\title{Cutoff for mixtures of permuted Markov chains: \\ general case}
\author{Bastien Dubail \thanks{Correspondence to be sent to: \href{mailto:bastdub@kth.se}{bastdub@kth.se}} \\ KTH, Stockholm, Sweden }
\date{}
\begin{document}

\maketitle

\begin{abstract}
	We investigate the mixing properties of a finite Markov chain in random environment defined as a mixture of a deterministic chain and a chain whose state space has been permuted uniformly at random. This work is the counterpart of a companion paper where we focused on a reversible model, which allowed for a few simplifications in the proof. We consider here the general case. Under mild assumptions on the base Markov chains, we prove that with high probability the resulting chain exhibits the cutoff phenomenon at entropic time $\log n / h$,  $h$ being some constant related to the entropy of the chain, when the chain is started from a typical state. However contrary to the reversible case uniform cutoff at entropic time does not hold, as we provide an example where the worst-case mixing time has at least polylogarithmic order. We also provide a polylogarithmic upper bound on the worst-case mixing time, which in fact plays a crucial role in deriving the main result for typical states. Incidentally, our proof gives a clear picture of when to expect uniform cutoff at entropic time: it appears as the consequence of a uniform transience property for the covering Markov chain used throughout the proof, which lies on an infinite state space and projects back onto the initial chain. 
\end{abstract}

\section{Introduction}

\subsection{Cutoff for mixtures of permuted Markov Chains}

This paper establishes a cutoff phenomenon at entropic time for a model of Markov chain in random environment. Let $n \geq 1$ be an integer and $P_1, P_2$ two $n \times n$ stochastic matrices. This paper is basically concerned with the mixing time of the Markov chain with transition matrix
\begin{equation*}
	p_1 \, P_1 + p_2 \, S P_2 S^{-1}
\end{equation*}
where $p_1 \in (0,1), p_2 = 1 - p_1$ and $S$ is a uniform $n \times n$ permutation matrix. This model is inspired by the work \cite{hermon2020universality} of Hermon, Sousi and Sly, who proved the cutoff phenomenon at entropic time for the simple random walk on a sequence of deterministic graphs to which is added a random uniform matching of the vertices. The model above aims to look at more general Markov chains, possibly non reversible, obtained by superposing two deterministic ones with a layer of randomness in between taking the form of a permutation of the states. Our model will in fact allow the probabilities $p_1, p_2$ to depend on the current state of the chain and on the permutation $S$, which is necessary to recover the case of the simple random walk on the superposition of graphs if these are not regular. For this case however, and more generally for reversible chains, there is a more adapted model tailored to preserve reversibility, corresponding to a specific choice of $p_1, p_2$. This model is studied in the companion paper \cite{cutoff_reversible}. We focus here on the most general case.

Let us first introduce general notations that will be used throughout the paper.
Given two measures $\mu, \nu$ on a countable set $S$, their total variation distance is defined as 
\begin{equation*}
	\TV{\mu - \nu} := \sup_{A \subseteq S} \abs{\mu(A) - \nu (A)} = \frac{1}{2} \sum_{x \in S} \abs{\mu(x) - \nu(x)}.
\end{equation*}
If $P$ is the transition kernel of a positive recurrent, irreducible and aperiodic Markov chain on $S$, it admits a unique invariant measure $\pi$. In that case, given a starting vertex $x \in S$ and $\e \in (0,1)$, the mixing time is defined as 
\begin{equation*}
	\tmix(x, \e) := \inf \{ t \geq 0: \TV{P^{t}(x, \cdot) - \pi} < \e \}. 
\end{equation*}
and the worst-case mixing time is 
\begin{equation*}
	\tmix(\e) := \sup_{x \in S} \tmix(x,\e).
\end{equation*}
If the chain is not irreducible or aperiodic, we consider the mixing time to be infinite. Given two functions $f,g: \bN \rightarrow \bR$, $f(n)= O(g(n))$ if there exists a constant $C > 0$ such that $\abs{f(n)} \leq C g(n)$ for all $n$, $f=o(g)$ if $f(n) / g(n) \xrightarrow[n \rightarrow \infty]{} 0$ and $f = \Theta(g)$ if $f=O(g)$ and $g = O(f)$. An event $A=A(n)$ is said to occur with high probability, if $\bP(A) = 1 - o(1)$. Given an integer $n \geq 1$, we write $[n] := \{ 1, \ldots, n \}$. 

\begin{theorem}\label{thm:cutoff}
	Let $P_1, P_2$ be two $n \times n$ stochastic matrices, $p_1, p_2 \in M_{n}([0,1])$ be $n \times n$ matrices with entries in $[0,1]$ such that $p_1 + p_2 = 1$ entry-wise, $\sigma$ a uniform permutation of $n$ elements and consider the Markov chain with transition probabilities
	\begin{equation}\label{eq:model0}
		\forall x,y \in [n]: \quad \scP_0(x,y) := p_1(x, \sigma(x)) \, P_1(x,y) + p_2(x, \sigma(x)) \, P_2(\sigma(x), \sigma(y)).
	\end{equation}
	Suppose 
	\begin{enumerate}[label=(H\arabic*)]
		\item \label{hyp:bdd_delta} for each $i \in \{ 1, 2 \}$ the non-zero entries of $P_1, P_2$ are $\Theta(1)$ and $\max_{y \in V} \sum_{x \in V} P_i(x,y) = O(1)$.
		\item \label{hyp:bdd_p} the entries of $p_1, p_2$ are $\Theta(1)$. 
		\item \label{hyp:cc3} For all $x \in [n]$ and $i=1,2$:
		\begin{equation*}
			\abs{\{y \in V: \exists k \geq 0: P_{i}^{k}(x,y) > 0 \} } \geq \left\{ \begin{array}{l l} 3 & \text{if $i=1$}  \\ 2 & \text{if $i=2$} \end{array} \right., 
		\end{equation*}
		\item \label{hyp:backtracking} There exists a constant $l \geq 1$ independent of $n$ such that
		\begin{equation*}
			\abs{ \{ x \in V \ | \ \forall y \in V: P(x,y) > 0 \Leftrightarrow (I+P)^{l}(y,x) > 0 \} } = n - o(n) 
		\end{equation*}
	\end{enumerate}
	Then the chain defined by \eqref{eq:model0} is irreducible, aperiodic and there exists $h = \Theta(1)$ for which the following holds. For all $\e \in (0,1)$, there exists $C(\e)$ such that for all $x \in [n]$ with high probability,
	\begin{align*}
		\tmix(x,\e) &\leq \frac{\log n}{h} + C(\e) \sqrt{\log n},\\
		\tmix(x,1-\e) &\geq \frac{\log n}{h} - C(\e) \sqrt{\log n}.
	\end{align*}
\end{theorem}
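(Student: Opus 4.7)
The plan is to adapt the entropic cutoff method of Lubetzky--Peres and Hermon--Sousi--Sly, following the same overall scheme as the companion paper \cite{cutoff_reversible} but replacing every reversibility-based simplification. The key object will be a ``covering'' Markov chain $\hat{X}$ living on an infinite quasi-tree obtained by unfolding the permuted environment; its asymptotic entropy rate will be the constant $h$. The cutoff will then follow from three ingredients: (i) transience and entropy concentration for $\hat{X}$, (ii) a coupling of $\scP_0$ with $\hat{X}$ up to the entropic time, and (iii) a second-moment / $L^2$ control on top of the coupling.

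I would first build the covering chain. By revealing $\sigma$ one pair at a time along the trajectory of $\scP_0$, one sees that before any collision has happened $\sigma$ behaves as a uniform injection. This unfolding lives naturally on a rooted tree-like graph $\hat{G}$ whose vertices carry a label $(x, y = \sigma(x))$; $\hat{X}$ moves via $P_1$ within each $\sigma$-fiber and via $P_2$ to a fresh label drawn from an urn of unused pairs. Hypothesis \ref{hyp:cc3} forces the effective out-degree to be $\geq 2$, \ref{hyp:bdd_delta}--\ref{hyp:bdd_p} make all transitions of order one, and \ref{hyp:backtracking} guarantees that the unfolding is genuinely tree-like for all but $o(n)$ initial labels (most vertices cannot close a short cycle). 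A Kingman / Kaimanovich--Vershik subadditivity argument then yields the almost-sure existence of
\[
	h := -\lim_{t \to \infty} \frac{1}{t} \log \hat{p}^{\,t}(\hat{o}, \hat{X}_t),
\]
with $h = \Theta(1)$ (upper bound from bounded transitions, lower bound from the $\geq 3$-successors assumption) together with a Gaussian-type concentration $\bigl| -\tfrac{1}{t}\log \hat{p}^{\,t}(\hat{o}, \hat{X}_t) - h \bigr| \leq C(\e)/\sqrt{t}$ with probability $1-\e$; this concentration is what produces the $\sqrt{\log n}$ cutoff window.

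For the upper bound at $t^\star := \log n / h + C(\e)\sqrt{\log n}$, I would proceed in two stages. First, using the polylogarithmic worst-case mixing bound mentioned in the abstract, a short burn-in brings the chain from an arbitrary $x$ into the ``typical'' part of the state space, so one can assume the starting state has no pathological local geometry. Then, since $t^\star \ll \sqrt{n}$, the permutation can still be coupled to a fresh injection up to time $t^\star$, and the quenched law of $\scP_0^{\,t}(x,\cdot)$ coincides with the image of $\hat{X}_t$ under $(x,y)\mapsto x$ except on an event of probability $\binom{t}{2}/n = o(1)$. The entropy concentration gives $\hat{p}^{\,t^\star}(\hat{o}, z) \leq n^{-1}e^{o(\sqrt{\log n})}$ on a $(1-\e)$-mass set of $z$'s, which translates into a $(1+o(1))$ comparison with $\pi$; a second-moment computation controlling pairwise collisions of two independent copies of $\hat{X}$ then upgrades this annealed bound to a quenched statement valid with probability $1-o(1)$ over $\sigma$.

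For the lower bound at $t^\star - C(\e)\sqrt{\log n}$, the same entropy estimate in reverse shows that the $(1-\e)$-support of $\scP_0^{\,t}(x, \cdot)$ has size $e^{ht + o(\sqrt{\log n})} = o(n)$, and since \ref{hyp:bdd_delta} forces $\pi(y) = O(1/n)$, this support misses $1-\e$ of the stationary mass, giving $\TV{\scP_0^{\,t}(x,\cdot) - \pi} \geq 1-\e$. The main obstacle is the coupling step in the absence of reversibility: unlike the companion paper, one cannot symmetrize trajectories to rule out short cycles easily, and one must control both the forward unfolding via $P_1, P_2$ and the ``inverse'' steps implicit in returning to an already-revealed $\sigma$-fiber. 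Hypothesis \ref{hyp:backtracking} is tailored to handle this for $1-o(1)$ of vertices, and the polylogarithmic worst-case bound acts as a safety net that absorbs the $o(n)$ atypical vertices the chain may start at or pass through --- which is why, as the abstract emphasizes, the worst-case bound is a genuine prerequisite to the typical-start result.
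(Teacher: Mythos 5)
Your overall scheme (covering quasi-tree, entropy concentration at the entropic time, coupling of the finite chain with the covering chain, second-moment/parallel-copies upgrade from annealed to quenched) is the same as the paper's, but two steps in your sketch would fail as written. The most serious one is the lower bound: you compare $\scP_0^{\,t}(x,\cdot)$ with the true stationary measure $\pi$ and claim that \ref{hyp:bdd_delta} forces $\pi(y)=O(1/n)$. It does not: the column-sum bound controls $\sum_x\scP_0(x,y)$ but not $\pi$ pointwise, and in this non-reversible model the environment can contain trapping neighbourhoods in which the chain is stuck for polylogarithmic times (this is exactly the mechanism behind Theorem \ref{thm:counterex}), so $\pi$ is not flat and is in fact unknown. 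The paper circumvents this by never using $\pi$ directly: it proves mixing towards an explicit proxy measure $\hat{\pi}$ (Proposition \ref{prop:nice_approx}), shows $\hat{\pi}$ admits a decomposition $\hat{\pi}=\hat{\pi}_1+\hat{\pi}_2$ with $\norm{\hat{\pi}_1}_2^2=o_{\bP}((\log n)^b/n)$ and $\hat{\pi}_2(V)=o_{\bP}(1)$, and runs the coverage/lower-bound argument through a Cauchy--Schwarz estimate against $\hat{\pi}_1$ together with the fact that the loop-erased weights sum to at most $1$. Identifying $\hat{\pi}$ with $\pi$ is precisely where the worst-case (uniform in $x$) mixing estimate enters; your use of the worst-case bound as a ``burn-in'' borrowed from the abstract is both circular (it is proved by the same machinery, not available as a black box) and not the role it actually plays.

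The second gap is the concentration of entropy at scale $\sqrt{t}$. Kingman/Kaimanovich--Vershik subadditivity only gives the law of large numbers for $h$; the Gaussian-window fluctuation bound that produces the $\sqrt{\log n}$ cutoff window requires the regeneration structure of the chain on the quasi-tree (Markov renewal process, constant-order mixing of the regeneration chain, variance bounds), and establishing that regeneration times even have stretched-exponential tails is the central difficulty of the non-reversible case: escape probabilities are not uniformly bounded below, only bounded below in expectation, which the paper proves by comparing Green functions with a reversible chain on a random subquasi-tree built from an explicit (sub-)invariant measure --- this is where Hypothesis \ref{hyp:backtracking} is really used. Your sketch instead attributes \ref{hyp:backtracking} to ruling out short cycles in the unfolding, which is not its role (tree-likeness up to the relevant horizon follows from the uniform permutation via a birthday-type bound, cf.\ Lemma \ref{lem:coupling_quasiT}); without the escape-probability analysis neither the regeneration times, nor the entropy fluctuations, nor the typicality statements that let you ignore traps are available. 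Finally, note that the paper concentrates the probability of following the \emph{loop-erased trace} (the weights $w_{x,R,L}$) rather than $-\frac1t\log\hat{p}^{\,t}(\hat{o},\hat{X}_t)$, because it is the former that can be transferred to the finite chain and matched with the nice-path decomposition; a concentration statement for the full transition kernel on the tree would not plug into the finite-graph upper bound without an additional argument.
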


Note that in Theorem \ref{thm:cutoff}, the starting state of the chain is fixed before generating the random environment. This implicitely makes the state $x$ \emph{typical}: from the result for a fixed $x$ one can for instance replace $x$ with a state chosen uniformly at random. However it cannot be chosen arbitrarily after the permutation $\sigma$ is fixed. Consequently our result does not establish uniform cutoff in the sense that with high probability $\tmix(x,\e) = \log n / h + o(\log n)$ simultaneously for all $x \in [n]$. This makes a strong difference with the reversible case studied in \cite{cutoff_reversible} in which we were able to prove uniform cutoff at entropic time. It turns out that with the level of generality considered here, the lack of a uniform result is not an artifact of the proof but is characteristic of the model. In whole generality, there is no uniform cutoff at entropic time, as proved by the following result.

\begin{theorem}\label{thm:counterex}
	For all $a > 1$, there exist transition matrices $P_1, P_2$ satisfying Assumptions \ref{hyp:bdd_delta} - \ref{hyp:backtracking} such that with high probability, the Markov chain $\scP_0 = 1/2 ( P_1 + S P_2 S^{-1})$ has worst-case mixing time $\max_{x} \tmix(x, 1/2) \geq (\log n)^{a}$.
\end{theorem}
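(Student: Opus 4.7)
The plan is to construct explicit matrices $P_1, P_2$ (depending on $a$) together with a ``bad'' starting state $x_0$ such that, with probability $1-o(1)$ over $\sigma$, the distribution $\scP_0^{t}(x_0, \cdot)$ stays far from $\pi$ for $t < (\log n)^a$. The most natural route exhibits a small low-conductance set $A \ni x_0$ of size $\asymp (\log n)^a$ in the combined chain and invokes the Cheeger-type lower bound $\tmix(x_0, 1/2) \geq c / \Phi_{\scP_0}(A)$.

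\textbf{Construction.} I would set $L := \lceil (\log n)^a \rceil$ and design $P_1, P_2$ so that each of their underlying graphs contains a rich family of ``almost-closed'' clusters of size $L$: subsets whose interior is a fast-mixing $3$-regular expander (to ensure \ref{hyp:cc3} and rapid internal mixing) and whose $P_i$-boundary consists of only $O(1)$ edges, each paired with a short return path so that \ref{hyp:backtracking} holds; global irreducibility of each $P_i$ is preserved through a sparse backbone linking the clusters. A low-conductance set in $\scP_0$ then requires an $A$ nearly closed under $P_1$ \emph{and} whose image $\sigma(A)$ is nearly closed under $P_2$. A first- or second-moment argument on the count of compatible cluster pairs aims to show that, with $\Theta(n/L)$ candidates on each side plus some structural flexibility, at least one such alignment holds with probability $1 - o(1)$ over $\sigma$. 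Once $A$ is identified, \ref{hyp:bdd_delta} yields near-uniform $\pi$ so that $\pi(A) \asymp L/n$ and boundary flux $Q_{\scP_0}(A, A^c) = O(1/n)$, whence $\Phi_{\scP_0}(A) = O(1/L)$ and $\tmix(x_0, 1/2) \geq c (\log n)^a$ for any $x_0 \in A$.

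\textbf{Main obstacle.} The delicate step is the alignment. For a fixed pair $A, B \subset [n]$ of size $L$, $\bP[\sigma(A) = B] = 1/\binom{n}{L}$ is superpolynomially small in $L$, and at most $\Theta((n/L)^2)$ disjoint candidate cluster pairs are available, so no naive union bound ensures exact alignment. The construction must therefore allow \emph{approximate} alignments—clusters that remain low-conductance under small symmetric-difference perturbations—so that the expected count of almost-aligned pairs is large and concentration via the second moment method applies. The further tension with \ref{hyp:backtracking}, which forbids strictly one-way escape edges and so constrains the cluster boundary geometry, is a secondary but nontrivial design constraint. Taken together, these obstacles account for why Theorem~\ref{thm:counterex} yields only a polylogarithmic, rather than a more dramatic, worst-case mixing time.
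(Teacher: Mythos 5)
There is a genuine gap, and it is exactly the one you flag yourself: the alignment step cannot be repaired in the form you propose. For a uniform permutation $\sigma$, the probability that a fixed set $A$ of size $L=\lceil(\log n)^a\rceil$ has $\sigma(A)$ equal to (or within small symmetric difference of) a prescribed set $B$ is of order $e^{-\Theta(L\log n)}$, while your construction supplies only polynomially many candidate cluster pairs (at best $\binom{n}{L}^2$ if you allow arbitrary unions, but low conductance under $P_1$ forces $A$ to be essentially a union of the $\Theta(n/L)$ built-in clusters, and likewise for $\sigma(A)$ under $P_2$). The expected number of even approximately aligned pairs is therefore superpolynomially small, and no first- or second-moment argument can produce one; with high probability no low-conductance set of polylogarithmic size of this "aligned bottleneck" type exists anywhere. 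There is also a structural reason the route is doomed: with fast-mixing, essentially reversible cluster interiors your model is of the type treated in the reversible companion paper, where \emph{uniform} cutoff at entropic time holds, so a counterexample cannot be built from unbiased expander clusters plus a rare geometric coincidence of $\sigma$. (A secondary issue: you assert $\pi(A)\asymp L/n$ from \ref{hyp:bdd_delta}, but the invariant measure of $\scP_0$ is not explicit; the paper has to control it indirectly via an $\ell^2$ decomposition of $\pi$.)

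The paper's actual mechanism is different and avoids any alignment event whose probability decays with a positive power of $n$. Take $P_1$ to be disjoint biased segments of length $3$ and $P_2$ disjoint biased segments of length $2$, each strongly drifting toward its left endpoint (parameter $\delta$ small). Then one asks only for a \emph{local} event: that the ball of radius $l=\beta\log\log n$ around some vertex in the superposed graph is tree-like and all its "centers" have type $0$, i.e.\ it coincides with a fixed weighted tree $T_{\le l}$ on which the walk is biased toward the root. This event has probability about $3^{-\Delta^{l+1}}=\exp\bigl(-c(\log n)^{\beta\log 3}\bigr)=n^{-o(1)}$, so among $n^{1/4}$ disjoint candidate balls it occurs somewhere with high probability. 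Escape from such a neighbourhood is then a Gambler's-ruin estimate: the distance to the root is dominated by a walk with leftward drift, so the exit time exceeds $(2\delta/(1-\delta))^{-l}=(\log n)^{\beta\log((1-\delta)/2\delta)}$, which beats $(\log n)^a$ for $\delta$ small. In short, the slowdown comes from \emph{bias/irreversibility inside a doubly-logarithmic trap} (rare event of probability $n^{-o(1)}$, escape time exponential in the trap radius), not from a conductance bottleneck of a polylogarithmic set, which is what your construction would need and which the randomness of $\sigma$ will not supply.
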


Our model is not the first instance which exhibits a non-uniform cutoff phenomenon. It was already observed for instance in the case of simple random walks on the giant component of Erdös-Renyi graphs: with high probability these contain segments of length of order $\log n$, resulting in the worst-case mixing time being of order $(\log n)^{2}$ \cite{benjamini2014mixing,fountoulakis2008evolution}, while the typical mixing time is $O(\log n)$ \cite{berestycki2018random}.

In our model, the potential slowdown of the chain is a consequence of irreversibility: having too much bias on the transition probabilities, the environment can with positive probability contain \emph{trapping neighbourhoods}, from which the chain escapes in a time that scales exponentially with the size of the neighbourhood. The previous proposition is based on an example where these neighbourhoods are of size $O(\log \log n)$, hence the polylogarithmic order in the result. Our last result establishes a matching upper bound of polylogarithmic order on the worst-case mixing time. 

\begin{theorem}\label{thm:worstcase}
	In the setting of Theorem \ref{thm:cutoff} under the same assumptions there exists $a \geq 1$ such that for all $\e \in (0,1)$, with high probability
	\begin{equation*}
		\tmix(\e) \leq (\log n)^{a}.
	\end{equation*}
\end{theorem}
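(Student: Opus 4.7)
The plan is to reduce the worst-case bound to the typical-state bound of Theorem \ref{thm:cutoff} via a polylogarithmic escape-and-coupling phase. Concretely, I would show that for every $x \in [n]$, after some $T_1 = (\log n)^{a_1}$ steps the distribution $\scP_0^{T_1}(x, \cdot)$ is within total variation $1/4$ (say) of $\scP_0^{T_1}(y, \cdot)$ for some typical $y$, and then invoke Theorem \ref{thm:cutoff} from $y$ to complete mixing in an additional $T_2 = O(\log n)$ steps. Summing the two phases yields the advertised $(\log n)^{a}$ bound for an appropriate $a$.

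The first step is a structural statement about the random environment: with high probability, for every $x$ the in-ball and out-ball of radius $R = C \log \log n$ in the transition graph of $\scP_0$ is tree-like apart from a ``trap core'' of bounded size. This rests on the same local-geometry analysis used for the entropic-time proof, now applied uniformly in $x$. The key point is that the only mechanism capable of slowing the chain below entropic scale is the presence of small trapping neighborhoods of the kind produced in Theorem \ref{thm:counterex}. A first-moment computation over the permutation $\sigma$ shows that a trap of depth $k$ at a given location exists with probability $n^{-\Omega(k)}$, so after a union bound w.h.p.\ no trap in the whole graph has depth larger than $C \log n / \log \log n$.

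Next, I would control the escape time from such a core using the covering Markov chain introduced for the entropic-time proof, which lifts $\scP_0$ to an infinite tree-like state space whose projection coincides with the original chain. Outside of a bounded neighborhood of the root the covering chain is transient (the uniform transience property alluded to in the abstract), so from any lifted state the walk eventually escapes to infinity on the cover, which at the level of the base chain amounts to hitting a typical state. Inside the core the escape time is at worst geometric in the core size, of order $\exp(O(R)) = (\log n)^{O(1)}$. Combining the two regimes gives a uniform polylogarithmic bound on the time for the projected chain to reach a typical state, regardless of the starting point.

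The main obstacle will be making the escape bound genuinely uniform in $x$. Unlike in the reversible setting of \cite{cutoff_reversible}, where a spectral argument yields uniform transience for the cover, here the drift towards infinity may be weak near atypical roots. Quantifying it therefore requires a Lyapunov/hitting-time analysis on the cover, reducing to a biased random walk on $\mathbb{Z}_{\geq 0}$ whose positive drift only switches on beyond the trap core; the number of possible core shapes of size $O(\log \log n)$ is itself polylogarithmic, so a union bound over them is affordable. Once this escape estimate is established, stitching it together with the entropic-time bound of Theorem \ref{thm:cutoff} is routine by the triangle inequality for total variation.
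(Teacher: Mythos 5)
Your two-phase reduction (``escape to a typical state in $(\log n)^{a_1}$ steps, then invoke Theorem~\ref{thm:cutoff} from that state and conclude by the triangle inequality'') has a genuine gap, and it is in the step you call routine. First, Theorem~\ref{thm:cutoff} only applies to a starting state fixed \emph{before} the environment is drawn; the state $y$ reached after the escape phase is a function of $\sigma$, so you cannot apply the typical-state bound at $y$ without first upgrading every ingredient (quasi-tree-like neighbourhoods, non-backtracking/regeneration structure, drift and entropy concentration) to hold simultaneously for all states reachable by the chain at time $(\log n)^{a}$. That upgrade is precisely the content of the shifted, uniform statements Lemma~\ref{lem:QTlike_uniform}, Lemma~\ref{lem:typical_paths}(ii), Proposition~\ref{prop:concentration_G}(ii) and Proposition~\ref{prop:nice_approx}(ii), whose proofs need annealed error bounds of order $o(1/n)$, the parallel-chains higher-moment Markov argument, and the quasi-tree-with-one-cycle coupling; it is the bulk of the work, not a stitching step. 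Second, and more structurally, the triangle inequality requires a common target measure. The upper bound of Theorem~\ref{thm:cutoff} is proved as mixing towards the proxy $\hat{\pi}$, which is not a priori invariant, and the only way the paper relates $\hat{\pi}$ to a genuine stationary measure $\pi$ (and proves uniqueness and irreducibility at all) is through the \emph{uniform-in-$x$} estimate: uniformity allows one to take $\pi$ as the initial law and deduce $\TV{\pi - \hat{\pi}} \leq 4\epsilon$. With only typical-state control you would need the bound for a set of states of large $\pi$-mass, and $\pi$ is unknown. So in this framework deducing Theorem~\ref{thm:worstcase} from Theorem~\ref{thm:cutoff} is circular; the paper's logic runs the other way, with both theorems extracted simultaneously from the uniform nice-path estimate.

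The quantitative claims in your escape phase are also miscalibrated. A trap of depth $k$ at a given location occurs with probability that is doubly exponentially small in $k$ (of order $3^{-\Delta^{k+1}}$ in the construction behind Theorem~\ref{thm:counterex}), not $n^{-\Omega(k)}$; consequently traps of depth $\Theta(\log\log n)$ \emph{do} appear somewhere with high probability, while your union bound only excludes depths $\gg \log n/\log\log n$. That is inconsistent with your claimed escape time $\exp(O(R)) = (\log n)^{O(1)}$: exponential escape from a trap of depth $\log n/\log\log n$ would be $n^{\Omega(1/\log\log n)}$, far above polylogarithmic. Moreover, the assertion that the covering chain is uniformly transient outside a bounded core is exactly what fails in the general non-reversible model --- this is the paper's central difficulty and the reason uniform cutoff fails. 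What is true, and what the proof actually uses, is the doubly-exponential tail of Proposition~\ref{prop:escape_gumbel}, which shows escape probabilities drop below $(\log n)^{-\beta}$ with probability $O(n^{-2})$ per vertex (see \eqref{eq:escape_uniform}); this serves as an effective quenched lower bound along the trajectory and yields the $(\log n)^{a}$ time to regenerate in a typical region. Any correct worst-case argument has to be built on that estimate (or an equivalent), not on a transience-outside-a-bounded-core dichotomy.
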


\begin{remark}
	Let us comment on the assumptions made above. Hypotheses \ref{hyp:bdd_delta} and \ref{hyp:bdd_p} bound uniformly in $n$ the transition probabilities and the number of ingoing and outgoing transitions at every state, while \ref{hyp:cc3} imposes a minimal size on the set of states reachable by a given state. These are not restrictive: it is easy to find examples where these assumptions are not satisifed and the mixing time scales differently as $\Theta(\log n)$, without cutoff. The most restrictive assumption is thus \ref{hyp:backtracking}. It can be interpreted as a constraint on the degree of irreversibility of the chains $P_1, P_2$, which for instance prevents them from having too many transient states. It is not clear whether it is really needed for our result but from the proof it does not seem superfluous either, as having more  irreversibility may amplify the trapping phenomenon mentionned above. Nonetheless, irreversibility is still permitted to a considerable extent and appears as a genuine limitation rather than a technical one. 
\end{remark}

\subsection{Related works}

This paper is the direct extension of \cite{cutoff_reversible} where we analyse a specific case of the Markov chain \eqref{eq:model0} tailored to preserve reversibility, for which cutoff at entropic time can be proved uniformly over the starting state. We tried to propose there a thorough overview of the most recent works about cutoff in random and generic settings, so we will remain brief in this paper and refer the reader to the "Related works" section of \cite{cutoff_reversible} for a more comprehensive list of references. 

As we mentionned already, our model is directly inspired from that of Hermon, Sly and Sousi \cite{hermon2020universality} who proved cutoff for the simple random on a graph with an added uniform matching, as well as related models: \cite{baran2023phase} in which Baran, Hermon, Šarković and Sousi prove a phase transition when weights are added on the random matching, and \cite{hermon2022cutoff} in which Hermon, Šarković and Sousi study the simple random walk on graphs with community structures. The absence of uniform cutoff was already observed for the simple random walk on the giant component of a supercritical Erdös-Renyi graph, studied in the work of Berestycki, Lubetzky, Peres and Sly \cite{berestycki2018random} from which we re-used a few arguments. Generally, our proof stems from the strategy initially used by Ben Hamou and Salez \cite{benhamou2017cutoff}, then by Bordenave, Caputo and Salez \cite{bordenave2018random, bordenave2019cutoff} for non-backtracking random walks.

\subsection{Proof outline}\label{subsec:outline}

We first mention that the results presented in this paper were obtained in the same time as those of \cite{cutoff_reversible} for the reversible model. As noted there, the arguments barely used the reversibility assumption, which was made mainly to avoid the technical difficulties that eventually make cutoff only valid for typical starting states. Thus the key arguments of the proof already appear in \cite{cutoff_reversible} and we will in this paper mostly emphasize on what needs to be adapted. 

\paragraph{Dealing with an unknown invariant measure:} first the absence of a reversibility assumption prevents any knowledge of the invariant measure. To establish mixing, we follow the strategy of \cite{bordenave2018random,bordenave2019cutoff}, already used in the reversible case, of proving convergence towards an approximate invariant measure $\hat{\pi}$. If $\TV{\scP_0^{t}(x, \cdot) - \hat{\pi}} \leq \e$ holds uniformly in $x$ for a given time $t$, then the invariance property implies that a true invariant measure $\pi$ satisfies $\TV{\pi - \hat{\pi}} \leq \e$, establishing in particular the uniqueness of the invariant measure. The additional issue here is that Theorem \ref{thm:cutoff} is valid only for typical starting vertices, so we will have to prove that mixing towards $\hat{\pi}$ occurs both at $t \simeq \log n / h$ for a typical starting state and at larger times for an arbitrary starting state, namely $t = (\log n)^{a}$. Note that since $\hat{\pi}$ is not a priori an invariant measure, there is no monotonicity in the distance to equilibrium that would maintain the chain distributed as $\hat{\pi}$ once it has become close to it. We note that Proposition \ref{prop:nice_approx} below gives an expression for the proxy measure $\hat{\pi}$ we use in our proofs. 

\paragraph{The entropic method: } the proof is based on the entropic method, which consists in two main arguments: first an entropic concentration is shown to occur at the entropic time $\log n / h$, which is shown to imply cutoff in the second part of the proof. These arguments have come so far with two different variations, depending on whether reversibility is supposed or not. Since we do not assume reversibility, we use the strategy that was originally designed for non-backtracking random walks \cite{benhamou2017cutoff,bordenave2018random,bordenave2019cutoff}. First we prove a quenched entropic concentration property for trajectories, namely we prove a statement of the form $\scP_0(X_0 \cdots X_t) \simeq e^{-t h + O(\sqrt{t})}$, where we write $\scP(X_0 \cdots X_t) = \prod_{i=0}^{t-1} \scP_0(X_i, X_{i+1})$. This statement is given here as a rough heuristic, which would be correct in the non-backtracking case. In general, what we prove in practice is rather concentration of the probability to follow the \emph{loop-erased trace} of $X$, so we are back at studying non-backtracking trajectories. This concentration phenomenon is proved by a coupling with another Markov chain lying on an infinite random state space, which we call a quasi-tree in reference to \cite{hermon2020universality}, as this object is similar to theirs. Basically, it is designed to be a stationary approximation of the universal cover of the finite chain, in the same way Erdös-Renyi random graphs can be approximated by infinite Galton-Watson trees for instance. 

\paragraph{Analysis on the quasi-tree:} all the differences with the reversible case are contained in this part of the argument. At the basis of the analysis for reversible chains in \cite{hermon2020universality,hermon2022cutoff, baran2023phase,cutoff_reversible} is the property of uniform transience, which asserts that at any vertex, the probability to escape to infinity along the neighbouring "branch" of the quasi-tree, conditional on the environment, is lower bounded away from $0$ uniformly in $n$. Such a uniform lower bound may not be achieved in the general model of Theorem \ref{thm:cutoff} and is ultimately the reason why cutoff cannot occur for a uniform starting state. For instance our counter example to uniform cutoff (Theorem \ref{thm:counterex}) stems directly from a quasi-tree which is close to being recurrent, resulting in the chain to be potentially stuck in trapping neighbourhoods. The lack of uniform transience is bypassed by proving that the probability to escape at infinity along is in expectation lower bounded uniformly in $n$. This will be proved thanks to a comparison with a reversible chain, which is the main reason why Hypothesis \ref{hyp:backtracking} comes into play. The lower bound on the expected escape probabilities will then imply that such a lower bound is also achieved when restricting to typical points. Very roughly, the difference with the reversible case will thus consist in arguing that the trajectories of the chain remain up to the mixing time on typical parts of the quasi-tree and consequently avoid traps. While uniform transience allows to directly use "quenched" statements, ie conditional on the environment, we will have to replace these by "annealed" statements, which average over the environment. Our analysis in that regards is thus largely inspired by the paper \cite{berestycki2018random} and the references cited therein, where the same issue was present. 

\paragraph{Argument for the lower bound:} the lower bound in the mixing time is based on a simple coverage argument. From the concentration of entropy, at time $t = O( \log n)$ the chain is necessarily confined in a set of size $e^{t h + O(\sqrt{t})}$ at most. This becomes $o(n)$ for  $t \leq \log n / h - C \sqrt{\log n}$ and a large constant $C > 0$, which is sufficient to conclude provided the invariant measure is well spread-out across the state space. Since the latter is unknown, we use the proxy measure $\hat{\pi}$ and its expression which is explicit enough for our purpose.

\paragraph{Concentration of nice trajectories:} for the upper bound, the second part of the argument uses the entropic concentration to show a second concentration phenomenon. Specifically, for all $x,y \in V$, we define a set $\frN^{t}(x,y)$ of "nice" trajectories between $x,y$, of length $t$, which will have the property that: 1. they are typical, that is the trajectory of the chain is likely to be nice, 2. the probability $\scP^{t}_{\frN}(x,y)$ of following a nice trajectory concentrates around its mean, which is shown to be independent of the starting point. From these properties we deduce that $\scP^{t}(x, \cdot)$ mixes towards a proxy stationary distribution $\hat{\pi}$. Concentration around the mean will be proved using a concentration inequality for "low-degree functions" on the symmetric group, developped specifically for this purpose and stated as Theorem 1.2 in our work for the reversible case \cite{cutoff_reversible}. These arguments are for the most part identical to the reversible case.

\paragraph{Getting a bound on the worst-case mixing time}: nice paths are in fact really defined only from typical states, so the previous arguments will ensure $\scP_0^{t}(x, \cdot) \simeq \hat{\pi}$ at time $t = \log n / h  + O(\sqrt{\log n})$ only for a typical $x$. To obtain an upper bound on the worst-case mixing time, which as remarked in the first paragraph above is crucial to justify $\hat{\pi}$ is close to being invariant, we will essentially argue that for some $a \geq 1$, at time $s = (\log n)^{a}$ the chain is likely to reach a typical point and have its $t$ subsequent steps follownig a nice trajectory, regardless of its starting state. Thus mixing at time $t = O(\log n)$ from typical states implies mixing at time $s+t = O((\log n)^{a})$ from any starting state. This argument was already apparent in the reversible case, where $s = \Theta(\log \log n)$ steps were enough, which was again the consequence of the quenched uniform lower bound on escape probabilities in quasi-trees. In the general case, a heuristic is as follows: we will get from the estimates on escape probabilities (Prop. \ref{prop:escape_gumbel}) that these are so unlikely to fall below $(\log n)^{-b}$ for some $b > 0$ that we can basically take $(\log n)^{-b}$ as a quenched lower bound. Thus we can expect it takes $(\log n)^{b} l$ steps at worst for the chain to escape potentially trapping neighbourhoods of size $l$, after which the chain is likely to be back in a typical neighbourhood if $l$ is chosen adequatly.  

\subsection{Organisation of the paper} 
In Section \ref{section:first} we give material that will be used throughout the paper, including the definition of quasi-trees. In Section \ref{section:main_arguments}, we give in detail the main technical arguments of the paper that sum up the arguments of the entropic method, from which we will be able to deduce Theorems \ref{thm:cutoff} and \ref{thm:worstcase}. In Section \ref{section:counterex}, we provide a counter-example to uniform entropic mixing time, proving Theorem \ref{thm:counterex}. The three next sections deal with the analysis of the quasi-tree: Section \ref{section:QT1} establish the lower bound on escape probabilities, Section \ref{section:QT2} proves technical results about the regeneration structure of quasi-trees while Section \ref{section:QT3} basically establishes "nice properties" in the quasi-tree setting, including in particular the concentration of the speed and entropy. These nice properties are then transferred back to the finite setting in Section \ref{section:nice_first}, which also contains the arguments that will lead to the upper bound on the worst-case mixing time. Finally, these properties are used in Section \ref{section:nice} where nice paths are properly defined and shown to have their probability concentrate around the mean.
\section{First moment argument, quasi-trees}\label{section:first}

\paragraph{Basic notations:}
throughout the paper, all quantities involved may have an implicit dependency in $n$ and the term constant will refer to quantities that are independent of $n$ and of the randomness. Given a set $S$, $\abs{S}$ denotes its cardinality, $\II_S$ is the indicator of $S$. Given $x,y \in \bR$, we write $x \wedge y := \min(x,y)$, $x \vee y := \max(x,y)$. We use the standard Landau notations $o, O$ for deterministic sequences: $f(n)= O(g(n))$ if there exists a constant $C > 0$ such that $\abs{f(n)} \leq C g(n)$ for all $n$, $f=o(g)$ if $f(n) / g(n) \xrightarrow[n \rightarrow \infty]{} 0$. We may write $O_{\e}(\cdot)$ to precise a potential dependence of the implicit constant in $\e$. Given two functions, $f = \Theta (g)$ if $f =O(g)$ and $g = O(f)$, and $f \gg g$ if $g = o(f)$. All the random variables considered in this paper are defined on an implicit probability space with measure $\bP$. If $Y_n, Z_n, Z$ are random variables, we write $Z_n \xrightarrow[]{\bP} Z$ for convergence in probability and $Z_n = o_{\bP}(Y_n)$ if $Z_n / Y_n \xrightarrow[]{\bP} 0$. In particular $Z_n = o_{\bP}(1)$ if $Z_n \xrightarrow[]{\bP} 0$. An event $A=A(n)$ is said to occur with high probability, if $\bP(A) = 1 - o(1)$.

\subsection{Quenched vs annealed probability, first moment argument}\label{subsec:first_moment}

Consider $(X_t)_{t \geq 0}$ a Markov chain in random environment, ie with random transition probabilities, such as the one defined by \eqref{eq:model0}. There are two laws naturally associated with the process $(X_t)_{t \geq 0}$. The probability $\bP$, which averages over both the random walk and the environment, gives rise to the \emph{annealed} law of the process $(X_t)_{t \geq 0}$ under which it is not a Markov chain. It is however a Markov chain under the \emph{quenched} law, which conditions on the environment. To emphasize the distinction, it is written using a different font, namely $\bfP$ will denote the quenched distribution. For all state $x$, we write $\bP_{x} := \bP \cond{\cdot}{X_0 = x}, \bfP_x := \bfP \cond{\cdot}{X_0 := x}$. Of course taking the expectation of the quenched law gives back the annealed law, which is the basis of the following first moment argument that will be used used throughout the article. To prove a trajectorial event $A$ has quenched probability vanishing to $0$ in probability as $n \rightarrow \infty$, it suffices to prove $A$ has annealed vanishing probability, as Markov's inequality implies for all $\e > 0$, 
\begin{equation}\label{eq:markov_quenched}
	\bP \sbra{ \bfP \sbra{A} \geq \e} \leq \frac{\bP \sbra{A}}{\e}.
\end{equation}
In most of this paper we will first prove statements valid for fixed starting states, chosen independantly of the environment, such as $\bP_{x} \sbra{A} = o(1)$, to obtain $\bfP_{x} \sbra{A} = o_{\bP}(1)$. These can be interpreted as conditional statements for the case where the starting state is also random, with a law which independant of the environment. Results for fixed starting states thus extend automatically to \emph{typical states}, for instance taken uniformly at random in the case the state space is $[n]$. To obtain stronger results valid for all states simultaneously, union bound shows it suffices to improve the annealed error bounds to $\bP_x \sbra{A} = o(1/n)$. This strategy will be used in Section \ref{section:nice_first} to extend results from typical to all states.

\subsection{The two-lift chain and half-integer time steps}

Let us start by rewriting and actually generalizing our model further, which lacks symmetry in the roles of $P_1$ and $P_2$. To obtain a model that is more symmetric, we construct the chain as a projection of its two-lift: this consists in seeing $P_1, P_2$ as two disjoint components of one Markov chain on a twice larger state space, which can be projected back to obtain the original model. This is essentially a matter of unifying notation and recover a setting which is somewhat similar to that of \cite{hermon2020universality}, but it can also prove useful on its own. All in all, the main model we will work with in this paper is the following. 

Let $V := [2n], V_1:= [n], V_2 := [n+1, 2n]$. Let $\sigma$ be now a uniform bijection from $V_1$ to $V_2$, which can be extended as a matching, or involution, $\eta$ on $V$ by 
\begin{equation*}
\eta_{| V_1} := \sigma, \qquad \eta_{| V_2} := \sigma^{-1}.
\end{equation*}
This matching defines an equivalence relation, namely $x \sim \eta(x)$. 
For all $x \in V$, define $V(x) = V_1 \ \II_{x \in V_1} + V_2 \ \II_{x \in V_2}$. Let $P$ be a stochastic matrix on $V$ with block form $P := \left( \begin{smallmatrix}
	P_1 & 0 \\ 0 & P_2 \end{smallmatrix} \right)$ and $p: V \times V \rightarrow [0,1]$ such that $p(x,y) + p(y,x) = 1$ for all $x,y \in V$ and let $q(x,y) := 1 - p(x,y) = p(y,x)$. Consider the Markov chain on $V$ given by the transition probabilities:
\begin{equation}\label{eq:2-lift}
	\forall x,y \in V: \scP(x,y) := \left\{ \begin{array}{l l}
		p(x, \eta(x)) \, P(x,y) & \text{if $V(x) = V(y)$} \\
		q(x, \eta(x)) \, P(\eta(x),y) & \text{if $V(x) \neq V(y)$}.\\
	\end{array}\right.
\end{equation}
We will work with this Markov chain most of the time, but the main object of interest of this paper is rather its projection to the quotient $V / \sim \ = [n]$. As is easily checked, the condition $q(x,y) = p(y,x)$ implies that $\scP(x,y) + \scP(x,\eta(y)) = \scP(\eta(x),y) + \scP(\eta(y),\eta(y))$. This condition ensures the projection is itself a Markov chain, with transition matrix given by
\begin{equation}\label{eq:main_model}
	\forall x,y \in [n], \bar{\scP}(x,y) := \scP(x,y) + \scP(x, \eta(y))
\end{equation}
This model contains \eqref{eq:model0} as a particular case: taking $p(x,y) := p_{1}(x,y-n)$ for all $x \in V_1, y \in V_2$ suffices to define $p$ completely. Furthermore $\sigma - n$ is a uniform permutation on $[n]$. From these it is easily checked that the projected chain $\bar{\scP}$ coincides with $\scP_0$. 

Given a probability measure $\mu$ on $V$, its projection $\bar{\mu}$ on $[n]$ is defined by $\bar{\mu}(x) = \mu(x) + \mu(\eta(x))$ for all $x \in [n]$. Notice that if $\pi$ is invariant for $\scP$, then $\bar{\pi}$ is invariant for $\bar{\scP}$. Furthermore, total variation distance is non-increasing under projections, so
\begin{equation}\label{eq:upper_bound_2lift}
	\TV{\bar{\scP}^{t}(x,\cdot) - \bar{\pi}} \leq \TV{\scP^{t}(x, \cdot) - \pi}.
\end{equation}
Therefore to obtain the upper bound in Theorem \ref{thm:cutoff}, which is the most difficult part of the argument, it suffices to prove an upper bound for the two-lift. In fact, our results hold for both chains.

Now let us introduce another characteristic of the two-lift. The two-lift is by construction made of two disjoint subspaces $V_1, V_2$: when at $x$ the Markov chain stays in the same subspace with probability $p(x,\eta(x))$, and change with probability $q(x,\eta(x))$, after which it takes a step independent of $\eta$. Because of this, it may be convenient to actually distinguish between these two steps, adding transitions at half integer times, defining transition probabilities 
\begin{equation}\label{eq:half_time_steps}
	\begin{array}{r c l}
	\bfP \sbra{X_{t+1/2} = y \ | \ X_{t} = x} &=& \left\{ \begin{array}{l l} 
		p(x,\eta(x)) & \text{if $y = x$} \\
		q(x,\eta(x)) & \text{if $y = \eta(x)$} \\
		0 & \text{otherwise} \end{array} \right.
	\\
	\bfP \sbra{X_{t+1} = y \ | \ X_{t+1/2} = x} &=& P(x,y)
	\end{array}
\end{equation}
for all $t \in \bN$. As is easily checked, the Markov chain $(X_t)_{t \in \bN}$ evaluated at integer times exactly has the transition matrix $\scP$ \eqref{eq:2-lift}.

\subsection{The graph point of view}
To emphasize the underlying tree structure of the Markov chains considered in this paper, these can be visualized as evolving on directed graphs.

\begin{definition}
	If $K$ is a Markov kernel on a countable state space $S$, the underlying graph $G = (S , E)$ is the oriented graph with vertex set $S$ and edge set
	\begin{equation*}
		E := \{(x,y) \in S \ | \ K(x,y) > 0 \}.
	\end{equation*}
	We consider in fact multi-graphs, so that loops and multi-edges are allowed in $G$. In quasi-trees, we will use graphs of Markov chains as parts of a larger structure. To that end, define a morphism of directed graphs as a map between two graphs that preserves the adjacency relations as well as the orientations of the edges. If weighted graphs are considered, such as underlying graphs of Markov chains, we may also implicitely require that weights are preserved. An isomorphism is as usual a bijective morphism.

	Given an oriented edge $e = (x,y) \in E$, write $e^{-} := x$ and $e^{+} := y$ for its initial and terminal vertex respectively. A path $\frp$ in $G$ can be made either of vertices or oriented edges, which can be identified. Given a path $\frp$, we write $K(\frp)$ for the product of transition probabilities along the edges of this path.
	The set $S$ is equipped with the graph metric provided by $G$: for all $x,y \in $, $d_{K}(x,y)$ is the minimal number of edges of a path joining $x$ to $y$. It is not symmetric in general because of the orientation of the edges. It thus gives rise to several notions of balls, depending on the orientation: given $x \in V, r \in \bN \cup \{ + \infty \}$, the forward and backward (closed) $K$-balls of radius $r$ around $x$ are respectively the sets
	\begin{align*}
		&B^{+}_{P}(x,r) := \{ y \in S \ | \ d_{K}(x,y) \leq r \}
		&B^{-}_{P}(x,r) := \{ y \in S \ | \ d_{K}(y,x) \leq r \}.
	\end{align*}
	Without further precision, the $K$-ball of radius $r$ around $x$ is $B_{K}(x,r) := B^{-}_{K}(x,r) \cup B^{+}_{K}(x,r)$, which forgets about the orientation of edges.
	Note that for any $x \in S$, $B_{K}^{+}(x, \infty)$ is the set that can be reached by the chain from the state $x$. If $x$ is recurrent, this is the communicating class of $x$.
\end{definition}

Apply the previous definition to the case at hand: from now on, let $G = (V , E)$ be the underlying graph of the Markov chain $P$: the graphs $G_1 = (V_1, E_1), G_2 = (V_2, E_2)$ spanned by $V_1$ and $V_2$ respectively are the underlying graphs of $P_1$ and $P_2$ and form two disconnected components of $G$. Adding the edges $(x, \eta(x))$ of the uniform matching yields a random graph $G^{\ast} := (V, E^{\ast})$, which can be seen as the underlying graph of the Markov chain $(X_t)_{t \geq 0}$, provided we take into account half-integer time steps \eqref{eq:half_time_steps}. We introduce a terminology inspired by \cite{hermon2020universality} to distinguish between these edges: an edge $(x,y) \in E^{\ast}$ is called a \emph{small-range} edge if $(x,y) \in E$, that is if $P(x,y) > 0$  or $P(y,x) > 0$. An edge $(x, \eta(x))$ added by the random matching will be called a \emph{long-range} edge. Note that long-range edges appear in $E$ with both orientations. When restricting to integer times only, the Markov chain $X$ moves along a long-range and a small-range edge at once or along a small-range edge only. Thus two vertices at $\scP$-distance $1$ from each other may in fact be separated by two edges in $G^{\ast}$. Similarly, a path in $G^{\ast}$ without further precision will be refering to a possible trajectory of the chain $\scP$. 

\subsection{Quasi-trees}\label{section:QT0}

We now define quasi-trees, which are designed to be an infinite approximation of the graph $G^{\ast}$. The same terminology and notation as in the finite setting will be used to emphasize analogies. This abuse is justified by the fact that later, both settings will be coupled so that quantities with matching terminology or notation will be equal. If necessary, we will introduce distinct notation.

\begin{definition}\label{def:quasitree}
	A rooted quasi-tree is a $4$-tuple $(\cG, O, \eta, \iota)$ where $\cG = (\cV, \cE)$ is an oriented graph, $O \in \cV$ is a distinguished vertex and $\iota$ is a map $\iota: \cV \rightarrow V$ that labels vertices of $\cG$ with states in $V$. We call $\iota(x)$ the type of the vertex $x \in V$. Finally $\eta$ is a map $\eta= \cV \rightarrow \cV$, which satisfies: 
	\begin{enumerate}[label=(\roman*)]
		\item $\eta$ is an involution either of $\cV$ or of $\cV \smallsetminus \{ O \}$ with no fixed point. The quasi-tree is called respectively \emph{two-sided} or \emph{one-sided} in these cases.
		\item For all $x \in \cV$, the edge $(x,\eta(x))$ is present in $\cE$ with both orientations. Such edges are called long-range edges, others are called small-range edges. 
		\item for all $x, y \in \cV$, there is a unique family, possibly empty, of long-range edges $e_1, \ldots, e_k$ such that all paths between $x$ and $y$ of minimal length contain the edges $e_1, \ldots, e_k$.
	\end{enumerate}
	In particular, there must exist a unique sequence of long-range edges joining the root $O$ to $x$. If this sequence is non-empty let $x^{\circ}$ be the the endpoint of these long-range edges which is the furthest from $O$. We call such a vertex a \emph{center}. If this sequence is empty, let $x^{\circ} := O$, however the root is not considered a center. 
	
	The two types of edges lead in turn to two additional types of paths and distances.	
	\begin{enumerate}
		\item A \emph{small-range path} is a path made exclusively of small-range edges. Given $x,y \in V$, the \emph{small-range distance} $d_{\SR}(x,y)$ is the minimal number of edges in small-range path from $x$ to $y$. The corresponding \emph{small-range balls} are written $\BSR(x,r)$.
		\item A \emph{long-range path} is a sequence $e=(e_i)_{i=1}^{k}$ of long-range edges such that for all $i \leq k-1$ $d_{\SR}(e_i,e_{i+1}) < \infty$ (so $e$ could be completed with small-range paths between long-range edges to obtain a genuine path in $\cG$). The length $\abs{e}$ of $e$ is the number of edges it contain. It joins two vertices $x ,y \in \cV$ if $d_{\SR}(x, e_1) < \infty$ and $d_{\SR}(e_{k}, y) < \infty$.  Given $x,y \in V$, the \emph{long-range distance} $d_{\LR}(x,y)$ is the minimal length of a long-range path between $x$ and $y$. We write $\BLR(x,r)$ for the corresponding \emph{long-range balls}. 
	\end{enumerate}
	As for the distance in Markov chain underlying graphs, these distances are not symmetric, so we may add $+$ or $-$ sign in exponent to denote forward and backward balls. From the definition, for any $x \in \cV$, $\BSR^{+}(x, \infty) = \BLR^{+}(x,0)$ is the set of vertices which can be joined from $x$ by a small-range path. We call the subgraph spanned by this set the \emph{small-range component} of $x$. We can now state a fourth property we require for quasi-trees: 
	\begin{enumerate}[label=(\roman*)]
		\addtocounter{enumi}{3}
		\item \label{enum:SR_component} for all $x \in \cV$, $\BLR^{+}(x, 0)$ is isomorphic to $B_{P}^{+}(\iota(x^{\circ}), \infty)$,
	\end{enumerate}
	that is, the small-range component of $x$ is the whole set of vertices in $V$ than can be reached from $\iota(x^{\circ})$ in the graph $G$.
\end{definition}

In the sequel we will refer to quasi-trees by their graph component only while keeping the other parameters implicit. 

Property (iii) above clearly gives quasi-trees a tree structure. For this reason, let us introduce a few extra notations. Given $k \geq 0$, the $k$-th level $\cV_k$ is the set of vertices which are at long-range distance $k$ from the root. Set also $\cV_{\leq k} := \bigcup_{i \leq k} \cV_i$. The requirement that $\eta$ has no fixed point implies that every vertex $x$ is joined to vertex $\eta(x)$ in another level. If $x$ is a center, $\eta(x)$ is in a level below, whereas if $x$ is not a center $\eta(x)$ is in a level above.

\paragraph{One-sided quasi-trees and subquasi-trees}

We introduced one-sided quasi-trees to consider subquasi-trees. If $x$ is a center, the subquasi-tree $\cG_x$ of $x$ is the graph spanned by the vertices $y$ for which all paths between $O$ and $y$ pass through $x$. If $x$ is not a center, $\eta(x)$ is and we set $\cG_x := \cG_{\eta(x)}$, so $\cG_x$ does not in fact contain $x$ in that case. Finally the complement subgraph $\cG \smallsetminus \cG_{x}$ is the graph spanned by vertices which can be reached from $O$ by a long-range path that does not use the long-range edge $(x, \eta(x))$. 

\paragraph{Non-backtracking paths, loop-erased paths, deviation, regeneration}
One main interest of having a genuine tree structure is the consideration of loop-erased trajectories. We thus introduce the following definitions.

\begin{definition}\label{def:backtracking_deviation}
	Let $\cG$ be a quasi-tree. A long-range path $e = (e_1, \ldots, e_k)$ backtracks over a distance $l \geq 1$ if it contains a subpath of distinct edges and of length $l$ immediately followed by the reversed path, that is there exists $i \leq k - 2l + 1$ such that $e_{i+l+j} = \overline{e_{i+l-1-j}}$ for all $j \in [0, l - 1]$, where $\overline{e_j}$ denotes the edge $e_j$ with reversed orientation. The loop-erased path $\xi(e)$ is the path obtained after erasing all backtracking steps. The long-range path $e$ is called \emph{non-backtracking} if $\xi(e) = e$.
	
	To previous definition is extended to general paths by extracting the long-range path: given a general path $\frp$, let $\xi(\frp)$ denote the loop-erased path formed by the long-range edges crossed by $\frp$. This is a non-backtracking path, which we call the loop-erased path or loop-erased trace of $\frp$. The long-range distance crossed by $\frp$ is the length of $\xi(\frp)$, or equivalently the long-range distance between its endpoints.
	
	The last two definitions require integer parameters. Given $R \geq 1$, let $\cG^{(R)}$ be the connected component of $O$ of the subgraph of $\cG$ spanned by the set
	\begin{equation}\label{eq:restricted_QT}
		\cV^{(R)} := \{ x \in \cV \ | \ d_{\SR}(x^{\circ}, x) < R \}.
	\end{equation}
	A path $\frp$ is said to \emph{deviate} from a small-range distance $R$ if it is not included in $\cG^{(R)}$. 

	Finally, the following notion of \emph{regeneration edges} will be used in several places: let $L \geq 1$ be an integer and consider a path $\frp$. A long-range edge $e$ crossed by $\frp$ is said to be a regeneration edge for $\frp$ with horizon $L$ if after the first time going through $e$ the path crosses a long-range distance $L$ or ends before going back to the endpoint of $e$ which was first visited by $\frp$. 
\end{definition}

\paragraph{Markov chains on quasi-trees}

Let $\cG$ be a quasi-tree. It is naturally the underlying graph of the Markov chain $(\cX_t)_{t \geq 0}$ on $\cG$ which has transition probabilities: 
\begin{equation}\label{eq:free_transitions}
	\begin{array}{r c l}
		\bfP \cond{\cX_{t+1/2} = y }{\cX_t = x} &=& \left\{ \begin{array}{l l} 
			p(\iota(x), \iota(\eta(x))) & \text{if $y = x$} \\
			q(\iota(x), \iota(\eta(x))) & \text{if $y = \eta(x)$} \\ 0 & \text{otherwise} \end{array} \right.
		\\
		\mathbf{P} \cond{\cX_{t+1} = y }{\cX_{t+1/2} = x} &=&  \left\{ \begin{array}{l l} 
			P(\iota(x), \iota(y)) & \text{if $y^{\circ} = x^{\circ}$} \\
			0 & \text{otherwise} \end{array} \right.
		\end{array}
\end{equation}
for all $t \in \bN$. The kernel of this Markov chain will be written $\cP$.

\subsection{The covering quasi-tree}

Among all quasi-trees, one is very natural to consider: this is the quasi-tree obtained by using the random matching $\eta: V \rightarrow V$ to define the matching on the quasi-tree. Given $x \in V$, this is the quasi-tree $(\cG^{\ast}(x), O, \iota, \tilde{\eta})$ defined by $\iota(O) = x$ and
\begin{equation*}
	\forall y \in \cV: \iota(\tilde{\eta}(y)) = \eta(\iota(y)).
\end{equation*}
The fact that this defines a unique quasi-tree is the consequence from property (iv) in Definition \ref{def:quasitree}, which imply the quasi-tree can be built iteratively. Starting from the small-range of $O$, which is necessarily $B_{P}^{+}(x, \infty)$, the previous equation uniquely determines the long-range edges at long-range distance $0$ from $O$ and thus the whole ball $\BLR(O,1)$. The process iterates to infinity to yield an infinite quasi-tree $\cG^{\ast}$. 

This quasi-tree has the property that it covers exactly $G^{\ast}$: the map $\iota$ is a surjective morphism of $\cG^{\ast}$ onto $G^{\ast}$ which preserves the transition probabilities, so the Markov chain $\cX$ defined above projects exactly onto the chain $X$: for all $t \geq 0$ $\iota(\cX_t) = X_t$  in distribution, conditional on $X_0 = x, \cX_0 = O$. 

The chain $\cX$ on $\cG^{\ast}$ will not be studied per se. We introduced it mainly to obtain easier definitions in the finite setting of objects and quantities that are naturally considered in the idealized setting of quasi-trees. First notice that the notions of long-range and small-range edges defined in the finite setting coincide with the projections under $\iota$ of the corresponding edges in the covering quasi-tree. Thus we can extend the notions of small-range, long-range paths, backtracking, etc. defined above to $G^{\ast}$ by taking their projections under $\iota$. An exception is the long-range distance, which in the finite setting will make sense only if restricting the quasi-tree: let $R \geq 1$ and recall the definition of the restricted quasi-tree $\cG^{(R)}$ \eqref{eq:restricted_QT}. Given $x \in V$ and $l \geq 0$, we define $\BLR^{(G,R)}(x,r)$ in $G^{\ast}$ as the projection 
\begin{equation*}
	\BLR^{(G,R)}(x,r) := \iota \left(\BLR(O,r) \cap \cG^{(R)}(x) \right).
\end{equation*}
The exponent $(G,R)$ is used to distinguish between the two settings. When not necessary, we may drop it from notation. Note from the definition that in $G^{\ast}$, for any vertices $x,y \in V$, $d_{\LR}(x,y) = 0$ if and only if $d_{\SR}(x,y) < R$. 

Finally, we introduce a last definition which is specific to $G^{\ast}$: a \emph{long-range cycle} is a non-deviating non-backtracking long-range path whose starting and ending point are at small-range distance at most $R$ from each other. A subgraph of $G^{\ast}$ is said to be \emph{quasi-tree-like} if it does not contain any long-range cycle. A quasi-tree-like subgraph can thus be identified with a neighbourhood of the root in the covering quasi-tree. Lemma \ref{lem:coupling_quasiT} below will establish that most vertices have in fact a quasi-tree-like neighbourhood. This relies on a bounded degree property, which is the object of the following paragraph.

\paragraph{Bounded degrees and transition probabilities:} 
Assumption \ref{hyp:bdd_delta} imply that the graph $G$ and consequently $G^{\ast}, \cG$ as well, have degrees bounded uniformly in $n$. Together with Assumption \ref{hyp:bdd_p} it also implies all transition probabilities are lower bounded uniformly in $n$. Consequently, let $\Delta$ denote a uniform bound on all the degrees and $\delta > 0$ a uniform bound on transition probabilities, which will serve throughout the paper. It gives in particular the growth of balls in $G^{\ast}$ and any quasi-tree $\cG$: for all $x \in V$ and $l \geq 0$
\begin{equation}\label{eq:ball_growth}
	\abs{\BSR(x,l)} \leq \abs{B_{\scP}(x,l)} \leq \frac{\Delta^{l+1} - 1}{\Delta - 1}, \qquad \abs{\BLR^{(G,R)}(x,l)} \leq \Delta^{R} \frac{\Delta^{R (l+1)} - 1}{\Delta^{R} - 1}.
\end{equation}
In particular, $\Delta^{R}$ can be thought of as the "long-range" degree. 

\subsection{Random setting}\label{subsec:random_QT}

The above definitions should make pretty clear that the forthcoming proofs are based on a coupling between the finite chain $X$ and the chain $\cX$ on an infinite quasi-tree. The quasi-tree in question is similar to the covering quasi-tree but contains much more independence, allowing basically to resample the matching $\eta$ at each new long-range edge. It can be constructed iteratively as follows: $\iota(O)$ is taken uniformly at random in $V$, which determines the small-range component around $O$ by point \ref{enum:SR_component} of the definition. Then to every vertex $x$ whose type $\iota(x)$ is known, adjoin a long-range edge $(x, \eta(x))$ to $x$, with $\iota(\eta(x))$ being taken uniformly in $V \smallsetminus V(\iota(x))$. 

The following formalisation of random quasi-trees will be useful to deal with subquasi-trees and their independency properties. It is analog to the Ulam-Harris labelling for Galton-Watson trees. Let 
\begin{equation}
	\scU := \bigcup_{k \geq 0} \{ \left( (u_i)_{i = 1}^{k}, v \right) \ | \ (u,v) \in V^{k+1}, \forall i \in [2, k], u_{i} \notin V(u_{i-1}), v \notin V(u_{k}) \}.
\end{equation}
The elements in $\scU$ corresponding to $k=0$ have their first coordinate empty and are identified with $V$. Given two sequences $u ,v$, write $uv$ for their concatenation. If $u=(u_i)_{i=1}^{k}$ is non-empty, write $u^{\flat} := (u_i)_{i=1}^{k-1}$ and $\bar{u} := u_k$. 
A rooted quasi-tree can then be represented as a pair $(\cV, O)$ where $\cV \subset \scU$, $O \in V$ satisfy:
	\begin{enumerate}[label=(\roman*)]
		\item $\cV \cap V = \BSR^{+}(O, \infty)$
		\item  $\forall (u,v) \in \cV$, $(u^{\flat}, \bar{u} ) \in \cV$.
		\item there exists an involution $\eta: \cV \rightarrow \cV$ such that for all $(u,v) \in \cV$, 
		\begin{itemize}
			\item either $\eta(u,v) = (u^{\flat}, \bar{u})$, ($(u,v)$ is then a center),
			\item or $\eta(u,v) = (uv, \rho(u,v))$ for some $\rho(u,v) \in V \smallsetminus V(v)$. 
		\end{itemize}
		In the second case, for all $w \in V$, $(uv,w) \in \cV$ if and only if $w \in \BSR^{+}(\rho(u,v), \infty)$.
	\end{enumerate}
	It is easily checked that with this definition the set $\cV$ identifies with the vertex set of a quasi-tree as defined in \ref{def:quasitree}. Note the type of a vertex $(u,v) \in \scU$ is $\iota(u,v) = v$. 

Consider now a family $(\zeta_{(u,v)})_{(u,v) \in \scU}$ of independent random variables, where for every $(u,v) \in \scU$, $\zeta_{(u,v)}$ is uniform in $V \smallsetminus V(v)$. Letting $U$ be independent and uniform in $V$, let $\cV$ be the random subset of $\scU$ obtained by taking $O = U$ and $\rho(u,v) := \zeta(u,v)$ in the case $(u,v)$ is not a center. This yields the same random quasi-tree as in the iterative definition above.

The previous construction is a bit heavy and can be avoided most of the time, however it has the advantage that it allows to consider subquasi-trees in a rigourous way. For every $x \in \scU$, $\cG_x$ is well-defined if $x \in \cV$, otherwise we set it to be empty. Then recall that in the case $x \in \cV$ $\cG_x$ contains $x$ if and only if it is a center. Thus the events $x \in \cV$ and $x$ being a center are both measurable with respect to $\cG \smallsetminus \cG_x$. If $x \in \cV$ is not a center, the knowledge of $\cG \smallsetminus \cG_x$ does not reveal the value of $\eta(x)$, thus conditional on $\cG \smallsetminus \cG_x$ and $x \in \cV$ not being a center, $\cG_x$ has the law of $\cG'_{O'}$ where $(\cG', O')$ is an independent copy of $(\cG, O)$. If $x$ is a center, the second coordinate of $x=(u,v)$ reveals $\iota(x)$ and thus reveals a part of the subquasi-tree $\cG_x$. Thus conditional and $\cG \smallsetminus \cG_x$ and $x \in \cV$ being a center, $\cG_x$ has the law of $\cG'_{O'}$ conditional on $\iota(\eta(O')) = \iota(x)$. 

\subsection{Coupling and sequential generation}\label{subsec:coupling}

We now explain how one can couple the Markov chain $(X_t)_{t \geq 0}$ on $G^{\ast}$ with the Markov chain $(\cX_t)_{t \geq 0}$ on the random infinite quasi-tree $\cG$ defined in the previous section. 

\medskip

The following procedure generates the neighbourhood of a vertex in either $G^{\ast}$ or $\cG$ up to some given long-range distance.

Let $x \in V$ be the point whose long-range neighbourhood is to be explored up to long-range distance $L \geq 0$. For $t \geq 1$ we write $EQ_t$ for the exploration queue, that is the set of vertices which remain to be explored, and $D_t$ for the set of explored vertices. The initiation is similar for $G^{\ast}$ and quasi-trees: $D_0 := \emptyset$; $EQ_0 := \BSR^{+}(x,R)$ in $G$, $EQ_0 := \BSR^{+}(x, \infty)$ in $\cG$. The procedure repeats then the following steps. If one wants to explore a neighbourhood in $G^{\ast}$ sampling is performed without replacement: 
\paragraph{Sequential generation of $G^{\ast}$, sampling without replacement:} for $t \geq 0$,
\begin{enumerate}
	\item pick $y \in EQ_t$: sample $\eta(y)$ uniformly at random in $(V \smallsetminus V(y)) \smallsetminus D_t$,
	\item add $y, \eta(y)$ to $D_{t+1}$ and remove them from $EQ_{t+1}$,
	\item add all vertices $z \in \BSR^{+}(\eta(y),R) \smallsetminus \{ \eta(y) \}$ such that $z \notin D_{t}$ and $d_{\LR}(x,z) < L$ to $EQ_{t+1}$. 
\end{enumerate}

If performed with replacement, each new value $\eta(y)$ is picked uniformly in $V \smallsetminus V(y)$ independently of previous draws and considered a new vertex in a set $\cV$. Specifically the procedure becomes:  
\paragraph{Sequential generation of the quasi-tree $\cG$, sampling with replacement:} for $t \geq 0$,
\begin{enumerate}[label=\arabic*'.]
	\item pick $y \in EQ_t$: sample $\eta(y)$ uniformly at random in $V \smallsetminus V(y)$,
	\item add $y, \eta(y)$ to $D_{t+1}$ and remove $y$ from $EQ_{t+1}$,
	\item add all vertices $z \in \BSR^{+}(\eta(y),R) \smallsetminus \{ \eta(y) \}$ such that $d_{\LR}(x,z) < L$ to $EQ_{t+1}$. 
\end{enumerate}
Since the constraint $z \notin D_t$ has been removed in step 3', vertices which would in $G$ be already explored are here added to the exploration queue and thus considered new vertices. The environment explored is the $L$ long-range ball of a quasi-tree as in Definition \ref{def:quasitree}. Taking $L = \infty$ would thus yield a realization of the infinite random quasi-tree $\cG$.
Finally, the procedures can be adapted to explore ball for the graph distance, that is balls $B_{\scP}$ in $G^{\ast}$. 

\paragraph{Sequential generation along trajectories}

Under the annealed law, the two processes can be generated together with the environment. Later our goal will be to couple weights, which require the exploration of the whole $L$-long range neighbourhood around the trajectory. The sequential generation of the environment along trajectories thus consists in the following steps. Consider for instance the finite setting $G$: let the chain be started at $X_0 := x \in V$, and $D_0 := \emptyset$. Then for all $t \geq 0$:
\begin{enumerate}[label=\alph*.]
	\item Explore the long-range $L$-neighbourhood of $X_t$ with the first procedure described above.
	\item This determines completely the transition probabilities \eqref{eq:half_time_steps} at the state $X_t$, allowing to sample $X_{t+1}$. 
\end{enumerate}
If one considers the second procedure one obtains instead $(\cX_t)_{t \geq 0}$. One can very naturally couple the two procedures and hence both processes using rejection sampling: for each $y \in EQ_t$ sample $\eta(y)$ uniformly in $V \smallsetminus V(y)$ and use it for step 1' in the generation of $\cG$. If in addition $\BSR(\eta(y), R) \cap D_t = \emptyset$, it can also be used for step $1$ in the generation of $G^{\ast}$. Otherwise, make a second draw with the first procedure. This rejection sampling scheme yields a coupling of $(X_t)_{t \geq 0}$ and $(\cX_{t})_{t \geq 0}$ until the first time a newly revealed small-range balls $\BSR(\eta(y), R)$ contain vertices that have already been explored, that is when a long-range cycle appears around the trajectory:
 \begin{equation}
	\tcoup := \inf \left\{ t \geq 0 \ | \ \bigcup_{k = 0}^{t} \BLR(X_k, L) \text{ contains a long-range cycle} \right\}.
\end{equation}

As a first application of the coupling and the first moment argument, we give the following Lemma.

\begin{lemma}\label{lem:coupling_quasiT}
	There exists $C_R, C_L$ large enough such that for $R = C_{R} \log \log n, L = C_L \log \log n$, for all $x \in V, \e \in (0,3/10)$ and $t = O(n^{3/10 - \e})$,
	\begin{enumerate}[label=(\roman*)]
		\item $\bfP_{x} \sbra{\exists s \leq t: \text{$B_{\scP}(X_{s}, \lfloor \log n / (5 \log \Delta) \rfloor)$ is not quasi-tree-like}} = o_{\bP}(n^{-\e}) = o_{\bP}(1)$.

		\item $\bfP_{x} \sbra{\exists s \leq t: \text{$\BLR^{(G,R)}(X_s, L)$ is not quasi-tree-like}} = o_{\bP}(n^{-\e}) =  o_{\bP}(1)$. As a consequence $\bfP_{x} \sbra{\tcoup \leq t} = o_{\bP}(n^{-\e})= o_{\bP}(1)$.
	\end{enumerate}
\end{lemma}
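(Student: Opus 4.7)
The plan is to invoke the first moment reduction \eqref{eq:markov_quenched}: to obtain a quenched bound $o_\bP(n^{-\e})$ it suffices to establish that the corresponding annealed probability is $o(n^{-2\e})$. I would thus work under $\bP_x$ and generate the environment along the trajectory using the sequential procedure of Section \ref{subsec:coupling}, revealing at each step a neighborhood around $X_s$ large enough to contain the ball in question --- a $\scP$-ball of radius $r := \lfloor \log n /(5 \log \Delta) \rfloor$ for part (i), and $\BLR^{(G,R)}(X_s, L)$ for part (ii). The key observation is that the explored region fails to be quasi-tree-like exactly at the first step where a newly sampled endpoint $\eta(y)$ lands within small-range distance $R$ of a previously explored vertex, since this is the only way a long-range cycle can arise. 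Because $\eta(y)$ is uniform in a set of size at least $n - O(1)$, and each already-explored vertex forbids at most $\Delta^{2R}$ target values (its small-range $R$-neighborhood, thickened again by $R$), a union bound over samplings yields an annealed failure probability of order $T^2 \Delta^{2R} / n$, where $T$ is the total number of long-range edges sampled across the $t$ explorations.

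Both parts are then completed by a size estimate from \eqref{eq:ball_growth}. For part (i), $|B_\scP(X_s, r)| = O(n^{1/5})$, so $T = O(t n^{1/5}) = O(n^{1/2 - \e})$, and since $\Delta^{2R} = n^{o(1)}$ one obtains $T^2 \Delta^{2R}/n = n^{-2\e + o(1)} = o(n^{-\e})$. For part (ii), the long-range ball is still sub-polynomial: $|\BLR^{(G,R)}(X_s, L)| \leq \Delta^{R(L+1)} = e^{O((\log \log n)^2)} = n^{o(1)}$, hence $T = n^{3/10 - \e + o(1)}$ and $T^2 \Delta^{2R}/n = n^{-2/5 - 2\e + o(1)}$, which is again $o(n^{-\e})$. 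The tail bound on $\tcoup$ is immediate from part (ii), since by definition $\{\tcoup \leq t\}$ is exactly the event that $\bigcup_{k \leq t} \BLR(X_k, L)$ contains a long-range cycle.

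I expect the main difficulty to be organizational rather than analytical: one has to set up the sequential exploration carefully enough that a collision in the sampling really corresponds to the appearance of a long-range cycle inside the explored ball (and not a spurious overlap that does not violate quasi-tree-likeness), and the counting of forbidden targets must be done honestly in the presence of the two small-range thickenings. Once these bookkeeping issues are pinned down, the whole estimate is driven by the birthday-paradox margin between $t \leq n^{3/10 - \e}$ and the critical scale $\sqrt{n}$, and the sub-polynomial factors from $\Delta^R$ and from $e^{O((\log \log n)^2)}$ are absorbed without any delicate analysis; in particular any sufficiently large choice of $C_R, C_L$ will do.
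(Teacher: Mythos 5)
Your argument is correct and is essentially the paper's own proof: sequential generation of the environment along the trajectory, a birthday-paradox collision count of order $m^{2}\Delta^{O(R)}/n$ with $m$ the number of revealed long-range edges, the ball-growth bound \eqref{eq:ball_growth} to control $m$, and the first-moment reduction \eqref{eq:markov_quenched} to pass from annealed to quenched. The only cosmetic differences (a union bound over draws instead of stochastic domination by a binomial plus Markov, $\Delta^{2R}$ instead of $\Delta^{R}$ for the forbidden target set, and stating the reduction with the unnecessarily strong threshold $o(n^{-2\e})$ when an annealed bound $o(n^{-\e})$ already suffices --- which is what your final estimates deliver) do not affect the conclusion.
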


\begin{proof}
	Let the chain be started at $x \in V$ and $t = o(n^{3/10 - \e})$ for $\e \in (0,3/10)$. Write $k := \lfloor \log n / (5 \log \Delta) \rfloor$. By \eqref{eq:ball_growth}, the number of vertices contained in a $\scP$-ball of radius $k$ is $O(\Delta^{k})$. This implies $\bigcup_{s \leq t} B_{\scP}(X_s, k)$ contains $m = O((t+1) \Delta^{k})$ vertices. Therefore, the exploration procedure along the path $X_0 \cdots X_t$ requires at most $m$ draws of values $\eta(y)$, each having probability at most $m \Delta^{R} / n$ that the small-range ball $\BSR(\eta(y), R)$ contains already explored vertices. Hence the number of long-range cycles in $\bigcup_{s \leq t} B_{\scP}(X_s, k)$ is stochastically upper bounded by a binomial $\mathrm{Bin}(m, m \Delta^{R}/n)$ so that by Markov's inequality
	\begin{align*}
		\bP_{x} \sbra{\exists s \leq t: \text{$B_{\scP}(X_{s},k)$ is not quasi-tree-like}} &\leq \frac{m^2 \Delta^{R}}{n} = O \left( \frac{(t+1)^{2} \Delta^{R} \Delta^{2k}}{n} \right) \\
		&= O \left( t^2 n^{-3 / 5 + o(1)} \right) = o(n^{-2 \e + o(1)}) = o(n^{- \e})
	\end{align*}
	by the choice of $t = O(n^{3/10 - \e})$ and $R, L = O(\log \log n)$. This bound on the annealed probability implies the quenched result (i) by the first moment argument \eqref{eq:markov_quenched}. The result (ii) is proved similarly, noting that $\Delta^{RL}$ is still $n^{o(1)}$ (in this case we can take $t$ up to $n^{1/2 - \e})$. 
\end{proof}

\section{The entropic method: main arguments}\label{section:main_arguments}

\subsection{Nice trajectories}

Our application of the entropic method consists in finding a definition of nice trajectories designed to be typical trajectories and have their probability concentrating around the mean. The latter will come from a constraint about an entropy-like quantity for the chain, which arises from comparing the trajectories of the finite chain $X$ with the loop-erased trace of $\cX$ in the (infinite) quasi-tree setting. Other properties will thus be required from nice paths, whose goal is basically to make them close to being non-backtracking trajectories in a quasi-tree. All in all, the defining properties of a nice trajectory will essentially be that: 

\begin{enumerate}[label=(\roman*)]
    \item it is contained in a quasi-tree-like portion of the graph, the endpoint having a quasi-tree-like neighbourhood up to $\scP$-distance $\lfloor \alpha \log n \rfloor$ for some $\alpha > 0$ (Lemma \ref{lem:QTlike_uniform}),
	\item it does not deviate or backtrack too much, and contains sufficiently many regeneration edges (Lemma \ref{lem:typical_paths}),
    \item the drift, ie the long-range distance travelled, and entropy concentrate on this trajectory (Proposition \ref{prop:concentration_G}).
\end{enumerate}

As mentionned in the proof outline, nice trajectories will be likely to be followed by the chain only if it is started at a typical state. To derive the upper bound on the worst-case mixing time, we will thus establish that for an arbitrary starting state the trajectory becomes nice only after some time, namely after $s = (\log n)^{a}$ steps for some $a > 0$. This is why in the sequel we state different results with a "typical, non-shifted" version valid from time $0$ and a "worst-case, shifted" version valid for arbitrary starting states from time $s$. For the first important property of nice trajectories, the typical version has in fact already been proved in Lemma \ref{lem:coupling_quasiT}. The worst-case version is given in the following.

\begin{lemma}\label{lem:QTlike_uniform}
	There exists $a \geq 1 $ such that for all $s \geq (\log n)^{a}$ and $t = o(n^{1/16})$,
	\begin{enumerate}[label=(\roman*)]
		\item for all $C_R, C_L > 0$, letting $R := C_R \log \log n, L := C_L \log \log n$,
		\begin{equation*}
			\max_{x \in V} \bfP_{x} \sbra{\exists t' \in [s,s+t]: \BLR^{(G^{\ast},R)}(X_{t'}, L) \text{ is not quasi-tree-like}} = o_{\bP}(1),
		\end{equation*}
		\item
		\begin{equation}
			\max_{x \in V} \bfP_x \sbra{B_{\scP} \left(X_s, \left\lfloor \frac{\log n}{10 \log \Delta} \right\rfloor \right) \text{ is not quasi-tree-like}} = o_{\bP}(1).
		\end{equation}
	\end{enumerate}
 \end{lemma}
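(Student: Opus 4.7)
This lemma promotes the fixed-starting-state guarantees of Lemma \ref{lem:coupling_quasiT} to worst-case-in-$x$ guarantees, at the price of allowing a polylogarithmic warm-up period $s=(\log n)^{a}$. Following the heuristic of the proof outline, the strategy is to show that by time $s$ the chain has left any polylog-sized trapping neighbourhood determined by its starting state and has reached a ``typical'' region of the graph, after which the first-moment argument of Lemma \ref{lem:coupling_quasiT} re-applies starting from $X_s$.

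The first step is a first-moment estimate on the set of bad vertices. Let $\mathcal{B}$ denote the set of $y\in V$ whose relevant neighbourhood --- $B_{\scP}\bigl(y,\lfloor\log n/(10\log\Delta)\rfloor\bigr)$ in case (ii), and $\BLR^{(G^{\ast},R)}(y,L)$ in case (i) --- fails to be quasi-tree-like. The one-vertex first-moment computation used in the proof of Lemma \ref{lem:coupling_quasiT} yields $\bP[y\in\mathcal{B}]=O\bigl((\text{ball size})^{2}\Delta^{R}/n\bigr)$. In case (i) balls are of polylogarithmic size, so this is $n^{-1+o(1)}$ and $|\mathcal{B}|=n^{o(1)}$ with high probability; in case (ii) balls have size $O(n^{1/10})$, giving $\bP[y\in\mathcal{B}]=O(n^{-4/5})$ and $|\mathcal{B}|=O(n^{1/5+o(1)})$ with high probability. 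In both cases $|\mathcal{B}|/n\to 0$.

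The second ingredient is the uniform polylogarithmic lower bound on quenched escape probabilities provided by Proposition \ref{prop:escape_gumbel}: there exists $b>0$ such that, with very high probability over the environment, the probability of escaping to infinity along the ambient branch of the quasi-tree is at least $(\log n)^{-b}$ uniformly over vertices. A geometric waiting-time argument then shows that from any starting vertex $x$ the chain crosses long-range distance $(\log n)^{c}$ within time $s=(\log n)^{a}$ (with $a$ taken large enough relative to $b$ and the scale at which one wants to avoid traps) with overwhelming quenched probability. Since the trajectory up to time $s$ explores at most $s\cdot\Delta^{R(L+1)}=n^{o(1)}$ new vertices, it meets the small set $\mathcal{B}$ with annealed probability $o(1)$ uniformly in $x$, which gives (ii). For (i) one iterates on the interval $[s,s+t]$: conditional on $X_{s}\notin\mathcal{B}$, the sequential generation along $X_{s},\dots,X_{s+t}$ reveals only $n^{o(1)}$ additional vertices, and the same first-moment bound of Lemma \ref{lem:coupling_quasiT} precludes the appearance of a new long-range cycle in any of the balls $\BLR^{(G^{\ast},R)}(X_{t'},L)$.

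The main obstacle is precisely the promotion of per-$x$ annealed estimates to a uniform quenched bound. A direct union bound over $x\in V$ would require an annealed failure probability $o(1/n)$, which is not available: the best the exploration gives is roughly $o(n^{-7/8})$ per $x$. The way around this is that the escape bound in Proposition \ref{prop:escape_gumbel} is itself quenched-uniform in the starting vertex, which is what ultimately allows the supremum over $x$ to be controlled without an extra factor of $n$. The exponent $a$ then emerges from the quantitative form of that escape bound: it must be large enough that the warm-up period $s=(\log n)^{a}$ dominates both the geometric waiting time to leave any polylog trap and the timescale on which long-range cycles could appear in the explored environment.
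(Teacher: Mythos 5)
There is a genuine gap, and it sits exactly at the point you flag as the ``main obstacle''. Your argument rests on the claim that Proposition \ref{prop:escape_gumbel} yields, with high probability over the environment, a lower bound $(\log n)^{-b}$ on escape probabilities \emph{uniformly over vertices}, and that this quenched-uniform bound lets you take the supremum over starting states without paying a factor $n$. No such uniform bound exists in this model: Proposition \ref{prop:escape_gumbel} is a per-vertex statement (conditional on $\cG \smallsetminus \cG_x$), and by Remark \ref{rk:trapping} the quasi-tree a.s.\ contains trapping neighbourhoods of every depth, so $\inf_x \qesc(x)=0$; the counter-example of Section \ref{section:counterex} shows the same trapping neighbourhoods occur in $G^{\ast}$ with high probability. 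This is precisely the feature that destroys uniform cutoff, so it cannot be assumed away. What is true, and what the paper uses, is the per-vertex bound \eqref{eq:escape_uniform}: $\qesc(u)<(\log n)^{-\beta}$ with probability $O(1/n^{2})$ conditional on the rest of the environment, which becomes an effective uniform bound only after a union bound over the $O(n^{1/16})$ vertices actually explored by the trajectory, at total annealed cost $o(1/n)$.

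Your second misstep is the assertion that a per-$x$ annealed error $o(1/n)$ ``is not available'' (your $o(n^{-7/8})$ figure comes from the cycle-free coupling of Lemma \ref{lem:coupling_quasiT}). The paper's proof obtains exactly this $o(1/n)$ bound, and this is the whole point of the construction in Section \ref{section:nice_first}: one couples $X$ with a chain on a quasi-tree allowed to contain \emph{one} long-range cycle, so the coupling failure is governed by the probability of two cycles, \eqref{eq:binomial_2}--\eqref{eq:coupling_uniform}, giving $o(t^{4}/n^{7/4})=o(1/n)$; the bad event then becomes ``$\cX_{t'}$ is within long-range distance $L$ of the cycle'', and combining \eqref{eq:escape_uniform} (union-bounded over explored vertices) with the geometric drift-away-from-the-cycle estimate for $s=(\log n)^{a}$, $a>\beta+1$, keeps every error term $o(1/n)$. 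Only then does the first-moment/union bound \eqref{eq:higher_markov} over the $2n$ starting states produce the $\max_{x}$ statement. Your alternative route of ``restart Lemma \ref{lem:coupling_quasiT} from $X_s$'' also has a conditioning problem: $X_s$ is not a state fixed independently of the environment, so that lemma does not apply to it directly; the paper sidesteps this by never leaving the annealed, one-cycle coupled picture. Without either the $o(1/n)$ per-$x$ bounds or a parallel-chain higher-moment argument as in \eqref{eq:higher_markov} with $k\gg 1$, the supremum over $x$ in both (i) and (ii) is not controlled, so the proposal as written does not prove the lemma.
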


For the second property we recall the notions of deviation, backtracking and regeneration edges are given in Definition \ref{def:backtracking_deviation}.

\begin{lemma}\label{lem:typical_paths}
    Let $\Gamma(R, L, M)$ denote the set of paths $\frp$ in $G^{\ast}$ such that $\frp$ does not deviate from a small-range distance more than $R$, backtrack over a long-range distance $L$ or contain a subpath of length $M$ without a regeneration edge.
	There exist constants $\kappa \geq 1$, $a \geq 1$, $C_R, C_L > 0$ such that for $L = C_L \log \log n$, $R = C_R \log \log n$ and $M = (\log \log n)^{\kappa}$, for all $t = O(\log n)$, the following holds: 
	\begin{enumerate}[label=(\roman*)]
		\item for all $x \in V$:
			\begin{equation*}
				\bfP_{x} \sbra{(X_0 \cdots X_{t}) \in \Gamma(R,L,M)} = 1 - o_{\bP}(1)
			\end{equation*}
		\item for all $s \geq (\log n)^{a}$:
    \begin{equation*}
        \min_{x \in V} \bfP_{x} \sbra{(X_s \cdots X_{s+t}) \in \Gamma(R,L,M)} = 1 - o_{\bP}(1).
    \end{equation*}
\end{enumerate}
\end{lemma}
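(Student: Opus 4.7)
The plan is to work in the quasi-tree setting via the coupling of Section~\ref{subsec:coupling}, verify the three defining properties of $\Gamma(R,L,M)$ there, and transfer the quenched conclusion back via the first moment argument \eqref{eq:markov_quenched}. For part (i), Lemma~\ref{lem:coupling_quasiT}(ii) ensures that for any fixed $x \in V$ the trajectory of $X$ coincides with that of $\cX$ up to time $\tcoup \geq t$ with annealed probability $1-o(1)$. It therefore suffices to bound the annealed probability that $(\cX_0 \cdots \cX_t)$ violates any of the three defining properties.

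Non-deviation is the simplest piece. By Assumption~\ref{hyp:bdd_p} each half-integer step traverses the adjoining long-range edge with probability at least $\delta>0$, so the number of consecutive small-range steps between two long-range crossings is stochastically dominated by a geometric variable of parameter $\delta$. Reaching small-range distance $R$ from the originating center requires at least $R$ such steps, which occurs with probability at most $(1-\delta)^R$; for $R=C_R\log\log n$ with $C_R$ large this is $o(1/\log n)$, and a union bound over the $O(\log n)$ excursions concludes.

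The backtracking and regeneration-spacing properties are the delicate ones and invoke the escape probability estimates of Section~\ref{section:QT1}. Let $\alpha(x)$ denote the $L$-horizon escape probability at a vertex $x$ in the quasi-tree, that is the conditional probability given $\cG$ that the chain started from $x$ crosses long-range distance $L$ before returning to the opposite side of the adjacent long-range edge. Proposition~\ref{prop:escape_gumbel} provides a polylogarithmic tail estimate of the form $\bP(\alpha(\cX_k)\leq(\log n)^{-b})=o(1/\log n)$ uniformly in $k \leq t$, for some constant $b>0$. Under this high-probability event each long-range crossing has probability at least $(\log n)^{-b}$ of producing a regeneration edge with horizon $L$, and the same bound controls a single backtracking stretch. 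Combined with the independence provided by the regeneration structure of Section~\ref{section:QT2}, the gap between consecutive regeneration edges has polylogarithmic tails, so for $\kappa$ sufficiently large the event of no regeneration over $M=(\log\log n)^{\kappa}$ consecutive steps has probability $o(1/\log n)$; similarly for a backtracking stretch of length $L=C_L\log\log n$ with $C_L$ large. A union bound over the $O(\log n)$ positions along the trajectory, followed by Markov's inequality \eqref{eq:markov_quenched}, yields the quenched statement. The main obstacle, relative to the reversible case of \cite{cutoff_reversible}, is precisely this loss of a uniform quenched lower bound on $\alpha$: one must replace it by annealed tail control and iterate carefully.

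For part (ii) the reduction is by restart at time $s$. Lemma~\ref{lem:QTlike_uniform} guarantees that, regardless of the starting state $x$, with $\bfP_x$-probability $1-o_{\bP}(1)$ the vertex $X_s$ has a quasi-tree-like long-range neighbourhood, so that the coupling of Section~\ref{subsec:coupling} can be freshly initialised at time $s$. Conditioning on the environment outside this neighbourhood and applying the Markov property, the distribution of $(X_s,\ldots,X_{s+t})$ is that of a chain of length $t$ started at $X_s$ in an essentially fresh environment, to which the annealed analysis of part (i) applies. The constant $a$ is then chosen large enough both for Lemma~\ref{lem:QTlike_uniform} to apply and to guarantee that $X_s$ lies in the typical set where the escape estimates of Proposition~\ref{prop:escape_gumbel} can be invoked; this is the manifestation in the proof of the polylogarithmic worst-case relaxation time needed to leave trapping neighbourhoods.
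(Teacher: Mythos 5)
Your high-level skeleton for part (i) — couple with the quasi-tree, verify the three properties there, transfer back by the first moment argument \eqref{eq:markov_quenched} — is the paper's route, which reduces (i) to Lemma \ref{lem:typical_paths_QT}. However, two of your verification steps fail quantitatively. First, the non-deviation argument: your geometric bound only controls one uninterrupted stretch of small-range steps, but deviation to distance $R$ can be accumulated over many separate visits to the same small-range component (each return re-enters at a previously visited vertex and pushes the small-range frontier a little further), and over $t=O(\log n)$ steps there can be order $\log n \gg R$ such returns. Ruling this out is a transience statement, and it is exactly what Proposition \ref{prop:escape_dense} together with Lemma \ref{lem:no_backtracking} provide in the paper. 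Second, and more seriously, the floor $\qesc \geq (\log n)^{-b}$ that you extract from Proposition \ref{prop:escape_gumbel} is far too weak at the scales $L=C_L\log\log n$ and $M=(\log\log n)^{\kappa}$: it only yields bounds of the shape $\exp\left(-c\,L\,(\log n)^{-b}\right)=1-o(1)$, so you cannot conclude that backtracking over distance $L$, or a gap of $M$ steps without a regeneration edge, is unlikely — no choice of $C_L$ or $\kappa$ repairs this. The paper instead works with constant-order escape probabilities holding for a constant proportion of vertices along every path (Proposition \ref{prop:escape_dense}, bootstrapped from Lemma \ref{lem:escape_probability}) and with the stretched-exponential tails of regeneration times (Lemma \ref{lem:regeneration_tails}), which give the bound $(s+t)e^{-C(R\wedge L\wedge M^{\alpha})}$ of Lemma \ref{lem:typical_paths_QT}; the polylogarithmic floor is reserved for the worst-case analysis, where the much longer time scale $(\log n)^{a}$ compensates it.

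For part (ii) the proposal misses the essential difficulty: the claim is $\min_{x\in V}\bfP_x\left[(X_s\cdots X_{s+t})\in\Gamma(R,L,M)\right]=1-o_{\bP}(1)$, i.e.\ a single high-probability event on the environment valid simultaneously for all $n$ starting states, so one must exchange quantifiers, which via the first-moment route requires annealed error bounds $o(1/n)$, not $o(1)$. Your restart argument — even granting the step "the distribution of $(X_s,\ldots,X_{s+t})$ is that of a chain in an essentially fresh environment", which is not justified (under the quenched law the environment is fixed, and under the annealed law the region explored during the first $s$ steps is biased and can be revisited) — would at best give, for each fixed $x$, an $o(1)$ annealed bound and hence a per-$x$ quenched statement; it never produces the minimum over $x$. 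The paper's proof of (ii) is built precisely to get $o(1/n)$: it parallelizes $k\asymp\log n$ copies of the chain on the same environment through the higher-moment Markov inequality \eqref{eq:higher_markov}, couples them with a quasi-tree in which one long-range cycle is allowed so that the coupling error becomes $o(1/n)$ \eqref{eq:coupling_uniform}, and uses the essentially uniform bound \eqref{eq:escape_uniform}, $\qesc\geq(\log n)^{-\beta}$ off an event of probability $O(n^{-2})$ per vertex, to show that during the warm-up of length $s=(\log n)^{a}$ (with $a$ large relative to $\beta$) each chain permanently leaves the region explored by the previous ones, after which the shifted quasi-tree estimates apply and the probabilities decouple as $(\epsilon^{3}/2+o(1))^{k}=o(\epsilon^{k}/n)$. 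Lemma \ref{lem:QTlike_uniform} cannot serve as a substitute for this machinery — it is itself proved by it.
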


\begin{remark}\label{rk:uniform_shifted}
	Notice that for any trajectorial event $A$
    \begin{align*}
        \bfP_{x} \sbra{(X_s, X_{s+1}, \ldots) \in A} &= \sum_{y \in V} \bfP_{x} \sbra{X_s = y} \bfP_{y} \sbra{(X_0, X_1, \ldots) \in A} \\
        &\leq \max_{y \in V} \bfP_{y} \sbra{(X_0, X_1, \ldots) \in A}.
    \end{align*}
    Thus if we prove $A$ holds from time $t$ with probability $1 - o_{\bP}(1)$ uniformly over the starting state this automatically extends to larger times $t \geq s$. In the above lemmas for instance it will be sufficient to prove the case $s = (\log n)^{a}$ to establish the case $s \geq (\log n)^{a}$.
\end{remark}

\subsection{Concentration of drift and entropy}\label{subsec:concentration}

The third and main property of nice trajectories consists in the concentration of an entropy-like quantity for the finite chain. To that end, we define weights as follows.

Throughout the paper, we will use the letter $\tau$ for a variety of stopping times. It should be clear from the context what the notation refers to. If $x$ is an element or a set $\tau_x$ will generally denote the hitting time of $x$. If $l \geq 1$, $\tau_l$ will generally denote the time a certain distance $l$ is reached. 
For all $l \geq 0$, consider in this section
\begin{equation*}
	\tau_l := \inf \{ t \geq 0 \ | \ \abs{\xi(X_0 \cdots X_t)} = l \}
\end{equation*}
and fix $R, L \geq 1$ for the rest of this section. Let
\begin{align}
	&\TSR := \inf \{t \geq 0 \ | \ (X_0 \cdots X_t) \text{ deviates from a small-range distance $R$} \} \\
	&\TNB := \inf \{ t \geq 0 \ | (X_0 \cdots X_t) \text{ backtracks over a long-range distance $L$}\}.
\end{align}
These are stopping times which depend on $R, L$. By construction if $\frp$ is a path in $\Gamma(R,L,M)$ as defined in Lemma \ref{lem:typical_paths}, the stopping time $\TSR \wedge \TNB$ does not occur on the trajectory $\frp$.
Given a long-range edge $e$ and $x \in V$ such that $d_{\SR}(x, e^{-}) < R$, define the weights
\begin{equation}\label{eq:def_weights_G}
	\begin{split}
	w_{x, R, L}(e) &:= \bfP_{x} \sbra{\xi(X_0 \cdots X_{\tau_L})_1 = e,  \tau_{L} < \TSR} \\
	w_{R, L}(e \ | \ x) &:= \bfP_{x} \cond{\xi(X_0 \cdots X_{\tau_L})_1 = e}{\tau_{L} < \tau_{\eta(x)} \wedge \TSR}
	\end{split}
\end{equation}
Here $\xi(X_0 \cdots X_{\tau_L})_1$ denotes the first edge of $\xi(X_0 \cdots X_{\tau_L})$. Then for a non-backtracking long-range path $\xi = \xi_1 \cdots \xi_k$, set
\begin{align}
	w_{x,R,L}(\xi) := w_{x,R,L}(\xi_1) \prod_{i=2}^{k} w_{R,L}(\xi_i \ | \ \xi_{i-1}^{+})
\end{align}
where empty products are by convention equal to $1$. The notation is consistent with the the identification of edges with paths of length $1$. 

\begin{remark}\label{rk:weight_sum_1}
	Note that for fixed $x \in V$, $\sum_{e} w_{x,R,L}(e) \leq 1$ and $\sum_{e} w_{R,L}(e \ | \ x) \leq 1$ where the sum is over all long-range edges. By extension the sum of weights over all non-backtracking paths starting from $x$ is at most $1$.
\end{remark}

Given a sequence $u = (u_i)_{i \leq l}$ of length $l$ and $k \geq 1$, we write $(u)_{\leq k} := (u_i)_{i \leq k}$ for the sequence truncated at length $k$. The following lemma show that weights are good proxies for measuring the probability that the loop-erased trace follows a given non-backtracking path. It is the same as for the reversible case as there is here no difference.

\begin{lemma}[{\cite[Lemma 3.3]{cutoff_reversible}}]\label{lem:probability_weights}
	Let $x \in V$ and $k \geq 1$ be an integer. Suppose that $\BLR^{(G,R)}(x, k)$ is quasi-tree-like. Then for all non-backtracking long-range path $\xi$ of length $k$, started in $\BSR^{+}(x, R)$
	\begin{equation}\label{eq:probability_weights}
		\bfP_{x} \sbra{\begin{gathered}\xi(X_0 \cdots X_{\tau_{k + L - 1}})_{\leq k} = \xi, \\ \tau_{k+L-1} < \TSR \wedge \TNB \end{gathered}} \leq w_{x,R,L}(\xi) \leq \bfP_{x} \sbra{\begin{gathered}\xi(X_0 \cdots X_{\tau_{k + L - 1}})_{\leq k} = \xi, \\ \tau_{k+L-1} < \TSR \wedge \TNB \end{gathered}} + u(\xi)
	\end{equation}
	where $u(\xi) \geq 0$ is such that 
	\begin{equation*}
		\sum_{\xi} u(\xi) \leq \bfP_{x} \sbra{\TSR \wedge \TNB \leq \tau_{k+L-1}},
	\end{equation*}
	the sum being over non-backtracking long-range paths of length $k$ from $x$.
\end{lemma}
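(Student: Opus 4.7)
The plan is to proceed by induction on $k \geq 1$, using the strong Markov property combined with the quasi-tree-like assumption on $\BLR^{(G,R)}(x,k)$. The role of the assumption is that the long-range edges $\xi_1, \ldots, \xi_k$ sit on a genuine tree, so the forward subquasi-tree attached to $\xi_i^+$ is disconnected from the rest except through $\xi_i$ itself; this is what will allow clean strong-Markov factorizations matching the conditional structure of the weights.

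The base case $k = 1$ is immediate from the definition: the probability in the statement equals $w_{x,R,L}(\xi_1)$ minus the contribution from trajectories that additionally satisfy $\TNB \leq \tau_L$, so the gap $u(\xi_1) := \bfP_{x}[\xi(X_0 \cdots X_{\tau_L})_1 = \xi_1, \tau_L < \TSR, \TNB \leq \tau_L]$ sums over $\xi_1$ to at most $\bfP_{x}[\TSR \wedge \TNB \leq \tau_L]$, as required. For the inductive step, the crucial preliminary observation is that on the event $A_k := \{\xi(X_0 \cdots X_{\tau_{k+L-1}})_{\leq k} = \xi, \tau_{k+L-1} < \TSR \wedge \TNB\}$, at every intermediate time $\tau_{i+L-1}$ with $1 \leq i \leq k$ the first $i$ edges of the loop-erased trace must already coincide with $\xi_1 \cdots \xi_i$: rewriting them later would entail backtracking over a long-range distance at least $L$, contradicting $\TNB > \tau_{k+L-1}$. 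In particular, the chain has committed to $\xi_1$ as its first edge already at time $\tau_L$. Applying the strong Markov property at the first arrival of the chain at $\xi_1^+$ via the edge $\xi_1$, the quasi-tree structure implies that the post-arrival chain restarts at $\xi_1^+$ and must extend its loop-erased trace through $\xi_2, \ldots, \xi_k$ without ever crossing $\xi_1$ backwards, i.e.\ without returning to $\eta(\xi_1^+)$ -- exactly the event whose conditional probability enters $w_{R,L}(\xi_2 \ | \ \xi_1^+)$. Combining this factorization with the inductive hypothesis applied at $\xi_1^+$ should yield the desired product of weights.

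The main obstacle is the careful bookkeeping of errors and cancellations. The conditional-probability denominators $\bfP_{\xi_{i-1}^+}[\tau_L < \tau_{\eta(\xi_{i-1}^+)} \wedge \TSR]$ appearing in $w_{R,L}(\xi_i \ | \ \xi_{i-1}^+)$ must cancel exactly with the corresponding "survival" factors from the Markov decomposition, which requires choosing the strong Markov stopping times so that each regeneration matches precisely the "advance by $L$ loop-erased edges without re-crossing the previous long-range edge" event defining those denominators. Tracking the induction error is the other delicate point: each application of the inductive bound adds an error contribution coming from the corresponding restarted chain, but because the set of trajectories for which $\TSR \wedge \TNB \leq \tau_{k+L-1}$ on the original chain decomposes across these restarts via the same Markov property, summing the errors $u(\xi)$ over $\xi$ should telescope back to $\bfP_{x}[\TSR \wedge \TNB \leq \tau_{k+L-1}]$.
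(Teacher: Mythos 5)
Your base case and your ``freezing'' observation (that on the event in question the first $i$ loop-erased edges are already fixed at time $\tau_{i+L-1}$, since rewriting them later would force a backtrack over a long-range distance $L$) are correct, and they are genuine ingredients of the argument. The inductive step, however, has a real gap at its central factorization. You apply the strong Markov property at the \emph{first} arrival at $\xi_1^{+}$ through $\xi_1$ and claim the shifted chain ``must extend its loop-erased trace through $\xi_2,\ldots,\xi_k$ without ever crossing $\xi_1$ backwards''. That is false: the event $\{\xi(X_0\cdots X_{\tau_{k+L-1}})_{\leq k}=\xi,\ \tau_{k+L-1}<\TSR\wedge\TNB\}$ only forbids backtracking over long-range distance $L$, so after its first crossing of $\xi_1$ the chain may recross $\xi_1$ (a backtrack of length $<L$), wander, and only later recommit to it. The time from which ``no future recrossing'' holds is the \emph{last} crossing of $\xi_1$ before $\tau_{k+L-1}$ (equivalently, the first crossing followed by an escape to long-range distance $L$ beyond it), and neither is a stopping time, so the strong Markov property cannot be invoked as you state it. This is not mere bookkeeping: it is exactly where the conditional structure of the weights has to emerge. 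The denominator $\bfP \cond{\tau_{L} < \tau_{\eta(\xi_{i-1}^{+})} \wedge \TSR}{X_{1/2}=\xi_{i-1}^{+}}$ hidden in $w_{R,L}(\xi_i \ | \ \xi_{i-1}^{+})$ is there precisely to account for the failed excursions and repeated crossings that a first-arrival decomposition ignores; a naive last-exit decomposition without it produces an expected number of crossings rather than a probability. Your proposal never explains how the \emph{conditional} probabilities arise from the decomposition, and defers exactly this point with ``must cancel exactly'' and ``should telescope''.

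The same applies to the second inequality: to show $w_{x,R,L}(\xi)\leq \bfP_x\sbra{\cdot}+u(\xi)$ with $\sum_{\xi}u(\xi)\leq \bfP_{x}\sbra{\TSR\wedge\TNB\leq\tau_{k+L-1}}$ one must reconstruct, from the product of conditional probabilities attached to the vertices $\xi_{i-1}^{+}$, actual trajectories of the chain started at $x$, identify the surplus as trajectories that deviate or backtrack before $\tau_{k+L-1}$, and sum the stagewise errors against the prefix weights (using that weights sum to at most $1$, Remark \ref{rk:weight_sum_1}) to get the stated bound on $\sum_\xi u(\xi)$. None of this is carried out. Note also that the present paper does not reprove this lemma but imports it from the reversible companion (its Lemma 3.3); measured against what that proof must actually accomplish, your text is a plan that identifies the right objects but leaves the core content --- the excursion/last-exit (regeneration-type) decomposition producing exactly the conditional weights, and the error accounting --- unproved, and its one concretely stated mechanism (first-arrival strong Markov) does not work as written.
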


The first part of the proof of Theorem \ref{thm:cutoff} consists in proving the following quenched concentration phenomenon.

\begin{proposition}\label{prop:concentration_G}
	There exist $\mathscr{d}, h = \Theta(1)$ and constants $a \geq 1, C_R, C_L > 0$ for which the following holds. Letting $R := C_R \log \log n, L := C_L \log \log n$, for all $\e \in (0,1)$, there exist constants $C_{\LR}(a,\e), C_{h}(a,\e)$ such that for all $t \gg 1$, $t = O(\log n)$: 
	\begin{enumerate}[label=(\roman*)]
		\item for all $x \in V$, with high probability
		\begin{equation*}
			\bfP_{x} \sbra{ \begin{array}{c} \abs{ \abs{\xi(X_{0} \cdots X_{t})} - \mathscr{d} t} \leq C_{\LR} \sqrt{t} \\
			\abs{ - \log w_{X_0, R, L}( \xi(X_0 \cdots X_{t})) - h t} \leq C_{h} \sqrt{t}
			 \end{array}} \geq 1 - \e. 
		\end{equation*}
		\item for all $s \geq (\log n)^{a}$, with high probability,
			\begin{equation*}
				\min_{x \in V} \bfP_{x} \sbra{ \begin{array}{c} \abs{ \abs{\xi(X_{s} \cdots X_{s+t})} - \mathscr{d} t} \leq C_{\LR} \sqrt{t} \\
				\abs{ - \log w_{X_s, R, L}( \xi(X_s \cdots X_{s+t})) - h t} \leq C_{h} \sqrt{t}
				\end{array}} \geq 1 - \e. 
			\end{equation*}
	\end{enumerate}
\end{proposition}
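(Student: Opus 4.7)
The plan is to reduce the statement to the analogous one on the random quasi-tree $\cG$ (Subsec \ref{subsec:random_QT}), where $(\cX_t)_{t \geq 0}$ has a regenerative structure along its loop-erased trace, and then to transfer back using the coupling of Subsec \ref{subsec:coupling} combined with the first moment argument \eqref{eq:markov_quenched}. Throughout I would fix $C_R, C_L$ large enough so that the conclusions of Lemmas \ref{lem:coupling_quasiT} and \ref{lem:QTlike_uniform} apply.

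For part (i), I would couple $X$ started at $x$ with $\cX$ started at $O$ with $\iota(O) = x$, up to time $\tcoup$; by Lemma \ref{lem:coupling_quasiT}(ii) this coupling succeeds with annealed probability $1 - o(1)$ for $t = O(\log n)$. Under it the loop-erased traces agree, and the weights $w_{x,R,L}$ coincide with their quasi-tree counterparts since the latter are defined through local balls only (as also confirmed by Lemma \ref{lem:probability_weights}). On $\cG$, regeneration edges with horizon $L$ (Definition \ref{def:backtracking_deviation}) cut the trajectory into independent pieces thanks to the subquasi-tree independence built into Subsec \ref{subsec:random_QT}. Admitting from Sections \ref{section:QT2}--\ref{section:QT3} that regeneration times have finite first and second moments uniformly in $n$, that regeneration occurs within every window of size $M = (\log \log n)^{\kappa}$ (matching Lemma \ref{lem:typical_paths}), and that the per-block long-range displacement and $-\log w$ increments are $\Theta(1)$ with exponential tails, I would define $\mathscr{d}$ and $h$ as the ratios of the expected per-block long-range displacement and expected per-block negative log-weight, respectively, to the expected regeneration-block duration; both are $\Theta(1)$ by \ref{hyp:bdd_delta} and \ref{hyp:bdd_p} combined with Proposition \ref{prop:escape_gumbel}. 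A Bernstein-type concentration inequality for the i.i.d.\ block sums then gives, for any $\e > 0$, constants $C_{\LR}(\e), C_h(\e)$ for which the concentration event has annealed probability at least $1 - \e^2$; Markov's inequality \eqref{eq:markov_quenched} upgrades this to the desired quenched statement with high probability over the environment.

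For part (ii), Remark \ref{rk:uniform_shifted} lets me reduce to proving the shifted concentration from time $s = (\log n)^a$ uniformly over the starting vertex. Lemma \ref{lem:QTlike_uniform} guarantees that with high probability over the environment, for every $x \in V$, the $\scP$-ball $B_{\scP}(X_s, \lfloor \log n / (10 \log \Delta) \rfloor)$ is quasi-tree-like and the long-range $(C_R, C_L \log \log n)$-neighbourhood of the chain up to time $s+t$ remains quasi-tree-like. Lemma \ref{lem:typical_paths}(ii) further ensures that the trajectory from time $s$ lies in $\Gamma(R, L, M)$, so it contains sufficiently many regeneration edges. The regeneration-based argument from part (i) can therefore be rerun on $X_s X_{s+1} \cdots X_{s+t}$, the only nuance being that the starting state is produced by the chain dynamics rather than fixed in advance; since under the coupling its neighbourhood is still distributed as the root of a quasi-tree, the same annealed estimate and first moment argument close the proof.

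The principal obstacle is the absence of a quenched uniform lower bound on escape probabilities on $\cG$. Without uniform transience one cannot directly guarantee that regeneration edges occur within a bounded number of steps from an arbitrary vertex, which is how regeneration was handled in the reversible case. The remedy, developed in Sections \ref{section:QT1}--\ref{section:QT3}, is to replace the quenched bound by an expected lower bound on the escape probability (Proposition \ref{prop:escape_gumbel}, obtained by comparison with a reversible chain via Hypothesis \ref{hyp:backtracking}), together with a Gumbel-type tail for the minimum escape probability along the trajectory. This is precisely what keeps the regeneration gaps with integrable first two moments, and is also what ultimately controls the shifted starting states in part (ii), where the chain is allowed to explore rare, trap-like regions of the environment over its first $(\log n)^a$ steps.
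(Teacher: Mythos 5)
Your reduction to the quasi-tree and the identification of $\mathscr{d}$ and $h$ through the regeneration structure is in the spirit of the paper, but the final transfer step is where the argument genuinely breaks. The annealed concentration estimates on the quasi-tree (Proposition \ref{prop:drift}, Lemma \ref{lem:concentration_weights_QT}) are of CLT type: for a given choice of $C_{\LR}(\e), C_h(\e)$ the deviation probability is bounded by $\e$ (or $\e^2$ if you tune the constants), but it does \emph{not} tend to $0$ with $n$. Feeding such a bound into the first moment argument \eqref{eq:markov_quenched} gives $\bP\sbra{\bfP_x\sbra{\text{deviation}} \geq \e} \leq \e^2/\e = \e$, i.e.\ the quenched bound holds with probability $1-\e$ over the environment — not with high probability, as the proposition requires. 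This is exactly the obstruction the paper flags, and its remedy is the bootstrapping of Section \ref{section:nice_first}: apply the $k$-th moment Markov inequality \eqref{eq:higher_markov_typical} with $k\to\infty$ (of order $\log\log n$ for (i)), run $k$ copies of the chain on the same environment, and show via the escape-probability estimates that the chains quickly stop following one another, so the annealed probability that all $k$ copies deviate decouples to $(\e/2+o(1))^k = o(\e^k)$, beating the $\e^{-k}$ factor. Also note the blocks between regenerations are not i.i.d.\ but a Markov renewal process driven by the regeneration chain $Y$; the paper handles this with the $O(1)$ mixing time of $Y$ (Lemma \ref{lem:mixing_regen}) and the variance bound of Proposition \ref{prop:variance_markov} rather than a Bernstein inequality for i.i.d.\ sums (the per-block $-\log w$ increments are only shown to have polynomial moments of all orders, and the weights must first be compared to loop-erased-chain probabilities via Lemma \ref{lem:weight_approx}).

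For part (ii) the gap is wider. The $\min_{x\in V}$ requires, after a union bound, annealed error bounds of order $o(\e^k/n)$ per starting state, and neither the plain coupling of Lemma \ref{lem:coupling_quasiT} (which fails at precision $o(1/n)$ since some vertices have no quasi-tree-like neighbourhood) nor a single application of \eqref{eq:markov_quenched} can produce them. The paper's proof introduces the quasi-tree with one long-range cycle so that the coupling of $k=\lfloor \log n/2\log\e^{-1}\rfloor$ parallel chains succeeds with probability $1-o(\e^k/n)$ \eqref{eq:coupling_uniform}, and replaces the constant-order escape bound by the essentially uniform bound $\qesc \geq (\log n)^{-\beta}$ failing with probability $O(1/n^2)$ \eqref{eq:escape_uniform}, which slows but does not destroy the decoupling of the $k$ chains (the separation now takes $(\log n)^{\beta}\,\mathrm{polyloglog}$ steps, absorbed by $s=(\log n)^a$ with $a>1+2\beta$). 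Your appeal to Remark \ref{rk:uniform_shifted}, Lemma \ref{lem:QTlike_uniform} and Lemma \ref{lem:typical_paths}(ii) followed by "the same annealed estimate and first moment argument" skips precisely these ingredients, and without them the uniform statement does not follow.
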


\subsection{Concentration of nice paths}

The second part of the argument consists in finding a set of "nice trajectories", defined in particular so that they satisfy the entropic and drift requirements given by Proposition \ref{prop:concentration_G}. Eventually the drift and entropic concentration are used to show that the probability of following a nice trajectory concentrates around its mean. The latter can be computed, providing an approximate stationary distribution $\hat{\pi}$ for $\scP$ on $V$.

Given $u \in V$, $L \geq 1$ consider again $\tau_L := \inf \{ t \geq 0 \ | \ \abs{\xi(X_0 \cdots X_t)} = L \}$ and
	\begin{equation}\label{eq:bfQu_G}
		\bfQ^{(L)}_{u} := \bfP \cond{\cdot}{X_{1/2} = u, \tau_{\eta(u)} > \tau_L}
	\end{equation}
In words, this measure considers trajectories immediately after a regeneration time. By Lemma \ref{lem:typical_paths}, if one takes $L = \Theta(\log \log n)$, the conditionning by $\tau_{L} < \tau_{X_0}$ essentially forbids the chain to come back at all to $u$ on a time scale $O(\log n)$. If $\nu$ is a probability measure on $V$, write $\bfQ^{(L)}_{\nu} := \sum_{u \in V} \nu(u) \bfQ^{(L)}_{u}$ and $\bfE^{(L)}_{\bfQ_{\nu}}$ for the expectation with respect to this measure. All in all, the second part of the argument is summarized in the following proposition. 

\begin{proposition}\label{prop:nice_approx}
	There exist a deterministic probability measure $\nu$ on $V$, a deterministic $s_0 =\Theta(\log n)$, constants $C_L > 0, a, \kappa \geq 1$ and for all $x, y \in V, t \in \bN$ a set $\frN^{t}(x,y)$ of length $t$ paths between $x$ and $y$ for which the following holds. Let $L := C_L \log \log n, M := (\log \log n)^{\kappa}$ and write $\scP_{\frN}^{t}(x,y) := \sum_{\frp \in \frN^{t}(x,y)} \scP(\frp)$. Consider the random probability measure
		\begin{equation}\label{eq:pihat}
			\hat{\pi}(v) := \frac{1}{\bfE_{\bfQ_{\nu}^{(L)}} \sbra{T_1 \wedge M}} \sum_{r =0}^{M} \bfQ_{\nu}^{(L)} \sbra{X_{r+s_0} = v, r < T_1 \leq M}
		\end{equation}
		where $T_1$ denotes the first regeneration time with horizon $L = C_L \log \log n$. For all $\e \in (0,1)$ there exists $C(\e) >0$ such that for $t = \log n / h + C(\e) \sqrt{\log n}$,
	\begin{enumerate}[label =(\roman*)]
		\item for all $x \in V$, with high probability,
		\begin{equation*}
			\sum_{y \in V} \scP^{t}_{\frN}(x,y) \geq 1 - \e,
		\end{equation*}\label{enum:not_nice_o1}
		\item for all $s \geq (\log n)^{a}$, with high probability
			\begin{equation*}
			\min_{x \in V} \sum_{y \in V} \scP^{s} \scP^{t}_{\frN} \, (x,y) \geq 1 - \e.
			\end{equation*}\label{enum:not_nice_uniform}
		\item there exists $c = (c_v)_{v \in V}$ such that $\sum_{v \in V} c_v = o_{\bP}(1)$ and with high probability, for all $x,y \in V$,
		\begin{equation*}
			\scP_{\frN}^{t}(x,y) \leq (1 + \e) \hat{\pi}(y) + c(y) + \frac{\e}{n},
		\end{equation*}\label{enum:nice_upper_bound}
		\item the measure $\hat{\pi}$ can be decomposed as $\hat{\pi} = \hat{\pi}_1 + \hat{\pi}_2$ with 
		\begin{equation*}
			\norm{\hat{\pi}_1}_{2}^{2} = o_{\bP}( (\log n)^{b} / n), \qquad \hat{\pi}_2(V) = o_{\bP}(1)
		\end{equation*}
		for some $b > 0$. \label{enum:pihat_flat}
	\end{enumerate}
\end{proposition}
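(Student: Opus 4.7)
The plan is to define nice paths as the conjunction of the typical events established in Lemmas \ref{lem:coupling_quasiT}/\ref{lem:QTlike_uniform}, \ref{lem:typical_paths} and Proposition \ref{prop:concentration_G}, then realise $\scP_\frN^t(x,y)$ as a low-degree function of the random matching and apply Theorem 1.2 of \cite{cutoff_reversible} to obtain concentration around a mean that, via a Palm-type argument on the regenerative structure, is identified with $\hat\pi$. Concretely, fix $C_R, C_L, \kappa, a$ large enough that Lemma \ref{lem:QTlike_uniform}, Lemma \ref{lem:typical_paths} and Proposition \ref{prop:concentration_G} apply simultaneously with these parameters, set $R := C_R \log \log n$, $L := C_L \log \log n$, $M := (\log \log n)^\kappa$, and for $t = \log n / h + C(\e)\sqrt{\log n}$ take $\frN^t(x,y)$ to be the length-$t$ paths $\frp$ from $x$ to $y$ such that (a) $\frp \in \Gamma(R,L,M)$; (b) $\BLR^{(G,R)}(X_r,L)$ is quasi-tree-like at every step, and so is the terminal ball $B_\scP(y, \lfloor \log n / (10 \log \Delta) \rfloor)$; and (c) $\abs{\,\abs{\xi(\frp)} - \mathscr{d} t\,} \leq C_\LR \sqrt{t}$ and $\abs{-\log w_{x,R,L}(\xi(\frp)) - ht} \leq C_h \sqrt{t}$ with the constants tuned so each clause fails under $\bfP_x$ with probability at most $\e/4$. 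Parts (i) and (ii) then follow by union-bounding the complements, using the typical statements for (i) and the shifted ones for (ii), together with Remark \ref{rk:uniform_shifted}.

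For part (iii), I would use Lemma \ref{lem:probability_weights} to rewrite, up to an additive slack absorbed into $c(y)$,
\[
    \scP_\frN^t(x,y) \leq (1+o(1)) \sum_{\xi} w_{x,R,L}(\xi) \, r(\xi, y),
\]
where the sum runs over non-backtracking loop-erased traces $\xi$ of length within $\mathscr{d} t \pm C_\LR \sqrt{t}$ whose endpoint lies in $\BSR^+(y,R)$, and $r(\xi,y)$ is a non-random short-range completion factor depending only on the quasi-tree-like neighbourhood of $y$. Each weight is bounded by $e^{-ht + O(\sqrt{t})} = n^{-1+o(1)}$ by the entropic clause, and the whole expression depends on $\sigma$ only through its values on the $O(\mathrm{polylog}(n))$ long-range coordinates revealed when exploring the quasi-tree-like neighbourhoods of the contributing paths. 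This realises $\scP_\frN^t(x,y)$ as a low-degree, small-Lipschitz function of $\sigma$, and Theorem 1.2 of \cite{cutoff_reversible} yields $\scP_\frN^t(x,y) \leq (1+\e)\,\bE\sbra{\scP_\frN^t(x,y)} + \e/n$ with probability $1 - o(n^{-1})$ for each $y$ whose own neighbourhood is quasi-tree-like; for the remaining targets, the deterministic a priori bound $\scP_\frN^t(x,y) \leq (\log n)^{O(1)}/n$ is absorbed into the correction $c(y)$, which then satisfies $\sum_y c(y) = o_\bP(1)$ by Lemma \ref{lem:coupling_quasiT}.

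To identify $\bE\sbra{\scP_\frN^t(x,y)}$ with $(1+o(1))\hat\pi(y)$, I would argue that within $s_0 = \Theta(\log n)$ steps a nice trajectory typically crosses at least one regeneration edge, and the independence properties of the random matching established in Section \ref{subsec:random_QT} then imply that the law of the state just after this first regeneration converges to a deterministic probability measure $\nu$ on $V$, independent of the starting point $x$, while the subsequent steps are distributed, up to negligible error, as $\bfQ_\nu^{(L)}$. Summing the expected occupation at $y$ over the $r$-th step of the subsequent regeneration cycle truncated at $M$ and normalising by $\bfE_{\bfQ_\nu^{(L)}}\sbra{T_1 \wedge M}$ is then the standard Palm formula for regenerative processes and produces exactly \eqref{eq:pihat}. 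For (iv), let $T \subset V$ be the set of vertices whose small-range $R$-neighbourhood is quasi-tree-like; by Lemma \ref{lem:coupling_quasiT}, $\abs{V \setminus T} = o_\bP(n)$, each summand of \eqref{eq:pihat} is uniformly $O((\log n)^{O(1)}/n)$ on $T$ (yielding $\norm{\hat\pi_1}_2^2 = o_\bP((\log n)^b/n)$ for $\hat\pi_1 := \hat\pi \, \II_T$), and the remainder $\hat\pi_2 := \hat\pi \, \II_{V \setminus T}$ has total mass bounded by the expected fraction of a regeneration cycle spent in $V \setminus T$, an $o_\bP(1)$ quantity. The step I expect to be the main obstacle is the verification of the hypotheses of the low-degree concentration inequality: one must quantify precisely that only $\mathrm{polylog}(n)$ coordinates of $\sigma$ influence $\scP_\frN^t(x,y)$, which is the reason the quasi-tree depth $R \cdot L$ must be kept at $(\log \log n)^{O(1)}$, and that the Lipschitz bounds inherited from the weight decay $e^{-ht+O(\sqrt{t})}$ are strong enough to produce a final $\e/n$-error after appropriately tuning $C(\e)$ relative to $C_h(a,\e)$.
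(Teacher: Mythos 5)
Your definition of nice paths (a pure conjunction of the typicality events) does not support the concentration step, and this is a genuine gap rather than a presentational one. The claim that $\scP_{\frN}^{t}(x,y)$ ``depends on $\sigma$ only through $O(\mathrm{polylog}(n))$ long-range coordinates'' is false: each contributing path of length $t=\Theta(\log n)$ crosses $\Theta(\log n)$ long-range edges, so the associated polynomial in the matching entries has total degree $d=\Theta(\log n)$, and the union of long-range edges over all contributing paths has cardinality $n^{1-o(1)}$ (compare \eqref{eq:stages}, where already the weight-truncated forward neighbourhood has $N_1=O(n^{1-\e'})$ edges). With $d=\Theta(\log n)$ the factors $2^{d}$, $\Delta^{d}$, $(4\Delta)^{d}$ appearing in Proposition \ref{prop:concentration_symmetric} are polynomial in $n$, and the Lipschitz/gradient bound you get from the entropic clause, $w_{\max}\approx e^{-ht+O(\sqrt t)}\approx n^{-1+o(1)}$, only beats $e^{O(\sqrt{\log n})}$ factors; the inequality then gives nothing. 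This is exactly why the paper's Definition \ref{def:nice_paths} is not a list of typicality events but a three-stage construction: the forward neighbourhood $K(x,l_1,w_{\min})$ and the backward neighbourhood $B(r,r+s,l)$ are revealed first and generate the $\sigma$-algebra $\cF_{r,l}$, the concentration inequality is applied \emph{conditionally} on $\cF_{r,l}$, and only the intermediate segment $\frp_2$ of length $O(\sqrt{\log n})$ carries unrevealed randomness, so the effective degree is $d=C_2\sqrt{\log n}$ (Lemma \ref{lem:conditions_concentration}); the truncated weights $w_E,w_F$ and the constraint $w_E(\xi(\frp_1))w_F(\xi(\frp_3))\le w_{\max}$ are what make the conditional gradient bound \eqref{eq:nice_paths_bound} available and $\cF_{r,l}$-measurable. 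Without some analogue of this conditioning-and-truncation device, the proposal's part (iii) cannot be carried out.

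Two further points are weaker but also off the paper's route. First, the identification of $\bE\sbra{\scP_{\frN}^{t}(x,y)}$ with $\hat\pi(y)$ is not a soft Palm argument: the paper needs the prescribed regeneration structure of $\frp_2,\frp_3$ (a regeneration outside $K$ early in $\frp_2$, none in the first $r$ steps of $\frp_3$) so that, after conditioning on $\cF_{r,l}$, only the middle factor is averaged, and the quantitative Markov renewal theorem (Propositions \ref{prop:markov_renewal_thm} and \ref{prop:mixing_annealed_QT}) applies on the time scale $\abs{\frp_2}=\Theta(\sqrt{\log n})\gg M^2$; this is Lemma \ref{lem:mixing_annealed_G}. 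Second, your decomposition in (iv), $\hat\pi_1:=\hat\pi\,\II_T$ with $T$ the quasi-tree-like vertices, rests on the unjustified pointwise bound $\bfQ_{\nu}\sbra{X_{r+s_0}=v,\dots}=O((\log n)^{O(1)}/n)$ for $v\in T$; Assumption \ref{hyp:bdd_delta} only bounds column sums by a constant possibly larger than $1$, so no such pointwise bound follows. The paper instead splits $\hat\pi$ according to the trajectory event $\TSR\wedge\TNB>M+s$ and bounds $\norm{\hat\pi_1}_2^2$ by a two-chain collision (second-moment) argument exploiting the non-deviation and non-backtracking constraints, which is where the $o_{\bP}((\log n)^{b}/n)$ bound actually comes from.
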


\begin{proof}[Proof of Theorem \ref{thm:cutoff} and \ref{thm:worstcase}]
	Recall $\scP$ is the transition matrix of the two-lift chain on $V$, which projects to a transition matrix $\bar{\scP}$ on $[n]$. 

	\medskip

	Start with the upper bound on the mixing time. Let $\e > 0$ and $t := \log n / h + C(\e) \sqrt{\log n}$. Since $\scP \geq \scP_{\frN}$ entry-wise, for all $x \in V$ and $s \geq 0$,
	\begin{align*}
		\TV{\hat{\pi} - \scP^{s+t}(x,\cdot)} &= \sum_{z \in V} \left[ \hat{\pi}(z) - \scP^{s+t}(x,z) \right]_{+} \leq \sum_{y,z \in V} \scP^{s}(x,y) \left[ \hat{\pi}(z) - \scP^{t}(y,z) \right]_{+} \\
		&\leq \sum_{y,z \in V} \scP^{s}(x,y) \left[(1 + \e) \hat{\pi}(z) + c(z) + \frac{\e}{n} - \scP_{\frN}^{t}(y,z) \right]_{+}.
	\end{align*}
	Point \ref{enum:nice_upper_bound} of the above proposition implies that with high probability, the right hand side summands are non-negative for all $x \in V$, so the sum can be computed to obtain that with high probability,
	\begin{align*}
		\TV{\hat{\pi} - \scP^{s+t}(x,\cdot)} \leq 1 - \scP^{s} \scP_{\frN}^{t}(x,y) + 3 \e.
	\end{align*}
	Using point \ref{enum:not_nice_uniform}, if $s \geq (\log n)^{a}$ with high probability
	\begin{equation*}
		\max_{x \in V} \TV{\scP^{s+t}(x, \cdot) - \hat{\pi}} \leq 4 \e
	\end{equation*}
	which projects by \eqref{eq:upper_bound_2lift} to 
	\begin{equation*}
		\max_{x \in V} \TV{\bar{\scP}^{s+t}(x, \cdot) - \bar{\hat{\pi}}} \leq 4 \e.
	\end{equation*}
	with $\bar{\hat{\pi}}$ the projection onto $[n]$ of $\hat{\pi}$.
	Since this estimate is uniform in the starting state, it extends to any starting distribution and particularly to a stationary distribution. Thus for any stationary distribution $\pi$ of $\bar{\scP}$,
	\begin{equation}\label{eq:bound_hatpi}
		\TV{\pi - \bar{\hat{\pi}}} \leq 4 \e.
	\end{equation}
	and from triangular inequality we obtain that with high probability
	\begin{equation*}
		\max_{x \in [n]} \TV{\bar{\scP}^{s+t}(x, \cdot) - \pi} \leq 8 \e.
	\end{equation*}
	Since this is valid for any invariant measure, the latter must be unique. This proves in particular the irreducibility and aperiodicity of the chain and establishes Theorem \ref{thm:worstcase}. 

	On the other hand for a fixed $x \in V$, taking $s = 0$ in the above bounds yields and using point \ref{enum:not_nice_o1} that with high probability,
	\begin{equation*}
		\TV{\bar{\scP}^{t}(x, \cdot) - \pi} \leq 8 \e,
	\end{equation*}
	proving the upper bound in Theorem \ref{thm:cutoff}.

\medskip

Let us prove the lower bound. For all $t \geq 0, \theta > 0$ and $x,y \in [n]$,
\begin{align*}
	\bar{\scP}^{t}(x,y) &= \scP^{t}(x,y) + \scP^{t}(x, \eta(y)) \\
	&\geq \bfP_{x} \sbra{X_t \in \{y, \eta(y) \}, w_{x,R,L}(\xi(X_0 \cdots X_t)) \leq \theta}.
\end{align*}
If equality holds, then 
\begin{equation*}
	\bar{\hat{\pi}}(y) - \bfP_{x} \sbra{X_t \in \{y, \eta(y) \}, w_{x,R,L}(\xi(X_0 \cdots X_t)) \leq \theta} \leq \left[ \bar{\hat{\pi}}(y) - \bar{\scP}^{t}(x,y) \right]_{+}
\end{equation*}
If equality does not hold, there must exist a non-backtracking long-range path $\xi$ between $x$ and $y$ or $x$ and $\eta(y)$ for which $w(\xi) > \theta$, in which case
\begin{multline*}
	\bar{\hat{\pi}}(y) - \bfP_{x} \sbra{X_t \in \{y, \eta(y) \}, w_{x,R,L}(\xi(X_0 \cdots X_t)) \leq \theta} \leq \hat{\pi}(y) \, \II_{\exists \xi: w_{x,R,L}(\xi) > \theta} \\
	+ \hat{\pi}(\eta(y)) \, \II_{\exists \xi: w_{x,R,L}(\xi) > \theta}.
\end{multline*}
We did not precise where $\xi$ lies to ease notation in the indicator functions but it depends on $y$ and $\eta(y)$ respectively.
Combining the two inequalities we obtain that in either case 
\begin{align*}
	\bar{\hat{\pi}}(y) - \bfP_{x} \sbra{X_t \in \{y, \eta(y) \}, w_{x,R,L}(\xi(X_0 \cdots X_t)) \leq \theta} &\leq \left[ \bar{\hat{\pi}}(y) - \bar{\scP}^{t}(x,y) \right]_{+} + \hat{\pi}(y) \, \II_{\exists \xi: w_{x,R,L}(\xi) > \theta} \\
	&+ \hat{\pi}(\eta(y)) \, \II_{\exists \xi: w_{x,R,L}(\xi) > \theta}.
\end{align*}
Decompose now $\hat{\pi} = \hat{\pi}_1 + \hat{\pi}_{2}$ as in Proposition \ref{prop:nice_approx}. From \ref{enum:pihat_flat}, summing over $y \in [n]$ in the previous inequality yields
\begin{equation*}
	\bfP_{x} \sbra{w_{x,R,L}(\xi(X_0 \cdots X_t)) > \theta} \leq \TV{\bar{\scP}^{t}(x, \cdot) - \bar{\hat{\pi}}} + \sum_{y \in V} \hat{\pi}_{1}(y) \II_{\exists \xi: w_{x,R,L}(\xi) > \theta} + o_{\bP}(1).
\end{equation*}
By Cauchy-Schwarz inequality,
\begin{equation*}
	\sum_{y \in V} \hat{\pi}_{1}(y) \II (\exists \xi: w_{x,R,L}(\xi) > \theta) \leq \left( \sum_{y \in V} \hat{\pi}_{1}(y)^{2} \right)^{1/2} \left( \sum_{\xi} \II (w_{x,R,L}(\xi) > \theta) \right)^{1/2},
\end{equation*}
where the second sum is over non-backtracking long-range paths from $x$. By Remark \ref{rk:weight_sum_1} weights sum up to at most $1$ hence so this sum contains at most $\theta^{-1}$ positive terms and
\begin{equation}\label{eq:mixing_lower_bound}
	\bfP_{x} \sbra{X_t = y, w_{x,R,L}(\xi(X_0 \cdots X_t)) > \theta} \leq \TV{\bar{\scP}^{t}(x, \cdot) - \bar{\hat{\pi}}} + \sqrt{\frac{1}{\theta} \sum_{y \in V} \hat{\pi}_{1}(y)^{2}} + o_{\bP}(1).
\end{equation}
To complete the proof, let $\e \in (0, 1)$ and specialize to $t := \log n / h - C_1 \sqrt{\log n}$ and  $\theta := n^{-1} \exp(C_{2} \sqrt{\log n})$ for some for some $C_1(\e)$, $C_2(\e) > 0$. Choosing the constant $C_1$ large enough, $\exp(-t h - C_{h}(\e) \sqrt{t}) = n^{-1} \exp((C_1 - C_{h} / h ) \sqrt{\log n} - o(\sqrt{\log n})) > \theta$ for large enough $n$, hence Proposition \ref{prop:concentration_G} implies that the left hand-side of \eqref{eq:mixing_lower_bound} is at least $1 - \epsilon$ with high probability. On the other hand, \ref{enum:pihat_flat} shows that the square-root in the right hand side is $o_{\bP}(1)$. All in all, this proves that with high probability 
\begin{equation*}
	\TV{\bar{\scP}^{t}(x, \cdot) - \bar{\hat{\pi}}} \geq 1 - 2 \epsilon.
\end{equation*}
Finally by \eqref{eq:bound_hatpi} we deduce
\begin{equation*}
	\TV{\bar{\scP}^{t}(x, \cdot) - \bar{\pi}} \geq 1 - 6 \epsilon
\end{equation*}
with high probability.
\end{proof}

\begin{remark}\label{rk:pi_flat}
	From the previous proof, the decomposition of $\hat{\pi}$ given by \ref{enum:pihat_flat} also extends to the unique invariant measures of $\scP$ and $\bar{\scP}$: $\pi = \pi_1 + \pi_2$ with $\norm{\pi_1}_{2}^{2} = o_{\bP} ((\log n)^{b} / n)$ and $\pi_2([n]) = o_{\bP}(1)$.
\end{remark}

\section{Counter-example to uniform entropic time}\label{section:counterex}

In this section we prove Theorem \ref{thm:counterex}.
Let $n \geq 1$ and $\delta \in (0,1)$. Consider the reversible Markov chains $Q_1, Q_2$ on the segments $\{0, 1, 2 \}$ and $\{0, 1\}$ which go from left to right with probability $\delta$, from right to left with probability $1 -\delta$ and stay at the extremities with remaining probability. Let $P_1$ be the transition matrix on $[6n]$ corresponding to $2n$ disjoint copies of $Q_1$, that is $P_{1}(i,j) = Q_{1}(i \mod 3, \ j \mod 3)$ for all $i,j \in [6n]$. Similarly let $P_2$ be the matrix corresponding to $3n$ copies of $Q_2$. Our counter-example to uniform mixing time is obtained with the matrix $\bar{\scP} := 1/2 (P_1 + S P_2 S^{-1})$ and small $\delta$. On this example, it will be exceptionally be clearer to forget about the two-lift and half-integer time transitions. We thus identify each $x$ with $\eta(x)$, both in the finite and quasi-tree setting, which has the effect of pruning the long-range edges in the graphs, and we use the notations $G^{\ast}, \cG, \cX$ to refer to the objects obtained after this operation.

Consider first the quasi-tree $\cG$ that would be obtained from $P_1$ and $P_2$: because the underlying graphs $G_1, G_2$ are forests, all realizations of quasi-trees are in fact genuine trees. On the other hand, if $\delta$ is small, the Markov chains on segments are heavily biased towards $0$. Say a vertex $x \in \cG$ has type $0$ if the map $\iota$ identifies $x$ with the leftmost vertex of a segment. Then consider the quasi-tree obtained $T$ if all centers in $\cG$ have type $0$. As one can check, this is a periodic weighted tree $T$ where except the root all vertices have their transition probabilities depending only on whether the degree is $2$ or $3$, see Figure \ref{fig:counterex}. Notice that for sufficiently small $\delta$, the chain on $T$ becomes recurrent. 
\begin{figure}
	\centering
	\begin{subfigure}{.5\textwidth}
		\centering
		\includegraphics[width=\textwidth]{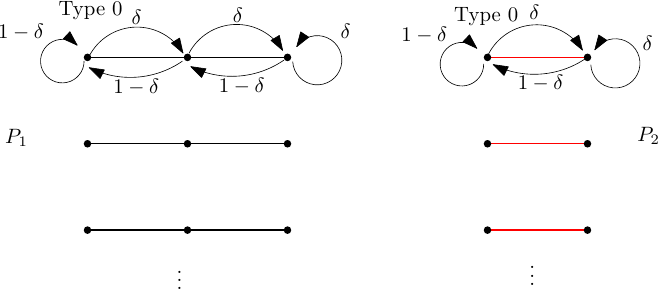}
	\end{subfigure}	
	\hfill
	\begin{subfigure}{.4\textwidth}
		\centering
		\includegraphics[width=\textwidth]{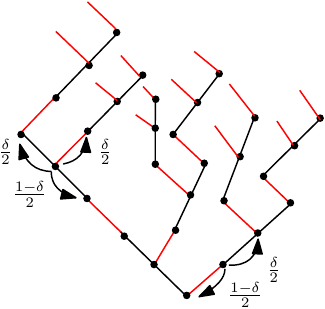}
	\end{subfigure}	
	\caption{The finite chains $P_1$, $P_2$ and the quasi-tree $T$ with all centers of type $0$, after identifying each $x \simeq \eta(x)$. The probability to remain at a vertex is not represented.}
	\label{fig:counterex}
\end{figure}
Of course this configuration occurs with probability $0$, however a finite portion of $T$ can arise with positive probability. Coming back to the finite graph $G^{\ast}$, which resembles locally a quasi-tree, this probability is in fact sufficiently large that $G^{\ast}$ also contains such neighbourhoods, which trap the chain for a long time. 

\begin{lemma}\label{lem:counter_ex}
	There exists $\beta > 0$ such that for all $\delta \in (0,1)$, with high probability there exists $x \in [6n]$ for which
	\begin{equation*}
		\bfP_{x} \sbra{X_t \notin B_{\bar{\scP}}(x, \beta \log \log n)} = o(1)
	\end{equation*}
	for all $t \leq (\log n)^{\beta \log(2 \delta / (1- \delta))}$.
\end{lemma}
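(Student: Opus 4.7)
The plan is to exhibit, with high probability, a polylogarithmic-radius neighborhood of $G^{\ast}$ that is isomorphic to a ball of the recurrent tree $T$. The chain started inside such a neighborhood then mimics the recurrent walk on $T$ and can only escape on polylogarithmic time scales.

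First I would show that for $\beta > 0$ small enough and $r := \beta \log \log n$ there exists with high probability some $x \in [6n]$ such that $B_{\bar{\scP}}(x, r)$ is quasi-tree-like and every center it contains is of type $0$, i.e.\ is identified with the leftmost position of a $Q_i$-segment. The ball has at most $\Delta^{r} = (\log n)^{\beta \log \Delta}$ vertices and is quasi-tree-like with probability $1-o(1)$ by a local exploration along the lines of Lemma \ref{lem:coupling_quasiT}. Conditional on this, sequential generation exhibits the long-range endpoints inside the ball as approximately uniform in $V_1$ or $V_2$, and each lands on a type-$0$ vertex with a constant probability $p_0 > 0$, so the probability that \emph{all} of them do is at least $p_{0}^{C (\log n)^{\beta \log \Delta}} \geq n^{-\alpha}$ for any fixed $\alpha \in (0,1)$ once $\beta < 1 / \log \Delta$. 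Picking $\Theta(n / (\log n)^{C'})$ candidate vertices with pairwise disjoint balls (feasible since each ball has polylogarithmic size) and using approximate independence of the matching restricted to these disjoint regions, a first / second moment argument yields the existence of at least one such ``pure type-$0$'' trap with high probability.

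Given such a trap at $x$, the labeled weighted graph $B_{\bar{\scP}}(x, r)$ is isomorphic to a ball of radius $r$ in $T$ around a type-$0$ vertex, and the coupling of Section \ref{subsec:coupling} shows the finite chain started at $x$ agrees in distribution with the chain on $T$ up to the first exit time from this ball. It remains to lower-bound this exit time on $T$. Since each step on a $Q_i$-segment is biased toward position $0$ by ratio at least $(1-\delta)/\delta$, the walk on $T$ is strongly biased toward the root. A direct electrical network or harmonic measure computation on the periodic tree $T$ then gives that from any center, the probability of reaching long-range depth $r$ before returning toward the root is at most $C \lambda^{-r}$, where $\log \lambda \geq c |\log(2\delta/(1-\delta))|$ for some absolute $c > 0$. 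Summing geometric return times shows the expected exit time from the depth-$r$ ball is at least $\lambda^{c' r}$, which for $r = \beta \log \log n$ translates into the claimed polylogarithmic lower bound after a Markov's inequality, provided $\beta$ is chosen small enough to simultaneously satisfy the constraint of the first step.

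The delicate point is expected to be the second-moment step in the existence argument: since $\sigma$ is a global uniform bijection rather than an i.i.d.\ sequence, the indicators that the candidate balls are pure type-$0$ are not independent. However the combined volume of any pair of disjoint balls is $o(n)$, so the exclusion correlations only contribute a multiplicative $1 + o(1)$ factor to the variance of the counting random variable, which is enough to apply Paley--Zygmund. The biased walk analysis on $T$ is more standard thanks to the periodic structure of $T$, though some care is needed to handle the branching of the quasi-tree while preserving the inward bias on every level.
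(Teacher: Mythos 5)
Your construction of the trap follows essentially the paper's strategy, with a different implementation of the existence step. The paper exploits the fact that the per-ball success probability ($3^{-\Delta^{l+1}} = n^{-o(1)}$ for $l=\beta\log\log n$ with $\beta$ small) is so large that $k = n^{1/4}$ disjoint candidate balls already suffice; the total explored volume $k\Delta^{l}=o(\sqrt n)$ then lets one couple the whole exploration with sampling-with-replacement at total-variation cost $o(1)$, so the candidate events become exactly independent and the one-line bound $(1-3^{-\Delta^{l+1}})^{k}=o(1)$ finishes the existence argument. Your choice of $\Theta(n/\mathrm{polylog}(n))$ candidates makes such a global coupling impossible and forces you through the second-moment/Paley--Zygmund route with correlation control for the uniform matching. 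That route does work (events determined by disjoint polylogarithmic-size regions are $(1+o(1))$-pairwise correlated under the uniform matching), but it is avoidable extra work: with your own estimate that a single ball is pure type-$0$ with probability $n^{-o(1)}$, far fewer candidates suffice and independence can be bought by the replacement coupling.

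The genuine flaw is in the final step. You derive a lower bound on the \emph{expected} exit time from the depth-$r$ ball of $T$ and then invoke Markov's inequality to conclude that exit before polylogarithmic time is unlikely. Markov's inequality goes the wrong way: from $\mathbb{E}[\tau]\ge \lambda^{c'r}$ one can only bound $\mathbb{P}(\tau\ge a)$ from above, not $\mathbb{P}(\tau\le t)$; the expectation could be carried by rare very long excursions while the walk still exits quickly with constant probability, so as written this step fails. The fix uses exactly the per-excursion estimate you already state: each return of the walk towards the root gives, by your harmonic-measure bound, a chance at most $C\lambda^{-r}$ of reaching depth $r$ before coming back, so the hitting time of depth $r$ stochastically dominates a geometric random variable of parameter $C\lambda^{-r}$, whence $\mathbb{P}(\tau_r\le t)\le 1-(1-C\lambda^{-r})^{t}=O(t\,\lambda^{-r})=o(1)$ for all $t$ polylogarithmically below $\lambda^{r}$. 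This is precisely how the paper concludes, after dominating the distance-to-root process by a lazy biased walk on a segment and applying the gambler's ruin estimate — a domination which also sidesteps the care you correctly flag as needed to handle the branching of $T$ when computing $\lambda$ by electrical-network means.
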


\begin{proof}
	Let $l := \beta \log \log n$. In the present setting, the degrees of $G^{\ast}$ and $\cG$ are clearly bounded by $\Delta = 3$. Consequently the exploration of a $\scP$-ball of radius $l$ around any $x \in V$ reveals at most $O(\Delta^{l}) = O((\log n)^{\beta \log 3})$ vertices. Suppose we reveal such neighbourhoods around vertices $x_0, \ldots, x_k$, each time picking a vertex disjoint from the previously explored neighbourhoods. As explained in Section \ref{subsec:coupling}, this exploration can be performed with replacement at a total variation cost which is $o(1)$ as long as $k \Delta^{l} = o(\sqrt{n})$. If this holds we can thus make the neighbourhoods of all $x_k$ coincide with trees with probability $1 - o(1)$. Now in the quasi-tree $\cG$, each center has type $0$ with probability at least $1/3$, so the probability that all centers up to depth $l$ have type $0$ is at least $3^{-\Delta^{l+1}}$. Consequently, the probability that no vertex among $x_0, \ldots, x_k$ has its neighbourhood isomorphic to $T_{\leq l}$, the subtree of $T$ up to depth $l$, is upper bounded by
	\begin{equation*}
		(1 - 3^{-\Delta^{l+1}})^{k} \leq \exp(-k 3^{-\Delta^{l+1}}) = \exp( -k 9^{- (\log n)^{\beta \log 3}})
	\end{equation*}
	 For $\beta > 0$ small enough, we can take for instance $k = n^{1/4}$ so that the previous probability is $o(1)$ and in the same time $k \Delta^{l} = o(\sqrt{n})$ so the approximation with the quasi-tree is justified. This proves that with high probability, there exists $x_0 \in [6n]$ such that $B_{\bar{\scP}}(x_0, l)$ coincides with $T_{\leq l}$. 

	 It remains to show that the Markov chain $(X_t)_{t \geq 0}$ is very unlikely to escape $B_{\bar{\scP}}(x, l)$ by time $e^{c l}$. It suffices to prove the statement for the Markov chain $(\cX_t)_{t \geq 0}$ on $T$. This is done by considering the maximal drift of the chain: the vertices which maximize the probability to go away from the root are vertices of degree $3$. If $\cX_t$ is at such a vertex, it goes away from the root with probability $\delta$, moves towards the root with probability with probability $(1 - \delta) /2$ and stays put with remaining probability. Consequently, the distance between $\cX_t$ and the root is stochastically upper bounded by a lazy random walk on the segment $[0, l]$ which goes left with probability $(1 - \delta )/ 2$ and right with probability $\delta$. By the classical Gambler's ruin problem, this random walk has probability at most $O((2 \delta / (1- \delta))^{l})$ to go from $1$ to $l$ before returning to $0$. In turn this implies that the hitting time $\tau_l$ of level $l$ by $\cX_t$ dominates stochastically a geometric random variable with parameter $r := (2 \delta / (1- \delta))^{l}$. In the end we deduce that if $t = t(n) = o(1/r)$ 
	 \begin{align*}
		\bP \sbra{X_t \notin B_{\bar{\scP}}(x_0, l)} &\leq \bP \sbra{\tau_l \leq t} \\
		&\leq 1 - (1- r)^{t} \\
		&= r t + o(rt) = o(1).
	 \end{align*}
	 Since $1/r=(\log n)^{\beta \log(2 \delta / (1- \delta))}$, this proves the result.
\end{proof}

\begin{proof}[Proof of Theorem \ref{thm:counterex}]
	Let $a > 1$, $t := (\log n)^{a}$ and $\e \in (0, 1)$. By Theorem \ref{thm:cutoff} the chain is irreducible and aperiodic with high probability. Let $\pi$ be the unique stationary measure of $\bar{\scP}$. Thanks to the lemma, we can find $\beta > 0$ and $\delta > 0$ small enough for which there exists with high probability a state $x \in [n]$ satisfying $\bfP_{x} \sbra{X_t \notin B} = o(1)$, where  $B := B_{\bar{\scP}}(x, \beta \log \log n)$. Then recall the decomposition of $\pi = \pi_1 + \pi_2$ given by Remark \ref{rk:pi_flat}. Since the ball $B$ contains at most $O(\Delta^{\beta \log \log n}) = n^{o(1)}$ vertices, Cauchy-Schwarz inequality implies
	\begin{equation*}
		\pi_1(B) \leq \sqrt{\abs{B} \sum_{u \in B} \pi_1(u)^{2}} = n^{-1/2 + o_{\bP}(1)},
	\end{equation*}
	while $\pi_2(B) \leq \pi_2([n]) = o_{\bP}(1)$.
	Consequently
	\begin{equation*}
		\bar{\scP}^{t}(x, B) - \pi(B) = 1 - o_{\bP}(1)
	\end{equation*}
	which implies $\tmix(x, \e) \geq (\log n)^{a}$.

\end{proof}

\begin{remark}\label{rk:trapping}
	As was noticed above for $\delta$ sufficiently small the Markov chain $(\cX_t)_{t \geq 0}$ on the tree $T$ becomes sufficiently biased towards the root to be recurrent. On the other hand, the Borel-Cantelli lemma implies that the quasi-tree $\cG$ contains a.s. infintely many copies of $T_{\leq l}$ for any $l$. The existence of such "trapping neighbourhoods" which could potentially slow down the chain by an important amount is a problem that will arise in the next section where we study asymptotic properties of $(\cX_t)_{t \geq 0}$. In particular the spectral radius of this Markov chain may be equal to $1$, despite the tree-like aspect of the graph.
\end{remark}

\section[Quasi-tree I: Escape probabilities]{Analysis on the quasi-tree on quasi-trees I: escape probabilities}\label{section:QT1}

The objective of the three following sections is to prove the concentration of the drift and entropy along with the other nice properties of Section \ref{section:main_arguments} for the Markov chain $(\cX_t)_{t \geq 0}$ on a random infinite quasi-tree $\cG$. This will allow us later to deduce the corresponding statements for the chain $X$ thanks to the coupling presented in Section \ref{subsec:coupling}. As in \cite{berestycki2018random, hermon2020universality}, the argument is based on the existence of regeneration times, which are times at which $\cX_t$ visits a long-range edge for the first and last time. A first step towards this objective is to lower bound the probability of escaping to infinity in the quasi-tree. Because of the potential trapping phenomenon mentionned in Remark \ref{rk:trapping}, this requires more involved arguments than in the reversible vase.

\subsection{Escape probabilities}

We recall $\cG$ denotes a random rooted quasi-tree which under $\bP$ has the law of the random quasi-tree described in Section \ref{subsec:random_QT}. Its vertex set is $\cV$.

\begin{definition}\label{def:escape}
	Given a non-center vertex $x \in \cV$, let
	\begin{equation*}
		\qesc(x) := \bfP_{x} \sbra{\forall t \geq 1: \cX_{t} \in \cG_x}. 
	\end{equation*}
	be the quenched probability that the chain enters the subquasi-tree of $x$ and never leaves it. If $x$ is a center, it is useful to also consider starting at time $1/2$ and let
	\begin{equation*}
		\qesc(x) := \bfP_{x} \sbra{\forall t \geq 0: \cX_{t} \in \cG_x} \wedge \bfP \cond{\tau_{\eta(x)} = \infty}{\cX_{1/2} = x}.
	\end{equation*}
	We call these quantities the \emph{escape probability} at $x$.
\end{definition}

\begin{remark}\label{rk:escape_center}
	Note that if $x$ is not a center,
	\begin{equation*}
		\qesc(x) \geq q(x, \eta(x)) \bfP \cond{\tau_{x} = \infty}{\cX_{1/2} = \eta(x)}.
	\end{equation*} 
	Similarly, if $x$ is a center, Assumption \ref{hyp:cc3} asserts there exists $y \neq x$ in the same small-range component as $x$, which is thus not a center, so that
	\begin{equation*}
		\qesc(x) \geq p(x, \eta(x)) \, P(x,y) \, \qesc(y). 
	\end{equation*}
	By Assumption \ref{hyp:bdd_p} the entries of $p$ and $q$ are bounded. Thus to lower bound escape probabilities, it matters little to consider a center vertex or not, and to start at integer or half-integer time.

	Furthermore, recall the Ulam labelling of Section \ref{subsec:random_QT}. For all $x \in \scU$, if $x \in \cV$ is not a center then the probability $\qesc(x)$ is measurable with respect to $\cG_x$ and $\iota(x)$ only. Hence
	\begin{equation}\label{eq:subquasiT}
		\bP \cond{\qesc(x) \in \cdot}{x \in \cV, \cG \smallsetminus \cG_x} = \bP \cond{\qesc(O) \in \cdot}{\iota(O) = \iota(x)}.
	\end{equation}
	If $x$ is a center,
	\begin{equation*}
		\bP \cond{\qesc(x) \in \cdot}{x \in \cV, \cG \smallsetminus \cG_x} = \bP \cond{\qesc(\eta(O)) \in \cdot}{\iota(O) = \iota(\eta(x)), \iota(\eta(O)) = \iota(x)}.
	\end{equation*}
	Thus the law of escape probabilities is entirely determined by the case of the root, conditional on $\iota(O)$ and $\iota(\eta(O))$. To ease the notation, we will somes times keep $\iota$ implicit in the sequel and write only $O, \eta(O)$ when conditionning on the types of these vertices.
\end{remark}

In the reversible model, a uniform lower bound remains satisfied as proved by Proposition 4.1 in \cite{cutoff_reversible}. However, in the general model, a uniform lower bound on escape probabilities may not be achievable. The counter example of Section \ref{section:counterex} and Remark \ref{rk:trapping} show that in the corresponding quasi-tree, 
\begin{equation*}
	\inf_{x \in \cV} \qesc(x) = 0.
\end{equation*}
This phenomenon, which was already present in \cite{berestycki2018random}, is ultimately what prevents uniform cutoff. Cutoff from typical vertices remains possible as we will argue that "trapping neighbourhoods", where escape probabilities tend to $0$, are sufficiently rare that they are essentially ignored from the chain, provided it is started at a typical vertex. The starting point is the following lemma, which will be proved with comparison arguments. The result is stated for the root but from Remark \ref{rk:escape_center} it extends to other subquasi-trees.

\begin{lemma}\label{lem:escape_probability}
	For all types of $O, \eta(O)$,
	\begin{equation*}
		\bE \cond{\qesc(O)}{O, \eta(O)} = \Theta(1).
	\end{equation*}
	Equivalently, there exists a constant $q_0 > 0$ such that for all types of $O, \eta(O)$,
	\begin{equation}
		\bP \cond{ \qesc(O) \geq q_0}{O, \eta(O)} \geq q_0.
	\end{equation}
\end{lemma}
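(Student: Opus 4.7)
The equivalence of the two formulations is immediate from $\qesc(O) \in [0,1]$: reverse Markov turns $\bE[\qesc(O) \mid O, \eta(O)] \geq c$ into $\bP[\qesc(O) \geq c/2 \mid O, \eta(O)] \geq c/2$, and the converse is obvious. Since $\bE[\qesc(O)] \leq 1$ is trivial, the content of the lemma is the uniform-in-type constant lower bound on the expectation.

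The plan is to compare the (possibly non-reversible) walk $\cX$ on the random quasi-tree with a reversible auxiliary chain on the same underlying graph, for which a uniform quenched escape probability is accessible via effective-resistance arguments. This is precisely the role of Hypothesis~\ref{hyp:backtracking}: for $n - o(n)$ types $x$, the out-neighbourhood of $x$ under $P$ coincides with the set of vertices from which $x$ is reached in at most $l$ lazy steps, so every forward step of $\cP$ can be reversed along a bounded-length path of comparable weight. Using (H1)--(H2), this lets one define a symmetrised conductance network $\hat{\cG}$ on $\cG$ with conductances $\Theta(1)$, together with a reversible kernel $\hat{\cP}$ whose transition probabilities are comparable to those of $\cP$ up to constants on the ``good'' (H4) portion of the graph.

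The argument then splits into two steps. First, the reversible proxy $\hat{\cP}$ has a uniform quenched lower bound on escape probability: by Hypothesis~\ref{hyp:cc3} the long-range scale of $\cG$ has effective branching at least two, by (H1)--(H2) conductances are bounded away from $0$ and $\infty$, and the long-range structure of $\cG$ is a genuine tree; a Nash--Williams / Lyons-type computation of effective resistance to infinity yields $\hat{q}_{\mathrm{esc}}(O) \geq q_{0} > 0$ almost surely. Second, transfer this estimate to $\cP$: on the event that the relevant local neighbourhood of $O$ only involves good types, a direct path comparison (or a Dirichlet-form comparison à la Diaconis--Saloff-Coste) gives $\qesc(O) \geq c \, \hat{q}_{\mathrm{esc}}(O)$. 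Averaging over the environment absorbs the contribution of the bad types because, at each newly revealed vertex in the iterative construction of Section~\ref{subsec:random_QT}, the type is uniform on $V$ (up to the small bias from avoiding $V(\iota(\cdot))$), and the bad set has size $o(n)$.

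The main obstacle is the path-wise comparison in the second step. The counter-example of Section~\ref{section:counterex} rules out any quenched uniform lower bound, so the annealed averaging is essential and must be quantitative enough that the comparison constant between $\cP$ and $\hat{\cP}$ does not degenerate as the chain explores deeper into $\cG$. The independence of disjoint subquasi-trees under $\bP$, recorded in \eqref{eq:subquasiT} and in the discussion of the Ulam labelling, together with the near-uniform type of each freshly attached vertex, should make this step work: Hypothesis~\ref{hyp:backtracking} prevents the trapping mechanism exhibited by the counter-example from contributing more than an $o(1)$ share to the expectation, leaving the reversible estimate $q_{0}$ as the dominant contribution.
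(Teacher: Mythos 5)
Your overall instinct — compare with a reversible chain, using \ref{hyp:backtracking} to reverse transitions in boundedly many steps — is the right starting point, but the argument as written has a genuine gap at the transfer step, and the gap is exactly the main difficulty of this lemma. A Diaconis--Saloff-Coste/Dirichlet-form comparison (or any path comparison yielding $\qesc(O) \geq c\,\hat q_{\mathrm{esc}}(O)$ with a constant $c$) requires controlling the \emph{invariant measure} of the non-reversible chain $\cP$ relative to that of your symmetrised proxy, not only the one-step transition probabilities. On the quasi-tree the invariant measure of $\cP$ is not comparable to the degree measure: writing it out via the tree structure (as the paper does in Lemma \ref{lem:invariant_measure}) one finds $\nu(x) = Z(x)\pi_P(x)$ where $\log Z$ is a branching random walk with symmetric increments along the long-range skeleton, so the ratio between $\nu$ and any bounded conductance measure degenerates exponentially along a positive fraction of rays. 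Consequently the comparison constant between $\cP$ and $\hat\cP$ cannot be kept uniform "as the chain explores deeper", and no event of probability $1-o(1)$ saves this: in the counter-example of Section \ref{section:counterex} \emph{every} state satisfies \ref{hyp:backtracking} (the building blocks $Q_1,Q_2$ are reversible segments), yet trapping neighbourhoods where $\qesc$ is tiny occur with probability bounded away from $0$. So your claim that the (H4)-bad types, being $o(n)$ in number, absorb the trapping contribution is incorrect — trapping is caused by the random orientation of the drift assembled by the matching, not by failure of \ref{hyp:backtracking}, and \ref{hyp:backtracking} is used only to make edge reversals (and a positive invariant/sub-invariant measure) available, not to exclude traps.

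The paper's proof works around exactly this: it compares Green functions via Woess's lemma (Lemma \ref{lem:comparison_green}), which allows a non-reversible chain against a reversible one provided transition probabilities \emph{and} invariant measures are comparable, and it achieves the measure comparison only on a \emph{random} subgraph $\cG'\subset\cG_O$. That subgraph is built from "backbone" labelled edges on which the multiplicative increments of $Z$ stay bounded, and it stochastically dominates a supercritical Galton--Watson tree; on the event (of probability $\Theta(1)$, not $1-o(1)$) that this backbone is infinite, the reversible chain on $\cG'$ escapes with uniformly positive probability and the Green-function comparison yields $\qesc(\eta(O)) \geq q_0$. The conclusion is therefore only an expectation/positive-probability bound, which is all the lemma asserts. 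Your proposal, by contrast, aims at a quenched bound off an $o(1)$-probability bad set, which is false, and it never supplies the invariant-measure construction or the branching-random-walk/backbone extraction that makes the comparison legitimate; these are the missing ideas.
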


Establishing this lemma is the main difficulty so the proof is deferred to the next sections.
Using the tree structure of $\cG$, the previous lemma can be boostrapped to yield a much better control on the behaviour of escape probability at typical vertices.

Using the tree structure of $\cG$, the previous lemma can be boostrapped to yield a much better control on the behaviour of the escape probability at typical vertices.

\begin{proposition}\label{prop:escape_gumbel}
	There exist constants $q_0, \delta \in (0,1)$ such that for all types of $O, \eta(O)$ and $k \geq 0$,  
	\begin{equation}
		\bP \cond{\qesc(x) < q_0 \delta^{4k}}{O, \eta(O)} \leq q_{0}^{2^k}.
	\end{equation} 
\end{proposition}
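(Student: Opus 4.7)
The plan is to prove the statement by induction on $k$, bootstrapping Lemma \ref{lem:escape_probability} through the branching structure of the random quasi-tree. The base case $k=0$ follows directly from Lemma \ref{lem:escape_probability} and a reverse Markov argument, possibly after iterating the doubling step below a finite number of times to drive the constant into the required range.

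For the inductive step, the key structural claim, a consequence of Assumption \ref{hyp:cc3} combined with the alternation of subspaces $V_1, V_2$ across long-range edges, is: there exist an integer $m \geq 2$ and non-center vertices $v_1, \ldots, v_m$ at long-range distance at most $2$ from $O$ whose subquasi-trees $\cG_{v_1}, \ldots, \cG_{v_m}$ are, conditionally on the portion of $\cG$ explored up to long-range level $2$, independent. Specifically, if $\iota(\eta(O)) \in V_1$, the small-range component of $\eta(O)$ has at least $3$ vertices, providing $m\geq2$ sub-roots already at long-range distance $1$; if $\iota(\eta(O)) \in V_2$, one descends via the guaranteed non-center sibling $y_1$ of $\eta(O)$ and crosses to $z_1 := \eta(y_1) \in V_1$, whose small-range component then contains at least $2$ non-center sub-roots, at long-range distance $2$.

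For each such $v_i$, I construct an explicit trajectory from $O$ into $\cG_{v_i}$ consisting of a bounded number of small- and long-range transitions, each having probability $\Theta(1)$ by Assumptions \ref{hyp:bdd_delta}--\ref{hyp:bdd_p}. The trajectory is designed so that the chain does not revisit $O$ along the way: at every intermediate vertex of a small-range component, the long-range coin is forced to stay, except at the last crossing into $\cG_{v_i}$. Combined with the strong Markov property and the definition of $\qesc(v_i)$, this yields a universal constant $c_1 = \Omega(1)$ such that $\qesc(O) \geq c_1 \qesc(v_i)$ for every $i$, and hence $\qesc(O) \geq c_1 \max_i \qesc(v_i)$. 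Using conditional independence of the $\qesc(v_i)$ given the explored environment and the induction hypothesis (applied to each $v_i$, whose conditional law depends only on the types of $v_i$ and $\eta(v_i)$, by Remark \ref{rk:escape_center}), one obtains
\begin{equation*}
	\bP \cond{\qesc(O) < c_1 q_0 \delta^{4k}}{O, \eta(O)} \leq \bP \cond{\textstyle\max_i \qesc(v_i) < q_0 \delta^{4k}}{O, \eta(O)} \leq (q_0^{2^k})^m \leq q_0^{2^{k+1}}.
\end{equation*}
Choosing $\delta$ small enough so that $\delta^4 \leq c_1$ ensures $q_0 \delta^{4(k+1)} \leq c_1 q_0 \delta^{4k}$ and closes the induction.

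The main obstacle will be to make the structural claim and the non-return pathwise bound $\qesc(O) \geq c_1 \qesc(v_i)$ fully rigorous using the Ulam--Harris labelling of Section \ref{subsec:random_QT}, handling uniformly the two subspace cases and ensuring that the induction hypothesis indeed transfers to each sub-root conditionally on its type. A minor additional point is to tune the base case so that the recursion is initialized with a $q_0$ compatible with both the multiplicative constant $c_1$ and the doubling-exponent requirement.
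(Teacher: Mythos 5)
Your proof is correct and is essentially the paper's argument in inductive form: the paper directly exhibits, via Assumption \ref{hyp:cc3} and the $V_1/V_2$ alternation, at least $2^k$ centers at long-range distance $2k$ from the root, reachable in at most $4k$ uniformly lower-bounded transitions, whose disjoint subquasi-trees are independent, and applies Lemma \ref{lem:escape_probability} to each of them, which is exactly what your recursive doubling does two long-range levels at a time. Your extra bookkeeping (boosting the base case by a bounded number of doubling steps and shrinking $\delta$ so that $\delta^{4} \leq c_1$) is legitimate and in fact slightly tidier than the paper's write-up, which tacitly passes from the bound $(1-q_0)^{2^k}$ to $q_0^{2^k}$.
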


\begin{proof}[Proof of Proposition \ref{prop:escape_gumbel}]
	Recall that from assumptions \ref{hyp:bdd_delta} and \ref{hyp:bdd_p} we suppose all transition probabilities are lower bounded by a constant $\delta$. This is also the constant $\delta$ of the proposition. By Assumption \ref{hyp:cc3}, every vertex $y \in \cV$ has at least one vertex at small-range distance $1$ and two vertices if $\iota(y) \in V_1$. Consequently, for any realization of $\cG_{x}$ and $k \geq 0$, there are at least $2^{k}$ centers at distance at long-range distance $2k$ from $x$ in $\cG_x$ which can be joined from $x$ in less than $4k$ steps by $\cX_t$. Since transition probabilities are bounded below by $\delta$, it suffices that one of these vertices $y$ has $\qesc(y) \geq q_0$ to obtain that $\qesc(O) \geq q_0 \delta^{4k}$. Now since these vertices are centers and at the same long-range distance from $x$ their respective subtrees are disjoint and thus independent. The probability that all $y$ have $\qesc(y) < q_0$ is thus upper bounded by $q_{0}^{2^k}$ by Lemma \ref{lem:escape_probability}. 
\end{proof}

From the previous proposition, we also deduce the following. Below, a path refers to a possible trajectory of the chain $\cP$ and its $\cP$-length is the corresponding number of transitions. 

\begin{proposition}\label{prop:escape_dense}
	For all $C > 0$ there exist constants $q_0, \alpha \in (0,1]$ such that the following holds: for all $r \geq 0$, with probability at least $1 - \exp(-Cr)$ conditional on the types of $O, \eta(O)$, every path in $\cG$ of $\cP$-length $r$ starting from $O$ contains a proportion at least $\alpha$ of vertices $x$ such that $\qesc(x) \geq q_0$.
\end{proposition}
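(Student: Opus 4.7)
The argument is a union bound over paths combined with a Chernoff-type concentration for each fixed path. Since degrees in $\cG$ are uniformly bounded by $\Delta$, there are at most $\Delta^r$ paths of $\cP$-length $r$ starting from $O$, so it suffices to show that for each fixed path $\gamma=(x_0,\ldots,x_r)$,
\begin{equation*}
\bP\!\left[\#\{i \leq r: \qesc(x_i) \geq q_0\} < \alpha r \,\big|\, O, \eta(O)\right] \leq e^{-(C+\log\Delta)r}.
\end{equation*}

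For a fixed $\gamma$, I would extract a subset $I \subseteq \{0, \ldots, r\}$ of size $|I| = \Theta(r)$, together with, for each $i \in I$, a small-range neighbor $y_i$ of $x_i$ such that the subquasi-tree rooted at $\eta(y_i)$ is disjoint from $\gamma$ and from the subquasi-trees selected at the other indices of $I$. Such a choice is feasible because Assumption \ref{hyp:cc3} provides $\Theta(1)$ small-range siblings at each $x_i$, while $\gamma$ visits at most $r$ distinct long-range edges in total. By the sequential construction of Section \ref{subsec:random_QT}, disjoint subquasi-trees are independent conditional on their root types; hence the variables $\qesc(\eta(y_i))$ for $i \in I$ are conditionally independent, and Lemma \ref{lem:escape_probability} gives $\bP[\qesc(\eta(y_i)) \geq q_0'] \geq p$ uniformly over types, for some constants $q_0', p > 0$. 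Since transition probabilities are uniformly lower bounded by $\delta$, one has $\qesc(x_i) \geq \delta^2 \qesc(\eta(y_i))$ by considering the two-step path $x_i \to y_i \to \eta(y_i)$. Setting $q_0 := \delta^2 q_0'$, the indicators $\II_{\qesc(x_i) \geq q_0}$ for $i \in I$ stochastically dominate independent $\mathrm{Bernoulli}(p)$ variables, and a Chernoff bound yields exponential concentration at a rate determined by $(\alpha, p, |I|/r)$.

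To attain any prescribed $C$, I would push $p$ close to $1$ by choosing $q_0$ smaller, using the doubly exponential tail $\bP[\qesc < q_0\delta^{4k}] \leq q_0^{2^k}$ from Proposition \ref{prop:escape_gumbel}; the resulting KL-type rate in Chernoff's bound then diverges as $p \to 1$, eventually exceeding $C + \log\Delta$. The main obstacle is the combinatorial verification of the second step: arranging $\Theta(r)$ disjoint probe subtrees along an arbitrary $\gamma$. The delicate case is when $\gamma$ lingers within a single small-range component or repeatedly backtracks over the same long-range edges, which can locally exhaust the sibling budget. I would handle this by decomposing $\gamma$ according to its fresh long-range crossings and selecting probes segment by segment, securing global disjointness via a pigeonhole argument based on the branching surplus granted by \ref{hyp:cc3}.
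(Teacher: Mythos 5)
Your probabilistic core coincides with the paper's proof: probe vertices adjacent to the path whose subquasi-trees are pairwise disjoint and disjoint from everything the path reveals, conditional independence of these subquasi-trees given their types (via the labelling of Section \ref{subsec:random_QT}), a per-probe success probability bounded below by Lemma \ref{lem:escape_probability} and pushed arbitrarily close to $1$ via Proposition \ref{prop:escape_gumbel}, a Chernoff bound, a union bound over the $O(\Delta^{r})$ paths, and a final transfer of the escape bound back to on-path vertices at the cost of a bounded power of $\delta$ in $q_0$ (the paper makes this same transfer, cf.\ Remark \ref{rk:escape_center} and the last lines of its proof). The only substantive difference is the combinatorial step you yourself flag as the main obstacle, and there your plan as written would fail.

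You aim to treat arbitrary walks, including ones that linger in a single component or repeatedly cross the same long-range edges. For such walks no per-path bound of order $e^{-(C+\log\Delta)r}$ is possible, and in fact the conclusion itself breaks down: a walk that oscillates for $r$ steps between $O$ and a single neighbour involves only a bounded number of distinct vertices, hence only a bounded amount of fresh randomness, and the event that all of them have $\qesc < q_0$ has probability independent of $r$ (and positive in general), so it cannot be beaten by $1-e^{-Cr}$ uniformly in $r$; no pigeonhole decomposition can manufacture independence that is not there. The paper avoids this by proving the statement for \emph{self-avoiding} paths, which is also what the downstream arguments use (one passes to loop-erased traces or counts hitting times, as in Lemma \ref{lem:no_backtracking}). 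For a self-avoiding $\frp$ crossing $k$ long-range edges the probe selection is then immediate: take the set $K_{\frp}$ of vertices at small-range distance at most $1$ from $\frp$ that are not endpoints of long-range edges crossed by $\frp$; their subquasi-trees are disjoint and unexplored by $\frp$, and Assumption \ref{hyp:cc3} gives $\abs{K_{\frp}} \geq \max\bigl(r+1-2k, \lfloor (k+1)/2 \rfloor\bigr) \geq r/5-1$, since either most path vertices lie on no crossed long-range edge, or the path crosses many small-range components, every other one of which is a $V_1$-component with at least three vertices and so contributes an unused probe. With this replacement for your selection step (and restricting the claim to self-avoiding paths), the rest of your argument goes through exactly as in the paper.
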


The proof is concluded with an argument similar in spirit to Lemma 2.2 in \cite{dembo2002large} (initially due to Grimmett and Kesten \cite{grimmett2001random}) and used to prove Lemma 2.3 in \cite{berestycki2018random}. 

\begin{proof}[Proof of Prop \ref{prop:escape_dense}]
	We reason conditional on $\iota(O)$ and $\iota(\eta(O))$ so we can assume these are fixed. 
	We distinguish the case of large and small paths, for which it suffices to find one state with lower bounded escape probability . Namely if $r < 10$, Lemma \ref{lem:escape_probability} shows that $\qesc(O) \geq q_0$ with positive probability, in which case every path of length at most $10$ contains a proportion at least $1/10$ vertices with escape probability lower bounded by $q_0$. 
	
	For larger paths, since $\cG$ is random, we use the Ulam labelling to consider vertices independently of the realization of $\cG$. Given $q_0 > 0$ to be determined and $x \in \scU$, let $A_x = \II_{x \in \cV, \qesc(x) \geq q_0}$.
	Consider now a self-avoiding path $\frp$ in $\scU$ started at $\iota(O)$ which can be the realization of a path of $\cP$-length $r \geq 10$ in $\cG$, starting at $O$. If $\frp$ is indeed a path in $\cG$, let $K_{\frp}$ be the set of vertices which are at small-range distance at most $1$ from $\frp$ but are not endpoints of a long-range edge crossed by $\frp$. Let us give an example for clarity: a path in $\scU$ may for instance contain vertices $u_0, u_1, (u_1, v_1), (u_1,v_2) \in \scU$ such that $u_0 = \iota(O)$, $P(u_0,u_1) > 0$, $V(v_1) \neq V(u_2)$ and $P(v_1, v_2) > 0$. If $\eta(u_1)$ is such that $P(\eta(u_1),v_1) > 0$, this path becomes a genuine path in $\cG$ of $\cP$-length $3$, with $K(\frp) = \{u_0, (u_1,v_1), (u_1,v_2) \}$ unless $\eta(u_1) \in \{ v_1, v_2 \}$ in which case one needs to discard the corresponding vertex. Define $A(\frp) := \sum_{x \in K_{\frp}} A_x$. Then by construction vertices which are in $K_{\frp}$ must have disjoint subquasi-trees, so conditional on $\frp \in \cG$ $A(\frp)$ is a sum of independent Bernoulli variables. Let us lower bound the number of these variables. Since the path is self-avoiding $\frp$ contains $r+1$ vertices. Supposing $k$ is the number of long-range edges crossed by $\frp$, $r+1 - 2k$ vertices do not belong to a long-range edge of $\frp$. $k+1$ is also the number of small-range components crossed by $\frp$. Now every other of these small-range component is isomorphic to a component of $V_1$, which from assumption \ref{hyp:cc3} contains at least three vertices. By the self-avoiding property, there is at least one vertex of such a small-range component which is in $K_{\frp}$. Hence 
	\begin{equation*}
		m := \abs{K_{\frp}} \geq \max (r+1 - 2k, \lfloor (k+1)/2 \rfloor) \geq \frac{r}{5} - 1 \geq 1.
	\end{equation*}
	
	On the other hand, for all $q \in (0,1)$ Proposition \ref{prop:escape_gumbel} and Remark \ref{rk:escape_center} imply the existence of a value of $q_0 > 0$, independent of $n$, such that conditional on $x \in \cV$, the Bernoulli variable $A_{x}$ has parameter lower bounded by $q$. Thus for any path $\frp$ in $\scU$, conditional on it being in $\cG$, $A(\frp)$ dominates a Binomial$(m,q)$. From Chernoff's bound, if $Z$ is such a binomial variable, for any $\alpha, \theta > 0$
	\begin{align*}
		\bP \sbra{ Z < \alpha m} &= \bP \sbra{ e^{-\theta Z} > e^{-\theta \alpha m}} \\
		&\leq e^{\theta \alpha m + \log \bE \sbra{e^{-\theta Z}}} \\
		&= e^{m (\theta \alpha  + \log (1-q + q e^{-\theta}))}.
	\end{align*}
	The base of the exponential can be made arbitrarily small by letting $q \rightarrow 1$, $\theta \rightarrow \infty$ and $\alpha \rightarrow 0$, namely for all $\e \in (0,1)$, for all $q > 1 - \e$,  there exists $\alpha \in (0,1)$ such that for all $m \geq 0$,
	\begin{equation}
		\bP \sbra{ Z < \alpha m} \leq \e^{m}.
	\end{equation}

	Hence, combining the previous observations, for all $\e > 0$ one can find $q_0 > 0$ and $\alpha > 0$, all independent of $n$, so that for all self-avoiding path $\frp$ of length $r$ starting from $\iota(O)$ in $\scU$.
	\begin{equation}\label{eq:paths}
		\bP \cond{A(\frp) < \alpha r}{\iota(O), \iota(\eta(O)), \frp \in \cG} \leq \e^{r}.
	\end{equation}
	Since the number of self-avoiding paths of $\cP$-length $r$ in $\cG$ is bounded by $O(\Delta^{r})$ thanks to \eqref{eq:ball_growth}, we can bound
	\begin{align*}
		&\bP \cond{\exists \frp \subset \cG: \abs{\frp}_\cP = r, A(\frp) < \alpha r}{O, \eta(O)} \\ 
		\qquad &= \bE \cond{\sum_{\frp \subset \scU} \II_{\frp \subset \cG, \abs{\frp}_{\cP} = r} \ \bP \cond{A(\frp) < \alpha r}{\iota(O), \iota(\eta(O)), \frp \in \cG}}{O, \eta(O)} \\
		&\qquad \leq O((\Delta \e)^{r})
	\end{align*}
	where  $\frp \subset \scU, \cG$ denotes a self-avoiding path of $\scU$ or $\cG$ respectively. Thus given $C > 0$, $A(\frp) \geq \alpha r$ holds simultaneously for all paths with probability $1 - (\e \Delta)^{r} = 1 - e^{-C r}$ choosing $\e$ and then $q_0, \alpha$ accordingly. This almost proves the result, except that the vertices considered in the random variables $A(\frp)$ may not be on the path $\frp$. However even if the proportion $\alpha r$ of vertices with lower bounded escape probability may lie partly outside $\frp$, this consists of vertices which can be reached in at most one step from $\frp$. Hence up to modifying the constant $q_0$ we can in the end deduce that every path $\frp$ contains a proportion $\alpha r$ of vertices with lower bounded escape probability.
\end{proof}

\begin{remark}\label{rk:escape_notonpaths}
	The proof actually shows a further property: with probability $1 - e^{-C r}$ every path $\frp$ contains a constant proportion of vertices for which the probability to escape to infinity outside $\frp$ is lower bounded. This can be useful to argue that the chain is unlikely to follow a given path. 
\end{remark}

\subsection{Lower bound on escape probabilities}

The proof relies on a comparison with a reversible chain, for which we have a uniform lower bound on escape probabilities thanks to the analysis done in \cite{cutoff_reversible} (see Prop. 4.1 and Remark 4.3). For this comparison to be valid, we will need Assumption \ref{hyp:backtracking}. 
We start with the simpler case where the property of \ref{hyp:backtracking} is satisfied by all vertices, ie there exists $l \geq 1$ such that 
\begin{equation}\label{eq:backtracking_all}\tag{H4'}
	\forall x,y \in V \quad P(x,y) > 0 \Leftrightarrow (I+P)^{l}(y,x) > 0.
\end{equation}
Note this implies in particular that all states are recurrent.

\subsubsection{Comparison of Green Functions}

The comparison will only be possible on a random subgraph of $\cG$. The starting point is Theorem 2.25 in \cite{woess2000random}, which proves that transience of a non-reversible Markov chain can be deduced from that of a reversible chain, provided their transition probabilities and their invariant measures can be compared up to multiplicative factors. This qualitative result can be made quantitative as it based on a comparison between Green functions, which is the consequence of the following lemma.

\begin{lemma}[{\cite[Lemma 2.24]{woess2000random}}]\label{lem:comparison_green}
	Let $H$ be a Hilbert space with inner product $\innerprod{\cdot}{\cdot}$ and let $T_1, T_2$ be two invertible linear operators on $H$ such that:
	\begin{enumerate}[label=(\roman*)]
		\item $T_1$ is self-adjoint,
		\item $\innerprod{T_2 f}{f} \geq \innerprod{T_1 f}{f} \geq 0$ for all $f \in H$.
	\end{enumerate}
	Then for all $f \in H$,
	\begin{equation*}
		\innerprod{T_{1}^{-1} f}{f} \geq \innerprod{T_{2}^{-1} f}{f}.
	\end{equation*}
\end{lemma}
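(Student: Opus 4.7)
The plan is to reduce the inequality to the case $T_1 = I$ by a symmetric conjugation with $T_1^{-1/2}$, after which the statement follows from a short Cauchy--Schwarz argument.

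From (i) together with the right-hand inequality of (ii), the operator $T_1$ is positive self-adjoint on $H$; being invertible, it admits by Borel functional calculus a positive self-adjoint invertible square root $T_1^{1/2}$. I will introduce
\begin{equation*}
    S := T_1^{-1/2} T_2 T_1^{-1/2},
\end{equation*}
which is invertible with $S^{-1} = T_1^{1/2} T_2^{-1} T_1^{1/2}$. Substituting $f := T_1^{-1/2} g$ into (ii) and using self-adjointness of $T_1^{\pm 1/2}$, I get $\innerprod{T_1 f}{f} = \|g\|^2$ and $\innerprod{T_2 f}{f} = \innerprod{Sg}{g}$, so (ii) transforms into the coercivity statement $\innerprod{Sg}{g} \geq \|g\|^2$ for every $g \in H$. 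Symmetrically, for any $\phi \in H$, the substitution $g := T_1^{1/2} \phi$ gives $\innerprod{T_1^{-1} \phi}{\phi} = \|g\|^2$ and $\innerprod{T_2^{-1} \phi}{\phi} = \innerprod{S^{-1} g}{g}$. The lemma therefore reduces to the following assertion: if an invertible operator $S$ satisfies $\innerprod{Sg}{g} \geq \|g\|^2$ for all $g$, then $\innerprod{S^{-1} g}{g} \leq \|g\|^2$ for all $g$.

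To establish this reduced statement, I will fix $g \in H$ and set $h := S^{-1} g$, so that $Sh = g$ and $\innerprod{S^{-1} g}{g} = \innerprod{h}{Sh} = \innerprod{Sh}{h}$ (in the real case; in the complex case the hypothesis forces $\innerprod{Sh}{h}$ to be real and one works with real parts throughout). The coercivity hypothesis combined with Cauchy--Schwarz yields $\|h\|^2 \leq \innerprod{Sh}{h} \leq \|Sh\| \, \|h\| = \|g\| \, \|h\|$, hence $\|h\| \leq \|g\|$. A second application of Cauchy--Schwarz then produces $\innerprod{S^{-1} g}{g} = \innerprod{Sh}{h} \leq \|Sh\| \, \|h\| = \|g\| \, \|h\| \leq \|g\|^2$, which is exactly the required inequality.

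The only genuine step is to spot the right change of variable: the symmetric conjugation by $T_1^{-1/2}$ simultaneously turns (ii) and the target into matched coercivity statements for $S$ and $S^{-1}$, after which the Cauchy--Schwarz estimate amounts to the one-line verification that a numerically coercive operator has a numerically contractive inverse. No delicate estimates are involved; the only prerequisite is the existence of $T_1^{1/2}$, which is guaranteed by the positivity, self-adjointness, and invertibility of $T_1$.
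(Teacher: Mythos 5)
Your argument is correct in substance, and it is worth noting that the paper does not prove this lemma at all: it is quoted verbatim from Woess (Lemma 2.24), whose standard proof runs through the variational identity $\innerprod{T_1^{-1}f}{f} = \sup_{g}\left( 2\innerprod{f}{g} - \innerprod{T_1 g}{g} \right)$ for a positive self-adjoint invertible $T_1$, followed by replacing $\innerprod{T_1 g}{g}$ by the larger quantity $\innerprod{T_2 g}{g}$ and evaluating at $g = T_2^{-1}f$. Your route — conjugating by $T_1^{-1/2}$ so that the hypothesis becomes numerical coercivity of $S = T_1^{-1/2}T_2T_1^{-1/2}$ and the conclusion becomes numerical contractivity of $S^{-1}$, then applying Cauchy--Schwarz twice — is a legitimate alternative of essentially the same depth; it uses the self-adjointness and positivity of $T_1$ exactly where Woess does (to build $T_1^{1/2}$, resp.\ the variational principle), and correctly does not require any symmetry of $T_2$, which matters since in the paper's application $T_2 = I - z\bar{\cP}$ is genuinely non-self-adjoint on the real space $\ell^{2}(\cV')$. (One should add the standing assumption that the operators are bounded, as they are in the application, so that the square root exists and is invertible with $\sigma(T_1)\subset[c,\infty)$, $c = \norm{T_1^{-1}}^{-1}$.)

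One slip to fix: in the second substitution you wrote $g := T_1^{1/2}\phi$, but with that choice $\norm{g}^{2} = \innerprod{T_1\phi}{\phi}$ and $\innerprod{S^{-1}g}{g} = \innerprod{T_2^{-1}T_1\phi}{T_1\phi}$, not the quantities you claim. The correct substitution is $g := T_1^{-1/2}\phi$ (equivalently $\phi = T_1^{1/2}g$), which indeed gives $\innerprod{T_1^{-1}\phi}{\phi} = \norm{g}^{2}$ and $\innerprod{T_2^{-1}\phi}{\phi} = \innerprod{S^{-1}g}{g}$, and since $T_1^{\pm 1/2}$ are bijections the reduction is complete. With that correction, the closing Cauchy--Schwarz argument ($h := S^{-1}g$, $\norm{h}^{2} \leq \innerprod{Sh}{h} \leq \norm{g}\,\norm{h}$, hence $\innerprod{S^{-1}g}{g} = \innerprod{Sh}{h} \leq \norm{g}^{2}$) is sound.
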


By Remark \ref{rk:escape_center} it suffices to lower bound $\bfP \cond{\tau_{O} = \infty}{\cX_{1/2} = \eta(O)}$. Consider $\tilde{\cG} = \{ O \} \cup \cG_O$ and let $\cP_{0}$ denote the transition kernel of the chain on $\tilde{\cG}$ which goes from $O$ to $\eta(O)$ with probability $1$ at integer time steps, stays at $O$ at half-integer time steps, and otherwise has the transitions of $\cX$. Let
\begin{equation*}
	G_{\cP_{0}}(x,y) := \sum_{t \in \bN} \cP_{0}^{t}(x,y)
\end{equation*}
be the Green function of $\cP_{0}$. $G_{\cP_{0}}(O, O)$ is the expected number of visits to $O$, which has a geometric distribution of parameter $\qesc(\eta(O))$ since the chain automatically goes from $O$ to $\eta(O)$. Hence 
\begin{equation}\label{eq:green_escape}
	G_{\cP_{0}}(O, O) = \bfP \cond{\tau_{O} = \infty}{\cX_{1/2} = \eta(O)}^{-1}
\end{equation}
so it is equivalent to lower bound escape probabilities and upper bound the Green function. Lemma \ref{lem:comparison_green} will allow to do so by comparison, expressing the Green function as $G_{\cP_{0}} = \lim_{z \rightarrow 1} (I - z \cP_{0})^{-1}$.

We will apply the lemma to compare the Markov chain $(\cX_t)_{t \geq 0}$ with basically the simple random walk on a subgraph of the quasi-tree. To this end, we will need to show that the transition probabilities as well as an invariant measure can be lower bounded pointwise by those of the simple random walk, up to multiplicative factor. Since the graph $\cG$ has bounded degrees by \eqref{eq:ball_growth}, the transition probabilities and invariant measure if its simple random walk are both of constant order. By Assumption \ref{hyp:bdd_delta} transition probabilities of $(\cX_t)_{t \geq 0}$ are also of constant order however these transitions may occur in only one direction. From the hypothesis done in this section \eqref{eq:backtracking_all}, this issue is solved by considering a bounded power of a lazy version of the chain. However when it comes to the invariant measure, the counter example from Section \ref{section:counterex} shows that we can expect the infimum of an invariant measure to be $0$, which is why comparison is only possible on a subgraph. Since the invariant measure can be taken up to multiplicative constant, comparison with the simple random walk requires finding a subgraph where the invariant measure is of the same order or larger than the measure of the root. In particular we need to find explicitly one such invariant measure for $(\cX_t)_{t \geq 0}$. This can be done thanks to the tree structure of $\cG$. 

\subsubsection{Invariant measure of $(\cX_{t})_{t \geq 0}$}

The idea is based on Kolmogorov's criterion for reversibility: for a reversible chain, an invariant measure can be determined explicitely from the detailed balance equation, which allows to propagate the value of the measure from a state $x$ to another state $y$ by considering any path between $x$ and $y$. Since the invariant measure, which does not need to be a probability, is determined up to multiplicative constant anyway, this gives an algorithmic way to compute an invariant measure. For a non-reversible chain this procedure does not work as multiple paths between vertices may give different values of the measure (the absence of this phenomenon is precisely Kolmogorov's condition for reversibility). However if there are no multiple paths, in other words if the underlying graph is a tree, and transitions can be reversed, then we can find such a reversible measure. Now in our setting, quasi-trees have a large-scale tree structure arising from the long-range edges, while cycles can only be present in small-range components. We can thus combine invariant measures of small-range components to obtain invariance with respect to transitions inside a component with the previous idea to obtain invariance along long-range edges as well. 

Let $\pi_P$ be an invariant measure of $P$ which puts positive mass on every communicating class. Assumption \eqref{eq:backtracking_all} implies that $P$ has no transient states so $\pi_P(x) > 0$ for all $x \in V$. Extend $\pi_P$ to $\cG$ by setting $\pi_P(x) := \pi_P(\iota(x))$ for all $x$. 
The invariant measure $\nu$ is described in terms of the following weights: for every center $x$, set
\begin{equation}\label{eq:invariant_ax}
	a_x := \frac{\pi_P(\eta(x)) q(\eta(x), x)}{\pi_P(x) q(x, \eta(x))}
\end{equation}
Then define
\begin{equation}\label{eq:def_Z}
	Z(x) := \prod_{i=1}^{k} a_{x_i}
\end{equation}
where $x_1, \ldots x_k = x$ is the unique sequence of centers that lie on any path in $\cG$ from $O$ to $x$, and $Z(O) := 1$. We extend $Z$ to non-center vertices by $Z(x) := Z(x^{\circ})$. 

\begin{lemma}\label{lem:invariant_measure}
	Under Assumption \eqref{eq:backtracking_all} the measure $\nu$ on $\cV$ defined by
	\begin{equation}
		\nu(x) := Z(x) \pi_P(x)
	\end{equation}
	is an invariant measure of the Markov chain $(\cX_t)_{t \geq 0}$.
\end{lemma}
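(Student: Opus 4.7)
The kernel $\cP$ factors as a half-integer step kernel $H$ (the $p/q$-transition of \eqref{eq:free_transitions}) composed with an integer step kernel $I$ (the $P$-transition inside a small-range component). My plan is to verify invariance of $\nu$ under each factor separately, from which $\nu\cP = \nu H I = \nu I = \nu$ follows.

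For $H$, only $x \in \{y, \eta(y)\}$ contribute to $\sum_x \nu(x) H(x,y)$, so using $p+q=1$ the identity $\nu H = \nu$ collapses to the detailed balance relation
\begin{equation*}
    \nu(y)\, q(\iota(y), \iota(\eta(y))) = \nu(\eta(y))\, q(\iota(\eta(y)), \iota(y)).
\end{equation*}
The weight $a_x$ of \eqref{eq:invariant_ax} is built to encode exactly this. A direct unfolding of $Z$ from \eqref{eq:def_Z} shows that the ratio $Z(y)/Z(\eta(y))$ equals $a_y$ if $y$ is a center (then $\eta(y)$ is not and sits in the previous small-range component, so its sequence of centers is one shorter), or equals $1/a_{\eta(y)}$ in the opposite case; in either case, substituting $\nu = Z \cdot \pi_P \circ \iota$ and expanding $a$ reduces the displayed identity to a tautology.

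For $I$, the key observation is that $Z$ is constant on each small-range component of $\cG$. Pulling this constant out, $\nu I = \nu$ boils down to
\begin{equation*}
    \sum_{x : x^\circ = y^\circ} \pi_P(\iota(x))\, P(\iota(x), \iota(y)) = \pi_P(\iota(y)).
\end{equation*}
By property \ref{enum:SR_component} of Definition \ref{def:quasitree}, the map $\iota$ identifies the small-range component of $y$ with $B_P^+(\iota(y^\circ), \infty) \subseteq V$, and the left hand side becomes $\sum_{z \in B_P^+(\iota(y^\circ), \infty)} \pi_P(z)\, P(z, \iota(y))$. Hypothesis \eqref{eq:backtracking_all} is the critical input here: if $P(z, \iota(y)) > 0$ then $(I+P)^l(\iota(y), z) > 0$, so $z$ is reachable from $\iota(y)$ and hence from $\iota(y^\circ)$. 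The restricted sum therefore equals the full sum $\sum_{z \in V} \pi_P(z)\, P(z, \iota(y)) = \pi_P(\iota(y))$.

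The substance of the argument is not really any individual step but this use of \eqref{eq:backtracking_all} in the integer step: it guarantees that the $P$-in-neighbourhood of any vertex already lives inside its forward small-range component, so that the $P$-invariance of $\pi_P$ transfers to the quasi-tree. Without this assumption, some predecessors would be missing from the quasi-tree sum and the identity would fail; this is exactly why the lemma is restricted to the setting \eqref{eq:backtracking_all}.
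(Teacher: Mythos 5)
Your proof is correct, and its only real divergence from the paper's is organizational: the paper verifies $\nu\cP=\nu$ in a single pass, splitting the incoming mass at $y$ into the $p$-contributions from the same small-range component and the $q$-contributions arriving through long-range edges (from $\eta(y^{\circ})$ below and from $\eta(z)$, $z\neq y^{\circ}$, above), and then collapsing everything with $p+q\equiv 1$ and the $P$-invariance of $\pi_P$; you instead factor $\cP=HI$ into the half-integer and integer step kernels and check $\nu H=\nu$ and $\nu I=\nu$ separately. The underlying algebra is identical: your detailed-balance identity across a long-range edge, $\nu(y)\,q(\iota(y),\iota(\eta(y)))=\nu(\eta(y))\,q(\iota(\eta(y)),\iota(y))$, is exactly what the weights $a_x$ of \eqref{eq:invariant_ax} encode and is how the paper rewrites the two $q$-cases, while your $I$-step identity is the paper's closing line $\sum_{x\in V(y)}\pi_P(x)P(x,y)=\pi_P(y)$ after pulling out the constant $Z(y^{\circ})$. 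One point where your write-up is more explicit than the paper's: you spell out why, under \eqref{eq:backtracking_all}, every $P$-predecessor $z$ of $\iota(y)$ lies in $B_{P}^{+}(\iota(y^{\circ}),\infty)$ (reachability of $z$ from $\iota(y)$, hence from $\iota(y^{\circ})$), so that the sum over the small-range component really is the full sum; the paper uses this silently and invokes \eqref{eq:backtracking_all} explicitly only for the positivity of $\pi_P$. The factorized presentation buys a cleaner separation of the two mechanisms (Kolmogorov-type balance along the tree of long-range edges versus $\pi_P$-invariance inside components) at no extra cost.
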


\begin{proof}
	Recall $\cP$ denotes the transition kernel of the Markov chain $(\cX_t)_{t \geq 0}$ and consider $x,y \in \cV$. Consider $x,y \in \cV$. Having $\cP(x,y) > 0$ requires that either $x$ is in the same small-range component as $y$, which occurs if and only if $x^{\circ} = y^{\circ}$, or $x = \eta(y^{\circ})$ is in a lower level, or $x$ is in a higher level, that is $x = \eta(z)$ for $z \neq y^{\circ}$ in the small-range component of $y$. In the second case, \eqref{eq:invariant_ax} implies
	\begin{align*}
		\nu(\eta(y^{\circ})) \cP(\eta(y^{\circ}),y) &= Z(\eta(y^{\circ})) \pi_P(\eta(y^{\circ})) q(\eta(y^{\circ}), y^{\circ}) P(y^{\circ}, y) \\ 
		&= Z(y^{\circ}) \pi_P(y^{\circ}) q(y^{\circ}, \eta(y^{\circ})) P(y^{\circ}, y)
	\end{align*}
	Similarly if $z \neq y^{\circ}$ is in the small-range component of $y$,
	\begin{align*}
		\nu(\eta(z)) \cP(\eta(z),y) &= Z(y^{\circ}) a_{\eta(z)} \pi_P(\eta(z)) q(\eta(z), z) P(z, y) \\ 
		&= Z(y^{\circ}) \pi_P(z) q(z, \eta(z)) P(z,y).
	\end{align*}
	Thus these two cases can be regrouped together as the sum $\sum_{x^{\circ} = y^{\circ}} Z(y^{\circ}) \pi_P(x) q(x, \eta(x)) P(x, y)$. Then using that $p + q \equiv 1$ and the invariance of $\pi_P$ with respect to $P$, we deduce
	\begin{align*}
		\sum_{x} \nu(x) \cP(x,y) &= \sum_{x: x^{\circ} = y^{\circ}} Z(y^{\circ}) \pi_P(x) p(x, \eta(x)) P(x, y) + Z(y^{\circ}) \pi_P(x) q(x, \eta(x)) P(x, y) \\
		&= Z(y^{\circ}) \sum_{x \in V(y)} \pi_P(x) P(x,y) \\
		&= Z(y^{\circ}) \pi_P(y) \\
		&= \nu(y).
	\end{align*}
\end{proof}

\subsubsection{Lower bounding the invariant measure}

Let us rearrange weights in the product \eqref{eq:def_Z} to obtain ratios that involve vertices in the same small-range component. Let $x \in \cV$ and $(x_i)_{i=1}^{k}$ be the unique sequence of centers joining $O$ to $x$, possibly empty. Then set
\begin{equation}\label{eq:def_Z_alternative}
	Z'(x) := \prod_{i=1}^{k-1} \frac{\pi_P(\eta(x_{i+1})) q(\eta(x_{i+1}), x_{i+1})}{\pi_P(x_i) q(x_i, \eta(x_{i}))}.
\end{equation}
taking by convention empty products to be equal to $1$.
By construction
\begin{equation}\label{eq:comp_Z_alternative}
	Z(x) = \frac{\pi_P(\eta(x_1)) q(\eta(x_1), x_1)}{\pi_P(x_k) q(x_k, \eta(x_k))} Z'(x)
\end{equation}
Thus to lower bound $Z(x)$ is essentially suffices to bound $Z'(x)$. The idea now is that the process $Z'(x)$ can be described as a branching random walk on $\bR_{+}^{\ast}$ with symmetric increments. We do not need to dive to much into the theory of branching random walks, so we only define a branching random walk informally as a Markov field defined on a Galton-Watson tree. 

In our case, the underlying tree $\cT$ can be obtained by collapsing all vertices of a small-range component into one node and keeping only long-range edges. We call this tree the skeleton tree of $\cG$. This tree has naturally a root which identifies with $O$ in $\cG$, while other vertices identify with centers. Thanks to this identification, $Z'$ can be seen as a random field on $\cT$. Write $\cT_O$ for the subtree of $\cT$ spanned by centers of $\cG_O$.

\begin{lemma}\label{lem:branching_subtree}
	Suppose $P$ satisfies \eqref{eq:backtracking_all} with constant $l \geq 1$. Then for all realization of $\cG$ there exists a connected subgraph $\cG' \subset \cG_O$ containing $\eta(O)$ such that
	\begin{enumerate}[label=(\roman*)]
		\item all vertices of $\cG'$ are at $\cP$-distance at most $2 l$ from their center,
		\item $Z'(x) = \Theta(1)$ for every center $x \in \cG'$,
		\item with probability $\Theta(1)$, any reversible chain with underlying graph $\cG'$ has uniformly lower bounded escape probabilities.
	\end{enumerate}
\end{lemma}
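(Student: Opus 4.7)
The plan is to interpret $Z'$ as a branching random walk on the skeleton tree $\cT_O$ whose increments are essentially centered and bounded, and then to extract $\cG'$ from a supercritical sub-process along which $\log Z'$ stays in a bounded window.

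First, I would invoke \eqref{eq:backtracking_all} together with \ref{hyp:bdd_delta} to see that any two vertices in the same small-range component of $\cG$ are joined by a $P$-path of length at most $l+1$ whose transitions are of order $\Theta(1)$. Iterating the invariance equation for $\pi_P$ along such a path yields $\pi_P(y) = \Theta(\pi_P(x))$ within each small-range component. In particular, each factor in the product \eqref{eq:def_Z_alternative} defining $Z'$ is bounded above and below by universal constants, and via \eqref{eq:comp_Z_alternative}, restricting to vertices at $\cP$-distance at most $2l$ from the corresponding centers (which is what (i) demands) only changes $Z$ by a multiplicative constant. Thus (i) is compatible with the control required in (ii).

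Next, I view $(\log Z'(x))_{x \in \cT_O}$ as a branching random walk on $\cT_O$. The offspring number of a center $x$ equals the size of its small-range component minus one, which by \ref{hyp:cc3} is at least $2$ if $\iota(x) \in V_1$ and at least $1$ if $\iota(x) \in V_2$; since children alternate types, every two successive generations of $\cT_O$ have combined branching at least $2$, giving a supercritical Galton--Watson structure. The increments are uniformly bounded by \ref{hyp:bdd_delta}--\ref{hyp:bdd_p} together with the first step, and the uniform matching on $\cG$ makes the two pairs $(\eta(x_{i+1}), x_{i+1})$ and $(x_i, \eta(x_i))$ symmetric across the bipartition $V_1 / V_2$: each consists of a vertex paired with a uniform $\eta$-partner on the opposite side. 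This symmetry makes the expected increment vanish (up to a negligible $O(1/n)$ bias from sampling without replacement, irrelevant in the infinite-quasi-tree limit). Fixing a threshold $K$ large enough and defining $\cT'$ as the subtree of $\cT_O$ obtained by killing a center as soon as some ancestor has $|\log Z'| > K$, standard branching random walk survival criteria (a first/second-moment computation via the many-to-one formula, combined with a ballot-type estimate for the underlying walk restricted to $[-K, K]$) show that $\cT'$ is infinite with probability bounded away from $0$, and on that event dominates a supercritical Galton--Watson tree.

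Let $\cG'$ be the union in $\cG_O$ of the centers of $\cT'$, the $\cP$-balls of radius $2l$ around each of them in their small-range component, and the long-range edges joining them. Condition (i) holds by construction, and (ii) follows from the killing rule combined with \eqref{eq:comp_Z_alternative} and the first step. For (iii), on the event that $\cT'$ is infinite, $\cG'$ contains an infinite tree-like subgraph of branching number strictly greater than $1$; the connecting small-range segments of bounded $\cP$-length at most $2l$ do not affect this conclusion, so by Rayleigh monotonicity and elementary network reductions, any reversible chain on $\cG'$ with conductances of constant order has uniformly positive escape probability, exactly as in \cite[Prop. 4.1]{cutoff_reversible}. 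The main obstacle is the middle step: carefully verifying the centering of the BRW increments through the bipartite matching structure, and then establishing survival of the killed BRW with a lower bound on the survival probability uniform in $n$. The surrounding steps reduce either to elementary comparisons (first step) or to classical reversible-network transience estimates (last step).
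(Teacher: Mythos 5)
There is a genuine gap, and it sits exactly where you flag the ``main obstacle'', but it appears earlier than you think: your first step misreads \eqref{eq:backtracking_all}. That hypothesis only says that each single transition $P(x,y)>0$ can be reversed within $l$ steps; it does \emph{not} bound the diameter of a small-range component. Small-range components of $\cG$ are entire sets $B_P^{+}(\iota(x^{\circ}),\infty)$, i.e.\ whole communicating classes of $P$, which may have size and diameter growing with $n$ (think of a chain biased along a long segment). Consequently the ratio $\pi_P(\eta(x_{i+1}))/\pi_P(x_i)$ appearing in each factor of \eqref{eq:def_Z_alternative} involves two vertices of the same component that can be arbitrarily far apart, and it is \emph{not} $\Theta(1)$; it can be exponentially large or small in the component diameter. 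This destroys the three pillars of your argument: the increments of your branching random walk are not uniformly bounded in $n$ (so no survival bound uniform in $n$ for the killed BRW), condition (ii) does not follow from a barrier on $\log Z'$ built from unbounded factors, and condition (i) together with connectivity of $\cG'$ fails, because the long-range edge to a child center $x_{i+1}$ attaches at $\eta(x_{i+1})$, which need not lie in the $\cP$-ball of radius $2l$ around the parent center $x_i$ — your killing rule is on $|\log Z'|$ only and imposes no geometric proximity.

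The paper's proof repairs precisely this point by selecting only ``backbone'' labels $(u,v)$ with $d_{\SR}(u,v)\le 2$ or $d_{\SR}(v,u)\le 2$; by \eqref{eq:backtracking_all} such pairs are within distance $2l$ of each other in \emph{both} directions, which simultaneously gives $b_{(u,v)}\in[\delta^{2l+1},\delta^{-(2l+1)}]$ and the distance control needed for (i), and Assumption \ref{hyp:cc3} guarantees enough backbone labels (one per vertex, two at type-$1$ vertices) to retain supercriticality. Moreover, instead of a barrier/ballot estimate for a centered BRW, the paper uses the label-inversion symmetry ($b_e\overset{d}{=}b_e^{-1}$) only through a monotone device: group the backbone edges into blocks of four, keep a block when its weight product is at least $1$ (probability at least $1/2$ by symmetry), and observe the kept component dominates a $\mathrm{Bin}(4,1/2)$ Galton--Watson tree, giving $Z'\ge b_{\min}^4$ along the whole selected subtree with probability $\Theta(1)$, uniformly in $n$. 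Your symmetry observation and your final reversible-network step are in the right spirit, but without the backbone restriction the intermediate construction cannot be carried out as stated; also note that sibling edges share the first coordinate of their label and are therefore not independent, so any many-to-one computation would need the paper's independence structure (independence only across edges not incident to a common vertex).
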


\begin{proof}
	The starting point is that the tree $\cT$ and the identification with centers of the quasi-tree can be generated in two steps. Suppose for instance that $P_1$ and $P_2$ are irreducible. Then all small-range components of $\cG$ are isomorphic to the unique communicating class of either $P_1$ or $P_2$, so the tree $\cT$ is just a $n$-regular tree. In general the tree $\cT$ can be described as a two-type Galton-Watson tree. For $i = 1, 2$ and $k \geq 0$, let $M_i(k)$ be the number of communicating classes in $V_i$ with exactly $k$ vertices. The tree $\cT$ has the law of the random tree where every vertex of type $i \in \{1 ,2 \}$ gives rise to $k-1$ children with probability $k M_{{\bar{i}}}(k) / n$, all of types $\bar{i}$, where $\bar{i} = 2$ if $i =1$ and vice versa. By Assumption \ref{hyp:cc3}, $M_1(1) = M_1(2) = M_2(1) = 0$, so $\cT$ always contains the deterministic tree that alternates between degree $2$ and degree $3$ vertices. 

	Suppose now $\cT$ is a tree generated according to the previous law. Suppose the type of the root is fixed, which determines the types of all other vertices. Then assign labels to edges of $\cT$, consisting of pairs $(u,v)$ such that: $v \in B_{P}^{+}(u,\infty), v \neq u$. They need to satisfy the two constraints: 1. the labels on edges incident to a same vertex $x$ share the same first coordinate; 2. if this coordinate is $u$, then for all $v \in B_{P}^{+}(u,\infty), v \neq u$ there is an edge incident to $x$ with label $(u,v)$. Because of the first constraint, we extend labels to vertices, calling $u$ the label of $x$. In addition, call the label $(u,v)$ a backbone label if $d_{\SR}(u,v) \leq 2$ or $d_{\SR}(v,u) \leq 2$, which is symmetric in $(u, v)$. Remark that every vertex $x$ has always one backbone label, nay two if $x$ has type $i = 1$. If $\cT$ is a tree with such a labelling of the edges, we can construct a quasi-tree $\cG$ such that $\cT$ is the skeleton tree of $\cG$: labels of vertices define the type $\iota$ of centers, while labels of edges detail how the corresponding small-range components are linked to each other. Backbone labels correspond to vertices which are close to their center: if $d_{\SR}(u,v) \leq 2$ then \eqref{eq:backtracking_all} implies that $d_{\SR}(v,u) \leq 2 l$ so $u,v$ are at bounded distance from each other in both directions.

	We then define weights on labelled edges. If $e$ has label $(u,v)$, set
	\begin{equation}
		b_{e} = b_{(u,v)} := \frac{\pi_P(v) q(v, \eta(v))}{\pi_P(u) q(u, \eta(u))}. 
	\end{equation}
	Note that $b_{(u,v)} = b_{(v,u)}^{-1}$. Given a vertex $x \in \cT$, we have by construction that $Z'(x) := \prod_{i=1}^{k} b_{e_i}$ where $(e_i)_{i = 1}^{k}$ is the unique path of edges from the root to $x$ and empty product is equal to $1$. From what precedes, if $(u,v)$ is a backbone label then $P(u,v)^{k} > 0$ for some $k \in [1, 2l]$. Recall that from Assumption \ref{hyp:bdd_delta} and \ref{hyp:bdd_p} there is a constant lower bound $\delta$ on all transition probabilities. Thus
	\begin{equation*}\label{eq:bdd_weights_b}
		b_{(u,v)} \geq \delta ^{2l+1} =: b_{\min}
	\end{equation*}
	is bounded below uniformly in $n$, using that $\pi_{P}(v) \geq \pi_{P}(u) P^{k}(u,v)$. Consequently backbone labels imply bounded weights uniformly in $n$.

	Now labels can be assigned randomly: suppose $x \in \cT$ has type $i \in \{ 1, 2 \}$ and $k-1$ vertices descendant vertices. Pick a communicating class $\cC$ of size $k$ in $V_i$ uniformly at random, pick a uniform $u \in \cC$ and distribute labels $((u,v))_{v \in \cC \smallsetminus \{ u \}}$ uniformly over the descendant edges of $x$. With this procedure, the quasi-tree obtained from the random labels has the distribution of $\cG$.
	Furthermore, labels are independent over edges that are not incident to a same vertex. 
	
	In particular $Z'_x$ is a product of independent weights over the path from the root to $x$. Furthermore, since every choice in this construction is uniform, the labels are also uniform over all possible pairs $(u,v)$. In particular an edge is given a label $(u,v)$ or $(v,u)$ with the same probability, which implies it is given a weight $b_{(u,v)}$ or $b_{(v,u)}$ with same probability. Since the latter are inverse of each other we deduce that the distribution of the weight assigned to an edge is symmetric, that is $b_{e}$ has the same law as $b_{e}^{-1}$. Then for all $x \in \cT$, $Z'_x$ needs also be symmetric as it is a product of independent weights. 
	
	We conclude the proof by a comparison with a branching process. We can reason conditional on $\cT$, so only labels and the associated weights are random. Consider the power tree $\cT_{O}^{(4)}$ obtained by keeping only the vertices of $\cT_O$ which are at depth a multiple of $k$ from the root, and where vertices are joined by an edge if one is the ancestor of the other in $\cT$. By definition an edge of $\cT_{O}^{(4)}$ identifies to a $4$-tuple of edges in $\cT_O$. Let $\cT''$ be the tree defined as the connected component of $\eta(O)$ of all edges in $\cT_{O}^{(4)}$ which identify to $4$-tuple of edges with backbone labels, and such that the product of weights is above $1$. What precedes shows that except $\eta(O)$, every vertex in $\cT_{O}^{(4)}$ has always $4$ edges with backbone labels, and the product of weights is above $1$ with probability $1/2$. Thus the tree $\cT''$ dominates stochastically a Galton Watson tree whose offspring law is binomial $\mathrm{Bin}(4,1/2)$ and is thus supercritical. This gives back a subtree $\cT' \subset \cT_O$ of the original tree, which is the one we are looking for. The labels on these edges give rise to a set of vertices which span the desired graph $\cG'$. Since all labels on $\cT'$ are backbone labels, these vertices are all at distance at most $2 l$ from their center, and $Z'(x) \geq b_{\min}^4$ for all $x \in \cT'$. It remains to prove the last statement. This is the consequence of the arguments used in the reversible model: from the above comparison with a supercritical Galton-Watson tree, if the latter is infinite, which occurs with probability $\Theta(1)$, then the skeleton tree $\cT'$ is sufficiently branching to make any reversible chain on $\cG'$ have uniformly lower bounded escape probabilities. We refer the reader to Proposition 4.1 and Remark 4.3 in \cite{cutoff_reversible} for details. 
\end{proof}

\begin{proof}[Proof of Lemma \ref{lem:escape_probability} with assumption \eqref{eq:backtracking_all}]
	By \eqref{eq:green_escape}, it suffices to bound the Green function $\cG_{\cP_0}$ at $O$. The measure $\nu$ is easily tweaked to give an invariant measure for $\cP_0$: the graph $\cG_{O}^{\ast}$ is still a quasi-tree which can be seen as having $\cC_O := \{ O \} \cup \, \SR(\eta(O))$ as the small-range component of the root. Then replace $\pi_P$ locally on that small-range component by the invariant measure $\pi_P^{\ast}$ of $\cC_O$. This modification does not affect the field $Z'(x)$ \eqref{eq:def_Z_alternative} thus from \eqref{eq:comp_Z_alternative} $Z(x)$ gets only modified by one factor. In the end we obtain an invariant measure $\nu_0$ for $\cP_0$ given by $\nu_{O}(x) := \pi_P^{\ast}(x)$ if $x \in \cC_{O}$ and otherwise
	\begin{equation*}
		\nu_0(x) := \frac{\pi_P^{\ast}(\eta(x_1)) q(\eta(x_1), x_1)}{\pi_P(x_k) q(x_k, \eta(x_k))} Z'(x) \pi_P(x)
	\end{equation*}
	if $(x_i)_{i=1}^{k}$ is the sequence of centers from $O$ to $x$. In particular, $\eta(x_1) \in \cC_O$ and $x_k \in \BSR(x, +\infty)$. Consider now the graph $\cG' =( \cV', \cE')$ of Lemma \ref{lem:branching_subtree}. For all $x \in \cV'$, $Z'(x) \geq c$ and all vertices of $\cV'$ are at bounded distance from their centers. As was argued in the proof of Lemma \ref{lem:branching_subtree}, this implies that the ratios between the measures of vertices inside the same small-range component can be bounded uniformly in $n$. Thus 
	\begin{equation*}
		\frac{\pi_P^{\ast}(\eta(x_1))}{\nu_{0}(O)} \frac{\pi_P(x)}{\pi_P(x_k)} = \Theta(1).
	\end{equation*}
	Since the invariant measure is defined up to multiplicative constant, let us suppose $\pi_P^{\ast}$ was taken in order to have $\nu_{0}(O) = 1$. Using also the fact that the probabilities $q$ are bounded by \ref{hyp:bdd_p}, we deduce that there exist a constant $c$ independent of $n$ such that for all $x \in \cV'$
	\begin{equation*}
	 	\nu_{0}(x) \geq c.
	\end{equation*}

	On the other hand, let $K$ denote the transition kernel of the simple random walk on $\cG'$. The measure $\mu$ defined by $\mu(x) := \deg_{\cG'} (x)$ for all $x \in \cV'$ in an invariant measure for $K$. Since the graph $\cG$ has degrees bounded by $\Delta$, the entries of both $K$ and $\mu$ can be lower bounded by a constant depending only on $\Delta$. We can thus bound
	\begin{equation*}
		\sup_{x \in \cV'} \mu(x) / \nu_{0}(x) \leq C
	\end{equation*}
	for some other constant $C > 0$ independent of $n$. Consider now the transition kernels $\bar{\cP} := (I + \cP_0)^{l+1} / 2^{l+1}$ and $\bar{K} := (1 - \frac{1}{2 C} \frac{\mu}{\nu_0})I + \frac{1}{2 C} \frac{\mu}{\nu_0} K$, where $l$ is given by the assumption \eqref{eq:backtracking_all}. The measure $\nu_0$ is still invariant for $\bar{\cP}$ but now one also has that $\bar{K}$ is reversible with respect to $\nu_0$. 
	
	Let $z \in (0,1)$. We apply Lemma \ref{lem:comparison_green} to operators $T_1 := (I - z \bar{K})$ and $T_2 := (I - z \bar{\cP})$ on the Hilbert space 
	\begin{equation*}
		\ell^{2}(\cV') := \{ f: \cV' \rightarrow \bR \ | \ \innerprod{f}{f}_{\nu_0} < \infty \}, \quad \innerprod{f}{g}_{\nu_0} := \sum_{x \in \cV'} \nu_0(x) f(x) g(x).
	\end{equation*}
	By \eqref{eq:backtracking_all} every small-range edge of $\cG'$, regardless of the orientation, has positive transition probability under $\bar{\cP}$, bounded below by $(\delta/2)^{l+1}$. Similarly, if $x,y \in \cV'$ and $P(\eta(x),y) > 0$, then $\bar{\cP}(x,\eta(x)) \geq 2^{-l-1} \cP_{0}(x,y) \cP_{0}(y,\eta(x)) \geq (\delta/2)^{l+1}$ so long-range edges too have a lower bounded probability. Thus for all $x,y \in \cV'$
	\begin{equation*}
		\bar{\cP}(x,y) \geq \epsilon \bar{K}(x,y)
	\end{equation*}
	for some constant $\epsilon$. As a consequence $\frac{1}{1- \epsilon}(\bar{\cP} - \epsilon \bar{K})$ defines a transition kernel with invariant measure $\nu_0$. This defines thus a contracting operator on $\ell^{2}(\cV')$, which implies
	\begin{equation*}
		\innerprod{(\bar{\cP} - \epsilon \bar{K}) f}{f}_{\nu_0} \leq (1 - \epsilon)\innerprod{f}{f}_{\nu_0}
	\end{equation*}
	and
	\begin{equation*}
		\innerprod{T_2 f}{f}_{\nu_0} \geq \epsilon \innerprod{T_1 f}{f}_{\nu_0}
	\end{equation*}
	for all $f \in \ell^{2}(\cV')$. The reversibility of $\bar{K}$ implying also the self-adjointness of $T_2$ all the assumptions of Lemma \ref{lem:comparison_green} are satisfied, hence
	\begin{equation*}
		\innerprod{T_{2}^{-1} f}{f}_{\nu_0} \leq \epsilon^{-1} \innerprod{T_{1}^{-1} f}{f}_{\nu_0}.
	\end{equation*}
	Taking $f$ as the indicator function at $O \in \cV'$ we obtain
	\begin{equation*}
		G_{\bar{\cP}}(O,O \ | \ z) \leq \epsilon^{-1} G_{\bar{K}}(O,O \ | \ z)
	\end{equation*}
	with $G_{\bar{\cP}}(O,O \ | \ z) = \sum_{t \geq 0} \bar{\cP}^{t}(O,O) z^{t}$ and similarly for $\bar{K}$. Take the limit $z \rightarrow 1$ from below to obtain
	\begin{equation*}
		G_{\bar{\cP}}(O,O) \leq \epsilon^{-1} G_{\bar{K}}(O,O)
	\end{equation*}
	Now we claim that Green functions of $\bar{\cP}$ and $\cP_0$ can be related as
	\begin{equation*}
		G_{\cP_O}(O, O) \leq 2^{l-1} (l+1) \ G_{\bar{\cP}}(O,O).
	\end{equation*}
	Indeed, $\bar{\cP}$ is obtained by applying the successive operations of "lazification" and taking the power $l+1$. It can be checked that the "lazification" multiplies the Green function by $2$. Then let $(\cY_t)_{t \geq 0}$ be the lazy chain with matrix $(I + \cP_0) / 2$. Taking the power $l+1$ of this chain comes down to consider the chain only at times which are a multiple of $l+1$. Now for all $m \in [0, l]$, for all $t \geq 1$, laziness implies
	\begin{equation*}
		\bP_{O}\sbra{\cY_{(l+1)t} = O} \geq \bP_{O} \sbra{\cY_{(l+1)t-m} = O} / 2^{m}
	\end{equation*}
	so we can average and get
	\begin{equation*}
		\bP_{O}\sbra{\cY_{(l+1)t} = O} \geq \sum_{m=0}^{l} \frac{1}{2^{m} (l+1)}\bP_{O} \sbra{\cY_{(l+1)t-m} = O}
	\end{equation*}
	Then summing over $t$ yields:
	\begin{align*}
		G_{\bar{\cP}}(O,O) &\geq \sum_{m=0}^{l} \frac{1}{2^{m} (l+1)} \sum_{t = -m \mod l+1} \bP_{O}\sbra{\cY_t = O} \\
		&\geq \frac{1}{2^{l} (l+1)} G_{(I + \cP_O) / 2}(O, O) \\
		&= \frac{1}{2^{l-1} (l+1)} G_{\cP_O}(O, O).
	\end{align*}

	Finally it remains to upper bound the Green function of $\bar{K}$. Since it is a reversible chain, Point (iii) of Lemma \ref{lem:branching_subtree} shows there is an event of probability $\Theta(1)$ on which it has uniformly lower bounded escape probabilities which from \eqref{eq:green_escape} implies its Green function is uniformly upper bounded at the root.
\end{proof}

\subsection{General case: $o(n)$ transient or close to transient states}

We now consider the general case where we suppose only \ref{hyp:backtracking}. In this case there can be $o(n)$ vertices around which it not possible to revert the transitions in $l$ steps. Transient states are such vertices, so we call these vertices \emph{close to transient}. There are two issues that prevent the previous proof to apply directly: first if $P$ contains transient states, $\pi_P$ takes the value $0$ so the measure $\nu$ as defined in Lemma \ref{lem:invariant_measure} cannot be invariant, and the chain $(\cX_t)_{t \geq 0}$ is incidentally not irreducible. This problem is fixed by the fact that we need only a "sub-invariant" measure, satisfying $\nu \cP \leq \nu$, to apply the comparison argument, which allows for a loss of mass at transient states. Secondly, even without genuine transient states, the comparison with a reversible chain fails at close to transient states. We will argue that since these are in number $o(n)$, they are sufficiently rare to be discarded. 

\paragraph{Quasi-stationary measures and transient classes}

Our goal is to define a sub-invariant measure which puts mass on transient states if there are any. We call such measures quasi-stationary, by analogy to classical quasi-stationary measures, which describe the limiting behaviour of absorbing Markov chains before absorption. Transitionning along a non-backtracking edge, that is an edge that can only be crossed in one direction, is indeed very similar to absorption. 

Let $(Y_t)_{t \geq 0}$ be a Markov chain on a countable state space $W$ with transition kernel $Q$. A set $A \subset W$ is called absorbing if for all $x \in A$, $Q(x,A) = 1$. Suppose the chain contains a non-empty absorbing set $A$ and write $B$ for its complement. A measure $\mu$ on $B$ is called quasi-stationary if for all $t \geq 0$, for all $y \in B$,
\begin{equation*}
	\mu(y) = \sum_{x \in B} \mu(x) \bP_{x} \sbra{Y_t = y, \tau_{A} > t}
\end{equation*}
 As for classical stationary measures, quasi-stationary measures can be obtain from the Perron-Frobenius theorem. Let $Q_B$ the restriction of $Q$ to $B$: this is a sub-stochastic matrix, with non-negative entries, hence it admits a maximal positive eigenvalue $\lambda \leq 1$, which in turn admits left and right eigenvectors with all positive entries. Since $Q_{B}^{t}(x,y) = \bP_{x} \sbra{Y_t = y, \tau_A > t}$, we deduce that left eigenvectors $\mu$ with positive entries define quasi-stationary measures on $B$. Note also that $\mu Q_B = \lambda \mu \leq \mu$ is a sub-invariant measure.

 \begin{remark}
	Our notion of quasi-stationary measures differs slightly from the classical one which not only intersects but conditions by the event that $\tau_A > t$. 
 \end{remark}

Now, if we suppose that $W$ contains transient and recurrent states, the latter form an absorbing set. A quasi-stationary measure in this setting consists thus of a quasi-stationary measure on the set of transient states, with respect to the set of recurrent states. 

\paragraph{Escape probability}

Let us move back to lower bounding the escape probability. We can in fact prove a slightly stronger result, namely allowing for a positive fraction $\alpha n$ of close to transient states, with $\alpha$ small enough. 
$P$ contains in particular at most $\alpha n$ genuine transient states. If there are such transient states, there exist edges $(x,y)$ between the transient and recurrent classes such that $P(x,y) > 0$ but $P^{k}(y,x) = 0$ for all $k \geq 0$. Call these non-backtracking edges. 
Non-backtracking edges in $G$ obviously give non-backtracking edges in $\cG$. We apply the argument with a version of the Green function $G_{\cP_0}$ \eqref{eq:green_escape} killed at non-backtracking edges. Les $\cG'$ denote the connected component of the root in $\cG$ after deletion of the non-backtracking edges and consider the transition kernel $\cP'_0$ given by the restriction of $\cP_0$ to $\cG'$, which is a sub-Markov operator. The corresponding Green function $G_{\cP'_0}(O,O)$ counts the average number of visits to $O$ before crossing a non-backtracking edge. Crossing a non-backtracking edge of $\cG_O$ obviously prevents the chain from going back to $O$, hence the number of visits to $O$ remains a geometric variable of parameter $\qesc(\eta(O))$ and thus $\qesc(\eta(O)) = G_{\cP'_0}(O, O)^{-1}$.

We thus aim to upper bound this Green function uniformly in $n$. After truncation, the remaining long-range edges in $\cG'$ connect small-range components which are either both isomorphic to a communicating class of $G$, or the component furthest from the root is isomorphic to a transient class. We can apply the same idea as in Lemma \ref{lem:invariant_measure} to find a sub-invariant measure, considering quasi-stationary measures on transient classes. Let $\pi_P$ be a positive combination, with bounded coefficients, of invariant measures on communicating classes and quasi-stationary measures on transient states. Then the exact same formula as in Lemma \ref{lem:invariant_measure} yields a sub-invariant measure.

Next we truncate $\cG'$ a second time to get rid of close to transient states as well. Let $\cG''$ be the subgraph of $\cG'$ spanned by the set $\cV''$ of vertices which do not identify under $\iota$ to such states. By construction, the operator $(I + \cP')^{l} / 2^{l}$ can be compared edge-wise with the simple random walk on $\cG''$. We can thus apply Lemma \ref{lem:comparison_green} provided we have a comparison between sub-invariant measures. This can be done similarly by studying the random field $Z$, the only difference with the previous section is that we need we only look at vertices in $\cV''$. This can be done by modifying the definition of the "backbone labels" in the proof of Lemma \ref{lem:branching_subtree}, adding the requirement that a label $(u,v)$ is a backbone label only if $u$ and $v$ are not close to transient. Provided $\alpha$ is small enough, the expected number of backbone labels can be made strictly larger than one, so that the comparison with a supercritical Galton-Watson tree still holds. One last think to be aware of is that we want a result conditional on $\iota(\eta(O))$, which can be close to transient and thus $\eta(O) \notin \cG''$. In this case the argument is to be applied to another subquasi-tree of $\cG_O$ at bounded distance from $\eta(O)$.

\section[Quasi-tree II: Markovian regeneration structure]{Analysis on the quasi-tree II: Markovian regeneration structure}\label{section:QT2}

Proposition \ref{prop:escape_gumbel} implies in particular that $\qesc(O)$ is a.s. positive and thus the Markov chain $(\cX_t)_{t \geq 0}$ a.s. transient. As a consequence, the shortest path from $O$ to $\cX_t$ eventually has to go through a unique sequence of long-range edges $(\xi_{i})_{i=1}^{\infty}$, which is called the \emph{loop-erased chain}. Among the edges of the loop-erased chain, some have the property to be crossed only once. Thanks to this property, these so-called \emph{regeneration edges} yield a Markov decomposition of the quasi-tree which we will use in the next section to prove concentration of the drift and entropy. The regeneration edges will also be used later to compute the approximate invariant measure $\hat{\pi}$.  

\subsection{Markov renewal processes}\label{subsec:mrp}

We start with general results about Markov renewal processes that will be necessary in the sequel. With the exception of Lemma \ref{lem:exponential_tail_markov}, the entirety of this section was already present in \cite{cutoff_reversible} so we will not provide the proofs that can already be found there. 

\begin{definition}\label{def:M1_property}
	Let $S$ be a countable state space and $E$ a Polish space. Consider a process $(Y,Z) = (Y_k, Z_k)_{k \geq 0}$ taking values in $S \times E$, satisfying for all $k \geq 1$, $x,y \in S, z \in E$, 
	\begin{equation}\tag{M1}\label{eq:M1_property}
		\bP \cond{Y_{k} = y, Z_k = z}{Y_{k-1} = x, Z_{k-1}, \ldots, Y_0, Z_0} = \bP \cond{Y'_1 = y, Z'_1 = z}{Y_0 = x},
	\end{equation}
where $(Y'_1, Z'_1)$ is an independent copy of $(Y_1, Z_1)$. This process is thus a Markov chain with a stronger Markov property that the usual one, in that the time dependence occurs only through the $Y$-coordinate. In particular $Y=(Y_k)_{k \geq 0}$ is a Markov chain on $S$. 

\bigskip

A \emph{Markov renewal process} is a process $(Y_k, T_k)_{k \geq 0}$ is a process taking values in $S \times \bN$ such that $(Y_k, T_{k} - T_{k-1})_{k \geq 0}$ satisfies \eqref{eq:M1_property} and $T_k - T_{k-1} \geq 1$ a.s. for all $k \geq 1$, taking $T_{-1} := 0$. The delay $T_0$ can have arbitrary distribution. We call transition kernels of $(Y,T)$ the family of kernels $(Q_{t})_{t \geq 1}$, where for each $t \geq 1$,
\begin{equation*}
	Q_{t}(x,y) := \bP \cond{Y_1 = y, T_1 = t}{Y_0 = x, T_0 = 0} = \bP \cond{Y_1 = y, T_1 - T_0 = t}{Y_0 = x}.
\end{equation*}
Notice then that $Y$ has transition kernel $Q := \sum_{t \geq 1} Q_t$. 
\end{definition}

\begin{remark}\label{rk:mrp_notation}
	If $(Y,Z)$ satisfies \eqref{eq:M1_property}, the initial pair $(Y_0, Z_0)$ can an have arbitrarily law. As usual, if $\nu$ is a probability measure on $S \times E$ we write 
	\begin{equation*}
		\bP_{\nu} := \sum_{y \in S} \int \bP \cond{\cdot}{Y_0 = y, Z = z} \nu(y, dz).
	\end{equation*}
	On the other hand, the \eqref{eq:M1_property} implies that $(Y_k, Z_k)_{k \geq 1}$ conditional on $Y_0$ is independent of $Z_0$. Thus if we are interested in a quantity that is measureable only with respect to $(Y_k, Z_k)_{k \geq 1}$, we will slightly abuse notation by writing $\bP_{u} := \bP \cond{\cdot}{Y_0 = u}$ (and write similarly for expectation), and by extension $\bP_{\mu} := \sum_{y \in S} \mu(y) \bP_{y}$ for a measure $\mu$ on $S$. In particular, note that if $\mu$ is an invariant measure for $Y$, the process $(Y_k, Z_k)_{k \geq 1}$ becomes stationary under $\bP_{\mu}$. In the sequel if $Y$ is positive recurrent, we only consider invariant measures which are probability distributions.
\end{remark}

\begin{lemma}\label{lem:exponential_tail_markov}
	Let $S$ be a countable set and consider a process $(Y_k, Z_k)_{k \geq 0}$ taking values in $S \times \bR_{+}$ which satisfies \eqref{eq:M1_property}. Suppose there exists $\alpha \in (0,1], \theta > 0$ such that $\bE \sbra{e^{\theta Z_{0}^{\alpha}}} < \infty$ and $\max_{u \in S} \bE_{u} \sbra{e^{\theta Z_{1}^{\alpha}}} < \infty$. Then for all $k \geq 0$,
	\begin{equation}
		\bE \sbra{e^{\theta \left(\sum_{i=0}^{k} Z_i \right)^{\alpha}}} \leq \left( \max_{u \in S} \bE_{u} \sbra{e^{\theta Z_{1}^{\alpha}}} \right)^{k}.
	\end{equation}
	As a consequence, there exists $C, C' > 0$ depending on $\theta$, $\bE \sbra{e^{\theta Z_{0}^{\alpha}}}$ and $\max_{u \in S} \bE_{u} \sbra{e^{\theta Z_{1}^{\alpha}}}$ such that for all $k \geq 0$
	\begin{equation}
		\bP \sbra{\sum_{i=0}^{k} Z_i > C k^{1/\alpha} } \leq e^{-C' k}.
	\end{equation}
\end{lemma}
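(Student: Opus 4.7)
The plan is to reduce the problem to a product of independent-like factors via the subadditivity of $x \mapsto x^{\alpha}$ for $\alpha \in (0,1]$, then peel off one step at a time using the Markov-type property \eqref{eq:M1_property}.

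First I would use the elementary inequality $(a+b)^{\alpha} \leq a^{\alpha} + b^{\alpha}$ for non-negative $a,b$ and $\alpha \in (0,1]$, iterated, to get pointwise
\begin{equation*}
	\left(\sum_{i=0}^{k} Z_{i}\right)^{\alpha} \leq \sum_{i=0}^{k} Z_{i}^{\alpha},
\end{equation*}
so that
\begin{equation*}
	\exp\!\left(\theta \Big(\textstyle\sum_{i=0}^{k} Z_i\Big)^{\alpha}\right) \leq \prod_{i=0}^{k} e^{\theta Z_{i}^{\alpha}}.
\end{equation*}
This turns a non-factorising exponential into a product, which is what allows the Markov structure to be used.

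Next I would peel off the factors from the right. Conditioning on $(Y_{j}, Z_{j})_{j \leq k-1}$, the property \eqref{eq:M1_property} gives
\begin{equation*}
	\bE \cond{e^{\theta Z_{k}^{\alpha}}}{Y_0, Z_0, \ldots, Y_{k-1}, Z_{k-1}} = \bE_{Y_{k-1}} \sbra{e^{\theta Z_{1}^{\alpha}}} \leq M, \quad M := \max_{u \in S} \bE_{u} \sbra{e^{\theta Z_{1}^{\alpha}}},
\end{equation*}
and an immediate induction on $k$ yields
\begin{equation*}
	\bE \sbra{\prod_{i=0}^{k} e^{\theta Z_{i}^{\alpha}}} \leq \bE \sbra{e^{\theta Z_{0}^{\alpha}}} M^{k},
\end{equation*}
which proves the first bound (the factor $\bE[e^{\theta Z_{0}^{\alpha}}]$ being absorbed in the statement via the convention on the initial distribution, or transparently if $Z_{0} = 0$).

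For the tail bound, I would apply Markov's inequality at exponential scale: for any $C > 0$,
\begin{equation*}
	\bP \sbra{\textstyle\sum_{i=0}^{k} Z_{i} > C k^{1/\alpha}} = \bP \sbra{e^{\theta \left( \sum_{i=0}^{k} Z_i\right)^{\alpha}} > e^{\theta C^{\alpha} k}} \leq e^{-\theta C^{\alpha} k} \, \bE \sbra{e^{\theta Z_{0}^{\alpha}}} M^{k}.
\end{equation*}
Choosing $C$ large enough that $\theta C^{\alpha} - \log M =: C' > 0$ yields the claimed $e^{-C' k}$ decay (up to the harmless prefactor $\bE[e^{\theta Z_{0}^{\alpha}}]$, which can be absorbed by slightly decreasing $C'$ and enlarging $k$, or by taking $C$ larger). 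I do not expect any substantial obstacle: the only subtle point is the use of subadditivity to split the exponential, after which the argument is mechanical. The constants $C,C'$ depend only on $\theta$, $M$ and $\bE[e^{\theta Z_{0}^{\alpha}}]$, as announced.
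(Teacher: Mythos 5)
Your proof is correct and follows essentially the same route as the paper: subadditivity of $x \mapsto x^{\alpha}$ to split the exponential, the \eqref{eq:M1_property} property to condition off one factor at a time by induction, and then Chernoff's bound. Your explicit retention of the prefactor $\bE\sbra{e^{\theta Z_0^{\alpha}}}$ is in fact slightly more careful than the paper's stated first display, and is consistent with the lemma's claim that $C, C'$ depend on that quantity.
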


\begin{proof}
	Let $\alpha \in (0,1], \theta > 0$ such that $\bE \sbra{e^{\theta Z_{0}^{\alpha}}} < \infty$ and $\max_{u \in S} \bE_{u} \sbra{e^{\theta Z_{1}^{\alpha}}} < \infty$.
	Let $(\cF_k)_{k \geq 0}$ denote the standard filtration of $(Y,Z)$.  Since $\alpha \leq 1$, $(x+y)^{\alpha} \leq x^{\alpha} + y^{\alpha}$ for all $x,y \geq 0$. Consequently Markov's property used with \eqref{eq:M1_property} implies
	\begin{align*}
		\bE \sbra{e^{\theta \left(\sum_{i=0}^{k} Z_{i} \right)^{\alpha}}} &= \bE \sbra{e^{\theta \left(\sum_{i=0}^{k-1} Z_{i} \right)^{\alpha}} \, \bE \cond{e^{\theta Z_{k}^{\alpha}}}{\cF_{k-1}}} \\
		&= \bE \sbra{e^{\theta \left( \sum_{i=0}^{k-1} Z_{i} \right)^{\alpha}} \, \bE_{Y_{k-1}} \sbra{e^{\theta (Z'_{1})^{\alpha}}}},
	\end{align*}
	where $Z'_1$ denotes an independent copy of $Z_1$. Thus
	\begin{equation*}
		\bE \sbra{e^{\theta \left( \sum_{i=0}^{k} Z_i \right)^{\alpha}}} \leq \max_{u \in S} \bE_{u} \sbra{e^{\theta Z_{1}^{\alpha}}} \bE \sbra{e^{\theta \left( \sum_{i=0}^{k-1} Z_i \right)^{\alpha}}}
	\end{equation*}
	from which the first inequality follows by induction.
	The second is an application of Chernoff's bound.
\end{proof}

The next results specify to the setting where the process $Z$ takes integer values. In particular we state analogs of classical renewal theorems in the context of a Markov renewal process $(Y,T)$. The following generalizes the so-called elementary renewal theorem.

\begin{proposition}\label{prop:lln_mrp}
	Let $(Y,T)$ be a Markov renewal process with state space $S$ such that $Y$ is positive recurrent with stationary distribution $\mu$ and $\max_{u \in S} \bE_u \sbra{T_1} < \infty$. Given $t \geq 0$ let
	\begin{equation*}
		N_t := \sup \{k \geq 0, T_k \leq t \}.
	\end{equation*}
	Then a.s.
	\begin{equation}
		\lim_{k \rightarrow \infty} \frac{T_k}{k} = \bE_{\mu} \sbra{T_1}
	\end{equation}
	and
	\begin{equation}\label{eq:lln_mrp}
		\lim_{t \rightarrow \infty} \frac{N_t}{t} = \frac{1}{\bE_{\mu}\sbra{T_1}}.
	\end{equation}
\end{proposition}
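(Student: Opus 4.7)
The plan is to prove the first limit via a regeneration argument at a fixed recurrent state, then deduce the second by the standard sandwich argument from elementary renewal theory.

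For the first limit, I would write $T_k = T_0 + \sum_{i=1}^{k} Z_i$ with $Z_i := T_i - T_{i-1}$ and let $h(u) := \bE_u[T_1]$, which is bounded by assumption and satisfies $\mu(h) = \bE_\mu[T_1]$. Pick a state $u_0 \in S$ with $\mu(u_0) > 0$ and let $\tau_0 < \tau_1 < \ldots$ denote the successive visit times of $Y$ to $u_0$. By the strong Markov property for $Y$ combined with \eqref{eq:M1_property}, the pairs $\bigl(\tau_{m+1} - \tau_m, \ \sum_{i=\tau_m + 1}^{\tau_{m+1}} Z_i \bigr)_{m \geq 0}$ are i.i.d.\ (for $m \geq 1$; the $m=0$ block is handled separately and contributes $o(m)$ in what follows). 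Both coordinates have finite expectation: the first by positive recurrence, and the second by Kac's identity applied to the bounded function $h$, which gives
\begin{equation*}
\bE_{u_0} \sbra{\sum_{i=1}^{\tau_1} Z_i} = \bE_{u_0} \sbra{\sum_{i=1}^{\tau_1} h(Y_{i-1})} = \bE_{u_0}[\tau_1] \cdot \bE_\mu[T_1],
\end{equation*}
the first equality following from \eqref{eq:M1_property} which yields $\bE[Z_i \mid \cF_{i-1}] = h(Y_{i-1})$. The classical i.i.d.\ strong law then gives $\tau_m/m \to \bE_{u_0}[\tau_1] = 1/\mu(u_0)$ and $T_{\tau_m}/m \to \bE_{u_0}[\tau_1]\cdot \bE_\mu[T_1]$ a.s., hence $T_{\tau_m}/\tau_m \to \bE_\mu[T_1]$. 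Since $k \mapsto T_k$ is non-decreasing, the sandwich $T_{\tau_m}/\tau_{m+1} \leq T_k/k \leq T_{\tau_{m+1}}/\tau_m$ for $\tau_m \leq k < \tau_{m+1}$ together with $\tau_{m+1}/\tau_m \to 1$ a.s.\ extends the convergence to $T_k/k \to \bE_\mu[T_1]$.

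For the second limit, since $T_k/k$ has a finite positive a.s.\ limit we have $T_k \to \infty$, and hence $N_t \to \infty$ a.s.\ as $t \to \infty$. The defining inequalities $T_{N_t} \leq t < T_{N_t + 1}$ give after dividing by $N_t$
\begin{equation*}
\frac{T_{N_t}}{N_t} \ \leq \ \frac{t}{N_t} \ \leq \ \frac{T_{N_t + 1}}{N_t + 1} \cdot \frac{N_t + 1}{N_t},
\end{equation*}
and both bounds tend a.s.\ to $\bE_\mu[T_1]$ by the first part, giving $t/N_t \to \bE_\mu[T_1]$ and hence \eqref{eq:lln_mrp}.

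The only genuinely delicate point is the reduction of the block sum $\bE_{u_0}[\sum_{i=1}^{\tau_1} Z_i]$ to $\bE_{u_0}[\tau_1]\, \bE_\mu[T_1]$; everything else is an application of the i.i.d.\ SLLN and the standard renewal-theoretic sandwich. This reduction rests on the conditional mean identity $\bE[Z_i \mid \cF_{i-1}] = h(Y_{i-1})$ from \eqref{eq:M1_property} (so that the martingale $\sum_{i \leq n}(Z_i - h(Y_{i-1}))$ stopped at $\tau_1$ has zero mean by the assumed bound on $h$ and optional stopping) combined with the usual Kac formula $\bE_{u_0}[\sum_{i=1}^{\tau_1} h(Y_{i-1})] = \bE_{u_0}[\tau_1] \mu(h)$ for positive recurrent $Y$.
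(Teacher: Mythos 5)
Your argument is correct, and it is worth noting that the paper itself does not reprove this statement: Section \ref{subsec:mrp} defers the proof to the companion paper \cite{cutoff_reversible}, where the natural route (consistent with how the authors argue elsewhere, e.g.\ the invocation of Birkhoff's theorem in Lemma \ref{lem:entropy_regen}) is to exploit that $(Y_k,Z_k)_{k\geq 1}$ is stationary and ergodic under $\bP_{\mu}$, apply the ergodic theorem there, and then transfer the a.s.\ limit to an arbitrary initial law. Your regeneration-at-a-state proof is a legitimately different and self-contained alternative: by cutting at successive visits to a fixed state $u_0$ with $\mu(u_0)>0$ you reduce everything to the i.i.d.\ SLLN, and the arbitrary delay $T_0$ and arbitrary initial distribution of $Y_0$ are handled for free (the ergodic-theorem route needs a separate step for the non-stationary start), at the cost of implicitly using irreducibility so that $\tau_0<\infty$ a.s.\ — which is the intended reading of ``positive recurrent with stationary distribution $\mu$'' and is harmless in the paper's application, where the regeneration chain satisfies a Doeblin condition. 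Two small remarks on your ``delicate point'': since $Z_i\geq 1\geq 0$ and $\{\tau_1\geq i\}\in\cF_{i-1}$, the identity $\bE_{u_0}\sbra{\sum_{i=1}^{\tau_1}Z_i}=\bE_{u_0}\sbra{\sum_{i=1}^{\tau_1}h(Y_{i-1})}$ follows directly from Tonelli and the conditional-mean relation $\bE\cond{Z_i}{\cF_{i-1}}=h(Y_{i-1})$, so optional stopping can be avoided altogether (though your Wald-type justification, using $\bE_{u_0}\sbra{\tau_1}<\infty$ and the uniform bound on $h$, is also valid); and in the sandwich step one should note explicitly that $\tau_{m+1}/\tau_m\to 1$ follows from $(\tau_{m+1}-\tau_m)/m\to 0$ a.s., itself a consequence of the i.i.d.\ return times having finite mean — exactly as you indicate.
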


The following Proposition is an analog of the classical renewal theorem, with a quantitative bound on the speed of convergence. It is not necessary for the asymptotic analysis on the quasi-tree but will be used to essentially compute the annealed laws of $\cX$ and $X$ and obtain the value of the limiting measure $\hat{\pi}$ in Proposition \ref{prop:nice_approx}. 

\begin{proposition}\label{prop:markov_renewal_thm}
	Let $(Y,T)$ be a Markov renewal process with state space $S$, such that $\max_{u \in S} \bE_{u} \sbra{T_1} < \infty$ and $Y$ is positive recurrent with stationary distribution $\mu$, irreducible and aperiodic. Let $Q_{t}(x,y) := \bP \cond{Y_1 = y, T_1 - T_0 = t}{Y_0 = x}$ for all $t \in \bZ$ and suppose
	\begin{equation*}
		\alpha := \min_{x \in S} \sum_{\substack{t \in \bZ \\ y \in S}} \left( Q_{t}(x,y) \wedge Q_{t+1}(x,y) \right) > 0.
	\end{equation*}
	Then for all probability distribution $\nu$ on $S \times \bN$, 
	\begin{equation}
		\sum_{y \in S} \abs{\bP_{\nu}\sbra{\exists k \geq 0: Y_k = y, T_k = t} - \frac{\mu(y)}{\bE_{\mu}\sbra{T_1}}} \xrightarrow[t \rightarrow \infty]{} 0.
	\end{equation}
	More precisely, given $\e \in (0,1)$, let $K_{\nu}(\e)$ be the minimal integer such that
	\begin{equation*}
		\bP_{\nu} \sbra{T_0 > K_{\nu}(\e)} \leq \e.
	\end{equation*}
	There exists $C(\e) > 0$ such that for all probability distribution $\nu$ on $S \times \bN$,
	\begin{equation*}
		\sum_{y \in S} \abs{\bP_{\nu}\sbra{\exists k \geq 0: Y_k = y, T_k = t} - \frac{\mu(y)}{\bE_{\mu}\sbra{T_1}}} \leq \e
	\end{equation*}
	for all
	\begin{equation}\label{eq:tmix_renewal}
		t \geq \frac{C(\e)}{\alpha} \left( \tmix^{(Y)}(\e) \max_{u \in S} \bE_{u} \sbra{T_1} + K_{\nu}(\e) + \frac{\bE_{\mu} \sbra{T_{1}^{2}}}{\bE_{\mu} \sbra{T_1}}  \right)^{2} \max_{v} \bE_{v} \sbra{T_1}
	\end{equation}
	where $\tmix^{(Y)}(\e)$ denotes the $\e$-mixing time of $Y$.
\end{proposition}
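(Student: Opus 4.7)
The strategy is to build a coupling between $(Y_k, T_k)$ started with distribution $\nu$ and a stationary companion $(Y_k^*, T_k^*)$ whose renewal measure is identically $\mu(y)/\bE_\mu\sbra{T_1}$ at every integer time. Once the coupling succeeds in both coordinates, the two renewal measures coincide for all later $t$, and the $\ell^1$ error sought is bounded by the failure probability of the coupling. The stationary companion is the standard one: take $Y_0^* \sim \mu$ and $(Y_1^*, \Delta T^*)$ from the size-biased law $\mu(x) Q_t(x,y) \, t / \bE_\mu\sbra{T_1}$, then set $T_0^*$ uniform on $\{0, \ldots, \Delta T^* - 1\}$ independently. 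A direct computation shows $\bP\sbra{\exists k : Y_k^* = y,\, T_k^* = t} = \mu(y)/\bE_\mu\sbra{T_1}$ for every $t \geq 0$ and $y \in S$, and that $\bE\sbra{T_0^*}$ is of order $\bE_\mu\sbra{T_1^2}/\bE_\mu\sbra{T_1}$.

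\paragraph{Three-phase coupling.} The coupling will run in three successive phases. \textbf{(a)} Wait out the initial delays until time $T_0 \vee T_0^*$; by the definition of $K_\nu(\e)$ and Markov's inequality on $T_0^*$, this phase ends before time of order $K_\nu(\e) + \bE_\mu\sbra{T_1^2}/(\e \bE_\mu\sbra{T_1})$ with probability $1 - \e$. \textbf{(b)} Couple the $Y$-marginals: since $Y$ is irreducible aperiodic with invariant law $\mu$, the two $Y$-chains can be made equal after $\tmix^{(Y)}(\e)$ renewal steps with probability $1 - \e$, costing at most of order $\tmix^{(Y)}(\e) \max_u \bE_u\sbra{T_1}$ units of real time. \textbf{(c)} Once $Y_k = Y_k^*$, the hypothesis $\alpha > 0$ enters: for any $x \in S$, the measures $Q_\cdot(x, \cdot)$ and $Q_{\cdot + 1}(x, \cdot)$ have total-variation overlap at least $\alpha$. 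This lets us couple two independent draws from $Q_\cdot(x, \cdot)$ so that with probability $\alpha/2$ the $Y$-components agree and the $T$-increments differ by $+1$, and symmetrically by $-1$ with probability $\alpha/2$; with the remaining probability the two draws are independent.

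\paragraph{Hitting-time analysis.} Under this coupling, as long as $Y_k = Y_k^*$ is maintained, the gap $D_k := T_k - T_k^*$ performs a lazy symmetric random walk on $\mathbb{Z}$ with $\pm 1$ probability $\alpha/2$ each, plus zero-mean contributions coming from the independent-draw steps. Standard hitting-time estimates then give that $D_k$ returns to $0$ in $O(D_{0}^{2}/\alpha)$ renewal steps with probability $1 - \e$. After phases (a)--(b), the gap $|D_0|$ is at most of order $K_\nu(\e) + \tmix^{(Y)}(\e) \max_u \bE_u\sbra{T_1} + \bE_\mu\sbra{T_1^2}/\bE_\mu\sbra{T_1}$, and multiplying the squared hitting time by the average step length $\max_v \bE_v\sbra{T_1}$ recovers the expression \eqref{eq:tmix_renewal}. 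Past the coupling time both processes can be run identically, and summing the coupling failure probability over $y \in S$ yields the advertised $\ell^1$ bound.

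\paragraph{Main obstacle.} The delicate point is that phase (c) cannot be run uninterrupted: the $1 - \alpha$ fraction of independent draws may destroy the $Y$-coupling, forcing a return to phase (b). The resolution is to interleave the two phases, observing that each decoupling excursion of $Y$ is again of length $O(\tmix^{(Y)}(\e))$ and that the independent-step contributions to $D_k$ are centered with bounded variance thanks to $\max_u \bE_u\sbra{T_1} < \infty$. A renewal-of-coupling argument then shows that integrating these excursions changes the hitting-time bound only by a constant factor, preserving the quadratic dependence on the initial gap that appears in \eqref{eq:tmix_renewal}.
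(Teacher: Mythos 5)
Your overall plan (couple the chain started from $\nu$ with a stationary companion built by the Palm/size-biased construction, match the $Y$-marginals using $\tmix^{(Y)}$, then use the overlap $\alpha$ between $Q_t$ and $Q_{t+1}$ to shift-couple the renewal epochs, which produces exactly the quadratic-in-the-gap bound \eqref{eq:tmix_renewal}) is the right one — the paper defers the proof to the companion paper, but the shape of the bound is precisely what this argument gives. The gap is in how you implement phase (c). You couple the two increment draws so that with probability $\alpha/2$ each they agree in $Y$ and differ by $\pm 1$ in time, \emph{and otherwise are independent}. With independent residuals the difference process $D_k = T_k - T_k^*$ is no longer a nearest-neighbour walk: its jumps are differences of unbounded renewal increments, so (i) it can overshoot $0$, and the claimed return to $0$ in $O(D_0^2/\alpha)$ steps is not justified — a quantitative hitting-time estimate for such a walk would need uniform second moments $\max_u \bE_u\sbra{T_1^2}$, which are not among the hypotheses (only $\max_u\bE_u\sbra{T_1}<\infty$, with $\bE_\mu\sbra{T_1^2}$ entering solely through the stationary delay); (ii) the failure steps also destroy the $Y$-matching, and during the re-coupling excursions the two chains sit at different states, so the increments of $D_k$ have drift $\bE_x\sbra{T_1}-\bE_{x'}\sbra{T_1}\neq 0$, which your "renewal-of-coupling" paragraph does not control — asserting that interleaving phases (b) and (c) "changes the bound only by a constant factor" is exactly the missing argument, not a consequence of anything you proved.

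The repair is the Mineka-type choice of the joint law: given both chains are at the same state $x$, set
$\bP[(\xi,\xi')=((y,t),(y,t+1))]=\bP[(\xi,\xi')=((y,t+1),(y,t))]=\tfrac12\,Q_t(x,y)\wedge Q_{t+1}(x,y)$ and put all remaining mass on the diagonal $\xi'=\xi$; both marginals are still $Q_\cdot(x,\cdot)$, the $Y$-components then agree \emph{surely} at every step (so once phase (b) succeeds it never has to be repeated), and $D_k$ becomes a lazy symmetric walk with steps in $\{-1,0,+1\}$ and per-step motion probability at least $\alpha$. It cannot jump over $0$, and the gambler's-ruin/optional-stopping estimate gives hitting of $0$ within $O(D_0^2/(\alpha\e))$ renewal steps with probability $1-\e$, with no second-moment assumption; converting renewal steps into time via $\max_v\bE_v\sbra{T_1}$ then yields \eqref{eq:tmix_renewal}. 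Two smaller points to tidy up: your stationary companion is described inconsistently (if the straddling triple $(x,y,t)$ is size-biased by $t$ then the first coordinate has the length-biased law $\mu(x)\bE_x\sbra{T_1}/\bE_\mu\sbra{T_1}$, not $\mu$; the correct construction puts the first renewal at a uniform offset in the size-biased interval with mark $y$, and one should check directly that its renewal measure is identically $\mu(y)/\bE_\mu\sbra{T_1}$), and in phase (b) the companion's $Y$-chain starts from this biased law rather than from $\mu$, which is harmless but should be said when invoking the mixing time.
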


The last result we need is a variance bound for Markov chains. Since we will only deal here with Markov chains with constant order mixing time, what we require is the following. It is obtained by combining Propositions 5.4 and 5.5 in \cite{cutoff_reversible}.

\begin{proposition}\label{prop:variance_markov}
	Let $S=S(n)$ be countable and $E$ a Polish space. Let $(Y_k, Z_k)_{k \geq 0}$ be a process on $S \times E$ satisfying \eqref{eq:M1_property}. Suppose that $Y$ is irreducible, positive recurrent, with invariant measure $\mu$ and has mixing time $\tmix(1/4) = O(1)$ as $n \rightarrow \infty$.
	Let $(f_i)_{i \geq 1}$ be a family of functions such that for all $i \geq 1$, $f_i: S \times E \rightarrow \bR$ and $\max_{u \in S} \bE_{u} \sbra{f_i (Y_1, Z_1)^{2}} < \infty$. Then there exists a constant $C > 0$ such that for all $k \geq 1$
	\begin{equation*}
		\Var_{\mu} \sbra{ \sum_{i=1}^{k} f_{i}(Y_i, Z_i) } \leq C \sum_{i=1}^{k} \Var_{\mu} \sbra{f_{i}(Y_1, Z_1)}.
	\end{equation*}
\end{proposition}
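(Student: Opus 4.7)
The approach is the standard variance bound for sums along a rapidly mixing Markov chain: expand the variance into diagonal and off-diagonal contributions, and exploit the fast mixing of $Y$ to show that distant covariances decay geometrically, so their contribution is only an $O(1)$ multiplicative overhead on top of the i.i.d.~bound. Under $\bP_\mu$ the process $(Y_k, Z_k)_{k \geq 1}$ is stationary by Remark \ref{rk:mrp_notation}, hence
\begin{equation*}
\Var_\mu\!\left[\sum_{i=1}^k f_i(Y_i, Z_i)\right] = \sum_{i=1}^k \Var_\mu[f_i(Y_1, Z_1)] + 2 \!\!\sum_{1 \leq i < j \leq k}\!\! \mathrm{Cov}_\mu(f_i(Y_i, Z_i), f_j(Y_j, Z_j)),
\end{equation*}
and only the off-diagonal sum requires work.

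Next I would use the M1 property to reduce these covariances to quantities involving only the base chain $Y$. Conditioning on $(Y_i, Z_i)$ and setting $g_t(u) := \bE_u[f_j(Y_t, Z_t)]$ and $h_i(u) := \bE_\mu[f_i(Y_i, Z_i)\mid Y_i = u]$, a short conditioning computation gives
\begin{equation*}
\mathrm{Cov}_\mu(f_i(Y_i, Z_i), f_j(Y_j, Z_j)) = \mathrm{Cov}_\mu(h_i(Y_i), g_{j-i}(Y_i)),
\end{equation*}
a single-time covariance of two deterministic functions of $Y \sim \mu$. Writing $g_{j-i} - \mu(f_j) = P^{j-i-1}(\tilde f_j - \mu(\tilde f_j))$ with $\tilde f_j(u) := \bE_u[f_j(Y_1, Z_1)]$ and $P$ the transition operator of $Y$, Cauchy--Schwarz together with geometric $L^2_0$-contraction of $P^t$ bounds this by $C\rho^{j-i}\sqrt{\Var_\mu f_i \, \Var_\mu f_j}$; the $\Var$ factors arise from the conditional Jensen bounds $\|h_i - \mu(h_i)\|_{L^2(\mu)}^2 \leq \Var_\mu f_i$ and the analogue for $\tilde f_j$. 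Summing over $i<j$ with AM-GM $\sqrt{V_i V_j} \leq (V_i + V_j)/2$ and $\sum_{d \geq 1}\rho^d < \infty$ then yields the required bound $C \sum_i \Var_\mu f_i$.

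The principal technical obstacle is extracting the geometric $L^2$-contraction of $P^t$ from the mere total-variation mixing assumption $\tmix^{(Y)}(1/4) = O(1)$, since no reversibility is available. One combines submultiplicativity of TV mixing ($\tmix(2^{-k}) \leq k\tmix(1/4)$) with a Cauchy--Schwarz or truncation step that trades TV distance against the second moments of the $f_i$, which exist by assumption. This machinery is precisely what is packaged in Propositions~5.4 and 5.5 of the companion paper \cite{cutoff_reversible}, so the proposition follows by applying Prop.~5.5 to reduce to correlations of the base chain $Y$, then Prop.~5.4 to obtain the variance bound for $Y$ itself.
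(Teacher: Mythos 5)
Your proposal is correct and follows essentially the same route as the paper, which gives no independent argument but simply obtains the bound by combining Propositions 5.4 and 5.5 of \cite{cutoff_reversible} — exactly the reduction (covariance decay for the base chain $Y$ via its $O(1)$ mixing time, then summation of the off-diagonal terms) that you sketch and then defer to those propositions. The only caveat is that "geometric $L^2_0$-contraction of $P^t$" is not literally what uniform TV mixing gives; the cited propositions work instead through a $\phi$-mixing-type covariance inequality trading TV distance against second moments, which is the hedge you already acknowledge.
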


The previous result can be applied to the case of a Markov renewal process $(Y_k, T_k)_{k \geq 0}$, to get that 
\begin{equation}\label{eq:variance_mrp}
	\Var_{\mu} \sbra{T_k} = O( k \Var_{\mu} \sbra{T_1}).
\end{equation}

\subsection{Regeneration structure}

Let us come back to the setting of quasi-trees.

\begin{definition}
	A time $t \in \bN / 2$ is called a regeneration time if $(\cX_{t-1/2}, \cX_{t})$ is a long-range edge crossed for the first and last time at time $t$.
\end{definition}

\begin{lemma}
	A.s. there exists an infinity of regeneration times.
\end{lemma}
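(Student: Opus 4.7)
The plan is to combine almost sure transience of the chain with two successive Borel--Cantelli type arguments. First, Proposition \ref{prop:escape_gumbel} guarantees that $\qesc(x) > 0$ almost surely for every $x \in \cV$, so $(\cX_t)_{t \ge 0}$ is almost surely transient and $d_{\LR}(O, \cX_t) \to \infty$. In particular the loop-erased chain $(\xi_i)_{i \ge 1}$ is almost surely an infinite sequence of distinct long-range edges. For each $i$, let $T_i$ be the first time the chain crosses $\xi_i$, so $\cX_{T_i} = z_i$, where $z_i$ denotes the endpoint of $\xi_i$ at long-range distance $i$ from $O$; by Definition \ref{def:quasitree} this endpoint is a center, hence the subquasi-tree $\cG_{z_i}$ is well defined. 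The tree-like structure of $\cG$ forces every path from $z_i$ to $y_i := \eta(z_i)$ to use the edge $\xi_i$, so the event that $T_i$ is a regeneration time coincides with $\{\cX_t \in \cG_{z_i} \text{ for all } t \ge T_i\}$. Applying the strong Markov property at $T_i$ then gives
\begin{equation*}
	\bfP \cond{T_i \text{ is a regeneration time}}{\cF_{T_i}} = \qesc(z_i).
\end{equation*}

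Next, I will show that $\qesc(z_i) \ge q_0$ holds for infinitely many $i$ almost surely. Since the $z_i$ are distinct centers lying along the loop-erased chain, their subquasi-trees $(\cG_{z_i})_{i \ge 1}$ are pairwise disjoint. The sequential generation of Section \ref{subsec:random_QT} shows that, conditional on the sequence of types $(\iota(z_i))_{i \ge 1}$ and on $\cG \smallsetminus \bigcup_i \cG_{z_i}$, the subquasi-trees $\cG_{z_i}$ are independent, with $\cG_{z_i}$ distributed as $\cG$ conditional on $\iota(O) = \iota(z_i)$ (cf.\ Remark \ref{rk:escape_center}). Lemma \ref{lem:escape_probability} then provides a constant $q_0 > 0$, uniform in the types, such that each event $\{\qesc(z_i) \ge q_0\}$ has conditional probability at least $q_0$; the usual second Borel--Cantelli lemma applied to these conditionally independent events yields $\sum_i \II_{\qesc(z_i) \ge q_0} = \infty$ almost surely.

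To conclude, I work conditional on the full environment $\cG$ and apply L\'evy's conditional Borel--Cantelli lemma to the events $A_i := \{T_i \text{ is a regeneration time}\}$ in the quenched filtration $(\cF_{T_i})_{i \ge 1}$. Since $\bfP(A_i \mid \cF_{T_i}) = \qesc(z_i)$ and the previous step gives $\sum_i \qesc(z_i) = \infty$ almost surely, we obtain $\bfP(\limsup_i A_i) = 1$ almost surely, and averaging over $\cG$ gives the claim under the annealed measure $\bP$. The only delicate points are checking the orientation conventions (so that $z_i$ really is a center and $\cG_{z_i}$ is the correct "forward" subquasi-tree) and the conditional independence of the subquasi-trees along the loop-erased spine; both follow directly from Definition \ref{def:quasitree} and the sequential construction of Section \ref{subsec:random_QT}, and are the main technical (rather than substantive) obstacles.
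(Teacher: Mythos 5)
Your overall plan (transience plus a Borel--Cantelli argument driven by the lower bound on escape probabilities of Lemma \ref{lem:escape_probability}) is in the right spirit, but two of its key steps do not hold as stated. First, $T_i$, the first crossing time of the $i$-th edge $\xi_i$ of the loop-erased chain, is \emph{not} a stopping time: whether a given long-range edge is $\xi_i$ depends on the entire future of the trajectory (the walk must eventually stay in $\cG_{z_i}$), so you cannot invoke the strong Markov property at $T_i$, and the conditional probability of regeneration given the past is not $\qesc(z_i)$. This is precisely the standard subtlety with regeneration structures: the law of the walk and environment beyond a regeneration point is the \emph{conditioned} law $\bQ_u$ of Lemma \ref{lem:markov_decomposition}, not the unconditioned one, and establishing that fact is part of what needs to be proved. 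The same circularity infects your final application of L\'evy's conditional Borel--Cantelli, where in addition the events $A_i$ (``never crosses $\xi_i$ again'') are not measurable with respect to any of the $\sigma$-fields $\cF_{T_j}$ you are conditioning on.

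Second, the claim that, given their types, the spine subquasi-trees $\cG_{z_i}$ are independent with the law of $\cG$ conditioned on $\iota(O)=\iota(z_i)$ is false. The vertices $z_i$ are selected \emph{by the walk}: $z_i$ lies on the loop-erased chain exactly when the walk eventually remains in $\cG_{z_i}$, so the conditional law of $\cG_{z_i}$ is biased (towards escapable subtrees). The bias happens to point in the favourable direction, but your second Borel--Cantelli step is carried out with the wrong conditional law, so the estimate $\bP(\qesc(z_i)\geq q_0\mid\cdots)\geq q_0$ is unjustified. The repair is the one the paper uses (following Lemma 3.3 of Lyons--Pemantle--Peres, and implemented via the ladder times $S_i$ in the proof of Lemma \ref{lem:regeneration_tails}): work with fresh/ladder epochs, which \emph{are} stopping times and at which a previously unexplored subquasi-tree is attached, so that under the annealed law each attempt to escape succeeds with probability bounded below by the constant of Lemma \ref{lem:escape_probability}; the number of attempts before the first regeneration is then dominated by a geometric random variable, and one iterates. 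Alternatively, the statement follows directly from the stretched-exponential tails of the regeneration times (Lemma \ref{lem:regeneration_tails}) together with the Markov decomposition of Lemma \ref{lem:markov_decomposition}, which is the route the paper indicates.
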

The proof is similar to that of Lemma 3.3 in \cite{lyons1996biased}. It can also be deduced from the tail estimates proved later on.

Frow now on, let $T_0 := 0, Y_0 := \iota(O), L_0 := 0$ and $(T_k)_{k \geq 1}$ be the sequence of successive regeneration times of $\cX$. For all $k \geq 1$, define 
\begin{equation*}
	Y_k = \iota(\cX_{T_{k} - 1/2}) \qquad L_k := d(O, \cX_{T_k}).
\end{equation*}
$(Y_{k})_{k \geq 1}$ is the Markov chain that dictates the law of the environments between successive regeneration times. To describe this Markov chain, introduce the measure 
\begin{equation}\label{eq:Qu}
	\bQ_{u} = \bP \cond{\cdot}{\cX_{1/2} = \eta(O), \iota(O) = u, \tau_{O} = \infty }.
\end{equation}

\begin{remark}\label{rk:comparison_Qu}
	By Lemma \ref{lem:escape_probability}, there exists a constant $q_0 > 0$ such that for all $u \in V$,
	\begin{equation*}
		\bP \cond{\tau_{O} = \infty}{\cX_{1/2} = \eta(O), \iota(O) = u} \geq \bE \cond{\qesc(\eta(O))}{\iota(O) = u}  \geq q_0. 
	\end{equation*}
	which implies 
	\begin{equation*}
		\bP \cond{\cdot, \tau_O = \infty}{\cX_{1/2} = \eta(O), \iota(O) = u} \leq \bQ_{u} \sbra{\cdot} \leq q_0^{-1} \bP \cond{\cdot}{\cX_{1/2} = \eta(O), \iota(O) = u}.
	\end{equation*}
	Consequently the laws of the quasi-tree $\cG$ under $\bP \cond{\cdot}{\iota(O) = u}$ and $\bQ_{u}$ are within constant factors of each other for events that do not contradict $\tau_O = \infty$. In particular, up to a change of constant the uniform lower bounds of Proposition \ref{prop:escape_gumbel} and \ref{prop:escape_dense} for the random variables $\qesc(O), \qesc(\eta(O))$ also hold under the probability $\bQ_{u}$, for any $u \in V$. Furthermore, the law of the centers added to the quasi-trees remains essentially uniform in the sense that for all  $v \in V$ such that $V(v) \neq V(u)$
	\begin{equation}\label{eq:etaO_unif}
		\bQ_u \sbra{\iota(\eta(O)) = v} = \Theta(1/n).
	\end{equation}
	Indeed by what precedes
	\begin{equation*}
		\bQ_{u} \sbra{\iota(\eta(O)) = v} \leq q_0^{-1} \bP \cond{\eta(O) = v}{\iota(O) = u} = q_0^{-1} / n
	\end{equation*}
	and 
	\begin{align*}
		\bQ_{u} \sbra{\iota(\eta(O)) = v} &\geq \bE \cond{\II_{\iota(\eta(O)) = v}\II_{\tau_O = \infty}}{\iota(O) = u, \cX_{1/2} = \eta(O)} \\
		&\geq \bE \cond{\II_{\iota(\eta(O)) = v} \bE \cond{\qesc(\eta(O))}{\iota(O), \iota(\eta(O))}}{\iota(O) = u} \\
		&\geq q_0 / n.
	\end{align*}
\end{remark}

\begin{remark}
	Section \ref{subsec:mrp} only considered integer valued processed for the time component of Markov renewal processes. To apply the results of this section we will thus implicitely identify regeneration times with an integer-valued process. 
\end{remark}

The following lemma is the analog of Lemma 3.6 in \cite{hermon2020universality} and is proved in a similar way.

\begin{lemma}\label{lem:markov_decomposition}
	\begin{itemize}
		\item The sequences $(Y_k, T_k)_{k \geq 1}$ and $(Y_k, L_k)_{k \geq 1}$ are Markov renewal processes. 
		\item The sequence $\left(Y_{k+1}, \cG_{\cX_{T_k}} \smallsetminus \cG_{\cX_{T_{k+1}}}, (\cX_{t})_{T_{k} \leq t < T_{k+1}} \right)_{k \geq 0}$ is a Markov chain whose transition probabilities only depend on the first coordinate $Y_k$, ie the law of this triplet at time $k+1$ conditional on time $k$ is only measurable with respect to $Y_k$.
		\item For all $k \geq 1$, conditional on $Y_k$, the pair $(\cG_{\cX_{T_{k}}}, (\cX_{t})_{t \geq T_k})$ has the law of $(\cG_O, \cX)$ under the probability $\bQ_{Y_k}$. 
	\end{itemize}
\end{lemma}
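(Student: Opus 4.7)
Proof plan: I would prove Point 3 first, and derive Points 1 and 2 from it. The argument is analogous to Lemma 3.6 in \cite{hermon2020universality} and rests on combining the strong Markov property of $\cX$ at the regeneration time $T_k$ with the conditional independence of subquasi-trees in the Ulam--Harris representation of $\cG$ from Section \ref{subsec:random_QT}.

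For Point 3, fix $k \geq 0$ and let $\cF_{T_k}$ denote the $\sigma$-algebra generated by the complement subquasi-tree $\cG \smallsetminus \cG_{\cX_{T_k}}$ together with the trajectory $(\cX_t)_{t \leq T_k}$. The defining property of the regeneration time $T_k$ decomposes into two conditions: (a) a long-range edge $(\cX_{T_k - 1/2}, \cX_{T_k})$ is crossed at time $T_k$ for the first time, which is $\cF_{T_k}$-measurable; and (b) this edge is never crossed again, equivalently $(\cX_{T_k + s})_{s \geq 0}$ remains forever in $\cG_{\cX_{T_k}}$, which is measurable with respect to $\cG_{\cX_{T_k}}$ and the shifted trajectory alone.

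By the Ulam--Harris construction, conditional on $\cF_{T_k}$ and on the type $\iota(\cX_{T_k})$, the subquasi-tree $\cG_{\cX_{T_k}}$ is independent of $\cF_{T_k}$ and distributed as a fresh rooted quasi-tree with root of that type. Moreover, the conditional law of $\iota(\cX_{T_k})$ given $Y_k = u$ coincides with the conditional law of $\iota(\eta(O))$ given $\iota(O) = u$, matching the expression \eqref{eq:etaO_unif}. Combining this with the strong Markov property of $\cX$ at $T_k$ and applying Bayes' rule to absorb condition (b) yields exactly the law of $(\cG_O, \cX)$ under $\bQ_{Y_k}$ as defined in \eqref{eq:Qu}, proving Point 3.

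Point 2 then follows because the triple $(Y_{k+1}, \cG_{\cX_{T_k}} \smallsetminus \cG_{\cX_{T_{k+1}}}, (\cX_t)_{T_k \leq t < T_{k+1}})$ is a measurable function of the pair $(\cG_{\cX_{T_k}}, (\cX_t)_{t \geq T_k})$ (it records the first regeneration cycle of the shifted chain); by Point 3 its conditional distribution depends only on $Y_k$. Point 1 is a further specialization, since both $T_{k+1} - T_k$ and $L_{k+1} - L_k$ are measurable functions of this first regeneration cycle, so the pairs $(Y_{k+1}, T_{k+1} - T_k)$ and $(Y_{k+1}, L_{k+1} - L_k)$ satisfy property \eqref{eq:M1_property}, yielding the Markov renewal structure of Definition \ref{def:M1_property}. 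The main technical subtlety is that the regeneration condition (b) is a future event and so $T_k$ is not a stopping time in the usual sense; the decomposition above circumvents this by isolating (b) into a subquasi-tree conditionally independent of the past, so that Bayes' rule identifies the resulting conditional law with $\bQ_{Y_k}$.
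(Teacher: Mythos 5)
Your proposal is correct and follows essentially the same route as the paper, which gives no written proof and simply defers to the regeneration argument of Lemma 3.6 in \cite{hermon2020universality}: split the regeneration event into a past-measurable first-crossing part and a never-return part measurable with respect to $\cG_{\cX_{T_k}}$ and the shifted trajectory, use the conditional independence of the subquasi-tree (whose attachment point $\eta(\cX_{T_k-1/2})$ is uniform over $V \smallsetminus V(Y_k)$ given the explored environment), and let the Bayes step convert the never-return conditioning into the condition $\tau_O = \infty$ defining $\bQ_{Y_k}$. The only detail worth spelling out in a full write-up is that the event that $t$ is the $k$-th (rather than merely some) regeneration time is past-measurable only on the never-return event (b), since given (b) no long-range edge crossed before $t$ lies in $\cG_{\cX_t}$ and hence none can be recrossed afterwards; your (a)/(b) decomposition handles this implicitly but does not state it.
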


\begin{remark}
	Note that although $(Y,T)$ starts at time $0$, it is a Markov renewal process from time $1$, and the delay is thus $T_1$. However by the lemma, under the measure $\bQ_{u,v}$, for any $u,v \in V$ such that $V(u) \neq V(v)$, $T_1$ has distribution given by a transition probability and thus $(Y,T)$ becomes a Markov renewal process already from time $0$, with $0$ delay. We will use this observation often in the sequel. 
\end{remark}

A non-negative random variable $Z$ has \emph{stretched exponential tail} if there exists $\alpha \in (0,1)$ such that $\bE \sbra{\exp{ Z^{\alpha}}} < \infty$. A stretched exponential tail implies the existence of polynomial moments of all orders. 

\begin{lemma}\label{lem:regeneration_tails}
	For all $u \in V$, under both measures $\bQ_{u}$ and $\bP_O \cond{\cdot}{\iota(O) = u}$: 
	\begin{itemize}
		\item $L_1$ has exponential tail,
		\item $T_1$ has stretched exponential tail.
	\end{itemize}
	Furthermore, given $L,t \geq 0$, let $\tT_{1}^{(L)}$ denote the first time a regeneration occurs outside the ball $\BLR(O,L)$ and $U_{t}(L) = \sum_{s \leq t} \II_{\cX_s \in \BLR(O,L)}$ the total time spent by the chain in $\BLR(O,L)$ before $t$. There exists $\alpha \in (0,1]$ such that for all $L, m, k \in \bN, m \geq 1$, for all $x \in \BLR(O,L)$ such that $d_{\LR}(O, x) = L$, conditional on either $\cX_0 = x$ or $\cX_{1/2} = x$
	\begin{equation*}
		\bP \cond{\tT_{1}^{(L)} > k + m}{\BLR(O,L), U_{k+m}(L) \leq m} \leq e^{-k^{\alpha}}.
	\end{equation*}
\end{lemma}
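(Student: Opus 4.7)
All three tail estimates follow from the escape-probability bounds of Section~\ref{section:QT1} combined with the Markovian tree structure of the quasi-tree. It suffices to establish them under $\bP_O \cond{\cdot}{\iota(O) = u}$, since Remark~\ref{rk:comparison_Qu} transfers such bounds to $\bQ_u$ up to a multiplicative constant. For the tail of $L_1$, let $x_1, x_2, \ldots$ be the successive endpoints of forward long-range crossings along the loop-erased trace of $(\cX_t)_{t \geq 0}$. By the strong Markov property applied conditional on $\cG$, upon first reaching $x_i$ the chain never returns across the incoming long-range edge with probability exactly $\qesc(x_i)$, and these independent escape trials compose multiplicatively, giving
\begin{equation*}
	\bfP \cond{L_1 > r}{\cG} \leq \prod_{i=1}^{r} \bigl(1 - \qesc(x_i)\bigr).
\end{equation*}
Proposition~\ref{prop:escape_dense} applied along the loop-erased trace guarantees that with probability at least $1 - e^{-Cr}$, at least a fraction $\alpha$ of the $x_i$ satisfy $\qesc(x_i) \geq q_0$. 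Taking expectations yields $\bP \sbra{L_1 > r} \leq (1 - q_0)^{\alpha r} + e^{-Cr}$.

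For $T_1$, let $\vartheta_i$ denote the first time the loop-erased trace attains length $i$, so that $T_1 \leq \vartheta_{L_1}$ and
\begin{equation*}
	\bP \sbra{T_1 > t} \leq \bP \sbra{L_1 > r} + \bP \sbra{\vartheta_r > t}.
\end{equation*}
The first term is exponentially small by (i). For the second, the Markov-renewal decomposition of Lemma~\ref{lem:markov_decomposition} extends to the loop-erased advances: $\bigl(\iota(\cX_{\vartheta_i}), \vartheta_i - \vartheta_{i-1}\bigr)_{i \geq 1}$ is a process satisfying~\eqref{eq:M1_property}. Each increment $\vartheta_i - \vartheta_{i-1}$ is controlled in tail by the inverse escape probability at the current head: advancing the trace one step from a vertex $y$ requires avoiding entrapment in sibling subquasi-trees, which takes time of order $1/\qesc(y)$. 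By Proposition~\ref{prop:escape_gumbel}, $-\log\qesc$ has doubly-exponential tails, so $1/\qesc$—and hence each increment $\vartheta_i - \vartheta_{i-1}$—has stretched-exponential tail with some exponent $\alpha \in (0,1)$. Lemma~\ref{lem:exponential_tail_markov} then gives $\bP \sbra{\vartheta_r > C r^{1/\alpha}} \leq e^{-C'r}$, and optimizing $r$ (e.g.\ $r \asymp t^{\alpha/2}$) yields $\bP \sbra{T_1 > t} \leq e^{-c t^{\alpha'}}$ for some $\alpha' \in (0,1)$.

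For the last estimate, the iterative construction of the random quasi-tree (Section~\ref{subsec:random_QT}) ensures that conditional on $\BLR(O,L)$ the subquasi-trees emanating beyond its boundary are independent fresh quasi-trees. When $U_{k+m}(L) \leq m$ holds, the chain has accumulated at least $k$ steps outside $\BLR(O,L)$ by time $k+m$. Concatenating these outside excursions—each started in an independent fresh quasi-tree—and applying the part~(ii) argument to the composite outside chain gives that the event "no regeneration outside during these $k$ steps" has probability at most $e^{-k^{\alpha}}$, uniformly in the realization of $\BLR(O, L)$. The main obstacle throughout is part~(ii): the chain may encounter near-recurrent trapping neighborhoods with arbitrarily small $\qesc$ (cf.\ Remark~\ref{rk:trapping}), which precludes exponential tails for $T_1$, and only the doubly-exponential concentration of $-\log\qesc$ from Proposition~\ref{prop:escape_gumbel} salvages stretched exponential decay.
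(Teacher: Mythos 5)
Your part (ii) contains the essential gap. The decomposition $T_1 \leq \vartheta_{L_1}$ is fine, but the claim that $\bigl(\iota(\cX_{\vartheta_i}), \vartheta_i - \vartheta_{i-1}\bigr)_{i \geq 1}$ satisfies \eqref{eq:M1_property} is unjustified: the times $\vartheta_i$ are not regeneration times, the walk may backtrack below the level reached at $\vartheta_i$ (so the loop-erased trace can even shorten and re-grow along a different branch) and revisit environment explored before $\vartheta_i$, hence the law of the next increment depends on the whole past and not only on the type of the current head. Moreover, the assertion that advancing the trace one step from $y$ "takes time of order $1/\qesc(y)$" only controls the \emph{number} of failed escape attempts (which is indeed geometric-like), not their \emph{duration}: a failed excursion can dive into a trapping sibling subquasi-tree and last arbitrarily long, and its length is not measurable with respect to $\qesc(y)$. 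This is exactly the difficulty the paper's proof is built around: it bounds $T_1$ through the time spent at centers and then controls separately (a) the number of distinct centers visited before level $k$ (at most $k^2$ up to an exponentially small probability, via expected escape probabilities), (b) the number of visits to each such center (at most $k$ up to $e^{-k^{c}}$, via Proposition \ref{prop:escape_gumbel} applied to a variant $\qesc'$ of the escape probability), and (c) a geometric comparison between total time and time spent at centers. Without an argument of this type, the stretched exponential tail does not follow from the ingredients you invoke.

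Two further points. For $L_1$, your product bound over the trace endpoints $x_i$ cannot be combined directly with Proposition \ref{prop:escape_dense}: in its proof the "good" vertices are taken at small-range distance at most $1$ from the path and are explicitly \emph{not} endpoints of the long-range edges crossed, so a positive fraction of the $x_i$ themselves being good does not follow; also the first crossing of what will eventually be the $i$-th loop-erased edge is not a stopping time, so "independent escape trials compose multiplicatively" needs a construction such as the paper's ladder epochs (following Dembo--Gantert--Peres--Zeitouni), where the number of epochs is dominated by a geometric variable and the distance increments between epochs are controlled via the annealed backtracking estimate of Lemma \ref{lem:typical_paths_QT} fed into Lemma \ref{lem:exponential_tail_markov}. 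For the final conditional statement, concatenating "outside excursions in independent fresh quasi-trees" overlooks that distinct excursions may re-enter the same outside subquasi-trees, and, more importantly, that conditioning on $\{U_{k+m}(L) \leq m\}$ biases the law of the outside trajectory; the paper instead observes that from a boundary vertex a regeneration can occur within a bounded number of steps, so the conditioning event has probability bounded below uniformly in $n, L, m$, and conditional upper bounds then follow from the unconditional argument up to a constant factor.
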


\begin{remark}
	Proposition 2.2 in \cite{aidekon2010large} shows that in the case of a randomly biased random walk on a Galton-Watson tree, stretched exponential can be the right order of magnitude for the tail of the first regeneration time. From the proof, it can be seen that this slowdown is precisely caused by the same trapping phenomenon as mentionned in Remark \ref{rk:trapping}, when the walk can move from a part of the tree where it goes very fast to infinity to a part where it tends to go back to the root. This suggests that for some choices $P_1, P_2$ and $p$, stretched exponential tails give the right tail for the regeneration times in our model too.
\end{remark}

\begin{proof}
	For the tails, Remark \ref{rk:comparison_Qu} shows it suffices to prove the result for the law $\bP_O \cond{\cdot}{\iota(O)}$. Suppose $\iota(O)$ is fixed. The proof of the exponential tail for regeneration levels is taken from that of Lemma 4.2 in \cite{dembo2002large}. It actually proves exponential tails for a stronger notion of regeneration, which looks at times when a long-range distance is reached for the first and last time. Namely let 
	\begin{equation*}
		\tT_1 := \inf \{ t \in \bN/2 \ |  \forall s < t: d_{\LR}(O, \cX_{s}) < d_{\LR}(O, \cX_{t}), \forall s \geq t: d_{\LR}(O, \cX_{s}) \geq d_{\LR}(O, \cX_{t}) \}.
	\end{equation*}
	and $\tL_1 := d_{\LR}(O, \cX_{\tT_1})$. Obviously $T_1 \leq \tT_1$ and $L_1 \leq \tL_1$, so it suffices to prove an exponential tail for $\tL_1$. 
	
	Suppose the chain is started at $O$. Consider the "ladder times" $(S_i)_{i = 0}^{K}$, where $K \in \bN \cup \{ \infty \}$ is a random integer, possibly infinite, defined recursively as follows. Let $S_0 := 0$, and define $\tau_{0}$ as the first return time to $O$. If $\tau_0 = \infty$ set $K := 0$. Otherwise, let $M_0 := \max \{d_{\LR}(O, \cX_{t}), 0 \leq t \leq \tau_0 \}$ be the maximal distance reached until $\tau_0$ and $S_1 := \min \{ t \in \bN/2: d_{\LR}(0, \cX_t) > M_0$ \}. Define the rest of the sequence recursively: let $\tau_i$ the first return time to level $d_{\LR}(0, \cX_{S_{i}}) - 1$. If $\tau_i = \infty$, $K := i$, and $S_{j} : =\infty$ for all $j \geq i+1$, otherwise set $M_i := \max \{ d_{\LR}(O, \cX_t), 0 \leq t \leq \tau_{i} \}$ and $S_{i+1} := \min \{t \in \bN/2: d_{\LR}(O, \cX_t) > M_i \}$. Notice that $S_K = \tT_1$ if $K \in (0, \infty)$ and $\tT_1 = 1/2$ if $K = 0$. Consequently for any $C > 0$, if $l \geq 1$ is large enough,
	\begin{align*}
		&\bP_O \cond{\tL_1 > C l}{\iota(O)} \leq \bP_O \cond{K < \infty, d(O, \cX_{S_K}) > C l}{\iota(O)} \\
		&\qquad = \bP_O \cond{K < \infty, \sum_{i=1}^{K} \left( d_{\LR}(O, \cX_{S_i}) - d_{\LR}(0, \cX_{S_{i-1}}) \right) > C l}{\iota(O)} \\
		&\qquad = \sum_{k=1}^{\infty} \bP_O \cond{\sum_{i=1}^{k} \left( d_{\LR}(O, \cX_{S_i}) - d_{\LR}(0, \cX_{S_{i-1}}) \right) > C l, \{\tau_{i} < \infty\}_{i=1}^{k-1}, \tau_k = \infty}{\iota(O)} \\
		&\qquad \leq \bP_O \cond{K > l}{\iota(O)} + \bP_O \cond{ \sum_{i=0}^{l-1} Z_i > C l}{\iota(O)}.
	\end{align*}
	where we write $Z_0 := d_{\LR}(O, \cX_{S_1}) \II_{\tau_0 < \infty}$, $Z_i := \left( d_{\LR}(O, \cX_{S_{i+1}}) - d_{\LR}(0, \cX_{S_{i}}) \right) \II_{\tau_i < \infty}$ for all $i \geq 1$. To prove this is exponentially small in $l$, it thus suffices to establish that $K$ has exponential tail and that for fixed $l \geq 1$, the second probability in the right-hand side is also exponentially small in $l$. 
	
	It was already observed in Remark \ref{rk:comparison_Qu} that $\bP \cond{\tau_{O} = \infty}{\cX_{1/2} = \eta(O), \iota(O)} = \Theta(1)$. We deduce that $K$ is stochastically dominated by a geometric variable of constant parameter and thus has exponential tail, even conditional on $\iota(O)$. 
	
	On the other hand, by construction, for all $i \geq 1$ if $S_i < \infty$ the trajectory $(\cX_t)_{S_i \leq t < \tau_{i}}$ is entirely contained in the subquasi-tree $\cG_{\cX_{S_i}}$. Furthermore, $d_{\LR}(O, \cX_{S_{i+1}}) = M_i + 1$ is determined by this trajectory. Thus for $i \geq 1$, $Z_i = (M_i - M_{i-1}) \II_{\tau_i < \infty}$ depends only on $\eta(\cX_{S_i - 1/2})$ and $\cG_{\cX_{S_i}}$. Letting $\tY_i := \iota(\eta(\cX_{S_i - 1/2}))$ for $i \geq 0$, we can deduce that if $i \geq 1, l \geq 1$ and $S_i < \infty$,
	\begin{align*}
		&\bP_O \cond{Z_i = l}{\cG \smallsetminus \cG_{\cX_{S_i}}, (\cX_s)_{s \leq S_i}} \\
		&= \bP \cond{\tau_{O'} < \infty, M_{0}' = l}{\cX_{1/2} = \eta(O'), \iota(O') = \tY_i} 
	\end{align*}
	with $(\cG', O', M_0')$ an independent copy of $(\cG, O, M_0)$. 
	We do not prove this in detail, which can be established as Lemma \ref{lem:markov_decomposition}. See also the proof of Lemma 4.4 of \cite{dembo2002large}. If we extend the process $(\tY, Z)$ by considering that $\tY_i$ takes an arbitrary value $\dagger$ if $S_i = \infty$ and consider that $Z_{i+1} = 0$ a.s. given $\tY_i = \dagger$, the previous equation implies in particular that $ \left(\tY_i,  Z_i \right)_{i \geq 0}$ is a Markov chain that satisfies the \eqref{eq:M1_property} property. 
	Furthermore for all $l \geq 1$,
	\begin{align*}
		&\bP \cond{\tau_{O} < \infty, M_0 \geq l}{\cX_{1/2} = \eta(O), \iota(O)} \\
		&\leq \bP \cond{\exists t > s \geq l: d_{\LR}(\eta(O),\cX_{s}) = l-1, \cX_t = O}{\cX_{1/2} = \eta(O), \iota(O)}.
	\end{align*}
	This is an annealed probability of backtracking over a long-range distance $L$. Let us here anticipate on the sequel and use Lemma \ref{lem:typical_paths_QT} which will prove this probability is exponentially small in $L$. With what precedes, this proves that there exists constants $C', \theta > 0$ such that for any possible state $u$ of the chain $\tY$, $\bE \cond{e^{\theta Z_1}}{\tY_0 = u} < C'$. Exponential moments of $Z_0$ can be bounded similarly. The reader will then have recognized the condition to apply Lemma \ref{lem:exponential_tail_markov}, which proves that for some constant $C >0$, $\bP_O \cond{ \sum_{i=0}^{l-1} Z_i > C l}{\iota(O)}$ is exponentially small in $l$, with all constants being independent of $n$.
	
	\medskip

	Let us move to the proof of stretched exponential tails for $T_1$. It is inspired from the proof of Prop. 2.2 in \cite{aidekon2010large}. For ease of notation we will here keep the conditionning by $\iota(O)$, but all probabilities below should be considered conditionning on it. Given $k,t \geq 0$, let $\tau_k$ denote the hitting time of level $k$ and $K_t$ denote the time spent before $t$ at a center, at half-integer times steps. For any $C> 0$ and $k \geq 0$,
	\begin{align*}
		\bP_O \sbra{T_1 > C k^3} &\leq \bP_O \sbra{T_1 > \tau_{k}} + \bP_{O} \sbra{C k^3 < T_1 \leq \tau_k} \\
		&\leq  \bP_O \sbra{T_1 > \tau_k} + \bP_O \sbra{K_{C k^3} \leq k^3} + \bP_O \sbra{K_{\tau_k} > k^3}.
	\end{align*}  
	The first term can be rewritten as $\bP \sbra{T_1 > \tau_k} = \bP \sbra{L_1 > k }$ which is exponentially small by the first part. 
	For the second term, note that when the chain is at a non-center vertex $x$ at integer times, it has probability $\Theta(1)$ to go to the center $\eta(x)$ by \ref{hyp:bdd_p}. Thus the first hitting time time of a center is stochastically dominated by a geometric variable, even under the quenched law. In turn we deduce
	\begin{equation*}
		\bfP_O \sbra{K_{C k^3} \leq k^3} \leq \bP \sbra{\sum_{i=1}^{k^3} Z_i \geq C k^3},
	\end{equation*}
	where the $Z_i$ are independent geometric variables of positive constant parameter. By Chernoff's inequality, the right-hand side is exponentially small in $k^3$ for a good choice of $C > 0$.
	The third term can be further decomposed as follows. Given a center $x \in \cV$, let $N(x)$ denote the number of visits to $x$ at half-integer times. Given $i \geq 1$, write $z_i$ for the $i$-th distinct center visited by the chain and $M_i$ for the number of distinct centers visited in $\cV_i$ at half-integer times. Then
	\begin{align*}
		\bP_O \sbra{K_{\tau_k} > k^3} &\leq \bP_O \sbra{\begin{array}{c} \text{more than $k^2$ distinct centers are visited} \\ \text{before $\tau_k$ at half-integer times} \end{array} } \\ &\quad + \bP_O \sbra{\exists i \leq k^2: N(z_i) > k}.
	\end{align*}
	By union bound
	\begin{equation*}
		\bP_O \sbra{\begin{array}{c} \text{more than $k^2$ distinct centers are visited} \\ \text{before $\tau_k$ at half-integer times} \end{array} } \leq \sum_{i=1}^{k} \bP_O \sbra{M_i \geq k}.
	\end{equation*}
	Let $\tau_{k}^{(i)}$ denote here the hitting time of the $k$-th distinct center of $\cV_i$. By the strong Markov property
	\begin{align*}
		\bfP_O \sbra{M_i \geq k} &= \bfP_O \sbra{\tau_{k}^{(i)} < \infty} \\
		&\leq \bfE_O \sbra{\II_{\tau_{k-1}^{(i)} < \infty} (1 - \qesc(\cX_{\tau_{k-1}^{(k)}}))}.
	\end{align*}
	Since the centers visited at $\tau_{k-1}^{(i)}$ and $\tau_{k}^{(i)}$ are distinct and at the same level, their subtrees are disjoint and independent. Hence
	\begin{equation*}
		\bP_O \sbra{M_i \geq k} \leq \bE \sbra{ \bfP_O \sbra{\tau_{k-1}^{(i)} < \infty} \left( 1 - \min_{u,v} \bE \cond{\qesc(\eta(O'))}{\iota(O') = u, \iota(\eta(O') = v)} \right)},
	\end{equation*}
	where $(\cG', O')$ is as usual an independent copy of $(\cG, O)$ and $V(u) \neq V(v)$.	By Lemma  \ref{lem:escape_probability}, the expected escape probability can be lower bounded by some constant $q_0 \in (0,1]$ hence by induction
	\begin{equation*}
		\bP \sbra{M_i \geq k} \leq (1-q_0)^{k-1}
	\end{equation*}
	and
	\begin{equation*}
		\bP_O \sbra{\begin{array}{c} \text{more than $k^2$ distinct centers are visited} \\ \text{before $\tau_k$ at half-integer times} \end{array} } \leq k (1-q_0)^{k-1}
	\end{equation*}
	is exponentially small in $k$.
	
	To bound the last term, let
	\begin{equation*}
		\qesc'(x) := \bfP \cond{\tau_{x} = \tau_{\eta(x)} = \infty}{\cX_{1/2} = x}.
	\end{equation*}
	for all center $x \in \cV$. Like $\qesc(x)$, this quantity depends only on $\cG_x$ and $\eta(x)$. Furthermore, if $y$ is a vertex at small-range distance $1$ from $x$, then 
	\begin{align*}
		\qesc'(x) &\geq p(x, \eta(x)) P(x,y) \qesc(y) \\
		&\geq \delta \qesc(y),
	\end{align*}
	thus lower bounding $\qesc'$ is essentially the same as lower bounding $\qesc$.
	For all $i \geq 1$,
	\begin{align*}
		\bfP_O \sbra{N(z_i) > k} &= \sum_{x \in \cV} \bfE \sbra{\II_{z_i = x, N(x) > k}} \\
		&\leq \sum_{x \in \cV} \bfE \sbra{\II_{z_i = x}(1 - \qesc'(x))^{k}} \\
		&= \sum_{x \in \cV} \bfP_O \sbra{z_i = x} (1 - \qesc'(x))^{k}
	\end{align*}
	To average over the environment, use the "Ulam labelling" of Section \ref{subsec:random_QT}:
	\begin{equation*}
		\bP_O \sbra{N(z_i) > k} \leq \sum_{x \in \scU} \bE \sbra{\II_{x \in \cV} \bfP_O  \sbra{z_i = x} } \bE \cond{ (1 - \qesc'(x))^{k}}{x \in \cV, \cG \smallsetminus \cG_x}
	\end{equation*}
	From Proposition \ref{prop:escape_gumbel}, one can easily obtain that
	\begin{equation*}
		\max_{x \in \scU} \bE \cond{ (1 - \qesc'(x))^{k}}{x \in \cV, \cG \smallsetminus \cG_x} \leq e^{-k^{c}}
	\end{equation*}
	for some constant $c > 0$, hence by union bound
	\begin{equation*}
		\bP_O \sbra{\exists i \leq k^2: N(z_i) > k} \leq k^2 e^{-k^{c}},
	\end{equation*}
	which is stretched exponential, as desired.

	Finally, for the last statement note that if the chains is started precisely at the boudnary of the ball $\BLR(O,L)$, then a regeneration can occur within a number of steps which is bounded in $L$, thus from the lower bound on escape probabilities (Lemma  \ref{lem:escape_probability}) it is easy to lower bound $\bP_{x} \cond{U_{k+m}(L) \leq m}{\BLR(O,L)}$ uniformly in $n, L, m$. Thus the conditionning by $U_{k+m}(L) \leq m$ only modifies upper bounds under the usual law of the quasi-tree by a factor of constant order. The upper bound for the conditional probability then follows from the same arguments as above. 
\end{proof}

\subsection{Mixing of the regeneration chain}\label{subsec:mixing_regeneration}

We now investigate the mixing properties of the Markov chain $Y$ underlying the regeneration process, which we call the regeneration chain. 
Lemma \ref{lem:mixing_regen} below will establish that $Y$ has constant mixing time, allowing us later to get moment bounds similar to the iid case. The lemma actually proves a stronger mixing property of both the chain $Y$ and the time process $T$ that will be used to derive an approximation of the invariant measure on $X$. 

\begin{lemma}\label{lem:mixing_regen}
	Let $(Q_t)_{t \geq 1}$ denote the transition kernels of the Markov renewal process $(Y,T)$ and $Q := \sum_{t \geq 1} Q_t$, after identification of $T$ with a process in $\bN$. Recall that from Hypothesis \eqref{hyp:backtracking} there exists a constant $l \geq 1$ such that the set
	\begin{equation*}
		S = S(l) :=\{ x \in V \ | \ \forall y \in V: P(x,y) > 0 \Leftrightarrow (I+P)^{l}(y,x) > 0 \}.
	\end{equation*}
	satisfies $\abs{S} = 2n - o(n)$.
	\begin{enumerate}[label = (\roman*)]
		\item For all $u \in V$ and $v \in S$,
		\begin{equation}\label{eq:doeblin}
			Q(u,v) = \Theta(1/n).
		\end{equation}
		As a consequence for all $\e \in (0,1)$, the chain $Y$ has mixing time $O_{\e}(1)$ as $n \rightarrow \infty$. 
		\item for all $u \in V$, 
		\begin{equation}\label{eq:lower_bound_alpha}
			\sum_{t \geq 1} \sum_{v \in V} Q_{t}(u, v) \wedge Q_{t+1}(u,v) = \Theta(1).
		\end{equation}
	\end{enumerate}
\end{lemma}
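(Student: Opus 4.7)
The plan is to prove both parts by combining the Markov renewal decomposition (Lemma \ref{lem:markov_decomposition}) with the near-uniform distribution of newly revealed centers in the random quasi-tree. By Lemma \ref{lem:markov_decomposition} and the convention that $Y_1 = u$ under $\bQ_u$, one has $Q(u,v) = \bQ_u[Y_2 = v]$. For the upper bound in (i), I decompose over the trajectory from $T_1$ to $T_2$: each new long-range edge explored introduces a center whose type is uniform in $V$ minus the parent half, hence matches a given $v$ with probability $\Theta(1/n)$ by \eqref{eq:etaO_unif} and Remark \ref{rk:comparison_Qu}. Since by Lemma \ref{lem:regeneration_tails} the increment $T_2 - T_1$ has stretched-exponential tail under $\bQ_u$, the number of centers explored before $T_2$ has bounded expectation, and a union bound yields $Q(u,v) = O(1/n)$.

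For the lower bound in (i), fix $v \in S$ and set $A(v) := \{c \in V : v \in B_P^+(c, \infty)\}$. Hypothesis \ref{hyp:cc3} guarantees some $y \neq v$ with $P(v,y) > 0$, and \ref{hyp:backtracking} applied to $v \in S$ gives $(I+P)^{l}(y,v) > 0$, hence $y \in A(v)$, so $|A(v) \setminus \{v\}| \geq 1$. I construct an explicit event of probability at least $c/n$ under $\bQ_u$ realising $Y_2 = v$: with probability $\Theta(1/n)$ by \eqref{eq:etaO_unif}, the type of $\eta(O)$ lies in $A(v) \setminus \{v\}$, which allows the chain to reach in one small-range step the non-center of type $v$ in that component (probability $\geq \delta$ by \ref{hyp:bdd_delta}--\ref{hyp:bdd_p}); next, the long-range jump out of this vertex (probability $\geq \delta$) followed by escape to infinity (probability $\Theta(1)$ by Lemma \ref{lem:escape_probability}) produces a regeneration with source type $v$. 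When $V(v) = V(u)$, so that $A(v) \setminus \{v\} \subseteq V(u)$ and the level-$1$ center cannot match, I go one level deeper: the type of a level-$2$ center is approximately uniform in $V \setminus V(\iota(\eta(O))) = V(u)$, which covers $A(v) \setminus \{v\}$, and the corresponding bounded-length scenario again yields probability at least $c/n$.

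The mixing time bound in (i) follows from the Doeblin minorisation $Q(u, \cdot) \geq \alpha \, \nu$ with $\alpha = \Theta(1)$ and $\nu$ the uniform probability on $S$ (using $|S|/n = 1 - o(1)$), giving $\TV{Q^{t}(u,\cdot) - \pi} \leq (1 - \alpha)^{t}$ and hence $\tmix(\e) = O_{\e}(1)$. For part (ii) the upper bound is trivial. For the lower bound, each scenario built for (i) realises $Y_2 = v$ at a specific time $t_v = O(1)$, and the task is to produce a companion scenario of length $t_v + 1$ reaching the same type $v$. This is achieved either by using the stay option at a half-integer step (of probability $p(\iota(\cdot), \iota(\eta(\cdot))) \geq \delta$) followed by an extra small-range detour, or by inserting a short cycle through $v$ of the appropriate length: combining \ref{hyp:cc3} and \ref{hyp:backtracking} produces cycles of bounded length in the small-range component of $v$, which allow adjusting the time by $1$ at constant multiplicative cost in probability. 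Thus $Q_{t_v}(u,v) \wedge Q_{t_v + 1}(u,v) \geq c/n$ for each $v \in S$, and summing over $v \in S$ gives $\sum_{t, v} Q_t \wedge Q_{t+1} = \Theta(1)$.

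The main obstacle will be the lower bound in (i) in the case $V(v) = V(u)$, where a careful descent to the second level is needed while preserving the $\tau_O = \infty$ conditioning of $\bQ_u$; Remark \ref{rk:comparison_Qu} ensures that this conditioning only costs a constant factor, which suffices. The overlap construction in (ii) is equally delicate: the inserted detour must return to the exact same type-$v$ vertex so that the destination type is preserved, and matching the parity of the required $+1$ increment may force combining cycles of two different lengths within the small-range component of $v$.
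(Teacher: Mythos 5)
Your identification of $Q(u,v)$ with the law of the first regeneration type under $\bQ_u$ is correct, and your construction in the case $V(v)\neq V(u)$ is essentially the paper's: walk inside the small-range component of $\eta(O)$ to a vertex of type $v$ (in at most $l$ steps, not one, since membership in your set $A(v)$ only gives reachability), jump along its long-range edge and escape. But in the case $V(v)=V(u)$ your event does not realize the required event. If the chain crosses the first long-range edge (from $\eta(O)$'s component to a level-two center of suitable type), reaches a type-$v$ vertex $y$ there and then escapes, that first long-range edge has been crossed exactly once and never again, so it is itself a regeneration edge: the first regeneration then has type in $V\smallsetminus V(u)$, not $v$, and your scenario only bounds the probability that the \emph{second} regeneration has type $v$. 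The missing idea, which is the heart of the paper's argument, is to force the chain to recross that first long-range edge: go out to $y$, come back across the edge (so it is crossed more than once and cannot be a regeneration edge), return to $y$, and only then jump and escape. This backtracking step is exactly where $v\in S$ is genuinely used: with $w$ an in-neighbour of $v$ serving as the type of the new center, $(I+P)^{l}(w,v)>0$ combined with the equivalence defining $S$ yields $P(v,w)>0$, so the return to the center costs only a bounded number of steps and a $\Theta(1)$ factor; in your write-up $v\in S$ only serves to show $A(v)\smallsetminus\{v\}\neq\emptyset$. (Relatedly, your upper-bound argument counts \emph{centers} of type $v$, but the regeneration vertex is in general a non-center whose type is not a fresh uniform draw, so that union bound does not address the event $\{Y_1=v\}$.)

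Part (ii) also has a genuine gap. Staying at a half-integer step does not delay the chain: the half-step is taken at every unit of time whether the chain swaps or stays, so the "stay option" does not change the length of a trajectory. And cycles inside the small-range component of $v$ cannot in general produce a shift of exactly one unit, because all cycle lengths within a component may share a nontrivial common divisor: for instance if every communicating class of $P_1$ is a directed $3$-cycle and every class of $P_2$ a $2$-cycle, Assumptions \ref{hyp:bdd_delta}--\ref{hyp:backtracking} hold, yet all loops inside one component have length a multiple of $3$, respectively $2$. This is precisely why the paper's proof of \eqref{eq:lower_bound_alpha} routes the chain through an \emph{intermediate} component: using the set $S_1$ of vertices admitting a two-step transition onto a short return loop, it builds two explicit trajectories ending with the same regeneration edge (hence the same $Y_1=v$) whose lengths differ by exactly one, each of probability $\Theta(1/n)$ coming from a single long-range type match, and then sums over $v\in S\cap V_2$. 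You flag the parity problem at the end, but propose no mechanism that survives examples of the above kind, so the overlap bound $Q_{t}(u,v)\wedge Q_{t+1}(u,v)\geq c/n$ is not established as written.
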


\begin{proof}[Proof of Lemma \ref{lem:mixing_regen}]
	Let $\e \in (0,1)$. The statement about the mixing time of $Y$ is easily deduced from \eqref{eq:doeblin} and the fact that $\abs{S} = 2n - o(n)$. Summing on $v$ yields the following Doeblin's condition for $Y$: there exists a constant $c > 0$ such that
	\begin{equation*}
		\TV {Q(u,\cdot) - Q(u', \cdot)} \leq 1 - c - o(1),
	\end{equation*}
	for all $u, u' \in V$. It is then easy to obtain from Doeblin's condition that $Y$ has mixing time $\tmix^{(Y)}(\e) \leq (c^{-1} + o(1)) \log \e^{-1}$.	

	Let us now prove \eqref{eq:doeblin}. Consider $u,v \in V$ with $v \in S$. By definition for all $t \geq 1$
	\begin{equation*}
		Q_{t}(u,v) = \bQ_{u} \sbra{Y_1 = v, T_1 = t} = \bP \cond{T_1 = t, \iota(\cX_{t - 1/2}) = v}{\tau_{O} = \infty, \iota(O) = u, \cX_{1/2} = \eta(O)}.
	\end{equation*}
	Probabilistic statements below will thus be made with respect to $\bQ_u$ unless stated otherwise. 
	
	Observe that if $V(u) \neq V(v)$, it is possible to realize $Y_1 = v$ with regeneration occurring at the first long-range edge crossed by the chain, which can be reached in half a step from $\eta(O)$ by Assumption \ref{hyp:cc3}. Let $w \in V$ such that $P(w,v) > 0$. From Remark \ref{rk:comparison_Qu}, $\bQ_{u} \sbra{\iota(\eta(O)) = w} = \Theta(1/n)$. Using that transition probabilities are bounded uniformly in $n$ by \ref{hyp:bdd_delta}, \ref{hyp:bdd_p}, we deduce that $\bQ_{u} \sbra{Y_1 = v} \geq \Theta(1/n)$. On the other hand, if $V(u) = V(v)$ the alternation between $V_1$ and $V_2$ forbids a regeneration at the first long-range edge. This is why we need to suppose $v \in S$ to make sure the chain can backtrack and prevent regeneration at the first long-range. Since $S$ occurs with probability $1 - o(1)$ under the uniform law on $V$, centers of the quasi-tree $\cG_O$ also have probability $\Theta(1)$ to be in $S$ under $\bQ_u$. Thus using the above arguments we can lower bound by $\Theta(1/n)$ the probability that the chain goes from $\eta(O)$ to a vertex $y$ with $\iota(y) = v$ using one long-range edge, comes back to $\eta(O)$, which ensures no regeneration occurred on the long-range edge, and returns to $y$, all that within a bounded number of steps. The chain can then escape in $\cG_y$ with probability $\Theta(1)$ by Lemma \ref{lem:escape_probability}, which will imply $Y_1 = v$. 

	\medskip

	The proof of \eqref{eq:lower_bound_alpha} is similar but requires taking time into account. Suppose $u \in V_1$ and let $v \in S \cap V_2$. Our argument is illustrated in Figure \ref{fig:mixing_argument}.
	\begin{figure}
		\centering
		\includegraphics[scale=.7]{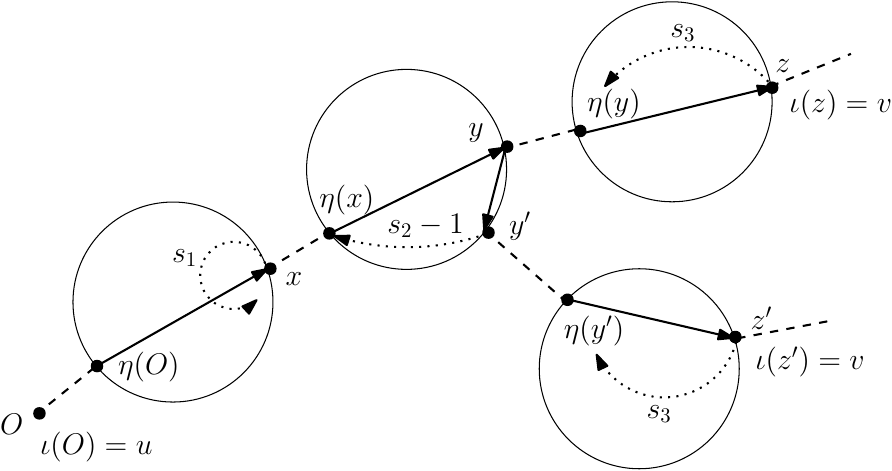}
		\caption{Argument of the proof of Lemma \ref{lem:mixing_regen}}
		\label{fig:mixing_argument}
	\end{figure}
	The idea is to use an intermediary component of $V_1$ to make the regeneration time shift by $1$. Consider the set
	\begin{equation*}
		S_1 := \{ x \in V \cap S \ | \exists y \neq x \in V, s \leq l:  P^{2}(x,y) > 0 \text{ and } P^{s}(y,x) > 0 \}.
	\end{equation*}
	From Assumption \ref{hyp:cc3}, $n - o(n)$ vertices of $S \cap V_1$ are recurrent and contained in a communicating class of size at least $3$. Consequently if $x \in S \cap V_1$, either there exists $y \neq x$ on a loop around $x$ of length less than $l$ such that $P^{2}(x,y) > 0$, in which case $x \in S_1$, or $x$ is in a communicating class $\{x,y,z\}$ of size exactly $3$ with $P^{2}(x,x) = 1$. However in that case $P^{2}(y,z), P^{2}(z,y) > 0$, so $y,z \in S_1$. Thus $S_1$ has size at least $2n/3 - o(n)$ and probability $\Theta(1)$ under the uniform law on $V_1$ or $\bQ_w$ for any $w \in V_2$.

	For any realization of $\cG$, there exists a path $(\eta(O), x, \eta(x), y, \eta(y), z)$ with two long-range edges $(x, \eta(x))$ and $(y, \eta(y))$ whereas $(\eta(O), x$), $(\eta(x), y)$, $(\eta(y), z)$ are small-range edges with distinct endpoints. 
	If $\iota(\eta(O)) \in S$, which occurs with probability $\Theta(1)$, we can suppose that $x$ was chosen so that $P^{s_1}(\iota(x),\iota(x)) > 0$, for some $s_1 \leq l+1$. Similarly, $\iota(\eta(x)) \in S_1$ with probability $\Theta(1)$, in which case we can suppose $y$ was chosen so that there exists a path between $y$ and $\eta(x)$ of length $s_2 \leq l$ and a vertex $y' \neq x$ on this path with $P^{2}(\iota(\eta(x)), \iota(y')) > 0$. 
	Then if $v \in S$ there exists $w \in V_2$ such that $P(w,v) > 0$ and $P(v,w)^{s_3} > 0$ for some $s_3 \leq l$. Conditional on the previous values of $\iota$, $\iota(\eta(y)) = w$ with probability at least $\Theta(1/n)$. Gathering what precedes, make the following observation: from $\eta(O)$ the chain can go to $x$ in half a step and then to $z$ in two more steps, after which it can come back to $x$ in $s_3 + s_2 + s_1$ steps going through $y'$, to eventually return at $z$ in two steps and escape to infinity through $\eta(z)$. We will then have $Y_1 = v, T_1 = t + 1/2$ with $t = 4 + s_1 + s_2 + s_3$. 

	Now instead of $y$, we can apply the same arguments with $y'$, supposing that the chain reaches a vertex $z'$ with $\iota(\eta(y')) = w$ and $\iota(z') =v$. By construction $z'$ is reachable in just one more step from $x$, but the loop around $x$ which passes through $z'$ takes the same number of steps. Thus the previous argument can be applied similarly with just one time step difference to obtain $Y_1 = v, T_1 = t+1 + 1/2$. Summing on the values of $\iota(\eta(O)), \iota(\eta(x))$, we obtain that 
	\begin{equation*}
		\sum_{t \geq 1} Q_t(u,v) \wedge Q_{t+1}(u,v) \geq \Theta(1/n).
	\end{equation*}
	The case where $u \in V_2$ is similar, with one less necessary long-range edge to cross to reach a component of $V_1 \cap S_1$, hence the bound applies to all $u \in V$. Then as $\abs{V_2 \cap S} = n - o(n)$ summing over $v \in V_2 \cap S$ yields \eqref{eq:lower_bound_alpha}. 
\end{proof}

Let $\mu$ denote the stationary distribution of the Markov chain $(Y_k)_{k \geq 0}$ and 
\begin{equation*}
	\bQ_{\mu} := \sum_{u \in V} \mu(u) Q_{u}.
\end{equation*}
In the sequel $\bE_{\bQ_{\mu}}$ denotes the expectation with respect to $\mu$. Later, we use similar notations for variance, covariance, etc.

The fact that $Y$ mixes in $O(1)$ steps will be sufficient for the rest of the analysis of the quasi-tree. Point (ii) in Lemma \ref{lem:mixing_regen} will then be used to obtain the expression of the approximate invariant measure in \eqref{eq:pihat}. The mixing of the regeneration chain will be used conditional on some already revealed parts of the environment, which in turn requires conditionning by a neighbourhood of the root. We did something similar in the reversible case (see Prop. 5.6 in \cite{cutoff_reversible}) however in the more general case, the neighbourhood can trap the chain. This should not be the case however if the neighbourhood is typical. To implement this idea we condition further by the fact that the chain rapidly escapes this neighbourhood. 

\begin{proposition}\label{prop:mixing_annealed_QT}
	Let $\nu$ be the law of $\iota(\eta(O))$ under $\bQ_{\mu}$. For all $v \in V$, 
	\begin{equation}\label{eq:nu_uniform}
		\nu(v) = \Theta(1/n).
	\end{equation}
	Furthermore given $L, t \geq 0$, let $U_{t}(L) = \sum_{s \leq t} \II_{\cX_s \in \BLR(O,L)}$ denote the total time spent by the chain in $\BLR(O,L)$ before $t$. Consider here $\tT_1 = \tT_1(L)$ to be the first regeneration time outside $\BLR(O,L)$, while for $k \geq 2$, let $\tT_{k}$ be the first regeneration time after $\tT_{k-1}$. 
	For all $\e \in (0,1)$ there exists $C(\e)$ such that for all $L, m \geq 0$ and $t \in \bN/2$, for all $x \in \BLR(O,L)$, $d_{\LR}(O,x) = L$, if $t \geq C(\e) m^{2}$ then
	\begin{equation*}
		\sum_{v \in V} \abs{ \bP_{x} \cond{\exists k \geq 0: \tT_k = t, \iota(\cX_t) = v}{\BLR(O,L), U_{t}(L) \leq m} - \frac{\nu(v)}{\bE_{\bQ_{\mu}} \sbra{T_1}}} \leq \e.
	\end{equation*}
	and 
	\begin{equation*}
		\sum_{v \in V} \abs{ \bP \cond{\exists k \geq 0: \tT_k = t, \iota(\cX_t) = v}{\cX_{1/2} = x, \BLR(O,L), U_{t}(L) \leq m} - \frac{\nu(v)}{\bE_{\bQ_{\mu}} \sbra{T_1}}} \leq \e.
	\end{equation*}
\end{proposition}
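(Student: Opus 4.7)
The first claim, $\nu(v) = \Theta(1/n)$, follows from a direct decomposition $\nu(v) = \sum_{u \in V} \mu(u) \bQ_u[\iota(\eta(O)) = v]$. Point (i) of Lemma \ref{lem:mixing_regen} shows that $\mu(u) = \Theta(1/n)$ for all $u$ in the set $S$ of size $2n-o(n)$, while Remark \ref{rk:comparison_Qu} gives $\bQ_u[\iota(\eta(O)) = v] = \Theta(1/n)$. Summing over $u \in S$ yields the claim.

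For the annealed mixing statement, the plan is to apply the Markov renewal theorem (Proposition \ref{prop:markov_renewal_thm}) to the Markov renewal process $(Y_k, \tT_k)_{k \geq 1}$ of Lemma \ref{lem:markov_decomposition}, with $Y_k := \iota(\cX_{\tT_k - 1/2})$ and initial delay $T_0 = \tT_1$. The key structural observation is that once the chain has regenerated outside $\BLR(O,L)$ it never returns to this ball, so conditional on $(Y_1, \tT_1)$ the subsequent process $(Y_k, \tT_k - \tT_1)_{k \geq 2}$ is independent of $\BLR(O,L)$ and of the conditioning $\{U_t(L) \leq m\}$, and has the standard transition kernels $(Q_t)_{t \geq 1}$ from Section \ref{subsec:mixing_regeneration}. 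Applying the renewal theorem will yield $\bP[\exists k \geq 1: \tT_k = t, Y_k = u] \simeq \mu(u)/\bE_{\bQ_\mu}[T_1]$ in total variation, and averaging over the conditional law of $\iota(\cX_{\tT_k}) = \iota(\eta(\cdot))$ given $Y_k = u$, which is $\bQ_u[\iota(\eta(O)) \in \cdot]$, will convert this into the desired approximation by $\nu(v)/\bE_{\bQ_\mu}[T_1]$.

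To verify the hypotheses of Proposition \ref{prop:markov_renewal_thm} uniformly in $n$: the mixing time of $Y$ is $O_\e(1)$ by Lemma \ref{lem:mixing_regen}(i), the overlap constant is $\alpha = \Theta(1)$ by Lemma \ref{lem:mixing_regen}(ii), and the stretched exponential tails of Lemma \ref{lem:regeneration_tails} give $\max_u \bE_u[T_1] = O(1)$ and $\bE_\mu[T_{1}^{2}] = O(1)$. The only quantity depending on the initial state $x$ and on the conditioning is the delay $T_0 = \tT_1$: the final statement of Lemma \ref{lem:regeneration_tails} gives $\bP[\tT_1 > k + m \mid \BLR(O,L), U_{k+m}(L) \leq m] \leq e^{-k^\alpha}$, from which one extracts $K_\nu(\e) = O(m) + C(\e)$. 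Substituting these estimates into \eqref{eq:tmix_renewal} then yields the claim for $t \geq C(\e) m^2$.

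The main obstacle is the comparison between the conditioning $\{U_t(L) \leq m\}$ appearing in the statement and the conditioning $\{U_{k+m}(L) \leq m\}$ under which the tail for $\tT_1$ is proved. The argument will rely on the fact that once the chain has escaped $\BLR(O,L)$ for good, which happens after $O(1)$ visits in expectation by the lower bound on escape probabilities from Lemma \ref{lem:escape_probability}, the two events differ only by a multiplicative constant, so the tail bound transfers with the same order. The half-integer starting case is handled identically using the corresponding part of Lemma \ref{lem:regeneration_tails}.
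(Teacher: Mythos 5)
Your proposal is correct and follows essentially the same route as the paper: the first claim via $\nu(v)=\sum_u \mu(u)\bQ_u[\iota(\eta(O))=v]$ and \eqref{eq:etaO_unif}, and the second by applying Proposition \ref{prop:markov_renewal_thm} to the regeneration renewal process, observing that the conditioning on $\BLR(O,L)$ and $U_t(L)\leq m$ only affects the delay $\tT_1$ (since after the first regeneration outside the ball the chain never returns), verifying $\alpha$, $\max_u\bE_u[T_1]$ and $\tmix^{(Y)}$ are $O(1)$ via Lemmas \ref{lem:mixing_regen} and \ref{lem:regeneration_tails}, taking $K_\nu(\e)=O(m)$, and converting $\mu$ into $\nu$ through the law of $\iota(\cX_{\tT_k})$ under $\bQ_{\tY_k}$. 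The "obstacle" you flag about replacing the conditioning $U_{k+m}(L)\leq m$ by $U_t(L)\leq m$ is handled in the same spirit as the paper, namely by noting the probability of the conditioning event is bounded below uniformly (from the annealed lower bound on escape probabilities), so the tail of the delay transfers up to a constant factor.
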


\begin{proof}
	The first statement is a direct consequence of \eqref{eq:etaO_unif}.

	Let us prove the second statement. Let $\e \in (0,1), m, t \geq 1$. For all $k \geq 1$ let $\tY_k := \iota(\cX_{\tT_{k} - 1/2})$. Apply Proposition \ref{prop:markov_renewal_thm} with the Markov renewal process of regeneration times considered here. Note that by the Markov property of $(Y,T)$ conditionning by $\BLR(O,L), U_{t}(L) \leq m$ yields the same transition kernels as $(Y,T$) and only affects the law of $(\tY_1, \tT_1$). Lemmas \ref{lem:mixing_regen} and \ref{lem:regeneration_tails} imply that the two quantities $\alpha$ and $\max_{u \in S} \bE_{u} \sbra{T_1}$ in this Proposition are bounded uniformly in $n$, as is the mixing time $\tmix^{(Y)}(\e)$ of $Y$ for all $\e \in (0,1)$. Finally, the last statement of Lemma \ref{lem:regeneration_tails} implies that 
	\begin{equation*}
		\bP_x \cond{\tT_1 \geq m + C_1(\e)}{\BLR(O,L), U_{m + C_1(\e)}(L) \leq m} \leq \e 
	\end{equation*}
	for some $C_1(\e) > 0$. This gives the value of the quantity $K_{\nu}(\e)$ considered in Proposition \ref{prop:markov_renewal_thm}, which thus proves that there exists $C(\e)$ such that for $t \geq C(\e) m^2$, 
	\begin{equation*}
		\sum_{u \in V} \abs{ \bP_{x} \cond{\exists k \geq 0: \tT_k = t-1/2, \tY_k = u}{\BLR(O,L), U_{t}(L) \leq m} - \frac{\mu(u)}{\bE_{\bQ_{\mu}} \sbra{T_1}}} \leq \e.
	\end{equation*}
	Then note that conditional on $\tT_k = t - 1/2, \tY_k = u$, $\iota(\cX_{t})$ is distributed as $\iota(\eta(O))$ under $\bQ_u$. Finally the arguments apply in the same way if the chain is started at time $x$ at time $1/2$ instead of $0$. 
\end{proof}

\section[Quasi-tree III: concentration of drift and entropy]{Analysis on the quasi-tree III: concentration of drift and entropy}\label{section:QT3}

In this section we establish "nice properties" for the chain $\cX$, proving in particular concentration of the drift and entropy. 

\subsection{Typical paths in quasi-trees}

We start with the following lemma, which will basically ensure the chain $\cX$ only visits typical vertices by time $t = O(\log n)$, allowing to use the uniform lower bound on escape probabilities.

\begin{lemma}\label{lem:escape_stationary}
	Let $\cA$ be a measurable set of quasi-trees. For all $t \geq 0$,
	\begin{align*}
		\max_{u,v \in V} \bP_O \cond{\bigcup_{s \leq t} \{ \cG_{\cX_s} \in \cA \}}{\iota(O) = u, \iota(\eta(O)) = v} \\ \leq t \max_{u,v \in V} \bP_O \cond{\cG_O \in \cA}{\iota(O) = u, \iota(\eta(O)) = v}. 
	\end{align*}
\end{lemma}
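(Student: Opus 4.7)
The plan is to apply a union bound over $s$ and reduce each term to the worst-case single-quasi-tree probability, using the subquasi-tree independence structure recalled in Section~\ref{subsec:random_QT}. Writing
\[
\bP_O \cond{\bigcup_{s \leq t} \{\cG_{\cX_s} \in \cA\}}{\iota(O) = u, \iota(\eta(O)) = v} \leq \sum_{s \leq t} \bP_O \cond{\cG_{\cX_s} \in \cA}{\iota(O) = u, \iota(\eta(O)) = v},
\]
it then suffices to show that each summand is at most $\max_{u',v'} \bP_O \cond{\cG_O \in \cA}{\iota(O) = u', \iota(\eta(O)) = v'}$, uniformly in $s \leq t$ and in the conditioning types $(u,v)$.

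Fix $s \leq t$. The sequential generation procedure of Section~\ref{subsec:coupling} allows us to jointly reveal the trajectory $(\cX_r)_{r \leq s}$ together with the portion $\cH_s$ of $\cG$ explored by time $s$. By construction $\cX_s \in \cH_s$ and the subquasi-tree $\cG_{\cX_s}$ is disjoint from $\cH_s$ apart from the vertex $\cX_s^\circ$ (or $\cX_s$ itself, in the case $\cX_s$ is a center). The Ulam-Harris labelling shows that conditionally on $(\cH_s, (\cX_r)_{r \leq s})$, the subquasi-tree $\cG_{\cX_s}$ is distributed as $\cG'_{O'}$ for an independent copy $(\cG', O')$ of $(\cG, O)$, further conditioned only on the types $\iota(\cX_s^\circ)$ and $\iota(\eta(\cX_s^\circ))$, both of which are $\cH_s$-measurable. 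Consequently
\[
\bP_O \cond{\cG_{\cX_s} \in \cA}{\cH_s, (\cX_r)_{r \leq s}} \leq \max_{u',v' \in V} \bP_O \cond{\cG_O \in \cA}{\iota(O) = u', \iota(\eta(O)) = v'}.
\]
Integrating over $(\cH_s, (\cX_r)_{r \leq s})$ under $\bP_O[\,\cdot \mid \iota(O) = u, \iota(\eta(O)) = v]$ gives the desired bound on each summand, and the lemma follows by summing over $s \leq t$. The only subtlety is bookkeeping which of the two types $\iota(\cX_s^\circ), \iota(\eta(\cX_s^\circ))$ plays the role of $u'$ versus $v'$, distinguishing between integer and half-integer times and between centers and non-centers; this is absorbed by taking the maximum on the right-hand side, and is the only place where the case analysis from Section~\ref{subsec:random_QT} is invoked.
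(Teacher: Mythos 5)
There is a genuine gap, and it sits exactly at the step you call a "subtlety": the claim that $\cG_{\cX_s}$ meets the explored environment only in $\cX_s^{\circ}$ (or $\cX_s$). This is false whenever the chain backtracks, which it does constantly (this is not a non-backtracking walk — handling backtracking is the whole point of the quasi-tree analysis). For instance the chain can cross a long-range edge below $\eta(O)$, explore part of the subquasi-tree there, and return, so that at time $s$ it sits at a vertex whose subquasi-tree $\cG_{\cX_s}$ contains a large already-revealed region. Conditionally on the trajectory and explored environment up to time $s$, the law of $\cG_{\cX_s}$ is then \emph{not} that of a fresh copy conditioned on two types, and the inequality $\bP_O \cond{\cG_{\cX_s} \in \cA}{\text{history up to } s} \leq \max_{u',v'} \bP_O \cond{\cG_O \in \cA}{\iota(O)=u', \iota(\eta(O))=v'}$ can fail by a factor of order $n$: take for $\cA$ the set of quasi-trees in which some vertex at small-range distance $1$ from the root's center has its long-range partner of a prescribed type $w$; the right-hand side is $O(\Delta/n)$, while the conditional probability equals $1$ as soon as such an edge has already been revealed along the trajectory. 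The union bound over $s \leq t$ is not the problem — it yields the same factor $t$ as the paper — but the conditioning on the full history at a fixed time is.

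The paper's proof avoids this by indexing not by time but by the successive \emph{distinct} vertices $z_0, z_1, \ldots$ visited by the chain (at most $t$ of them up to time $t$), and then decomposing $\bP_O \cond{\cG_{z_i} \in \cA}{\iota(O), \iota(\eta(O))}$ over the Ulam label $x \in \scU$ of $z_i$. The point is that $\{z_i = x\}$ is a \emph{first-visit} event: before first hitting $x$ the chain has never entered $\cG_x$ and its quenched transition probabilities do not involve $\cG_x$, so the weight $\bfP_O \sbra{z_i = x}$ is measurable with respect to $\cG \smallsetminus \cG_x$. One can then legitimately apply the subquasi-tree independence of Section \ref{subsec:random_QT} to the factor $\bP_O \cond{\cG_x \in \cA}{\cG \smallsetminus \cG_x}$, bound it by the maximum over types, and sum the weights over $x \in \scU$, which total at most $1$. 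Your time-indexed event $\{\cX_s = x\}$ does not have this measurability property, precisely because after the first visit the chain may enter and exit $\cG_x$; repairing your argument forces you to pass from "position at time $s$" to "newly visited vertex", which is the paper's argument.
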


\begin{proof}
	Fix the types of $O, \eta(O)$. Let $z_{0} := O$ and for $i \geq 1$ let $z_i$ be the $i$-th distinct vertex visited by the chain. There are at most $t$ distinct vertices visited up to time $t$, hence
	\begin{align*}
		\bP_O \cond{\bigcup_{s \leq t} \{ \cG_{\cX_s} \in \cA \}}{O, \eta(O)} &\leq \bP \cond{\bigcup_{i \leq t} \{ \cG_{z_i} \in \cA \}}{O, \eta(O)} \\
		&\leq t \max_{i \geq 0} \, \bP \cond{\cG_{z_i} \in \cA }{O, \eta(O)}.
	\end{align*}
	Note that for $x \in \scU$ the probability $\bfP_{O} \sbra{z_i = x}$ is measurable with respect to $\cG \smallsetminus \cG_x$. Thus for all $i \geq 0$, 
	\begin{align*}
		\bP_O \cond{\cG_{z_i} \in \cA }{\iota(O),\iota(\eta(O))} &= \sum_{x \in \scU} \bP_{O} \cond{z_i = x, \cG_x \in \cA}{O, \eta(O)} \\ 
		&= \sum_{x \in \scU} \bE \cond{\bfP_{O} \sbra{z_i = x} \, \bP_O \cond{\cG_x \in \cA}{ \cG \smallsetminus \cG_x} }{O, \eta(O)}.
	\end{align*}
	Now the second factor in this sum can be bounded as 
	\begin{equation*}
		\bP_O \cond{\cG_x \in \cA}{ \cG \smallsetminus \cG_x} \leq \max_{u,v \in V} \, \bP \cond{\cG'_{O'} \in \cA}{\iota(O') = u, \iota(\eta(O')) = v}
	\end{equation*}
	with $(\cG', O')$ an independent copy of $(\cG,O)$. The remaining terms then sum up to $1$, yielding the result.	 
\end{proof}

The previous Lemma will be combined with the following.

\begin{lemma}\label{lem:no_backtracking}
	Given $\alpha, q_0 \in (0,1]$ and $l \geq 0$, let $\cA_l = \cA_l (q_0, \alpha)$ be the set of quasi-trees for which every path of length $l$ starting from $O$ contains a proportion at least $\alpha$ of vertices $x$ satisfying $\qesc(x) \geq q_0$. There exists a constant $C$ such that for all $l \geq 0$, on the event $\{ \cG \in \cA_l \}$, 
	\begin{gather*}
		\bfP_O \sbra{\exists t > s \geq 0: \cX_{s} \in \cG_x, d_{\LR}(O, \cX_s) \geq l, d_{\LR}(O, \cX_t) = 0} \leq e^{- C l} \\
		\bfP_O \sbra{\exists t \geq 0: d_{\SR}(O, \cX_t) \geq l} \leq e^{-C l}.
	\end{gather*}
	Similarly, for all $x \in \cV$, on the event $\{\cG_x \in \cA_{l} \}$,
	\begin{gather*}
		\bfP_x \sbra{\exists t > s \geq 0: \cX_{s} \in \cG_x, d_{\LR}(x, \cX_s) \geq l, d_{\LR}(x, \cX_t) = 0} \leq e^{- C l} \\
		\bfP_x \sbra{\exists t \geq 0: \cX_t \in \cG_x, d_{\SR}(\cX_{t}^{\circ}, \cX_t) \geq l} \leq e^{-C l}.
	\end{gather*}
\end{lemma}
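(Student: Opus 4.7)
The plan is to reduce both inequalities to a single key quenched estimate: on $\cG \in \cA_l$, for every vertex $y$ with $d_{\LR}(O,y) \geq l$,
\[
\bfP_y \left[\exists t \geq 0 : d_{\LR}(O, \cX_t) = 0 \right] \leq (1 - q_0)^{\alpha l}.
\]
The first inequality of the lemma will then follow by applying the strong Markov property at the first hitting time of long-range distance $l$. To prove the estimate, I fix such a $y$ and consider the unique geodesic path from $O$ to $y$ in $\cG$. Since its $\cP$-length is at least $l$, the defining property of $\cA_l$ provides at least $\alpha l$ vertices on this path with $\qesc \geq q_0$. Using Remark \ref{rk:escape_center} to replace each such vertex by a nearby center at the cost of shrinking $q_0$ by a constant factor, I may assume these are centers $x_1, \ldots, x_k$ with $k \geq \alpha l$, ordered by increasing distance from $O$; their subquasi-trees are then nested, $\cG_{x_1} \supsetneq \cdots \supsetneq \cG_{x_k} \ni y$.

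The key structural observation is that each $x_i$ is the unique portal between $\cG_{x_i}$ and its complement, so any trajectory starting at $y$ that ever reaches long-range distance $0$ must visit $x_k, x_{k-1}, \ldots, x_1$ successively in that order, and must at each first hitting time $\tau_{x_i}$ eventually leave $\cG_{x_i}$. Applying the strong Markov property at $\tau_{x_i}$ together with the definition of $\qesc(x_i) \geq q_0$ gives
\[
\bfP_y \left[ \tau_{x_{i-1}} < \infty \right] \leq (1 - \qesc(x_i)) \, \bfP_y \left[ \tau_{x_i} < \infty \right] \leq (1 - q_0) \, \bfP_y \left[ \tau_{x_i} < \infty \right].
\]
Iterating from $i = k$ down to $i = 1$, together with one further strong-Markov step to account for leaving $\cG_{x_1}$ after $\tau_{x_1}$, yields the announced bound $(1-q_0)^k \leq e^{-Cl}$ with $C := -\alpha \log(1-q_0) > 0$.

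The small-range distance bound then follows by combining the above with a direct argument. The chain leaves its current small-range component at each half-integer step with probability at least some constant $q_{\min} > 0$ by \ref{hyp:bdd_p}, so each visit of the chain to $O$'s small-range component has geometrically bounded length, which caps the small-range distance reached during a single visit. To accumulate small-range distance at least $l$ across multiple visits the chain must repeatedly re-enter $O$'s component, which requires backtracking through its incident long-range edges; the number of such backtrackings is geometrically controlled by the key estimate above applied at long-range distance $1$, after reduction to a good center nearby. The total distance reached is therefore a geometric sum of geometrically tailed variables with constant parameters, whose tail is exponential in $l$. Finally, the analogous statements starting from $x$ are proved by applying the same arguments inside the subquasi-tree $\cG_x$, with $\cG_x \in \cA_l$ playing the role of $\cG \in \cA_l$.

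The main technical obstacle is the iterative strong-Markov chaining in the key estimate; once the nesting of subquasi-trees and the ordering $\tau_{x_k} < \cdots < \tau_{x_1}$ are established, each step extracts an independent factor of $(1-q_0)$ from the conditional escape probability, and the product over the $\alpha l$ good centers yields the exponential decay.
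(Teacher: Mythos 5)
There is a genuine gap, and it sits exactly at the step you flag as the heart of the argument: the conversion of the $\geq \alpha l$ good vertices supplied by $\cA_l$ into centers $x_1,\ldots,x_k$ with $\qesc(x_i)\geq c\,q_0$ and nested subquasi-trees $\cG_{x_1}\supsetneq\cdots\supsetneq\cG_{x_k}\ni y$. The event $\cA_l$ only guarantees that a proportion $\alpha$ of the vertices \emph{on the path} satisfy $\qesc\geq q_0$; it says nothing about where these vertices sit. If a good vertex $v$ is a non-center lying in the interior of a small-range stretch of the geodesic, the only centers you can reach with Remark \ref{rk:escape_center} are (a) $\eta(v)$, whose subquasi-tree $\cG_v=\cG_{\eta(v)}$ hangs \emph{off} the geodesic and does not contain $y$ — so it is not a portal between $y$ and $O$, the chain from $y$ need never visit it, and the claimed nesting fails — or (b) the center $v^{\circ}$ of $v$'s small-range component, which is indeed a cut vertex with $y\in\cG_{v^{\circ}}$, but then the transfer costs a factor $\delta^{d_{\SR}(v^{\circ},v)+1}$, and $d_{\SR}(v^{\circ},v)$ is not bounded by a constant (small-range components are communicating classes of $P$ and can have diameter growing with $n$; a deviation bound such as Lemma \ref{lem:typical_paths_QT} cannot be invoked here since it relies on the present lemma). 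Moreover, several good vertices in the same component all map to the same center $v^{\circ}$, so even granting the transfer you may be left with far fewer than $\alpha l$ distinct centers — in the worst case $O(1)$ of them if the first $l$ steps of the geodesic cross only a few long-range edges — and the product no longer decays exponentially in $l$. Your chaining itself (strong Markov at the successive $\tau_{x_i}$, which is legitimate because $\tau_{x_k}\leq\cdots\leq\tau_{x_1}$ are nested) is exactly the argument that works in the reversible companion paper, but there it is fed by a \emph{uniform} lower bound on escape probabilities; here the input you need for those specific cut centers is precisely what is not available. The second half of your proof (the small-range bound) inherits the same problem, since the control of the number of re-entries into $O$'s component again requires escape lower bounds at the particular exit vertices, "after reduction to a good center nearby".

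The paper's proof avoids this localization issue entirely: it works with the set $\mathrm{Esc}=\{x:\qesc(x)\geq q_0\}$ and the successive hitting times $\tau^{(k)}_{\mathrm{Esc}}$ of this set \emph{by the walk itself}. Since any trajectory realizing the event (returning to long-range distance $0$ from distance $l$, or reaching $\BSR(O,l)$) has $\cP$-length at least $l$, on $\cA_l$ it must visit $\mathrm{Esc}$ at least $\lfloor\alpha l\rfloor$ times before the event can occur; at each such visit the conditional probability of being permanently absorbed in the subquasi-tree of the current vertex is at least $q_0$, and absorption anywhere — whether or not the vertex is a cut vertex between the walk's position and $O$ — forbids the event. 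Iterating the strong Markov property along these hitting times gives the bound $(1-q_0)^{\lfloor\alpha l\rfloor}$ without ever needing the good vertices to separate $y$ from $O$. If you want to salvage your write-up, you should replace the geodesic/center reduction by this trajectory-based counting (or add a hypothesis giving lower-bounded escape probabilities at the crossed centers themselves, which is not what $\cA_l$ provides).
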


\begin{proof}
	Given $q_0 > 0$, consider the set $\mathrm{Esc} = \mathrm{Esc}(q_0) := \{x \in \cV \ | \ \qesc(x) \geq q_0 \}$. For all $x \in \cG \cap \mathrm{Esc} \smallsetminus \{ O \}$, $\bfP_x \sbra{\tau_{O} < \infty} \leq 1 - \qesc(x) < 1 - q_0$.

	Let $l \geq 0$ and $\tau_l$ be the first time $t \in \bN$ such that $d(O, \cX_t) = l$ and $\tau_{O}$ the hitting time of $O$. By strong Markov's property,
	\begin{align*}
		\bfP_{O} \sbra{\exists t > s \geq 0: d_{\LR}(O, \cX_s) \geq l, d_{\LR}(O, \cX_t) = 0} &= \bfE_{O} \sbra{ \II_{\tau_l < \infty} \bfP_{\cX_{\tau_l}} \sbra{\tau_{O} < \infty} } \\
		&\leq \max_{x_1: d_{\LR}(O, x_1) = l} \bfP_{x_1} \sbra{\tau_{O} < \infty}.
	\end{align*}
	Let $\tau^{(k)}_{\mathrm{Esc}}$ be the $k$-th hitting time of $\mathrm{Esc} \smallsetminus \{ O \}$ and suppose $x_1 \in \cG$ is at long-range distance $l$ from $O$. Then $x_1$ must also be at $\cP$-distance at least $l$ from $O$, thus if $\cG \in \cA_l$ and $\cX_0 = x_1$ it is impossible that $\tau_{O} < \tau_{\mathrm{Esc}}^{(\lfloor \alpha l \rfloor )}$ as this would imply the existence of a path contradicting the definition of $\cA_l$. Thus when $\cG \in \cA_l$, applying Strong Markov's property at the successive stopping times $\tau_{\mathrm{Esc}}^{(k)}$ yields
	\begin{align*}
		\bfP_{x_1} \sbra{\tau_{O} < \infty} &= \bfP_{x_1} \sbra{\tau_{\mathrm{Esc}}^{(\lfloor \alpha l \rfloor )} \leq \tau_{O} < \infty} \\
		&\leq (1 - q_0)^{\lfloor \alpha l \rfloor}.
	\end{align*}

	The argument for the second bound is similar. Let $\tau^{(k)}_{\mathrm{Esc}}$ be now the $k$-th hitting time of $\mathrm{Esc}$, so we do not discard $O$ anymore, and let $\tau_{l}$ denote now the hitting time of $B_{\SR}(O, l)$. As before, observe it is impossible to have $\tau_{l} < \tau_{\mathrm{Esc}}^{(\lfloor \alpha l \rfloor )}$ if $\cG \in \cA_l$. However, to reach $B_{\SR}(O, l)$ there must be no escape to infinity before $\tau_l$. Hence on the event $\{ \cG \in \cA_l \}$,
	\begin{align*}
		\bfP_{O} \sbra{\exists t \geq 0: d_{\SR}(O, \cX_t) \geq l} &= \bfP_{O} \sbra{\tau_l < \infty} \\
		&= \bfP_{O} \sbra{\tau_{\mathrm{Esc}}^{(\lfloor \alpha l \rfloor )} \leq \tau_{l} < \infty} \\
		&\leq (1- q_0)^{\lfloor \alpha l \rfloor},
	\end{align*}
	the last line being obtained by applying the strong Markov property at times $\tau^{(k)}_{\mathrm{Esc}}$.

	Finally the two last statements for a starting state $x \in \cV$ are proved with the exact same reasoning.
\end{proof}

We now establish an analog of Lemma \ref{lem:typical_paths} in the quasi-tree setting.

\begin{lemma}\label{lem:typical_paths_QT}
	Let $\Gamma(R, L, M)$ denote the set of paths $\frp$ in $\cG$ such that $\frp$ does not deviate from a small-range distance more than $R$, backtrack over a long-range distance $L$ or contain a subpath of length $M$ without a regeneration edge. 
	There exists $C > 0$ and $\alpha \in (0,1]$ such that for all $R, L, M \geq 0$, for all $t \geq 0$, for any types of $O, \eta(O)$,
	\begin{equation*}
		\bP_O \cond{\cX_{s} \cdots \cX_{s+t} \notin \Gamma(R,L,M)}{O, \eta(O)} \leq (s+t)e^{-C (R \wedge L \wedge M^{\alpha})}.
	\end{equation*}
\end{lemma}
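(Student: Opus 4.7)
The plan is to decompose the complement $\Gamma(R,L,M)^c$ as the union of three events — the trajectory deviates beyond small-range distance $R$, backtracks over a long-range distance $L$, or contains a subpath of length $M$ without a regeneration edge (with horizon $L$) — and bound each separately by $(s+t)\,e^{-C(R \wedge L \wedge M^{\alpha})}$, concluding by union bound.

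For the deviation and backtracking events, I would combine Proposition \ref{prop:escape_dense} with Lemmas \ref{lem:escape_stationary} and \ref{lem:no_backtracking}. Proposition \ref{prop:escape_dense} supplies $q_0, \alpha \in (0,1]$ so that for any prescribed constant, $\bP \cond{\cG_O \notin \cA_l(q_0,\alpha)}{O, \eta(O)} \leq e^{-Cl}$ uniformly in $l$. Since the chain visits at most $s+t$ distinct vertices (hence at most $s+t$ distinct centers $\cX_{s'}^{\circ}$) by time $s+t$, Lemma \ref{lem:escape_stationary} applied to the bad set $\cA_l^c$ yields an annealed bound at most $(s+t)\,e^{-Cl}$ on the probability that some subquasi-tree $\cG_{\cX_{s'}^{\circ}}$ rooted at a center visited by the trajectory is not in $\cA_l$. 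On the complementary event, I would invoke the strong Markov property at the first center where the trajectory would cross the forbidden small-range or long-range distance, then apply the two quenched bounds of Lemma \ref{lem:no_backtracking} to the corresponding subquasi-tree (which now belongs to $\cA_R$, resp. $\cA_L$) to obtain quenched probabilities $\leq e^{-CR}$ for deviation and $\leq e^{-CL}$ for backtracking.

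For the regeneration-gap event, the idea is to leverage the stretched exponential tail on inter-regeneration times. By Lemma \ref{lem:regeneration_tails}, $T_1$ has stretched exponential tail uniformly over the starting type, and applying Lemma \ref{lem:exponential_tail_markov} to the Markov renewal process $(Y_k, T_k - T_{k-1})_{k \geq 0}$ I would deduce $\bP \sbra{T_{k+1} - T_k > M} \leq e^{-C M^{\alpha}}$ uniformly in $k$. Every true regeneration edge is a regeneration edge with horizon $L$ for any $L$ (the chain never returns to its starting endpoint, in particular not after crossing long-range distance $L$), so a subpath of length $M$ within $\cX_0 \cdots \cX_{s+t}$ devoid of regeneration edges with horizon $L$ forces a gap of length at least $M$ between two consecutive true regeneration times. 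A union bound over the at most $s+t$ such gaps then gives probability $\leq (s+t)\,e^{-C M^{\alpha}}$. Summing the three contributions and adjusting the constant yields the claimed bound.

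The main obstacle will be the deviation/backtracking step: routing the argument through the correct subquasi-tree so that the quenched estimates of Lemma \ref{lem:no_backtracking} apply. One must first annealedly discard the (low-probability) event that the chain visits a "bad" subquasi-tree using Lemma \ref{lem:escape_stationary}, and on the complement invoke the strong Markov property at the proper center so that the rooted subtree inherits the good property $\cA_R \cap \cA_L$. The regeneration-gap part is more routine once the stretched exponential tail is combined with the Markov-renewal structure of Section \ref{subsec:mrp}.
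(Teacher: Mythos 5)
Your proposal follows essentially the same route as the paper: a union bound over the three defects, with deviation and backtracking handled by discarding atypical subquasi-trees via Proposition \ref{prop:escape_dense} together with Lemma \ref{lem:escape_stationary} and then applying the quenched bounds of Lemma \ref{lem:no_backtracking} on the good event, and the regeneration-gap event handled by the uniform stretched exponential tails of Lemma \ref{lem:regeneration_tails} through the Markov renewal structure. The only cosmetic difference is that the uniform bound on a single increment $T_{k+1}-T_k$ follows directly from Lemma \ref{lem:markov_decomposition} and Lemma \ref{lem:regeneration_tails}, so the appeal to Lemma \ref{lem:exponential_tail_markov} (which concerns sums of increments) is unnecessary, though harmless.
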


\begin{proof}
	Fix $R, L, M$, $s,t \geq 0$ and the types of $O, \eta(O)$. Let us start with the deviation property:
	\begin{multline}\label{eq:deviation_bound}
		\bP_O \cond{\exists t' \in [s,s+t]: d_{\SR}(\cX_{t'}^{\circ}, \cX_{t'}) \geq R}{O, \eta(O)} \\ \leq \bP_O \cond{\exists t' \leq s+t: d_{\SR}(O, \cX_{t'}) \geq R}{O, \eta(O)} \\ + \bP \cond{\exists t_1 \leq t_2 \leq s+t: \cX_{t_2} \in \cG_{\cX_{t_1}}, d_{\SR}(\cX_{t_2}^{\circ}, \cX_{t_2}) \geq R}{O, \eta(O)} 
	\end{multline}
	Given $q_0 > 0$, $\alpha \in (0,1)$, let $\cA_R = \cA_{R}(q_0, \alpha)$ be the set of quasi-trees, for which every path of length $R$ starting from $O$ contains a proportion at least $\alpha$ of vertices $x$ such that $\qesc(x) \geq q_0$. Then the first term can be bounded as 
	\begin{multline*}
		\bP_O \cond{\exists t' \leq s+t: d_{\SR}(O, \cX_{t'}) \geq R}{O, \eta(O)} \leq \bP \cond{\cG \notin \cA_R}{O, \eta(O)} \\ + \bE \cond{\II_{\cG \in \cA_R} \bfP_O \sbra{\exists t' \leq s+t: d_{\SR}(O, \cX_{t'}) \geq R}}{O, \eta(O)}.
	\end{multline*}
		By Proposition \ref{prop:escape_dense} there exist constants $q_0, \alpha > 0$ such that the first term can be made exponentially small in $R$, while Lemma \ref{lem:no_backtracking} shows the second term is also exponentially small. 
		The second term of \eqref{eq:deviation_bound} can be bounded similarly: 
		\begin{align*}
			&\bP \cond{\exists t_1 \leq t_2 \leq s+t: \cX_{t_2} \in \cG_{\cX_{t_1}}, d_{\SR}(\cX_{t_2}^{\circ}, \cX_{t_2}) \geq R}{O, \eta(O)} \\
			& \leq \bP \cond{\exists t_1 \leq s+t: \cG_{\cX_{t_1}} \notin \cA_R}{O, \eta(O)} \\ 
			& + \sum_{t_1 \leq s+t} \bE \cond{\II_{\cG_{\cX_{t_1}} \in \cA_{R}} \bfP_{\cX_{t_1}} \sbra{\exists t_2 \geq 0: \cX_{t_2} \in \cG_{\cX_{t_1}}, d_{\SR}(\cX_{t_2}^{\circ}, \cX_{t_2}) \geq R}}{O, \eta(O)} \\
			& \leq (s+t) e^{-C R}
		\end{align*}
		for some constant $C > 0$, using Lemma \ref{lem:escape_stationary} and Proposition \ref{prop:escape_dense} to bound the first term and Lemma \ref{lem:no_backtracking} for the second. 

		The probability that the trajectory backtrack over a long-range distance $L$ between $s$ and $s+t$ is established with the same arguments. It then remains to handle the regeneration requirement. By union bound for all $t \geq 0$ and $u \in V$
	\begin{align*}
		&\bP_O \cond{\exists s' \in [s,s+t]: [s', s' + M] \cap \{T_{k}, k \geq 1\} = \emptyset}{O, \eta(O)} \\
		&\qquad \leq \bP_O \cond{\exists k \leq s+t: T_{k+1} - T_{k} \geq M}{O, \eta(O)} \\ 
		&\qquad \leq( s+t) \max_{k \leq s+t} \bP_O \cond{T_{k+1} - T_{k} \geq M}{O, \eta(O)}.
	\end{align*} 
	The stretched exponential tails of regeneration times (Lemma \ref{lem:regeneration_tails}) conclude the proof. 
\end{proof}

\subsection{Concentration of the drift}

\begin{proposition}\label{prop:drift}
	Let $\mathscr{d} := \frac{\bE_{\bQ_{\mu}} \sbra{L_1}}{\bE_{\bQ_{\mu}} \sbra{T_1}}$. Then for all $s \geq 0$, a.s. 
	\begin{equation}\label{eq:cv_drift}
		\frac{d_{\LR}(\cX_s, \cX_{s+t})}{t} \xrightarrow[t \rightarrow \infty]{} \mathscr{d}.
	\end{equation}
	Furthermore, there exists $\alpha > 0$ for which the following holds. For all $\e > 0$ there exists $C=C(\e)$ such that for all $s,t \geq 0$, for all types of $O, \eta(O)$,
	\begin{equation}\label{eq:clt_drift}
		\bP_O \cond{ \abs{d_{\LR}(\cX_s, \cX_{s+t}) - \mathscr{d} t} > C \sqrt{t} }{O, \eta(O)} \leq \e + C \sqrt{s} e^{- t^{\alpha}}.
	\end{equation}
\end{proposition}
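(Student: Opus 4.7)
The proof splits into the almost sure convergence \eqref{eq:cv_drift} and the quantitative deviation bound \eqref{eq:clt_drift}. For both, the strategy is to reduce to the Markov renewal process $(Y_k, T_k, L_k)_{k \geq 0}$ of Lemma \ref{lem:markov_decomposition}.

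For the almost sure convergence, I would apply Proposition \ref{prop:lln_mrp} to $(Y_k, T_k)$ and $(Y_k, L_k)$. Lemma \ref{lem:mixing_regen} ensures $Y$ is positive recurrent with invariant measure $\mu$, and Lemma \ref{lem:regeneration_tails} gives finite $\bE_\mu[T_1]$ and $\bE_\mu[L_1]$, hence $T_k/k \to \bE_\mu[T_1]$ and $L_k/k \to \bE_\mu[L_1]$ a.s. For $t \in [T_k, T_{k+1})$, the quasi-tree geometry forces $\cX_t$ into the excursion between the $k$-th and $(k+1)$-th regeneration edges, so $|d_{\LR}(O,\cX_t) - L_k|$ is dominated by $T_{k+1}-T_k$; the stretched exponential tails of Lemma \ref{lem:regeneration_tails} combined with Borel--Cantelli then give that this excursion is $o(t)$ a.s., yielding $d_{\LR}(O,\cX_t)/t \to \mathscr{d}$. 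To extend this to $d_{\LR}(\cX_s, \cX_{s+t})/t \to \mathscr{d}$, I would use the tree structure: every geodesic from $\cX_s$ to $\cX_{s+t}$ must traverse the regeneration edges lying in the time interval $[s, s+t]$, so this quantity differs from $L_{k_{s+t}} - L_{k_s}$ only by the two boundary excursions, which are again $o(t)$.

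For the concentration \eqref{eq:clt_drift}, I would first establish it under the stationary measure $\bQ_\mu$. Set $M_k := L_k - \mathscr{d} T_k$: by construction $\bE_\mu[M_k - M_{k-1}] = 0$, and the tail estimates of Lemma \ref{lem:regeneration_tails} give uniform bounds on $\bE_u[(M_1 - M_0)^2]$ over $u \in V$. Since $Y$ has mixing time $O(1)$ by Lemma \ref{lem:mixing_regen}, Proposition \ref{prop:variance_markov} applied to the centered function $(L_1 - L_0) - \mathscr{d}(T_1 - T_0)$ gives $\Var_{\bQ_\mu}[M_k] = O(k)$, and \eqref{eq:variance_mrp} controls the fluctuations of $T_k$ around $k \bE_\mu[T_1]$. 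A Chebyshev bound, together with a standard maximal inequality to upgrade single-index control to a supremum over $k$, produces for any $\e > 0$ a constant $C(\e)$ such that $\bQ_\mu[\max_{k \leq K} |M_k| > C\sqrt{K}] \leq \e$. Combining this with the fluctuation estimate on $T_k$ and the definition of $\mathscr{d}$ yields concentration of $L_{N_t} \approx \mathscr{d} t$ up to $O(\sqrt{t})$ under $\bQ_\mu$, where $N_t$ denotes the number of regenerations by time $t$.

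It remains to transfer the stationary concentration to the quenched bound conditional on $\iota(O), \iota(\eta(O))$ and to handle the shift by $s$. The $O(1)$ mixing of $Y$ handles the initial burn-in: after $k_0 = k_0(\e)$ regenerations the law of $Y_{k_0}$ is within $\e$ of $\mu$, absorbing the conditioning on types at constant cost. For the shift, I would apply the strong Markov property of $\cX$ at the first regeneration time $\tau \geq s$: by Lemma \ref{lem:regeneration_tails} this occurs within $t^{\alpha/2}$ of $s$ with probability at least $1 - e^{-t^{\alpha'}}$, and past $\tau$ Lemma \ref{lem:markov_decomposition} provides a fresh Markov renewal process independent of the past, to which the stationary estimate applies to give $|d_{\LR}(\cX_\tau, \cX_{s+t}) - \mathscr{d}(s+t-\tau)| \leq C\sqrt{t}$ with probability $\geq 1 - \e$. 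The $\sqrt{s}$ prefactor would then arise from a second-moment union bound controlling the $O(s)$ regeneration excursions in $[0,s]$ whose stretched exponential tails could otherwise spoil the matching of $d_{\LR}(\cX_s, \cX_{s+t})$ with $L_{k_{s+t}} - L_{k_s}$. The main obstacle is precisely this last step: ensuring that across the entire shifted interval, the boundary terms between the discrete regeneration geometry and the continuous-time chain stay within $O(\sqrt{t})$ requires careful use of the exponential tail of $L_1$, not merely the stretched exponential tail of $T_1$.
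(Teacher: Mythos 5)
Your overall architecture is the one the paper uses: Proposition \ref{prop:lln_mrp} for the almost sure limit, and for the $\sqrt{t}$ fluctuations a burn-in of $\tmix^{(Y)}(\e)$ regenerations (Lemma \ref{lem:mixing_regen}) followed by Chebyshev with the variance bound of Proposition \ref{prop:variance_markov}/\eqref{eq:variance_mrp} and the tails of Lemma \ref{lem:regeneration_tails}. Centering $M_k := L_k - \mathscr{d}\, T_k$ instead of controlling $N_t$ and $L_k$ separately is a harmless repackaging; note, however, that you do not need (and should not casually invoke) a "standard maximal inequality" for this dependent sum --- the paper avoids it entirely because $L_k$ and $T_k$ are nondecreasing in $k$, which lets one pass from fixed-index concentration to control of $L_{N_t}$ once $N_t$ is localized in a window of width $O(\sqrt{t})$.

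The genuine gap is in the shifted case. Your claim that, by Lemma \ref{lem:regeneration_tails}, the first regeneration time after $s$ falls within $t^{\alpha/2}$ of $s$ with probability at least $1-e^{-t^{\alpha'}}$ does not follow and cannot hold with an error independent of $s$: the lemma bounds the tail of a single increment $T_{k+1}-T_k$ at a fixed index, whereas the residual time after the deterministic time $s$ is the increment of the interval straddling $s$, i.e.\ $T_{N_s+1}-s$ with $N_s$ random (an inspection-paradox effect) --- this is exactly why the statement of the proposition carries an error term growing with $s$. Your fallback, a union bound over the $O(s)$ excursions in $[0,s]$, produces a prefactor $s\,e^{-t^{\alpha}}$ rather than the claimed $\sqrt{s}\,e^{-t^{\alpha}}$. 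The paper's resolution is to first prove the concentration $\abs{N_s - s/\bE_{\bQ_{\mu}}\sbra{T_1}} \leq C\sqrt{s}$ with probability at least $1-\e$ (display \eqref{eq:fluctations_Ns}), and only then union bound the event $T_{k+1}-T_k > \sqrt{t}$ over the $O(\sqrt{s})$ indices $k$ in that window, yielding $\bP\sbra{T^{(s)}_1 > \sqrt{t}} \leq \e + 2C\sqrt{s}\, e^{-t^{\alpha/2}}$, which is the source of the $C\sqrt{s}\,e^{-t^{\alpha}}$ term. Once this delay is controlled, Lemma \ref{lem:markov_decomposition} shows the shifted process $(T^{(s)}_k, L^{(s)}_k)_{k\geq 1}$ has the same increment laws as the original one, so the unshifted argument applies verbatim; and the boundary matching needs no appeal to the exponential tail of $L_1$, since $0 \leq d_{\LR}(\cX_s,\cX_{s+t}) - L^{(s)}_{N^{(s)}_t} \leq T^{(s)}_{N^{(s)}_t+1} - T^{(s)}_{N^{(s)}_t}$, which is handled by Markov's inequality over the $O(\sqrt{t})$ window of possible values of $N^{(s)}_t$.
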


\begin{proof}
		For notational simplicity we omit writing the conditionning by $\iota(O)$ and $\iota(\eta(O))$. As can be checked this conditionning does not affect the proof as the technical results that will be used hold even conditional on the long-range edge at the root. For all $t \geq 0$, let 
		\[
			N_t := \max \{k \geq 0 \ | \ T_k \leq t \}.
		\]
		Then 
		\begin{equation*}
			L_{N_t} \leq d_{\LR}(O, \cX_t) \leq L_{N_{t}} + T_{N_t +1} - T_{N_t}.
		\end{equation*}
		It is easy to prove that $(T_{N_{t} + 1} - T_{N_t}) / t \rightarrow 0$, hence the law of large numbers \eqref{eq:cv_drift} follows from Lemma \ref{lem:markov_decomposition} and Proposition \ref{prop:lln_mrp} which prove $N_t / t \xrightarrow[t \rightarrow \infty]{} 1/ \bE_{\mu}\sbra{T_1}$ and $L_k / k \xrightarrow[k \rightarrow \infty]{} \bE_{\mu} \sbra{L_1}$ a.s..

		Then we establish \eqref{eq:clt_drift} in the case $s=0$. It suffices to prove that for all $\epsilon > 0$ there exists $C > 0$ such that for all $t,k \geq 0$
		\begin{align}
			&\bP_O \sbra{\abs{N_t - \frac{t}{\bE_{\bQ_{\mu}} \sbra{T_1}}} > C \sqrt{t}} \leq \e \label{eq:fluctations_Ns} \\
			&\bP_O \sbra{\abs{L_k -\bE_{\bQ_{\mu}} \sbra{L_1} k} > C \sqrt{k}} \leq \e. \nonumber
		\end{align}
		Indeed if this holds using that regeneration levels are non-decreasing we deduce that $\abs{L_{N_t} - \mathscr{d} t} > C \sqrt{t}$ with probability at most $\e$ for some $C = C(\e)$, whereas union bound and Markov's inequality show that for all $C' > 0$
		\begin{align*}
			&\bP_O \sbra{T_{N_t +1} - T_{N_t} > C' \sqrt{t}, \abs{N_t - \frac{t}{\bE_{\bQ_{\mu}} \sbra{T_1}}} \leq C \sqrt{t}} \\
			&\quad \leq \bP_O \sbra{\exists k: \abs{k - \frac{t}{\bE_{\bQ_{\mu}} \sbra{T_1}}} \leq C \sqrt{t}, T_{k+1} - T_{k} > C' \sqrt{t}} \\
			&\quad \leq 2 C \sqrt{t} \frac{\max_{u \in V} \bE_{\bQ_u} \sbra{T_{1}} \vee \bE_O \sbra{T_1}}{C' \sqrt{t}} \\
			&\quad \leq \e
		\end{align*}
		for large enough $C' = C'(\e)$, using also Lemma \ref{lem:regeneration_tails} to argue the expectations are $O(1)$.

		Let us now prove the claim. We only prove the upper tail of the first inequality as the other bounds are established similarly.
		Let 
		\begin{equation*}
			k = \left\lfloor \frac{t}{\bE_{\bQ_{\mu}} \sbra{T_1}} + C \sqrt{t} \right\rfloor.
		\end{equation*}
		The objective is to apply the Bienaymé Chebychev inequality with the variance bound for Markov chains \eqref{eq:variance_mrp}, which is valid only when started at equilibrium. Let $\e \in (0, 1/2)$ and $t_0 = \tmix^{(Y)}(\e)$ be the mixing time of $(Y_l)_{l \geq 0}$, then decompose $T_k = Z_1 + Z_2$ with $Z_1 = \sum_{i = 1}^{t_0} T_{i} - T_{i-1}$ and $Z_{2} = \sum_{i=t_0 +1}^{k} T_{i} - T_{i-1}$. 
		
		We write further $Z'_2 := Z_2 - \bE_{\bQ_{\mu}} \sbra{Z_2}$. Stationarity implies $\bE_{\bQ_{\mu}} \sbra{Z_2} = (k-t_0) \bE_{\bQ_{\mu}} \sbra{T_1}$ hence
		\begin{align*}
			\bP_O \sbra{N_t > k} &= \bP \sbra{ T_k < t } \\
			&= \bP_O \sbra{Z_1 + Z'_2 < t - (k - t_0) \bE_{\bQ_{\mu}}\sbra{T_1}} \\
			&\leq \bP_O \sbra{Z_1 + Z'_2 < z}
		\end{align*}
		where
		\begin{equation*}
			z := (t_0 + 1) \bE_{\bQ_{\mu}}\sbra{T_1} - C \bE_{\bQ_{\mu}}\sbra{T_1} \sqrt{t}.
		\end{equation*} 
		By Lemmas \ref{lem:regeneration_tails} and \ref{lem:mixing_regen}, $t_0$ and the expectation above can be bounded by constants independent of $n$. Thus for some constant $C' >0$,
		\begin{align*}
			\bP_O \sbra{Z_1 + Z'_2 < z} &\leq \bP_O \sbra{ \abs{Z_1 + Z'_2} >  2 C' \sqrt{t}} \\
			&\leq \bP_O \sbra{Z_1 > C' \sqrt{t}} + \bP_O \sbra{\abs{Z'_2} > C' \sqrt{t}}.
		\end{align*}
		From Lemma \ref{lem:mixing_regen} and the fact that $t_0 = O(1)$, we also deduce that $\bE_O \sbra{Z_1} = O(1)$ so Markov's inequality ensures the first term is smaller than $\e$ for all $t$ by taking the constant $C'$ large enough. For the second term, since $t_0 = \tmix^{(Y)}(\e)$
		\begin{align*}
			\bP_O \sbra{\abs{Z'_2} > C' \sqrt{t}} &= \sum_{y} \bP_O \sbra{Y_{t_0} = y} \bQ_{y} \sbra{\abs{T_{k-t_0} - \bE_{\bQ_{\mu}}\sbra{T_{k-t_0}}} > C' \sqrt{t}} \\
			&\leq \TV{Y_{t_0} - \mu} + \bQ_{\mu} \sbra{\abs{T_{k-t_0} - \bE_{\bQ_{\mu}}\sbra{T_{k-t_0}}} > C' \sqrt{t}} \\
			&\leq \e + \bQ_{\mu} \sbra{\abs{T_{k-t_0} - \bE_{\bQ_{\mu}}\sbra{T_{k-t_0}}} > C' \sqrt{t}}.
		\end{align*}
		Using the Bienaymé-Chebychev inequality with \eqref{eq:variance_mrp}, we deduce 
		\begin{equation*}
			\bQ_{\mu} \sbra{\abs{T_{k-t_0} - \bE_{\bQ_{\mu}}\sbra{T_{k-t_0}}} > C' \sqrt{t}} \leq \frac{\Var_{\bQ_{\mu}}(T_{k-t_0})}{(C')^2 t} \leq \frac{C'' (k-t_0) \Var_{\bQ_{\mu}} \sbra{T_1}}{(C')^{2} t},
		\end{equation*}
		for some constant $C'' > 0$, as the regeneration chain has constant mixing time by Lemma \ref{lem:mixing_regen}. By Lemma \ref{lem:regeneration_tails}, the variance of $T_1$ is of constant order while $k = O(t)$, hence the probability above can be made smaller than $\e$ for all $t$ by taking $C'$ large enough. 

		For the general case $s \geq 0$, we apply the previous arguments with a shifted version of the processes. For $s \geq 0$ fixed, consider
		\begin{equation*}
			T^{(s)}_{k} := T_{N_s + k} - s, \qquad L^{(s)}_{k} := L_{N_s + k} - d_{\LR}(O,\cX_s)
		\end{equation*}
		if $k \geq 1$ and $T^{(s)}_0 := 0, L^{(s)}_0 := 0$. Then let
		\begin{equation*}
			N^{(s)}_t := \max \{ k \geq 0 \ | \ T^{(s)}_k \leq t \}.
		\end{equation*}
		Notice that
		\begin{equation*}
			L^{(s)}_{N^{(s)}_t} \leq d_{\LR}(\cX_s, \cX_{s+t}) \leq L^{(s)}_{N^{(s)}_t} + T^{(s)}_{N^{(s)}_t+1} - T^{(s)}_{N^{(s)}_t}.
		\end{equation*}
		These processes still satisfy the conclusions of Lemma \ref{lem:markov_decomposition} and have the same increments as the usual regeneration times. Thus the only thing to be careful when applying the above arguments is the law of the first regeneration time that now depends on $s$. Using the concentration \eqref{eq:fluctations_Ns} for $N_s$, union bound and Lemma \ref{lem:regeneration_tails}, we can see that for all $m \geq 0$,
		\begin{align*}
			\bP \sbra{T^{(s)}_1 > m } &= \sum_{k \geq 0} \bP \sbra{N_s = k, T_{k+1} - s > m} \\
			&\leq \bP \sbra{\abs{N_s - s / \bE_{\bQ_{\mu}} \sbra{T_1}} > C \sqrt{s}} + \bP \sbra{\exists k: \abs{k - s / \bE_{\bQ_{\mu}} \sbra{T_1}} \leq C \sqrt{s}, T_{k+1} - T_k > m} \\
			&\leq \e + 2 C \sqrt{s} e^{-m^{\alpha}}.
		\end{align*}
		Taking $m = \sqrt{t}$ yields the stretched exponential term of \eqref{eq:clt_drift}. Then for higher regeneration times $(T^{(s)}_{k})_{k \geq 2}$, the above arguments apply. 
\end{proof}

\subsection{Concentration of the entropy}

The concentration of the entropy in Proposition \ref{prop:nice_approx} is based on the convergence of the entropy for the loop-erased chain in the quasi-tree, that is the convergence of $- \log \bfP \cond{\xi'_k = \xi_k}{\xi} / k$ towards a constant, the entropic rate of the chain. Such convergence is well-known in the context of groups or random walks on Galton-Watson trees, see \cite{kaimanovich1983random, lyons1995ergodic}. We will however not prove this result but establish concentration directly for a notion of weights similar to those of \eqref{eq:def_weights_G}. Of course, these are designed to mimick the law of the loop-erased chain, so the argument is similar. In fact the first step is to prove the convergence and concentration of the loop-erased chain when restricted to regeneration steps. 

\begin{lemma}\label{lem:entropy_regen}
	Let $\xi'$ be an independent copy of the loop-erased chain $\xi$. 
	There exists $h' = \Theta(1)$ such that 
	\begin{equation}\label{eq:lln_regen}
		\lim_{k \rightarrow \infty} \frac{- \log \bfP \cond{ \xi'_{L_k} = \xi_{L_k} }{\cX}}{k} = h'.
	\end{equation}
	Furthermore, for all $\e > 0$, there exists $C(\e) >0$ such that for all $k, l \geq 1$, for all types of $O, \eta(O)$, 
	\begin{equation}\label{eq:clt_regen}
		\begin{gathered}
			\bP \cond{\abs{- \log \bfP \cond{ \xi'_{L_k} = \xi_{L_k} }{\cX} - h' k} > C(\e) \sqrt{k}}{O, \eta(O)} \leq \e, \\
			\bP \cond{\abs{- \log \bfP \cond{ \xi'_{L_{k+l}} = \xi_{L_{k+l}} }{\cX, \xi'_{L_k} = \xi_{L_k}} - h' l} > C \sqrt{l}}{O, \eta} \leq \e.
		\end{gathered}
	\end{equation}
\end{lemma}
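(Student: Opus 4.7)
Define block increments by
\begin{equation*}
W_i := - \log \bfP \cond{\xi'_{L_i} = \xi_{L_i}}{\cX, \xi'_{L_{i-1}} = \xi_{L_{i-1}}}, \qquad L_0 := 0,
\end{equation*}
so that telescoping yields $-\log \bfP \cond{\xi'_{L_k} = \xi_{L_k}}{\cX} = \sum_{i=1}^{k} W_i$. Since $\xi_{L_i}$ is a regeneration edge and therefore lies on the unique geodesic from $O$ to $\cX_{T_i}$, the event $\{\xi'_{L_i} = \xi_{L_i}\}$ conditional on $\{\xi'_{L_{i-1}} = \xi_{L_{i-1}}\}$ is determined by $Y_i$ together with the portion of quasi-tree and trajectory revealed between $T_{i-1}$ and $T_i$. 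By Lemma \ref{lem:markov_decomposition} this forces $(Y_i, W_i)_{i \geq 1}$ to satisfy \eqref{eq:M1_property}, with $Y$ positive recurrent of stationary law $\mu$ and $O(1)$-mixing thanks to Lemma \ref{lem:mixing_regen}.

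Granted this, \eqref{eq:lln_regen} will follow from the ergodic theorem for additive functionals of a positive recurrent Markov chain, with the explicit value $h' := \bE_{\bQ_\mu} \sbra{W_1}$. For the concentration \eqref{eq:clt_regen} I would follow exactly the pattern used for \eqref{eq:clt_drift} in Proposition \ref{prop:drift}: split $\sum_{i=1}^{k} W_i = Z_1 + Z_2$, where $Z_1$ collects the first $\tmix^{(Y)}(\e) = O(1)$ summands and is controlled by Markov's inequality on $\bE \sbra{W_1}$, and bound $Z_2 - \bE_{\bQ_\mu} \sbra{Z_2}$ by Chebyshev using the variance bound of Proposition \ref{prop:variance_markov}, replacing the law of $Y_{\tmix^{(Y)}(\e)}$ by $\mu$ at a total variation cost of $\e$.

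The main obstacle will be to establish uniform $L^2$ bounds on $W_1$. To that end I would derive the a.s.\ lower bound
\begin{equation*}
\bfP \cond{\xi'_{L_1} = \xi_1}{\cX} \geq \delta^{c T_1} \, \qesc(\cX_{T_1})
\end{equation*}
by exhibiting the explicit realization that walks directly along $\xi$ for at most $T_1$ steps (each transition costing a factor $\delta$ by \ref{hyp:bdd_delta}--\ref{hyp:bdd_p}) and then escapes to infinity from the far side of the last regeneration edge. This gives $W_1 \leq c \log(1/\delta) \, T_1 + \log(1/\qesc(\cX_{T_1}))$, and Lemma \ref{lem:regeneration_tails} (stretched-exponential tail for $T_1$) together with Proposition \ref{prop:escape_gumbel} (doubly-exponential tail for $-\log \qesc$) yield all moments of $W_1$, uniformly in the starting type. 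Positivity $h' > 0$ follows from the branching structure ensured by Assumption \ref{hyp:cc3}: with positive probability the first regeneration requires choosing among several plausible long-range continuations, forcing $\bfP \cond{\xi'_{L_1} = \xi_1}{\cX}$ to be bounded away from $1$ on an event of positive probability.

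Finally, the second inequality of \eqref{eq:clt_regen} is obtained by the same argument applied to the shifted process: by Lemma \ref{lem:markov_decomposition}, conditional on $\{\xi'_{L_k} = \xi_{L_k}\}$ and $Y_k$, the quasi-tree beyond $\cX_{T_k}$ is a fresh quasi-tree under $\bQ_{Y_k}$, so the increments $(W_{k+i})_{i \geq 1}$ retain the same Markov renewal structure and the previous concentration estimate applies with $l$ in place of $k$.
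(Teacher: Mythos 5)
Your decomposition is exactly the paper's: your $W_i$ coincide with the increments $Z_i$ used there, the telescoping identity is the same, the moment bounds via the explicit realization $\bfP \cond{\xi'_{L_1} = \xi_{L_1}}{\cX} \geq \delta^{c T_1} \qesc(\cX_{T_1})$ together with Lemma \ref{lem:regeneration_tails} and Proposition \ref{prop:escape_gumbel} are the paper's argument, and handling the shifted process through Lemma \ref{lem:markov_decomposition} is also how the second inequality of \eqref{eq:clt_regen} is obtained. The law of large numbers via stationarity of the increments under $\bQ_{\mu}$ is likewise fine.

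The gap is your claim that $\{\xi'_{L_i} = \xi_{L_i}\}$, conditionally on $\{\xi'_{L_{i-1}} = \xi_{L_{i-1}}\}$, ``is determined by $Y_i$ together with the portion of quasi-tree and trajectory revealed between $T_{i-1}$ and $T_i$'', so that $(Y_i, W_i)$ satisfies \eqref{eq:M1_property} and Proposition \ref{prop:variance_markov} applies. This is false: $W_i$ is the quenched probability that an \emph{independent} loop-erased trajectory started at $\cX_{T_{i-1}}$ crosses the edge $\xi_{L_i}$, a harmonic-measure type quantity which depends on the entire infinite subquasi-tree $\cG_{\cX_{T_{i-1}}}$ (escape probabilities beyond level $L_i$ and in all competing branches), not only on the block of environment and trajectory between consecutive regenerations. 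In particular $W_i$ and $W_{i+1}$ both depend on $\cG_{\cX_{T_i}}$, so the conditional law of $(Y_{i+1}, W_{i+1})$ given $Y_i$ and the past is not a function of $Y_i$ alone, and the variance bound of Proposition \ref{prop:variance_markov} cannot be invoked for your Chebyshev step. The paper bridges this by a truncation/decorrelation argument: $Z_1$ is approximated by $Z_{1,k}$, defined from the chain observed only up to the $(k-1)$-th regeneration level, with $\abs{Z_1 - Z_{1,k}}$ exponentially small in $k$ on a good event controlled by Lemma \ref{lem:no_backtracking}, Propositions \ref{prop:escape_dense} and \ref{prop:escape_gumbel} and Lemma \ref{lem:regeneration_tails}; since $Z_{1,k}$ is measurable with respect to the environment and trajectory before the $(k-1)$-th regeneration while $Z_{2k}$ only involves what lies beyond the $(2k-1)$-th, conditioning on $Y_{k-1}$ and $Y_{2k-1}$ and using the $O(1)$ mixing of $Y$ gives $\Cov_{\bQ_{\mu}}(Z_1, Z_{2k})$ stretched-exponentially small, hence $\Var_{\bQ_{\mu}} \sbra{\sum_{i \leq k} Z_i} = O(k)$ and the Chebyshev bound. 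Without this step your plan does not yield \eqref{eq:clt_regen}.
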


\begin{proof}
	Again let us omit the conditionning by $O, \eta(O)$ which does not affect the argument. Given $k \geq 1$, we write $\cG(k) := \cG_{\cX_{T_{k}}}$, to simplify notations. Given $u \in V$ and $g$ a possible realization of the subquasi-tree $\cG_O$, let
	\begin{equation*}
		\bfQ_{u,g} := \bP \cond{\cdot}{\cX_{1/2} = \eta(O), \tau_{O} = \infty, \iota(O) = u, \cG_O = g}.
	\end{equation*}
	Finally let $\xi'(k)$ denote a loop-erased chain on $\cG(k-1)$, started at $\cX_{T_{k-1}}$, independent of $\cX$.
	Having $\xi'_{L_k} = \xi_{L_k}$  implies $\xi_{L_{m}} \in \xi'$ for all $m \leq k$. Thus for $k \geq 2$
	\begin{align*}
		\bfP \cond{\xi_{L_k} \in \xi'}{\cX} &= \bfE \cond{ \II (\xi'_{L_{k-1}} = \xi_{L_{k-1}}) \, \bfP \cond{\xi'_{L_k} = \xi_{L_k}}{\cX, (\xi'_i)_{i \leq L_{k-1}}} }{\cX} \\
		&= \bfE \cond{\II (\xi'_{L_{k-1}} = \xi_{L_{k-1}}) \,  \bfQ_{Y_{k-1}, \cG(k-1)} \cond{\xi_{L_k} \in \xi'(k)}{(\cX_t)_{t \geq T_{k-1}}}}{\cX} \\
		&= \bfP \cond{\xi'_{L_{k-1}} = \xi_{L_{k-1}}}{\cX} \bfQ_{Y_{k-1}, \cG(k-1)} \cond{\xi_{L_k} \in \xi'(k)}{(\cX_t)_{t \geq T_{k-1}}}.
	\end{align*}
	Letting $Z_1 := - \log \bfP \cond{\xi'_{L_1} = \xi_{L_1}}{\cX}$ and
	\begin{equation*}
		Z_i := - \log \bfQ_{Y_{i-1}, \cG(i-1)} \cond{\xi_{L_i} \in \xi'(i)}{(\cX_t)_{t \geq T_{i-1}}}
	\end{equation*}
	for $i \geq 2$, we just proved that 
	\begin{equation*}
		- \log \bfP \cond{ \xi'_{L_k} = \xi_{L_k} }{\cX} = \sum_{i=1}^{k} Z_i
	\end{equation*}
	and
	\begin{equation*}
		- \log \bfP \cond{ \xi'_{L_{k+l}} = \xi_{L_{k+l}} }{\cX, \xi'_{L_k}} = \sum_{i=k+1}^{k+l} Z_i.
	\end{equation*}
	Under $\bQ_{\mu}$, the sequence $(Y_k)_{k \geq 1}$ is stationary. Lemma \ref{lem:markov_decomposition} then shows that $(Z_i)_{i \geq 2}$ is stationary as well.
	Provided $Z_1 < \infty$ a.s., $\bE_O \sbra{Z_1} < \infty$ and $\bE_{\bQ_{\mu}} \sbra{\abs{Z_1}} < \infty$, which is proved afterwards, Birkhoff's ergodic theorem implies the law of large numbers \eqref{eq:lln_regen}. 

	Let us now show that for all $l \geq 1$, there exists a constant $C_l \geq 0$ such that
	\begin{equation*}
		\bE_O \sbra{\abs{Z_1}^{l}} < C_l
	\end{equation*}
	and for all $i \geq 2$ and $u \in V$
	\begin{equation}\label{eq:moment_Zi}
		\bE_{\bQ_{u}} \sbra{\abs{Z_i}^l} \leq C_l.
	\end{equation}
	This is sufficient to justify that $Z_1 < \infty$ a.s., $\bE_{\bQ_{\mu}} \abs{Z_i} < \infty$ and will be used to prove \eqref{eq:clt_regen}. We only treat the case $ i \geq 2$ as the case $i = 1$ is similar. Let $l \geq 1$ and $u \in V$. Let us identify here a long-range edge $(\eta(x), x)$ with its center vertex $x$. By Lemma \ref{lem:markov_decomposition}
	\begin{align*}
		\bE_{\bQ_{u}} \sbra{\abs{Z_i}^l} &= \bE_{\bQ_{u}} \sbra{ \sum_{x \in \cV} \bfQ_{u, \cG} \sbra{\cX_{T_i} = x} \left( - \log \bfQ_{Y_{i-1}, \cG(i-1)} \sbra{x \in \xi'(i)} \right)^l }\\
		&= \sum_{v \in V} \bQ_{u} \sbra{Y_{i-1} = v} \bE_{\bQ_{v}} \sbra{ \sum_{x \in \cV} \bfQ_{v, \cG} \sbra{\cX'_{T'_1} = x} \left( - \log \bfQ_{v, \cG} \sbra{x \in \xi'} \right)^l }
	\end{align*}
	where $\cX'$ is an independent copy of $\cX$ under the law $\bfQ_{v, \cG}$, $\xi'$ is its loop-erased trace and $T'_1$ its first regeneration time. Thus it suffices to prove the upper bound
	\begin{equation*}
		\max_{u \in V} \bE_{\bQ_{u}} \sbra{\sum_{x \in \cV} \bfQ_{u, \cG} \sbra{\cX_{T_1} = x} \left( - \log \bfQ_{u, \cG} \sbra{x \in \xi} \right)^{l}} \leq C_l.
	\end{equation*}
	Fix $u \in V$. For all $k \geq 1$, if $d_{\cP}(\eta(O), x) = k$, the chain goes from $\eta(O)$ to $x$ with probability at least $\delta^{k}$, after which it escapes to infinity in $\cG_x$ with probability $\qesc(x)$, hence 
	\begin{align*}
		\sum_{x \in \cV} \bfQ_{u, \cG} \sbra{\cX_{T_1} = x} \left( - \log \bfQ_{u, \cG} \sbra{x \in \xi} \right)^l &\leq \sum_{k \geq 1} \bfQ_{u, \cG} \sbra{d_{\cP}(\eta(O),\cX_{T_1}) = k} k^{l}  (- \log  \delta)^{l} \\
		&+ \sum_{x \in \cV} \bfQ_{u, \cG} \sbra{\cX_{T_1} = x} \left( - \log \qesc(x) \right)^l 
	\end{align*}
	Averaging on the environment yields
	\begin{align*}
		\bE_{\bQ_{u}} \sbra{\sum_{x \in \cV} \bfQ_{u, \cG} \sbra{\cX_{T_1} = x} \left( - \log \bfQ_{u, \cG} \sbra{x \in \xi} \right)^{l}} &\leq (- \log \delta)^{l} \, \bE_{\bQ_{u}} \sbra{ d_{\cP}(\eta(O), \cX_{T_1})^{l}} \\
		&+ \bE_{\bQ_{u}} \sbra{ (- \log \qesc(\cX_{T_1}))^{l}}
	\end{align*}
	Now for any $k \geq 1$, 
	\begin{equation*}
		\bQ_{u} \sbra{d_{\cP}(\eta(O), \cX_{T_1}) \geq k} \leq \bQ_{u} \sbra{T_1 \geq k}
	\end{equation*}
	which is stretched exponential by Lemma \ref{lem:regeneration_tails}, hence $\bE_{\bQ_{u}} \sbra{ d_{\cP}(\eta(O), \cX_{T_1})^{l}} = O(1)$. 
	On the other hand
	\begin{align*}
		\bE_{\bQ_u} \sbra{ (- \log \qesc(\cX_{T_1}))^{l}} = \sum_{v \in V} \bQ_{u}\sbra{Y_1 = v} \bE_{\bQ_{v}} \sbra{(- \log \qesc (\eta(O)))^l}.
	\end{align*}
	By Proposition \ref{prop:escape_gumbel} and Remark \ref{rk:comparison_Qu}, $\bE_{\bQ_{v}} \sbra{(- \log \qesc(\eta(O)))^{l}} = O(1)$ for any $v \in V$, hence $\bE_{u} \sbra{ (- \log \qesc(\cX_{T_1}))^{l}} = O(1)$ as well and we deduce \eqref{eq:moment_Zi} for some constant $C_l$. 
	
	To establish \eqref{eq:clt_regen}, we argue as in the end of the proof of Proposition \ref{prop:drift} for the drift. The bound is eventually proved using the Bienaymé-Chebychev inequality. Since $Y$ mixes in constant time $t_0$ by Lemma \ref{lem:mixing_regen}, separating the sum $\sum_{i=1}^{k} Z_i$ into two sums corresponding respectively to regeneration times before or after $t_0$ shows that it suffices to establish the variance bound $\Var_{\bQ_{\mu}} \sbra{\sum_{i=2}^{k} Z_i} = O(k)$ under the stationary measure. Let us modifiy the definition of $Z_1$, setting $Z_1 := - \log \bfQ_{\iota(O), \cG_O} \cond{\xi_{L_1} \in \xi'}{\cX}$ so that the whole sequence $(Z_i)_{i \geq 1}$ is stationary. It then suffices to prove that for all $k \geq 1$, $\Cov_{\bQ_{\mu}}(Z_1,Z_k)$ is stretched exponential in $k$. 

	Let $k > 1$. Let $\tau'_{k-1}$ be the first time the chain $\cX'$ reaches the $L_{k-1}$-th level, and consider the loop-erased trace $\xi'(1,k)$ obtained from observing $\cX'$ up to time $\tau'_{k-1}$.  Define $W_1 := \exp(-Z_1), W_{1k} := \bfQ_{\iota(O), \cG_O} \cond{\xi_{L_1} \in \xi'(1,k)}{\cX}$, $Z_{1,k} := - \log W_{1k}$.
	By the inequality $\abs{\log x - \log y} \leq \abs{x-y} / (x \wedge y)$,
	\begin{equation*}
		\abs{Z_{1} - Z_{1,k}} \leq \frac{\abs{W_{1} - W_{1k}}}{W_{1} \wedge W_{1k}}
	\end{equation*}
	Observe that if $\xi_{L_1}$ is exclusively in one of the two paths $\xi'(1,k)$, $\xi'$ then the chain $\cX'$ backtracks from the $L_{k-1}$-th level to the $(L_1 - 1)$-th. Letting $A'$ denote this event, we thus have
	\begin{equation*}
		\abs{W_{1} - W_{1k}} \leq \bfQ_{\iota(O), \cG_O} \cond{A'}{\cX}.
	\end{equation*}
	Let $\cA_{k}(q_0,\alpha)$ be the set of quasi-trees for which every length $k$ path from the root contains a proportion at least $\alpha > 0$ of vertices $x$ such that $\qesc(x) \geq q_0 > 0$. For some constants $q_0, \alpha, \e \in (0,1)$ and $C_1 > 0$ to determine, consider the events
	\begin{align*}
		&E_0 := \{ \cG_O \in \cA_k(q_0,\alpha) \} \cap \{\qesc(\xi_{L_1}) \geq \e^{k} \} \\
		&E_1 = \{ d_{\cP}(O,\cX_{T_1}) \leq  k / C_1  \}
	\end{align*}
	If $\cG_O \in \cA_k(q_0, \alpha)$, Lemma \ref{lem:no_backtracking} implies $\bfQ_{\iota(O), \cG_O} \cond{A'}{\cX}$ is exponentially small in $k$. On $E_1$, the chain $\cX'$ goes from $O$ to $\xi_{L_1}$ with probability at least $\delta^{k/C_1}$, after which $\xi_{L_1} \in \xi' = \xi'(1,k)$ if the chain $\cX'$ escapes to infinity. Thus on the event $E_0 \cap E_1$, 
	\begin{equation*}
		W_1 \wedge W_{1k} \geq \delta^{k/C_1} \e^{k},
	\end{equation*}
	hence choosing the constant $C_1$ large enough and $\e$ close enough to $1$ yields eventually
	\begin{equation*}
		\abs{Z_{1} - Z_{1,k}} \II_{E_0 \cap E_1} \leq e^{-c_1 k}
	\end{equation*}
	for some constant $c_1 > 0$. Next we claim that given this choice of $C_1$ and $\e$, there is an appropriate choice of the remaining parameters that ensures
	\begin{equation*}
		\bQ_{\mu} \sbra{E_{0}^{c}} \leq e^{- c_2 k}, \quad \bQ_{\mu} \sbra{E_1^{c}} \leq e^{-c_2 k^{\beta}}
	\end{equation*}
	for some $c_2 > 0$ and $\beta \in (0,1]$. By Remark \ref{rk:comparison_Qu} and Proposition \ref{prop:escape_dense} there exist $q_0, \alpha > 0$ such that $\cG_O \notin \cA_k(q_0,\alpha)$ has exponentially small probability, whereas Proposition \ref{prop:escape_gumbel} implies $\qesc(\xi_{L_1}) < \e^{k}$ with doubly exponentially small probability. For $E_1$ observe $d_{\cP}(O,\cX_{T_1}) > k/C_1$ implies $T_1 > k /C_1$ which has stretched exponentially small probability by Lemma \ref{lem:regeneration_tails}. Then by the moment bounds \eqref{eq:moment_Zi} for some $c_3 > 0$,
	\begin{align*}
		\Cov_{\bQ_{\mu}}(Z_1,Z_{2k}) &= \bE_{\bQ_{\mu}} \sbra{(Z_1 - \bE_{\bQ_{\mu}} \sbra{Z_1})(Z_{2k} - \bE_{\bQ_{\mu}} \sbra{Z_{2k}}) \II_{E_1}} + O \left( e^{- c_3 k^{\beta}} \right) \\
		&= \bE_{\bQ_{\mu}} \sbra{(Z_1 - Z_{1,k}) Z_{2k} \II_{E_1}} - \bE_{\bQ_{\mu}} \sbra{(Z_1 - Z_{1,k}) \II_{E_1}} \bE_{\bQ_{\mu}} \sbra{Z_{2k}} \\
		&\quad + \bE_{\bQ_{\mu}} \sbra{(Z_{1,k} - \bE_{\bQ_{\mu}} \sbra{Z_1})(Z_{2k} - \bE_{\bQ_{\mu}} \sbra{Z_{2k}})\II_{E_1}} + O \left( e^{- c_3 k^{\beta}} \right)  \\
		&= \bE_{\bQ_{\mu}} \sbra{(Z_1 - Z_{1,k}) Z_{2k} \II_{E_0 \cap E_1}} - \bE_{\bQ_{\mu}} \sbra{(Z_1 - Z_{1,k}) \II_{E_0 \cap E_1}} \bE_{\bQ_{\mu}} \sbra{Z_{2k}} \\
		&\quad + \bE_{\bQ_{\mu}} \sbra{(Z_{1,k} - \bE_{\bQ_{\mu}} \sbra{Z_1})(Z_{2k} - \bE_{\bQ_{\mu}} \sbra{Z_{2k}})\II_{E_1}}  + O \left( e^{- c_3 k^{\beta}} \right) \\
		&= \bE_{\bQ_{\mu}} \sbra{(Z_{1,k} - \bE_{\bQ_{\mu}} \sbra{Z_1})(Z_{2k} - \bE_{\bQ_{\mu}} \sbra{Z_{2k}})\II_{E_1}} + O \left( e^{- c_3 k^{\beta}} \right).
	\end{align*}
	Note that by construction $Z_{1,k} \II_{E_1}$ is measurable with respect to $\cG \smallsetminus \cG(k)$ and $(\cX_{t})_{t \leq T_{k-1}}$, whereas $Z_{2k}$ is measurable with respect to $\cG(2k-1)$ and $(\cX_{t})_{T_{2k-1} \leq t}$ only. Thus by conditionning on $\{Y_{k-1},Y_{2k-1}\}$ Markov's property implies
	\begin{align*}
		&\bE_{\bQ_{\mu}} \sbra{(Z_{1,k} - \bE_{\bQ_{\mu}} \sbra{Z_1})(Z_{2k} - \bE_{\bQ_{\mu}} \sbra{Z_{2k}})\II_{E_1}} \\
		&\qquad = \bE_{\bQ_{\mu}} \sbra{\bE_{\bQ_{\mu}} \cond{(Z_{1,k} - \bE_{\bQ_{\mu}} \sbra{Z_1}) \II_{E_1}}{Y_{k-1}}  \bE_{\bQ_{\mu}} \cond{Z_{2k} - \bE_{\bQ_{\mu}} \sbra{Z_{2k}}}{Y_{2k-1}} }
	\end{align*}
	The right-hand side can be written as $\bE_{\bQ_{\mu}} \sbra{f(Y_{k-1}) g(Y_{2k-1})}$ for some functions $f, g$. From the proof of Proposition \ref{prop:variance_markov}, this expectation is exponentially small in $k$, provided $f$ and $g$ have finite moments. This is the direct consequence of \eqref{eq:moment_Zi} for $g$, while for $f$ we need moment bounds on $Z_{1k}$. These can be established with the same arguments as above.
\end{proof}

To relate the previous concentration with the weights \eqref{eq:def_weights_G}, we define similar weights in the quasi-tree. Let $\tau_l$ denote here the first time $t$ such that $d_{\LR}(\cX_0, \cX_t) = l$. For all long-range edge $e \in \cG$, write $\cG_e$ for the subquasi-tree at any endpoint of $e$ (they give the same quasi-tree). 
Given $R, L \geq 0$, $x \in \cV$ and a long-range edge $e$ at long-range distance $0$ from $x$ 
\begin{equation}\label{eq:def_weights_quasiT}
	\begin{split}
	w_{x,R,L}(e) &:= \bfP_{x} \sbra{\cX_{\tau_L} \in \cG_{e}, \tau_{L} < \TSR} \\
	w_{R,L}(e \ | \ x) &:= \bfP \cond{\cX_{\tau_L} \in \cG_{e}, \tau_{L} < \TSR}{\cX_{1/2} = x, \tau_{L} < \tau_{\eta(x)}}
	\end{split}
\end{equation}
Then if $e=(e_i)_{i=1}^{k}$ is a long-range non-backtracking path starting from $\BLR(x,0)$, set
\begin{equation*}
	w_{x,R,L}(e) := w_{x,R,L}(\xi_1) \prod_{i=2}^{k} w_{R,L}(\xi_i \ | \xi_{i-1}^{+})
\end{equation*}
where the product if taken equal to $1$ if empty. 

In the proof of Lemma \ref{lem:entropy_regen}, we introduced the measure 
	\begin{equation*}
		\bfQ_{u,g} := \bP \cond{\cdot}{\cX_{1/2} = \eta(O), \iota(O) = u, \tau_{O} = \infty, \cG_O = g}.
	\end{equation*}
where $u \in V$ and $g$ is a possible realization of the subquasi-tree $\cG_O$. We use a notation which may be reminiscent of \eqref{eq:bfQu_G} as these two measures are very similar, although note that here $u$ is not the type of the starting state of the chain but its long-range neighbour. There should be no risk of confusion as the measure of \eqref{eq:bfQu_G} will not be used until Section \ref{section:nice}. Using the same arguments as for the lemma, we can see that
\begin{equation}\label{eq:harmonic_measure_product}
	\bfP_{x} \sbra{\xi_1 = e_1, \ldots, \xi_k = e_k} = \bfP_{x} \sbra{\xi_1 = e_1} \prod_{i=2}^{k} \bfQ_{\iota(e_{i-1}^{+}), \cG_{e_{i-1}}} \sbra{\xi_{1} = e_i}.
\end{equation}
The following Lemma establishes thus a bound on individual weights. 

\begin{lemma}\label{lem:weight_approx}
	There exist constants $C, C_0,C_1,C_2 > 0$ such that for all $R, L > 0$ the following holds, conditional on the types of $O, \eta(O)$: 
	\begin{enumerate}[label=(\roman*)]
		\item with probability at least $1 - e^{-C (R \wedge L)}$, for all long-range edge $e$ such that $d_{\SR}(O,e^{-}) < R$,
			\begin{equation*}
			\abs{\log w_{O,R,L}(e) - \log \bfP_O \sbra{\xi_1 = e}} \leq C_0 e^{- C_1 L + C_2 R}, \label{eq:weight_approx_O} 
			\end{equation*}
		\item for all $x \in \cV$, with probability at least $ 1 - e^{-C (R \wedge L)}$ conditional on $\cG \smallsetminus \cG_{x}$, for all long-range edge $e$ of $\cG_x$ such that $d_{\SR}(x, e^{-}) < R$
			\begin{equation*}
				\abs{\log w_{R,L}(e \ | \ x) - \log \bfQ_{\iota(e^+), \cG_e} \sbra{\xi_1 = e}} \leq C_0 e^{- C_1 L + C_2 R}. \label{eq:weight_approx_conditional}
			\end{equation*}
	\end{enumerate}
\end{lemma}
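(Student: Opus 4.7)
The plan is to compare, for each long-range edge $e$ with $d_{\SR}(O, e^-) < R$, the event $A_e := \{\cX_{\tau_L} \in \cG_e, \tau_L < \TSR\}$ (whose $\bfP_O$-probability is $w_{O,R,L}(e)$) with the event $B_e := \{\xi_1 = e\}$. Since the chain is a.s. transient, $\xi_1 = e$ forces $\cX_{\tau_L} \in \cG_e$, so the symmetric difference decomposes as $B_e \setminus A_e \subseteq \{\TSR \leq \tau_L\}$ (small-range deviation before reaching long-range distance $L$) and $A_e \setminus B_e \subseteq \{\text{the chain backtracks past } e \text{ after time } \tau_L\}$, which requires long-range backtracking over distance at least $L$ from the endpoint $\cX_{\tau_L}$.

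I would first build a good event $\mathcal{E}$, of probability at least $1 - e^{-C(R \wedge L)}$ conditional on the types of $O$ and $\eta(O)$, on which: $\cG \in \cA_L(q_0, \alpha) \cap \cA_R(q_0, \alpha)$ via Proposition \ref{prop:escape_dense}; every subquasi-tree $\cG_{y^\circ}$ at a center visited by the chain up to time $\tau_L$ also lies in $\cA_R$, using Lemma \ref{lem:escape_stationary} combined with the whp bound $\tau_L = O(L)$ from Proposition \ref{prop:drift} and the drift tail estimate; and $\qesc(e^+) \geq q_0 R^{-\gamma}$ simultaneously over every long-range edge $e$ with $d_{\SR}(O, e^-) < R$. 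The last point is the crucial one and uses Proposition \ref{prop:escape_gumbel} with $k = \lceil C \log_2 R \rceil$: the doubly-exponential tail $q_0^{2^k}$ makes the failure probability at a single edge smaller than $\Delta^{-R}$, hence survives a union bound over the $O(\Delta^R)$ candidate edges with only a polynomial loss $R^{-\gamma}$ on the resulting uniform lower bound.

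On $\mathcal{E}$, Lemma \ref{lem:no_backtracking} applied after $\tau_L$ combined with the strong Markov property gives $\bfP_O(A_e \setminus B_e) \leq e^{-cL}$, while a union bound over the $O(L)$ distinct centers visited before $\tau_L$, with Lemma \ref{lem:no_backtracking} applied to each of their subquasi-trees, yields $\bfP_O(B_e \setminus A_e) \leq \bfP_O(\TSR \leq \tau_L) \leq O(L)\, e^{-cR}$. On the other hand the chain can always reach $e^-$ from $O$ in fewer than $R$ small-range steps with probability at least $\delta^R$, then cross $e$ and escape in $\cG_e$ with probability $\qesc(e^+) \geq q_0 R^{-\gamma}$, so on $\mathcal{E}$ both $\bfP_O[\xi_1 = e]$ and $w_{O,R,L}(e)$ are at least $c\,\delta^R R^{-\gamma}$ (the latter up to a $(1 - e^{-cL})$ factor). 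Dividing the additive error by this common lower bound and absorbing the subexponential factors $L$ and $R^\gamma$ into the exponent gives
\begin{equation*}
    |\log w_{O,R,L}(e) - \log \bfP_O[\xi_1 = e]| \leq \frac{O(L)\,e^{-c(R \wedge L)}}{c\,\delta^R R^{-\gamma}} \leq C_0\, e^{-C_1 L + C_2 R},
\end{equation*}
which establishes (i).

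Part (ii) follows by applying the same argument to the subquasi-tree $\cG_x$: conditional on $\cG \smallsetminus \cG_x$ and on $x$ not being a center (the center case being handled symmetrically), $\cG_x$ has the law of a freshly sampled quasi-tree, and the factorization \eqref{eq:harmonic_measure_product} together with the Markov property at the crossing of the long-range edge into $\cG_x$ identifies $\bfQ_{\iota(e^+), \cG_e}[\xi_1 = e]$ with the analog of $\bfP_O[\xi_1 = e]$ in this fresh setting, so the bound from (i) transfers verbatim. The main obstacle throughout is assembling the good event $\mathcal{E}$: the deviation term in the additive error forces both a Lemma \ref{lem:no_backtracking}-type control at every one of the $O(L)$ visited centers and a uniform lower bound on $\qesc(e^+)$ over the $\Theta(\Delta^R)$ candidate edges, and it is precisely the doubly-exponential decay in Proposition \ref{prop:escape_gumbel} that makes the second ingredient possible — a merely exponential tail would not survive the $\Delta^R$ union bound.
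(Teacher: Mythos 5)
Your proposal is correct and follows essentially the same route as the paper: an environment event of probability $1-e^{-C(R\wedge L)}$ built from Proposition \ref{prop:escape_dense} and Proposition \ref{prop:escape_gumbel} (the latter surviving the union bound over the $O(\Delta^{R})$ edges near the root), on which the symmetric difference between the weight event and $\{\xi_1=e\}$ is controlled by deviation/backtracking via Lemma \ref{lem:no_backtracking}, both probabilities are lower bounded by $\delta^{R}$ times the escape probability, and one concludes with $\abs{\log a-\log b}\le\abs{a-b}/(a\wedge b)$; part (ii) is likewise handled conditionally on $\cG\smallsetminus\cG_x$. The only slip is that your good event $\mathcal{E}$ mixes in walk-dependent conditions (subquasi-trees at centers \emph{visited} before $\tau_L$, the bound $\tau_L=O(L)$), whereas the statement requires an environment-measurable event; this addition is in fact unnecessary, since membership of $\cG_O$ in $\cA_R\cap\cA_L$ alone already gives the quenched deviation and backtracking bounds through Lemma \ref{lem:no_backtracking}, as in the paper's event $E_0$.
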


\begin{proof} 
	We prove the first bound in detail. As usual we forget about the conditionning by the types of $O, \eta(O)$ which changes little in practice. Given $k \geq 1$, let $\cA_{k} = \cA_{k}(q_0, \alpha)$ be the set of quasi-trees for which every path of length $k$ starting from the root contains a proportion at least $\alpha$ of vertices $x$ such that $\qesc(x) \geq q_0$. Let
	\begin{equation*}
		E_0 := \{ \cG_{O} \in \cA_{R} \cap \cA_{L} \} \cap \{\forall x \in \BSR(O, R): \qesc(x) \geq c_0 e^{- c_1 R} \}.
	\end{equation*}
	We claim the event $E_{0}^{c}$ occurs with exponentially small probability in $R \wedge L$ for an appropriate choice of parameters. First there exist $q_0, \alpha > 0$ such that $\cG \notin \cA_{R} \cap \cA_{L}$ with exponentially small probability by Proposition \ref{prop:escape_dense}. Next, Proposition \ref{prop:escape_gumbel} implies there exists $c \in (0,1)$ such that for a fixed $x \in \BSR(O, R)$, $\qesc(x) < c \delta^{4 R}$ with probability at most $c^{2^{R}}$. However there are $O(\Delta^{R})$ vertices in this ball, hence the claim by union bound. 
	
	Notice then that to have one of the two events $e \in \xi$ or $\cX_{\tau_L} \in \cG_e$ realized exclusively, the chain needs to backtrack from level $L$ to level $0$. On $E_0$, this occurs with probability exponentially small in $L$ by Lemma \ref{lem:no_backtracking}. Thus on $E_0$
	\begin{equation*}
		\abs{w_{O,R,L}(e) - \bfP_O \sbra{\xi_1 = e}} \leq \bfP_O \sbra{\TSR \wedge \TNB < \infty} \leq e^{- C' (R \wedge L)}
	\end{equation*}
	for some constant $C' > 0$. On the other hand, on $E_0$
	\begin{equation*}
		w_{O,R,L}(e) \wedge \bfP_O \sbra{\xi_1 = e} \geq \delta^{R} q_0
	\end{equation*}
	as the right-hand side is a lower bound on the probability to go from $O$ to $e$ and then escape to infinity, which forces both $e \in \xi$ and $\cX_{\tau_{L}} \in \cG_e$. Using $\abs{\log x - \log y} \leq \abs{x-y} / (x \wedge y)$, we deduce that on $E_0$, 
	\begin{equation*}
		\abs{\log w_{O,R,L}(e) - \log \bfP_O \sbra{\xi_1 = e}} \leq C_0 e^{-C_1 L + C_2 R}
	\end{equation*}
	for some constants $C_0, C_1, C_2 > 0$. 

	The second bound is proved similarly conditional on $\cG \smallsetminus \cG_x$. The main difference is the additional conditionning by either $\tau_L < \tau_{\eta(x)}$ or $\tau_{\eta(x)} = \infty$. Since the differences implies backtracking, these can be controlled using the same arguments as above and Proposition \ref{prop:escape_gumbel}. 
\end{proof}

\begin{lemma}\label{lem:concentration_weights_QT}
	There exist constants $C, C_1, \ldots, C_3 > 0$ and $\beta \geq 1$ such that the following holds. For all $\e > 0$ there exist $C_0 = C_0(\e), C_h=C_h(\e)$ such that for all $s,t \geq 0$, for all $R,L > 0$, with probability at least $ 1 - \e - 2 (s+t) e^{-C (R \wedge L)}$ conditional on the types of $O$ and $\eta(O)$, 
	\begin{equation}\label{eq:clt_weights}
		\abs{- \log w_{\cX_s, R, L}(\xi(\cX_s \cdots \cX_{s+t})) - h t} \leq C_{h} \sqrt{t} + C_0 \, t \, e^{-C_1 L + C_2 R} + C_3 R L^{\beta + 1}. 
	\end{equation}
\end{lemma}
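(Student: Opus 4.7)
The plan is to combine three ingredients that have just been established: the per-edge weight approximation of Lemma~\ref{lem:weight_approx}, the entropic concentration at regeneration times (Lemma~\ref{lem:entropy_regen}), and the renewal-theoretic control on the number of regenerations in a time interval of length $t$ (as in Proposition~\ref{prop:lln_mrp} and the fluctuation estimate \eqref{eq:fluctations_Ns} used in the proof of Proposition~\ref{prop:drift}). Both $w_{\cX_s,R,L}$ and the harmonic measure on loop-erased paths share the product structure \eqref{eq:harmonic_measure_product} over long-range edges, so the comparison can be carried out term by term.

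The execution then proceeds in two steps. First, I would apply point (ii) of Lemma~\ref{lem:weight_approx} at each long-range edge traversed by $\cX$ during $[s,s+t]$: since the exceptional event there is measurable with respect to the subquasi-tree rooted at the relevant edge, and since at most $s+t$ vertices are visited by time $s+t$, a union bound yields, outside an event of probability at most $2(s+t)e^{-C(R\wedge L)}$, the edge-by-edge estimate
\[
\Bigl|-\log w_{\cX_s,R,L}(\xi(\cX_s\cdots\cX_{s+t})) + \log \bfP \cond{\xi'_j=\xi_j,\ j\leq d_{\LR}(\cX_s,\cX_{s+t})}{\cX}\Bigr| \leq C_0 \, t\, e^{-C_1 L + C_2 R}.
\]
Second, I would split the joint probability at regeneration times. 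Let $k := N^{(s)}_t$ denote the number of regenerations of $\cX$ in $(s,s+t]$ and $L^{(s)}_k$ the corresponding long-range distance. Because the path between any two vertices of a quasi-tree is unique, the events $\{\xi'_{L^{(s)}_k}=\xi_{L^{(s)}_k}\}$ and $\{\xi'_i=\xi_i,\ i\leq L^{(s)}_k\}$ coincide, so Lemma~\ref{lem:entropy_regen}, applied to the process shifted at the first regeneration after $s$, yields $|-\log \bfP(\xi'_{L^{(s)}_k}=\xi_{L^{(s)}_k}\mid\cX) - h' k| \leq C_h(\e)\sqrt{k}$ with probability at least $1-\e$. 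The fluctuation bound \eqref{eq:fluctations_Ns}, adapted to the shifted renewal process, gives $|N^{(s)}_t - t/\bE_{\bQ_{\mu}} \sbra{T_1}| \leq C \sqrt{t}$ with probability at least $1-\e$; setting $h := h'/\bE_{\bQ_{\mu}}\sbra{T_1}$ converts $h' k$ into $h t + O_\e(\sqrt t)$.

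The main obstacle, and the origin of the polynomial term $C_3 R L^{\beta+1}$, is the control of the two boundary blocks: the edges traversed before the first regeneration after time $s$, and the incomplete block between $L^{(s)}_k$ and $d_{\LR}(\cX_s,\cX_{s+t})$. Each such edge contributes $O(\log(1/\delta))$ to the log-probability thanks to assumptions \ref{hyp:bdd_delta}--\ref{hyp:bdd_p}, but bounding their number is delicate because $\cX_s$ is not a regeneration state and its $L$-neighbourhood is partly revealed by the conditioning on the types of $O,\eta(O)$. Here I would invoke the conditional tail estimate of Lemma~\ref{lem:regeneration_tails}, which yields a polynomial-in-$L$ upper bound on the first regeneration time after $s$ once $\BLR(\cX_s,L)$ is fixed; absorbing the small-range detours of length at most $R$ inside each long-range step then produces a contribution of the form $C_3 R L^{\beta+1}$ for some $\beta \geq 1$ depending on the tail exponent $\alpha$ of Lemma~\ref{lem:regeneration_tails}. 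Collecting all error sources --- weight approximation, entropy fluctuation, regeneration count fluctuation, and boundary blocks --- and readjusting constants then yields the claimed bound.
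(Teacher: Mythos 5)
Your plan is essentially the paper's own proof: an edge-by-edge comparison of the weights with the conditional harmonic measure via Lemma \ref{lem:weight_approx} and a union bound along the trajectory, conversion to $ht \pm C_h\sqrt{t}$ through Lemma \ref{lem:entropy_regen} combined with the fluctuation bound \eqref{eq:fluctations_Ns} for the (shifted) regeneration count, and absorption of the two incomplete boundary blocks into the term $C_3 R L^{\beta+1}$ using the per-edge bound $-\log w = O(RL)$ together with regeneration-time tails. When writing it out, add the event $\{\TSR > s+t\}$ to your good event (cost $(s+t)e^{-CR}$ by Lemma \ref{lem:typical_paths_QT}), since the weights are killed at $\TSR$ so both the per-edge comparison of Lemma \ref{lem:weight_approx} and the bound $-\log w \leq C RL$ require the trace to stay within small-range distance $R$ of the centers; also, the overshoot of the regeneration block containing time $s$ is more naturally controlled as in the end of the proof of Proposition \ref{prop:drift} (fluctuations of $N_s$, $N_{s+t}$ plus stretched-exponential tails, yielding the $\sqrt{s+t}\,e^{-CL}$ cost) than by the conditional estimate of Lemma \ref{lem:regeneration_tails}, which is stated for a chain started on the boundary of a revealed ball and does not apply verbatim at time $s$.
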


\begin{proof}
	Fix $R,L$ for the rest of the proof. For notational simplicity, we drop subscripts $R,L$ from the weights and omit writing the conditionning by $\iota(O), \iota(\eta(O))$ but the probabilistic statements below should be interpreted conditional on these. Recall $\cG^{(R)}$ is the quasi-tree truncated at the $R$-boundary of small-range components. Note that that weights are positive only for edges in $\cG^{(R)}$.
	
	Given $t \geq 0$, let 
	\begin{equation*}
		N_t := \max \{k \geq 0 \ | \ T_k \leq t \}.
	\end{equation*}
	We first argue there exists a constant $C_3 > 0$ such that for all $s,t \geq 0$ if $\cX_{0} \cdots \cX_{s+t}$ is included in $\cG^{(R)}$ then
	\begin{multline}\label{eq:weight_truncation}
		\abs{\log w_{\cX_s}(\xi(\cX_s \cdots \cX_{s+t})) - \log w (\xi_{L_{N_{s}+2}} \cdots \xi_{L_{N_{s+t}}} \ | \ \xi_{L_{N_s}+1})} \leq C_0 R L \left( T_{N_{s} +2} - T_{N_s} \right. \\ \left. + T_{N_{s+t} +1} - T_{N_{s+t}}\right).
	\end{multline}
	Note first that $\xi_{L_{N_{s}+1}} \cdots \xi_{L_{N_{s+t}}}$ is necessarily part of the path $\xi(\cX_s \cdots \cX_{s+t})$. Since weights are below one, we can easily lower bound  
	\begin{equation*}
		- \log w_{\cX_s}(\xi(\cX_s \cdots \cX_{s+t})) \geq  - \log w (\xi_{L_{N_{s}+2}} \cdots \xi_{L_{N_{s+t}}} \ | \ \xi_{L_{N_{s}+1}}).
	\end{equation*}
	On the other hand the path $\xi(\cX_s \cdots \cX_{s+t})$ contains at most $T_{N_{s} + 2} - s \leq T_{N_{s} +2} - T_{N_s}$ edges until it reaches $\xi_{L_{N_{s}+2}}$ and similarly it contains at most $T_{N_{s+t} +1} - T_{N_{s+t}}$ after it leaves $\xi_{L_{N_{s+t}}}$. Then we bound the weights of these edges. If $x \in \cV$ and $e$ is a long-range edge such that $d_{\SR}(x, e^{-}) < R$, $\delta^{R}$ lower bounds the probability the chain goes from $x$ to $e$ and leaves by this edge. As small-range components have size at least $2$ \ref{hyp:cc3}, $\delta^{RL}$ lower bounds the probability that the chain goes from $x$ to $e^{+}$ and reaches long-range distance $L$ before coming back to $e^{-}$. Consequently, for some constant $C_3 > 0$, we can always bound $- \log w_{x}(e) \leq C_3 R L$	for all $x,e$ in $\cG^{(R)}$, for any realization of the quasi-tree. We deduce that
	\begin{multline*}
		- \log w_{\cX_s}(\xi(\cX_s \cdots \cX_{s+t})) + \log w_{\cX_s} (\xi_{L_{N_{s+2}}} \cdots \xi_{L_{N_{s+t}}}) \leq C_3 R L \left( T_{N_{s} +2} - T_{N_s} \right. \\ \left. + T_{N_{s+t} +1} - T_{N_{s+t}}\right).
	\end{multline*}
	
	Next fix $\e \in (0,1)$ and let $s,t \geq 0$. Let $\cA$ be the set of quasi-trees for which every long-range edge $e$ such that $d_{\SR}(O, e^{-}) < R$ satisfies the bounds of Lemma \ref{lem:weight_approx} for some $C_0,C_1,C_2 > 0$. Consider the following events: 
	\begin{align*}
		&E_0 := \{ \cG \in \cA \} \cap \bigcap_{s' \leq s + t} \{ \cG_{\cX_{s'}} \in \cA \}, \\
		&E_1 := \{ \TSR > s + t \} \\
		&E_2 := \{ \abs{L_{N_{s+t}} - L_{N_s} - \mathscr{d} t } \leq C_{\LR} \sqrt{t} \}
	\end{align*}
	with $C_{\LR} = C_{\LR}(\e) > 0$ to determine, and write $E := E_0 \cap E_1 \cap E_2$. By Lemmas \ref{lem:weight_approx} and \ref{lem:escape_stationary}, $C_0,C_1,C_2$ can be chosen so that $\bP_O \sbra{E_0^c} \leq (s+t) e^{- C_4 (R \wedge L)}$ for some $C_4 > 0$. By Lemma \ref{lem:typical_paths_QT} there exists $C_5 > 0$ such that $\bP_O \sbra{E_1^{c}} \leq (s+t) e^{- C_5 R}$. Finally from the proof of Proposition \ref{prop:drift}, $C_{\LR}(\e)$ can be taken so that $\bP_O \sbra{E_2^c} \leq \e$. Hence $\bP_O \sbra{E^c} \leq \e + (s+t) e^{-C (R \wedge L)}$, for some constant $C > 0$.

	Then let $l_{+} := \lceil \mathscr{d} t + C_{\LR} \sqrt{t} \rceil$. On the event $E$, using \eqref{eq:harmonic_measure_product}
	\begin{multline*}
		\abs{\log w (\xi_{L_{N_{s}+2}} \cdots \xi_{L_{N_{s+t}}} \ | \ \xi_{L_{N_{s} +1}}) - \log \bfP \cond{\xi_{L_{N_{s}+2}} \cdots \xi_{L_{N_{s+t}}} \in \xi'}{\cX, \xi'_{L_{N_{s}+1}} = \xi_{L_{N_{s}+1}}}} \\ \leq l_{+} e^{-C_1 L + C_2 R}.
	\end{multline*}	
	Letting $h'$ be the constant of Lemma \ref{lem:entropy_regen}, we claim then that for $h = h' / \bE_{\bQ_{\mu} \sbra{T_1}}$ and some constant $C_h = C_h(a,\e)$
	\begin{equation*}
		\abs{- \log \bfP \cond{\xi_{L_{N_{s}+2}} \cdots \xi_{L_{N_{s+t}}} \in \xi'}{\cX, \xi'_{L_{N_{s}+1}} = \xi_{L_{N_{s}+1}}} - h t} \leq C_{h} \sqrt{t}
	\end{equation*}
	with probability at least $\e$. This can be done using the same arguments as in the proof of Proposition \ref{prop:drift}, combining the fluctations for $N_{s+t}$ \eqref{eq:fluctations_Ns} established in the said proof with Lemma \ref{lem:entropy_regen}.

	Finally, we are left with bounding the right hand side of \eqref{eq:weight_truncation}. Using again the fluctations for $N_{s+t}$ \eqref{eq:fluctations_Ns} with the stretched exponential tail of regeneration times, there exists $\beta \geq 1$ and $C_6 > 0$ such that
	\begin{equation*}
		\bP \cond{ T_{N_{s} +1} - T_{N_s} + T_{N_{s+t} +1} - T_{N_{s+t}} > L^{\beta}}{O, \eta(O)} \leq \e + \sqrt{s+t} e^{- C_6 L}.
	\end{equation*}
	All in all, up to changing the value of $C$ we have thus proved that with probability at least $1 - 2 \e - (s+t) e^{-C (R \wedge L)}$,
	\begin{equation*}
		\abs{\log w_{\cX_s}(\xi(\cX_s \cdots \cX_{s+t})) - ht } \leq C_h \sqrt{t} + C_0 \, l_{+} e^{-C_1 L + C_2 R} + C_3 R L^{\beta + 1}.
	\end{equation*}
	Since $l_+ = O_{\e}(t)$ this proves the result.
\end{proof}
\section{First steps towards nice paths}\label{section:nice_first}

We now come back to the finite setting. The goal of this section is to prove Lemmas \ref{lem:QTlike_uniform}, \ref{lem:typical_paths} and Proposition \ref{prop:concentration_G}, which will be used in the next section to justify the chain is likely to follow a nice trajectory. 

\subsection{The case of typical starting states}

We start with the case of typical starting states, for which there is little left to prove. 

\begin{proof}[Proof of (i) in Lemma \ref{lem:typical_paths}]
    This is essentially a matter of checking definitions. For fixed $x \in V$, the choice of $s,t = O(\log n)^{a}, a \geq 1$, Lemma \ref{lem:coupling_quasiT} ensures the finite chain $X$ and the chain $\cX$ on the quasi-tree can be coupled until time $s+t$ with annealed probability $1 - o(n^{-\e})$ for some $\e > 0$. Provided the coupling succeeds, the property of backtracking or hitting the boundary of a small-range ball coincides for both chains, while the regeneration times of $\cX$ obviously give regeneration edges for the finite chain. Thus from Lemma \ref{lem:coupling_quasiT} and Lemma \ref{lem:typical_paths_QT} we deduce
    \begin{equation}
        \bP_x \sbra{(X_s \cdots X_{s+t} \cdots \in \Gamma(R,L,M))} \leq (s+t) e^{-C(R \wedge L \wedge M^{\alpha})} + o(n^{- \e})
    \end{equation}
    Since $s,t = (\log n)^{a}$, for any $b > 0$ the choice of $R,L = O(\log \log n)$ with large enough implicit constants depending on $a,b$ and $M = (\log \log n)^{2/\alpha}$ makes the right hand side $o((\log n)^{-b})$. The first moment argument \eqref{eq:markov_quenched} implies (i) in Lemma \ref{lem:typical_paths}.
\end{proof}

We can repeat the same kind of arguments for point (i) in Proposition \ref{prop:concentration_G} however we obtain for the time being a slightly weaker statement that the one expected. Because of the dependance in $\e$ in the annealed probability of \eqref{eq:clt_drift} and \eqref{eq:clt_weights}, applying the first moment arguments will not establish the concentration property with high probability but only $1 - \e - o(1)$. To obtain a high probability result we need to apply a higher order Markov's inequality, which is also the key argument to obtain the uniform statements.

\subsection{Bootstrapping annealed bounds with parallel chains}

Arguments used so far above establish errors on annealed error in $o(1)$, nay in $\e + o(1)$. Following ideas from \cite{bordenave2018random,bordenave2019cutoff}, one strategy to improve these error bounds consists in using higher order moments in Markov's inequality, which leads to an argument of "parallelizing chains" on the same environment. Namely for all $\e > 0$ and $k \geq 1$, for any trajectorial event $A$
    \begin{align}\label{eq:higher_markov_typical}
        \forall x \in V: \bP \sbra{\bfP_{x} \sbra{A} > \e} &\leq \frac{\bP_x \sbra{A}^{k}}{\e^{k}} = \e^{-k} \, \bP_{x} \sbra{ \bigcap_{i=1}^{k} \{ X^{(i)} \in A \}}
    \end{align}
    and 
    \begin{align}
        \bP \sbra{\max_{x \in V} \bfP_{x} \sbra{A} > \e} &\leq \frac{1}{\e^{k}} \bE \sbra{\sum_{x \in V} \bfP_x \sbra{A}^{k} } \nonumber \\
        &\leq \frac{n}{\e^{k}} \max_{x \in V} \bP_{x} \sbra{ \bigcap_{i=1}^{k} \{ X^{(i)} \in A \} }, \label{eq:higher_markov}
    \end{align}
where $X^{(1)}, \ldots,  X^{(k)}$ are $k$ versions of the chain $X$ generated independently conditional on the same environment. These $k$ trajectories can be generated sequentially with the environment, sampling the environment only when exploring parts of the environment not already visited by the previous chains. The idea will then be to argue that the chains are unlikely to follow each other, so they end up exploring disjoint parts of the environments. If the event $A$ depends only on what happens after the trajectories separate from each other (hence the consideration of shifted trajectories), we can then expect the annealed probability to decouple and become $\bP \sbra{A}^{k}$. If the event $A$ has probability below $f(\e)$ for some function $f$, this can be sufficient to beat the $\e^{-k}$ or $\e^{-k} n$ factor.

As an example of application, we can start proving point (i) in Proposition \ref{prop:concentration_G}, which also illustrates some of the arguments used in the sequel.

\begin{proof}[Proof of (i) in Proposition \ref{prop:concentration_G}]
    Let $\e > 0$, $t = O(\log n)$ and $x \in V$. For $C_{\LR} = C_{\LR}(\e)$ and $C_h=C_h(\e) >0$ to determine, consider the set of paths
    \begin{equation}\label{eq:Gamma_ent}
        \Gamma_{\Ent} := \left\{ \frp \ \left| \ \begin{array}{c} \abs{\abs{\xi(\frp)} - \mathscr{d} \abs{\frp} } \leq C_{\LR}\sqrt{\abs{\frp}} \\
            \abs{- \log w_{\frp_0,R,L} (\xi(\frp)) - h \abs{\frp} } \leq C_h \sqrt{\abs{\frp}} \end{array} \right. \right\}
    \end{equation}
    which can be paths either in $G^{\ast}$ or $\cG$. The goal is thus to show $\bP_x \sbra{(X_0) \cdots X_{t} \notin \Gamma_{\Ent}} = o(1)$. Consider $k$ versions $X^{(1)}, \ldots, X^{(k)}$ of the chain $X$ generated independently conditional on the same environment, with $k \gg 1, k = O(\log \log n)$. Then using the same coupling as in Section \ref{subsec:coupling} these can be coupled with $k$ versions $\cX^{(1)}, \ldots, \cX^{(k)}$ of the chain $\cX$. For $j \in [k]$ let $\cG(j,t)$ denote the union of the first $j$ trajectories up to time $t$, along with their long-range neighbourhoods up to depth $L$. Compared to the case of a single chain, the number of visited vertices gets multiplied by $k$ at most, so the proof of Lemma \ref{lem:coupling_quasiT} shows the chains $X^{(1)}, \ldots, X^{(k)}$ can be coupled with high probability, along with their long-range neighbourhood up to length $L = O(\log \log n)$, up to time $t = O(\log n)$, to coincide with $\cG(k,s+t)$. Now the definitions of the long-range distance and of weights (\eqref{eq:def_weights_G} and \eqref{eq:def_weights_quasiT}), which are measurable with respect to this neighbourhood, were exactly made to coincide in the finite and quasi-tree settings. Thus if the coupling succeeds $(X_0 \cdots X_{t}) \in \Gamma_{\Ent}$ if and only in $\cX_{0} \cdots \cX_{t} \in \Gamma_{\Ent}$. Letting for $j \in [k]$:
    \begin{equation*}
        B_j := \bigcap_{i=1}^{j} \left\{ \cX^{(i)}_0 \cdots \cX^{(i)}_{t} \notin \Gamma_{\Ent} \right\}
    \end{equation*}
    \eqref{eq:higher_markov_typical} and the coupling imply
    \begin{equation*}
        \bP_{x} \sbra{X_0 \cdots X_{t} \notin \Gamma_{\Ent}} \leq \e^{-k} \, \bP \cond{B_k}{\iota(O) = x} + o(1),
    \end{equation*}
    so it suffices to show $\bP_O \cond{B_k}{\iota(O) = x} = o(\e^{k})$. We consider in fact a slightly modified event. Let $l := (\log \log n)^3$ and consider the set of quasi-trees $\cA_l = \cA_{l}(\alpha, q_0)$ for which every length $l$ path from the root contains a proportion at least $\alpha$ of vertices for which the probability to escape to infinity outside $\frp$ is lower bounded by $q_0$. Recalling Proposition \ref{prop:escape_dense} and Remark \ref{rk:escape_notonpaths}, Lemma \ref{lem:escape_stationary} implies the existence of constants $q_0, \alpha > 0$ and $C > 0$ such that all vertices in $\cG(j,t)$ have their subquasi-tree in $\cA_l$ with probability at least $1 - t e^{-C l}$. Since $t = O(\log n)$ and $l = (\log \log n)^3$ this probability is $1 - o(1)$. Thus we can suppose that the event $B_{j}$ also contains the event that all vertices of $\cG(j,t)$ have their subquasi-tree in $\cA_l$. 
	Secondly, consider the $l$-th regeneration time $T^{(i)}_l$ of the $i$-th chain. By the stretched exponential tails of regeneration times \ref{lem:regeneration_tails} and Chernoff's bound, there exist constants $\alpha \in (0,1]$ and $C > 0$ such that for all $i \leq k$
	\begin{equation*}
		\bP_O \cond{T^{(i)}_l > t_0}{\iota(O) = x} \leq e^{- t_0^{\alpha} + C l}
	\end{equation*}
	Thus considering $t_0 := (\log \log n)^\beta$, for $\beta > 3/\alpha$ large enough makes the right hand side $o(k^{-1})$. Consequently by union bound we obtain that $\bP_O \sbra{\exists i \leq k: T^{(i)}_l > t_0} = o(1)$, so we can also incorporate to the events $B_j$ the event that $T^{(i)}_l \leq t_0$ for $i \leq j$.
    
    Next we claim that $\bP_O \cond{B_1}{\iota(O) = x} \leq \e / 2 +  o(1)$ and $\bP_O \cond{B_{j}}{\iota(O) = x, B_{j-1}} \leq \e / 2 + o(1)$ for all $j \in [2, k]$. Noting that
    \begin{equation*}
        \bP_O \cond{B_k}{\iota(O) = x} = \bP_O \cond{B_1}{\iota(O) = x} \prod_{j=2}^{k} \bP_O \cond{B_j}{\iota(O) = x, B_{j-1}},
    \end{equation*}
    the claims imply that $\bP \cond{B_k}{\iota(O) = x} = (\e / 2 + o(1))^{k} = o(\e^{k})$ as desired.
   
    Let us prove the claims. The first claim is the consequence of Proposition \ref{prop:drift} and Lemma \ref{lem:concentration_weights_QT}, provided the constants $C_{\LR}, C_h$ are chosen adequatly. To prove the second claim, let $j \geq 2$ and suppose $B_{j-1}$ holds. We argue that $\cX^{(j)}$ has quenched probability at least $1 - o(1)$ to exit $\cG(j-1, t)$ for the last time by time $t_0$. Consider then the loop-erased chain $(\xi^{(j)}_m)_{m}$ obtained from $\cX^{(j)}$. Thanks to the first event we added to $B_{j-1}$, for any long-range path $\zeta$ of length $l$ contained in $\cG(j-1,t)$,
    \begin{equation*}
        \bfP_O \cond{\xi^{(j)}_l = \zeta_l}{B_{j-1}} \leq e^{- C l}
    \end{equation*}
    for some constant $C > 0$. Since $\cG(j-1,t)$ is made of the long-range neighbourhood of the first $j-1$ trajectories, it contains at most $k t \Delta^{R(L+1)}$ long-range paths of length $l$. Thus by union bound 
    \begin{equation*}
        \bfP_O \cond{\xi^{(j)}_l \in \cG(j-1,s+t)}{B_{j-1}} \leq k t \Delta^{R(L+1)} e^{- C l} = e^{-C (\log \log n)^3 + O(\log \log n)^{2}} = o(1).
    \end{equation*}
	On the other hand, the second event added to $B_j$ implies that $T^{(j)}_l \leq t_0$ so the chain must in particular have made the first $l$ steps of its loop-erased trace before time $t_0$. Combined with what precedes we deduce that conditional on $B_{j-1}$, the chain $\cX^{(j)}$ leaves the graph $\cG(j-1,t)$ before time $t_0$ with probability $1-o(1)$. 

    Let $L^{(j)}$ denote now this last exit time. Conditional on $L^{(j)}$ and $\cX_{L^{(j)}} = u$, the quasi-tree that contains the subsequent trajectory needs then to be generated according to the measure $\bQ_u$ \eqref{eq:Qu}. Using Remark \ref{rk:comparison_Qu} upper bounds under this law only differ by a constant factor from upper bounds with the usual law. We can thus apply Proposition \ref{prop:drift} and Lemma \ref{lem:concentration_weights_QT} conditional on $L_j$ to obtain that 
    \begin{equation*}
        \bP_O \cond{(X^{(j)}_{L_j} \cdots X^{(j)}_{L_j +t}) \notin \Gamma_{\Ent}}{B_{j-1}, \iota(O) = x} \leq \e/2 + o(1).
    \end{equation*}
    Now by what precedes $d_{\LR}(\cX^{(j)}_0, \cX^{(j)}_{L_j}) \leq l$ with probability $1-o(1)$ conditional on $B_{j-1}$, hence
    \begin{equation*}
        d_{\LR}(\cX^{(j)}_0, \cX^{(j)}_t) \in [\mathscr{d} t - C_{\LR} \sqrt{t},\mathscr{d} t - C_{\LR} \sqrt{t} + l ]
    \end{equation*}
    and 
    \begin{equation*}
        - \log w_{\cX_0,R,L}(\xi(\cX^{(j)}_0 \cdots \cX^{(j)}_t)) \in [ht - C_h \sqrt{t}, ht + C_h \sqrt{t} + l w_{\max}],
    \end{equation*}
    where $w_{\max}$ denotes an upper bound on all the weights contained in $\cG(j-1,t)$. For any realization of the quasi-tree we can bound $w_{\max} \leq O(RL)$ as in the proof of Lemma \ref{lem:concentration_weights_QT}. All in all, since $l w_{\max} = o(\sqrt{t})$, this ensures $(\cX^{(j)}_0, \cX^{(j)}_t) \in \Gamma_{\Ent}$ with probability $1 - \e /2 - o(1)$, proving the claim.
\end{proof}

\subsection{From $o(1)$ to $o(1/n)$ bounds}

\subsubsection{Quasi-tree with a cycle}

The rest of this section will now be devoted to the proof of the uniform, worst-case statements. The main ideas are the same as those used above. From \eqref{eq:higher_markov}, we only need to push them yet a little further to make error bounds on annealed probabilities $o(1/n)$. From the proof of (i) in Proposition \ref{prop:concentration_G} two arguments require a modification: the coupling with the a quasi-tree and the use of escape probabilities.

Let us focus on the first point. It seems quite obvious that the coupling with quasi-trees cannot be improved beyond to a $o(1/n)$ error as some vertices do not have a quasi-tree like neighbourhood. However from the proof of Lemma \ref{lem:coupling_quasiT} this error can be achieved allowing just one cycle. We are thus led to consider a modified quasi-tree model which contains at most one long-range cycle. 

Let $(u_i)_{i=0}^{l}, (v_i)_{i=0}^{l}, l \geq 0$ be two sequences of vertices in $V$ such that $u_{i+1} \in \BSR^{+}(v_i, \infty)$ for all $i \in [0,l-1]$ and $\BSR^{+}(v_l, R) \cap \BSR^{-}(u_0, R) \neq \emptyset$. For every $i$ and vertex $z \in \BSR^{+}(u_i,\infty) \smallsetminus \{u_i, v_i \}$, add an edge $(z, \eta(z))$ and grow a one-sided quasi-tree rooted at $\eta(z)$. We call the oriented graph $\cG = (\cV, \cE)$ obtained a quasi-tree with a cycle $((u_i,v_i))_{i =0}^{l}$. As a usual quasi-tree, it is given by maps $\iota, \eta$ such that $\iota$ identifies vertices of $\cV$ with vertices in $V$, while $\eta: \cV \rightarrow \cV$ is an involution. Here $\eta$ is obtained by the corresponding maps in the quasi-trees outside the cycle whereas for the cycle we set $\eta(u_i) := v_{i+1}$ for all $i = 0, \ldots l-1$. A root $O$ can be chosen, which does not to be on the cycle. The definition of the Markov chain $\cX$ \eqref{eq:free_transitions} extends directly to this setting.

The coupling of Section \ref{subsec:coupling} gives a natural way to couple $\cX$ on a random realization of $\cG$ with the Markov chain $X$: the rejection scheme is used until the first occurence of a cycle, after which cycles are ignored in the construction of $\cG$. The stochastic comparison in the proof of Lemma \ref{lem:coupling_quasiT} still holds, but using now that 
\begin{equation}\label{eq:binomial_2}
     \bP \sbra{Z \geq 2} \leq \frac{m^{4} \Delta^{2R}}{n^{2}}
\end{equation}
if $Z$ is a binomial $\mathrm{Bin}(m, m \Delta^{R} / n)$, we deduce that for $t = n^{o(1)}$, the chains $X$ and $\cX$ can be coupled so that the chains coincide and $B_{\LR}(X_s,L)$ and $B_{\LR}(\cX_s,L)$ are isomorphic for all $s \leq t$ with probability $1 - o(1/n)$. 

More generally, let $k \geq 1$ and $X^{(1)}, \ldots, X^{(k)}$, resp. $\cX^{(1)}, \ldots, \cX^{(k)}$, $k$ versions of the chain $X$, resp. $\cX$ generated independently conditional on the same environment. Letting $G^{\ast}(k,t), \cG(k,t)$ be the environments explored by these trajectories up to time $t$ along with their $L$-long-range neighbourhoods, the same arguments as above implies that 
\begin{equation}\label{eq:coupling_uniform}
    \bP \sbra{\begin{array}{c} \text{$G^{\ast}(k,t)$ and $\cG(k,t)$ are isomorphic} \\ \forall i \in [1,k], \forall s \leq t: X^{(i)}_s = \iota(\cX^{(i)}_s) \end{array}} = 1 - o((kt)^{4}/n^{7/4}) = o(\e^{k} / n)
\end{equation} 
if for instance $k = \lfloor \log n / 2 \log \e^{-1} \rfloor$.

\subsubsection{An essentially uniform bound on escape probabilities}

The second argument that needs to be adapted is the use of escape probabilities, which can be expected to of constant order only for typical states. Consider $\cG$ to be a random realization of a quasi-tree with a cycle as described above and $\cX$ the associated Markov chain. Conditional on the cycle, the quasi-trees that are added to it are generated according to the model studied in Sections \ref{section:QT1} - \ref{section:QT3}, therefore the asymptotic analysis of the chain $\cX$ directly extends to that case, conditional on the cycle. The chain ultimately leaves the cycle, after which it stays in a genuine quasi-tree. Given a vertex $x$ which is not on the cycle, let $\qesc(x)$ denote the probability to escape in the corresponding quasi-tree. If $x$ is on the cycle, there is no quasi-tree at $x$, so let $\qesc(x)$ denote the probablity of escaping through one of the vertices that are in the same component as $x$. Using that components of $V_1$ have size at least $3$ \ref{hyp:cc3}, these escape probabilities can be bounded exactly as in Section \ref{section:QT1}, up to a change of constants, conditional on the cycle. Thus from Proposition \ref{prop:escape_gumbel}, there exists $\beta \geq 1$ such that for all $u \in \cV$ not on the cycle,
    \begin{equation}\label{eq:escape_uniform}
        \bP \cond{\qesc(u) < \frac{1}{(\log n)^{\beta}}}{\cG \smallsetminus \cG_u} = O(1/n^2).
    \end{equation}
Combined with Lemma \ref{lem:escape_stationary}, we see that on a time scale $t = O((\log n)^{a}), a \geq 1$, the probability that the chain $\cX$ meets a vertex which does not satisfy the lower bound on escape probability above is still $o(1/n)$. This makes the chain sufficiently unlikely to meet such vertices that the chain $X$ cannot reach such quasi-tree-like neighbourhoods either, thanks to the the coupling described above. Informally, this allows us to proceed as if escape probabilities were everywhere lower bounded by $1/(\log n)^{\beta}$ and reason conditional on the environment, as we did in the reversible case.

\subsection{Nice paths from arbitrary states}

A first application of these uniform lower bounds is the following.

\begin{proof}[Proof of Lemma \ref{lem:QTlike_uniform}]
    We prove the first point. Let $L = C_L \log \log n$ for a fixed constant $C_L > 0$, $t \ll n^{1/16}$ and $x \in V$. By Remark \ref{rk:uniform_shifted} it suffices to consider the case of $s = C (\log n)^{a}$ for some $a > 0$ to determine. Using the coupling described above (with only one chain here, ie $k=1$) we can couple the trajectory of $X$ and its whole $L$ long-range neighbourhood with that of $\cX$. By \eqref{eq:coupling_uniform} the coupling succeeds up to time $s+t$ with probability $1- o(1/n)$, hence for $u \in V$
    \begin{multline*}
        \bP_u \sbra{\exists s' \in [s,s+t]: \BLR(X_{s'}, L) \text{ is not quasi-tree-like} } \\ = \bP_O \cond{ \text{$\exists s' \in [s,s+t]: \cX_{s'}$ is at distance less than $L$ from the cycle}}{\iota(O) = u} + o(1/n).
    \end{multline*}
    Now let $\beta \geq 1$ be as in \eqref{eq:escape_uniform} and consider the set
    \begin{equation*}
        \mathrm{Esc} := \{ x \in \cG \ | \ \qesc(x) \geq (\log n)^{- \beta} \}.
    \end{equation*}
    Write $\tau_{\mathrm{Esc}^{c}}$ for the hitting time of the complement set of $\mathrm{Esc}$. The number of vertices in the $L$ long-range neighbourhood of the trajectory being bounded by $O(s+t) \Delta^{R(L+1)} = O(n^{1/16})$, union bound and \eqref{eq:escape_uniform} imply
    \begin{equation*}
        \bP_O \cond{\tau_{\mathrm{Esc}^{c}} \leq s+t}{\iota(O) = u} = o(1/n).
    \end{equation*}
	We are being a bit sketchy here, but the argument can be formalized using the Ulam labelling.
    On the other hand, when the chain is at a vertex of $\mathrm{Esc}$ its long-range distance to the cycle has a quenched probability at least $(\log n)^{-\beta}$ to increase by $1$ indefinitely. Thus if we consider the sequence of times $(S_k)_{k \geq 1}$ at which this happens, for all $x \in \cV, s \geq 0$, 
    \begin{equation*}
    \bfP_{x} \sbra{\tau_{\mathrm{Esc}^{c}} > s+t, S_1 > s} \leq e^{-s / (\log n)^{\beta}}.
    \end{equation*} 
    Consequently by union bound and Strong Markov's property
    \begin{multline*}
        \bfP_{x} \sbra{\text{$\exists s' \in [s,s+t]: \cX_{s'}$ is at distance less than $L$ from the cycle}, \tau_{\mathrm{Esc}^{c}} > s+t} \\ \leq \bfP_{x} \sbra{S_L > s, \tau_{\mathrm{Esc}^c > s +t}} \leq L e^{- s / L (\log n)^{\beta}}.
    \end{multline*}
    Since $s = (\log n)^{a}$, choosing $a > \beta + 1$ makes these two bounds $o(1/n)$. The first moment argument \eqref{eq:higher_markov} concludes the proof of the first point in the general case. 
	
	Finally point (ii) can be proved with similar arguments.
\end{proof}

It remains to prove (ii) in Lemma \ref{lem:typical_paths} and Proposition \ref{prop:concentration_G}. The argument is similar and uses the same ideas as above.

\begin{proof}[Proof of (ii) in Lemma \ref{lem:typical_paths} and Proposition \ref{prop:concentration_G}]
    Consider the concentration of the drift and entropy. Let $x \in V$, $\e > 0$ and $k := \lfloor \log n / 2 \log \e^{-1} \rfloor$. As for point (i) we combine the higher Markov inequalities \eqref{eq:higher_markov} and the coupling with $k$ chains on a quasi-tree with a cycle. As a cycle is allowed the error due to the coupling is $o(1/n)$ by \eqref{eq:coupling_uniform} so we need only focus on the infinite setting: letting
    \begin{equation*}
        B_j := \bigcap_{i=1}^{j} \{ \cX^{(i)}_{s} \cdots \cX^{(i)}_{s+t} \notin \Gamma_{\Ent} \}.
    \end{equation*}
    for $j \in [k]$, with $\Gamma_{\Ent}$ as in \eqref{eq:Gamma_ent} it suffices to prove $\bP_O \cond{B_k}{\iota(O) = x} = o(\e^{k} / n)$. Then we claim that $\bP_O \cond{B_1}{\iota(O) = x} \leq \e^3 / 2 +  o(1)$ and $\bP_O \cond{B_{j}}{\iota(O) = u, B_{j-1}} \leq \e^3 / 2 + o(1)$ for all $j \in [2, k]$. With our choice of $k$ this will imply $\bP_O \cond{B_k}{\iota(O) = x} = (\e^3/2 + o(1))^k = o(\e^{k} / n)$ as needed.

    As we mentionned already, results proved for genuine quasi-trees extend to the case where an additional cycle is present. In particular, the first regeneration time and level, defined to occur outside the cycle, have stretched exponential or exponential tail respectively, conditional on the cycle, while the remaining regenerations occurs on genuine quasi-trees. This is sufficient to establish that Proposition \ref{prop:drift} and Lemma \ref{lem:concentration_weights_QT} still hold in this case, implying that $\bP \cond{B_1}{\iota(O)=u} \leq \e^{3}/2$ for a good choice of $C_h, C_{\LR}(\e)$. 

    For the second claim, we argue as for (i) by showing the chains are unlikely follow each other. Let $\beta \geq 1$ be as in \eqref{eq:escape_uniform}, consider the set
    \begin{equation*}
        \mathrm{Esc} := \{ x \in \cG \ | \ \qesc(x) \geq (\log n)^{- \beta} \}.
    \end{equation*}
    and let $\tau_{\mathrm{Esc}^{c}}$ be the first exit time by any of the chains. As we argued above in the proof of Lemma \ref{lem:QTlike_uniform}, $\bP_O \cond{\tau_{\mathrm{Esc}^{c}} \leq s + t}{\iota(O) =u} = o(1/n)$. On the other hand, while on this set the chains have a quenched probability at least $(\log n)^{\beta}$ to have their long-range distance increased by $1$ permanently, hence for any $j \in [k]$, letting $(S^{(j)}_k)_k$ be the sequence of times at which this occurs for the $j$-th chain,
    \begin{equation}\label{eq:distance_uniform}
        \bfP_O \sbra{S^{(j)}_l > s, \tau_{\mathrm{Esc}^{c}} > s+t} \leq l e^{- C_1 s / l (\log n)^{\beta}}
    \end{equation}
    for some constant $C_1 > 0$. Taking $a > 1 + 2 \beta$ in $s = (\log n)^{s}$ and $l = (\log n)^{\beta} (\log \log n)^3$ the right hand side is $o(1)$. 
    
    Consider now the loop-erased chain $(\xi^{(j)}_m)_{m}$ obtained from $\cX^{(j)}$. If all vertices visited by the chain are in $\mathrm{Esc}$, and using that components of $V_1$ have size at least $3$, the probability to follow a given path is exponentially small in the length. Since $\cG(j-1,s+t)$ contains at most one long-range cycle this subgraph contains at most $2 k (s+t) \Delta^{R(L+1)}$ long-range paths, so by union bound 
    \begin{equation*}
        \bfP_O \sbra{\xi^{(j)}_l \in \cG(j-1,s+t), \tau_{\mathrm{Esc}^{c}} > s+t} \leq 2 k (s+t) \Delta^{R(L+1)} e^{-C_2 l / (\log n)^{\beta}}.
    \end{equation*}
    for some constant $C_2 > 0$. For $l = (\log n)^{\beta} (\log \log n)^3$ this is $o(1)$. 

    Combining the two previous observations, we deduce that conditional on $B_{j-1}$, the last exit of $\cG(j-1,s+t)$ by the trajectory $(\cX^{(j)}_{t'})_{t' \leq s+t}$ occurs before $s$, leaving at least $t$ subsequent steps. Letting $L^{(j)}$ denote this last exit time, we can reason conditional on $L_{j}$ and use Proposition \ref{prop:drift} and Lemma \ref{lem:concentration_weights_QT} about shifted trajectories to deduce the claim. 

    Finally, the proof of Lemma \ref{lem:typical_paths} uses the same arguments, considering the set of paths $\Gamma(R,L,M)$ instead of $\Gamma_{\Ent}$. 
\end{proof}

\section[Nice paths]{Approximation by nice paths: proof of Proposition \ref{prop:nice_approx}}\label{section:nice}

We move to the second part of the proof of Theorem \ref{thm:cutoff}. Most of this section is identical to the reversible case so we will skip some of the proofs that can be found in \cite{cutoff_reversible}. 

\subsection{Forward neighbourhood}

Nice paths between $x$ and $y$ will have their first steps and last steps contained respectively in some quasi-tree-like neighbourhoods of $x$ and $y$. We define here the forward neighbourhood of $x$.

Let $x \in V$, $l \geq 1$ an integer and $w_{\min} \geq 0$. The forward graph $K(x,l, w_{\min})$ is designed essentially as a "spanning quasi-tree" of the ball $\BLR(x,l)$, obtained by exploring this ball algorithmically but giving priority to paths with large weights and truncating whenever cycles are encountered. This process will thus build iteratively a sequence $(K_m)_{m=0}^{\tau}$ of subsets of $\BLR(x,l)$, until it stops at a random time $\tau$ to yield $K(x,l, w_{\min}) := K_{\tau}$. Unless the procedure is initiated at a vertex $x$ whose ball $\BLR(x,L)$ is not quasi-tree-like, $K_m$ remains at all time quasi-tree-like. In this case for every long-range edge $e \in E_m$ there exists a unique long-range path $\xi(e)$ from $x$ to $e$ contained in $K_m$. Define its cumulative weight as $\hat{w}(e) := w_x(\xi(e))$. Because weights require the knowledge of $(L-1)$-neighbourhoods, the exploration queue will consist in subsets $E_m$ of long-range edges for which the whole long-range $(L-1)$-neighbourhood is contained in $K_m$, so that cumulative weights can be computed from $K_m$ only. Finally, a constraint of minimal weights is added during the procedure, in order to keep the number of vertices explored as $o(n)$.

\paragraph{Exploration of the forward neighbourhood}

The procedure is initiated with $K_0 := \BLR(x,L)$. If $K_0$ contains a long-range cycle, $E_0 := \emptyset$ and the procedure stops. Otherwise let $E_0$ be the set of long-range edges at distance $0$ from $x$. Then for all $m \geq 0$ the $(m+1)$-th step goes as follows: 
\begin{enumerate}
	\item Among all long-range edges $e$ in $E_m$ at long-range depth at most $l$ from $x$ in $K_m$ and such that $\hat{w}(e) \geq w_{\min}$, pick the edge $e_{m+1}$ with maximal cumulative weight, using an arbitrary ordering of the vertices to break ties. If there is no such edge, the procedure stops.
	\item Explore the depth-$L$ neighbourhood of $e_{m+1}$: for each descendant $z \in \partial K_m$ at long-range distance $L-1$ from $e_{m+1}$, reveal $\eta(z)$. This exploration phase stops if a revealed edge violates the quasi-tree structure: this occurs if for some $z$ the small-range ball $\BSR(\eta(z), R)$ has a non-empty intersection with $K_m$ or one of the previously revealed balls.
	\item If the previous exploration phase stopped because of intersecting small-range balls, then $E_{m+1} := E_m \smallsetminus \{ e_{m+1} \}$ and $K_{m+1} := K_m$. If it stopped because of an intersection with $K_m$, let $Z_{m}$ be this intersection. Then $E_{m+1}$ is obtained by deleting from $E_m$ every long-range edge which has either a descendant or an ancestor in $Z_m$, as well as the edge $e_{m+1}$, and set $K_{m+1} := K_m$. Finally, if the exploration ended without a violation of the quasi-tree structure, add the subsequent long-range edges of $e_{m+1}$ to $E_{m+1}$, whereas the newly revealed vertices are added to $K_{m+1}$.
\end{enumerate}

When the procedure ends, the set $E_{\tau}$ consists by construction of edges at long-range distance $l - L$ from $x$, which contain no long-range cycles in their $(L-1)$-neighbourhood and whose weights are measurable with respect to $K(x,l,w_{\min})$. The two following lemmas were proved for the reversible case.

\begin{lemma}[{\cite[Lemma 8.1]{cutoff_reversible}}]
	Let $\kappa_m$ denote the number of long-range edges revealed during the first $m$ steps $m$ and $\kappa(x,l, w_{\min}) := \kappa_{\tau}$ the total number of long-range edges revealed during the construction of $K(x,l, w_{\min})$. Suppose $\BLR(x,L)$ is quasi-tree-like so that $\tau \geq 1$. There exists a constant $C > 0$ such that for all $m \in [1,\tau]$
	\begin{equation}\label{eq:bound_kappa_m}
		\hat{w}(e_m) \leq C \frac{l \Delta^{RL} }{\kappa_m}.
	\end{equation}
	In particular
	\begin{equation}\label{eq:bound_kappa}
		\kappa(x,l, w_{\min}) \leq C \frac{l \Delta^{RL} }{w_{\min}}.
	\end{equation}
\end{lemma}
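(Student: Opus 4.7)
The plan is to combine the greedy maximum-weight selection with the subadditivity of weights encoded in Remark~\ref{rk:weight_sum_1}.

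First, I would establish the monotonicity $\hat{w}(e_1) \geq \hat{w}(e_2) \geq \cdots \geq \hat{w}(e_\tau)$. When $e_m$ is expanded, the new long-range edges possibly added to $E_{m+1}$ lie at long-range distance $L$ from $e_m$; their cumulative weights factor as $\hat{w}(e_m)$ times a sub-path weight bounded by $1$, so every element of $E_{m+1}$ has weight at most $\hat{w}(e_m)$. Since the next greedy choice is the maximum, $\hat{w}(e_{m+1}) \leq \hat{w}(e_m)$, and the inequality is trivial if no new edges are added due to a cycle or small-range intersection.

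Second, I would bound the number of selected edges at each depth using Remark~\ref{rk:weight_sum_1}. Each selected edge $e_i$ is identified with the unique non-backtracking long-range path $\xi(e_i)$ from $x$ to $e_i$ in $K_{i-1}$, and by definition $\hat{w}(e_i) = w_{x,R,L}(\xi(e_i))$. For fixed depth $k \in \{0,1,\ldots,l\}$, the sum of $w_{x,R,L}(\xi)$ over non-backtracking long-range paths of length $k+1$ starting at $x$ is at most $1$, hence the number of selected edges at depth $k$ with weight at least $w$ is at most $1/w$. Taking $w = \hat{w}(e_m)$ and summing over the $l+1$ possible depths yields $m \leq (l+1)/\hat{w}(e_m)$.

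Third, each step of the procedure reveals at most $\Delta^{RL}$ new long-range edges, since the long-range $L$-neighbourhood of $e_m$ has this cardinality up to constants by \eqref{eq:ball_growth}. Hence $\kappa_m \leq m\,\Delta^{RL}$, and combining with the previous step gives
\[
    \hat{w}(e_m) \;\leq\; \frac{l+1}{m} \;\leq\; \frac{(l+1)\,\Delta^{RL}}{\kappa_m},
\]
which is \eqref{eq:bound_kappa_m} up to a universal constant. For \eqref{eq:bound_kappa}, the termination condition of the procedure ensures that $\hat{w}(e_\tau) \geq w_{\min}$; applying \eqref{eq:bound_kappa_m} at $m = \tau$ then gives $w_{\min} \leq C\,l\,\Delta^{RL}/\kappa_\tau$, whence $\kappa_\tau \leq C\,l\,\Delta^{RL}/w_{\min}$.

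No serious obstacle is anticipated: the argument is essentially bookkeeping, combining greedy monotonicity with the $\ell^1$-bound on weights at each fixed level. The only subtlety is to check the monotonicity of $\hat{w}(e_m)$ carefully across the three possible outcomes at step~3 of the exploration (full expansion, abort due to intersecting small-range balls, abort due to an intersection with $K_m$), all of which can only remove elements from or add strictly lighter elements to the queue.
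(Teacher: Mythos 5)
Your argument is correct and is essentially the standard proof of this lemma (which the paper itself defers to the companion work): greedy monotonicity of $\hat{w}(e_m)$, the level-wise bound $\sum_{|\xi|=k+1} w_{x,R,L}(\xi)\leq 1$ from Remark~\ref{rk:weight_sum_1} giving $m\leq (l+1)/\hat{w}(e_m)$, and the bound $\kappa_m = O(m\,\Delta^{RL})$ from \eqref{eq:ball_growth}, with the stopping rule $\hat{w}(e_\tau)\geq w_{\min}$ yielding \eqref{eq:bound_kappa}. The only points worth stating explicitly are that the selected edges $e_1,\dots,e_m$ are pairwise distinct (each is removed from the queue and, by the quasi-tree structure of $K_m$, never re-enters) and correspond to distinct non-backtracking paths, which you implicitly use in the counting step.
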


\begin{lemma}[{\cite[Lemma 8.2]{cutoff_reversible}}]\label{lem:forward_martingale}
	If $m > \tau$, let $\mathrm{cycle}(e_{m+1})$ be the event that the exploration of the $(L+1)$ neighbourhood of the long-range edge $e_{m+1} \in E_{m}$ considered at the $(m+1)$-th step revealed a cycle. Let $\e \in (0,1)$ and consider the following process. If $\BLR(x,L)$ is not quasi-tree-like, let $W_m := 1$ for all $m \geq 0$, otherwise set $W_0 := 0$ and for all $m \geq 0$ define
	\begin{equation*}
		W_{m+1} := W_{m} + \left( \hat{w}(e_{m+1}) \wedge \e / 2 \right) \II_{m < \tau} \II_{\mathrm{cycle}(e_{m+1})}.
	\end{equation*}
	This is the total cumulative weight of edges that violated the quasi-tree structure at step $m+1$ and that are below $\e / 2$. Suppose $l = O(\log n)$, $R,L = O(\log \log n)$ and that $w_{\min} \geq e^{ (\log \log n)^{3}} / n$. Then for all $s=s(n)$, with high probability, for all $x \in V$,
	\begin{equation}
		W_{\tau} \leq W_{s} + \e.
	\end{equation}
\end{lemma}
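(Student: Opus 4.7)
If $\BLR(x, L)$ is not quasi-tree-like then $W_m \equiv 1$ identically and the inequality $W_\tau \leq W_s + \e$ is automatic, so I focus on the non-trivial case where $W_0 = 0$. I will use a Doob decomposition $W_\tau - W_s = (A_\tau - A_s) + (M_\tau - M_s)$, where $A_m := \sum_{k=1}^m \bE \cond{W_k - W_{k-1}}{\cF_{k-1}}$ is the predictable compensator and $M_m := W_m - A_m$ is the associated martingale, and bound each piece by $\e/2$ with high probability, uniformly in $x$.

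The central input for controlling $A$ is the conditional cycle probability at step $m+1$. The procedure reveals $\eta(z)$ for $O(\Delta^{R L})$ new boundary vertices, each approximately uniform in $V$, and a cycle is triggered precisely when at least one of these values falls within small-range distance $R$ of one of the $O(\kappa_m)$ previously revealed long-range edges; by a union bound, $\bP \cond{\mathrm{cycle}(e_{m+1})}{\cF_m} = O(\kappa_m \Delta^{R(L+1)}/n)$. Combined with the truncated amplitude $\hat{w}(e_{m+1}) \wedge \e/2$, I split the sum over $m$ at the threshold $\kappa^{\ast} := 2 C l \Delta^{RL}/\e$: for $\kappa_m \leq \kappa^{\ast}$ the truncation dominates and $\hat{w} \wedge \e/2 \leq \e/2$, while for $\kappa_m > \kappa^{\ast}$ the bound \eqref{eq:bound_kappa_m} gives $\hat{w}(e_m) \leq C l \Delta^{RL}/\kappa_m$. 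In either regime each summand is $O(l \Delta^{2RL+R}/n)$, and using \eqref{eq:bound_kappa} to bound $\tau \leq C l \Delta^{RL}/w_{\min}$ yields
\[
\bE[A_\tau] \;=\; O\!\left(\frac{l^{2}\, \Delta^{3RL+R}}{w_{\min}\, n}\right) \;=\; \exp\!\bigl(-(\log\log n)^{3}\,(1-o(1))\bigr)
\]
under the standing hypotheses on $l, R, L, w_{\min}$. Markov's inequality then gives $A_\tau - A_s \leq \e/2$ with probability $1 - n^{-\omega(1)}$ for each fixed $x$. For the martingale piece, the increments are bounded by $\e/2$ and the predictable quadratic variation satisfies $\langle M \rangle_\tau \leq (\e/2)\,A_\tau$, so on the event that $A_\tau$ is of its expected order, Freedman's inequality controls $\abs{M_\tau - M_s}$ by a similarly small quantity.

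The hard part is upgrading these per-vertex bounds into the claimed uniform statement over $x \in V$: a naive union bound loses a factor of $n$ that beats the sub-polynomial $e^{-(\log\log n)^{3}}$ decay. I would close this gap by exploiting the shared randomness of the family $(W^{(x)})_{x \in V}$, all of which are functions of the single random matching $\sigma$. A cycle in the exploration from $x$ can only occur if $\BLR(x, l)$ contains a short long-range cycle of $G^{\ast}$; a direct first-moment computation on $\sigma$ shows that with high probability the total number of such cycles in $G^{\ast}$ is only $n^{o(1)}$, and each of them affects at most the $|\BLR(\cdot, l)| = n^{o(1)}$ starting vertices whose $l$-ball contains it. For the $(1 - o(1))$-fraction of $x$ whose exploration sees no cycle at all the inequality is automatic ($W_\tau = 0$), and on the remaining $n^{o(1)}$ "bad" starting vertices the per-vertex estimate combined with a union bound of size $n^{o(1)}$ is easily absorbed, giving the claimed joint bound.
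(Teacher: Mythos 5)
There is a genuine gap, and it is concentrated in your uniformity step; the per-vertex step is also much weaker than you claim. First, the structural premise behind your union-bound reduction is false. With $w_{\min}$ allowed to be as small as $e^{(\log\log n)^{3}}/n$, the bound \eqref{eq:bound_kappa} only limits the exploration to $\kappa(x,l,w_{\min}) = O(l\Delta^{RL}/w_{\min}) = n\,e^{-(\log\log n)^{3}(1-o(1))}$ revealed long-range edges, i.e.\ a set of size $n^{1-o(1)}$, not $n^{o(1)}$; likewise $\abs{\BLR(x,l)} = \Delta^{\Theta(Rl)} = n^{\Theta(\log\log n)}$ by \eqref{eq:ball_growth}, and the number of long-range cycles of length up to $l = \Theta(\log n)$ in $G^{\ast}$ is far larger than $n^{o(1)}$. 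Consequently, for essentially \emph{every} starting vertex $x$ the exploration collides with itself many times (the expected number of cycle events is of order $\kappa_\tau^{2}\Delta^{O(RL)}/n = n^{1-o(1)}$), so the claim that a $(1-o(1))$-fraction of $x$ have $W_\tau = 0$ and that only $n^{o(1)}$ ``bad'' vertices remain is simply wrong. The lemma is true not because cycles are rare, but because the cycles met at late steps carry negligible cumulative weight $\hat{w}(e_m) \leq C l\Delta^{RL}/\kappa_m$ by \eqref{eq:bound_kappa_m}.

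Second, your per-vertex estimates cannot absorb even a small union bound. The compensator is in fact bounded \emph{deterministically} by $O(l^{2}\Delta^{O(RL)}/(w_{\min}n)) = e^{-(\log\log n)^{3}(1-o(1))}$ (no Markov needed), but this quantity is $n^{-o(1)}$, not $n^{-\omega(1)}$ as you assert; and Freedman's inequality with increments bounded by $\e/2$ applied at deviation scale $\e/2$ only yields a constant bound (roughly $e^{-3/2}$), since a single jump can realize the whole deviation — it does not see that such jumps have tiny conditional probability. So your argument produces at best an $n^{-o(1)}$ (via Markov on the nonnegative sum $W_\tau$) or constant (via Freedman) failure probability per $x$, which cannot be unioned over $n$, nor even over your purported $n^{o(1)}$ bad set. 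A working route must stratify the cycle events by the cumulative weight of the step at which they occur: steps with $\hat{w} \geq \e/\mathrm{polylog}(n)$ number only $e^{O((\log\log n)^{2})}$ by \eqref{eq:bound_kappa_m}, so at least two cycle events among them (needed because of the truncation at $\e/2$) has probability $n^{-2+o(1)}$; while accumulating a constant total from lighter steps forces at least $\log n/(\log\log n)^{O(1)}$ cycle events, each with conditional probability at most $\kappa_\tau\Delta^{O(RL)}/n \leq e^{-(\log\log n)^{3}(1-o(1))}$ — this is exactly where the hypothesis $w_{\min} \geq e^{(\log\log n)^{3}}/n$ is used — giving $n^{-\omega(1)}$ by a binomial tail bound. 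These estimates are uniform enough to union over $x \in V$, which is the content the statement actually requires (and note that the ``for all $s$'' part is automatic once $W_\tau \leq \e$ is proved on the quasi-tree-like event, since $W$ is nondecreasing there and constant otherwise).
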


\subsection{Nice paths: definition}

We now define nice paths in order to prove Proposition \ref{prop:nice_approx}.
For the rest of this section set
\begin{equation*}
	R := C_R \log \log n, \qquad C_L := C_L \log \log n, \qquad M := (\log \log n)^{\kappa}.
\end{equation*}
where $C_R, C_L > 0$ and $\kappa \geq 1$ are constants chosen large enough so that the conclusions of Lemma \ref{lem:typical_paths} holds. 
Fix $\epsilon \in (0,1)$ for the rest of this section. In the sequel, we consider several constants $C_0, \ldots, C_5$, defined in terms of the constants $C_{\LR}(\e), C_h(\e)$ of Proposition \ref{prop:concentration_G}, which can in particular depend on $\e$. They are indexed in the reverse order in which they are fixed, so $C_0$ is chosen after $C_1$, which is chosen after $C_2$, etc. Let 
\begin{equation}\label{eq:nice_parameters}
	\begin{gathered}
	s := \left\lfloor \frac{\log n}{10 \log \Delta} \right\rfloor \wedge \left\lfloor \frac{\log n}{10 h} \right\rfloor, \qquad t := \left\lfloor \frac{\log n}{h} + C_0 \sqrt{\log n} \right\rfloor, \\
	l_1 := \mathscr{d} (t-s) - C_4 \sqrt{t}, \qquad w_{\min} := e^{- h (t-s) - C_h \sqrt{t}} \qquad w_{\max} := e^{-h t + C_1 \sqrt{t}}.
	\end{gathered}
\end{equation}

Given $x,y \in V, r \in \bN / 2$, $L \leq r \leq M$ and $l_3 \in [\mathscr{d} s - C_5 \sqrt{s}, \mathscr{d} s + C_5 \sqrt{s}]$, consider the following three-stage exploration of the environment: 
\begin{enumerate}
	\item Explore $K=K(x_0,l_1, w_{\min})$ as explained in the previous section. Let $E := E_{\tau}$ be the set of long-range edge remaining in the exploration queue at the end of the procedure, and consider the set $E'$ of boundary vertices at long-range distance $l_1$ from $x_0$, whose image under $\eta$ is yet to determine. 
 	\item The backward neighbourhood is $B = B(r,r+s,l_3) := B_{\scP}^{-}(y,r+s) \cap \BLR^{-}(y,l_3)$. 
  	\item Finally, reveal everything else.
\end{enumerate}

It will be crucial in the sequel to control the numbers $N_1, N_2$ of long-range edges revealed during the two first stages. By definition $N_1 := \kappa(x,l_1, w_{\min})$. Observe that for any $\e' < \frac{h}{10 \log \Delta} \wedge \frac{1}{10}$, $h (t-s) < (1-\e') \log n + O (\sqrt{\log n})$. Thus by \eqref{eq:bound_kappa}, 
\begin{equation*}
	N_1 = O( l_1 \Delta^{RL} e^{h (t-s) + C_h \sqrt{t}}) = O(\log n \ \Delta^{C_R C_L (\log \log n)^2} n^{1 - \e'} e^{C \sqrt{\log n}})
\end{equation*}
for some constant $C$. The cardinality of $B$ is bounded by that of $B_{\scP}^{-}(y,r+s)$, which is $O(\Delta^{r+s}) = O(n^{2/10})$ by choice of $r \leq M$ and $s$. 
All in all, for any $\e' < \frac{h}{10 \log \Delta} \wedge \frac{1}{10}$,
\begin{equation}\label{eq:stages}
	\begin{split}
		&N_1 = O(n^{1-\e'}), \\
		&N_2 = O(n^{1/10}).
	\end{split}
\end{equation}

Let $\cF_{r,l}$ be the $\sigma$-algebra generated by the long-range edges revealed during the two first stages. Since $K$ is quasi-tree-like, a non-backtracking long-range path $\xi$ from $x$ to $E'$ entirely contained in $K$ must cross a unique edge of $E$. Let $\xi_E$ denote this edge and define $w_E(\xi) := w_{x,R,L}(\xi_1 \cdots \xi_E)$. This is essentially the weight $w_x(\xi)$, but where last steps of the path were truncated to keep a weight that is $\cF_{r,l}$-measurable. 

In $B$, let $F'$ be the set of boundary vertices which are at long-range distance exactly $l$ from $y$, for which the shortest long-range path to $y$ is unique and has a tree-like neighbourhood in $B_{\scP}(y,r+s)$, that is $B \cap \BLR(z,L)$ contains no long-range cycle for all vertex $z$ on this path. Consider now the set $F$ of long-range edges in $B$ that are at long-range distance $L-1$ from $F'$. If $\xi$ is a non-backtracking long-range path from $F'$ to $y$, let $\xi_F$ denote the unique edge of $F$ crossed by $\xi$ and $\xi_{F+1}$ the subsequent edge. Set $w_F(\xi) := w_{\xi_{F}^{+},R,L}(\xi_{F+1} \cdots \xi_{\abs{\xi} - L+1})$, where $\xi_{F}^{+}$ is the endpoint of $\xi_F$ closest from $y$. Here we truncate the path at both ends: the first steps to have a $\cF_{r,l}$-measurable weight but also the last steps, as the trajectories considered afterwards end at $y$. 

\begin{definition}\label{def:nice_paths}
	Given $r \in \bN/2$, $r \leq M$ and $l \in [\mathscr{d} s - C_5 \sqrt{\log n}, \mathscr{d} s + C_5 \sqrt{\log n}]$ let $\frN^{t}_{r,l}(x,y)$ be the set of length $t$ paths $\frp$ between $x$ and $y$ such that
	\begin{enumerate}[label = (\roman*)]
		\item $\frp \in \Gamma(R,L,M)$
		\item $\frp$ can be decomposed as the concatenation $\frp = \frp_1 \frp_2 \frp_3$ of three paths such that: $\frp_1$ is a path from $x$ to $E'$ entirely contained in $K(x, l_1)$, whose endpoint is the only vertex of $E'$ it contains (which implies that it starts and ends with a long-range step),
		\item $\frp_2$ is a path between $E'$ and $F'$ which starts with a small-range step but ends with a long-range step such that
		\begin{equation*}
			C_3 \sqrt{\log n} \leq \abs{\frp_2} \leq C_2 \sqrt{\log n}.
		\end{equation*}
		for some $C_2, C_3$. 
		In addition, $\frp_2$ contains a regeneration edge outside $K(x,l_1,w_{\min})$ before step $ML$, and the endpoint of $\frp_2$ is the only vertex of $B(r,r+s,l)$ it contains. 		
		\item $\frp_3$ is a path of length $r+s$ from $F'$ to $y$ entirely contained in $B(r,r+s,l)$, which starts and ends with a small-range step and does not contains any regeneration edge in its first $r$ steps,
		\item $w_E(\xi(\frp_1)) w_F(\xi(\frp_3)) \leq w_{\max}$.
	\end{enumerate}
	In the sequel we consider 
	\begin{equation*}
		\scP^{t}_{r,l}(x,y) := \sum_{\frp \in \frN^{t}_{r,l}(x,y)} \scP(\frp).
	\end{equation*}
	The complete set of nice paths is obtained by taking the union over parameters $r,l$, namely 
	\begin{equation*}
		\frN^{t}(x,y) := \bigcup_{\substack{r \leq M \\ r \in \bN + 1/2}} \bigcup_{l = \mathscr{d} s - C_5 \sqrt{s}}^{\mathscr{d} s + C_5 \sqrt{s}} \frN_{r,l}(x,y).
	\end{equation*}
	and the total probability of nice paths	by $\scP^{t}_{\frN}(x,y) := \sum_{\frp \in \frN^{t}(x,y)} \scP(\frp)$.
\end{definition} 

Conditions (i) and (ii) allow to relate the probability of following a nice path with the weight constraint (v): thanks to the tree structure of $K(x, l_1)$, each vertex in $E'$ has a unique ancester edge in $E$. Given $e \in E$, let $E'(e)$ denote the set of vertices in $E'$ with ancester $e$ and recall $\xi(e)$ is the unique non-backtracking long-range path from $x$ to $e$. Similarly for $f \in F$, let $F'(f)$ be the set of vertices of $F'$ from which the unique non-backtracking long-range path to $y$ goes through $f$ and $\xi(f)$ the unique non-backtracking long-range path from $f^{+}$ to $y$ (which thus does not include $f$). Then for a fixed total long-range length $l$:
\begin{align*}
	\sum_{\substack{\frp: \xi(\frp_1)_E = e \\ \xi(\frp_3)_F = f \\ \abs{\xi(\frp)} = l}} \scP(\frp) &\leq \sum_{\substack{\xi: \xi_E = e \\ \xi_F = f, \\ \abs{\xi} = l}} \bfP_{x} \sbra{\xi(X_0 \cdots X_{\tau_{l}})_{\leq l - L+1} = \xi, (X_0 \cdots X_{\tau_{l}}) \in \Gamma(R,L,M)} \\
	&\leq\sum_{\substack{\xi: \xi_E = e \\ \xi_F = f, \\ \abs{\xi} = l}} w_{x,R,L}(\xi). 
\end{align*}
by Lemma \ref{lem:probability_weights}. For each $\xi$ in the sum, 
\begin{align*}
	w_{x,R,L}(\xi) &= w_{x,R,L}(\xi_1 \cdots \xi_E) w_{\xi_{E}^{+},R,L}(\xi_{E+1} \cdots \xi_{F}) w_{\xi_{F}^{+},R,L} (\xi_{F+1} \cdots \xi_{l-L+1}) \\
	&= w_{E}(\xi_1 \cdots \xi_E) w_{\xi_{E}^{+},R,L}(\xi_{E+1} \cdots \xi_{F}) w_{F}(\xi_{F+1} \cdots \xi_{l-L+1})
\end{align*}
Observe now that for fixed $e$ and $f$ the first and third factors are fixed as well and determined by $\xi(\frp_1), \xi(\frp_3)$, so the sum is only over the steps $\xi_{E +1} \cdots \xi_{F-1}$. Since weights sum up to $1$, we can also sum over $l$ and bound
\begin{equation}\label{eq:nice_paths_bound}
	\sum_{\substack{\frp: \xi(\frp_1) = \xi_1 \\ \xi(\frp_3) = \xi_3}} \scP(\frp) \leq w_{\max}
\end{equation}
thanks to the weight constraint (v). 
In words: the total probability of nice paths with prescribed long-range edges in $E$ and $F$ is upper bounded by $w_{\max}$.

\subsection{Nice paths are typical}\label{subsec:nice_typical}

We prove here points (i)-(ii) of Proposition \ref{prop:nice_approx}. The argument is the same for both points and relies on Lemmas \ref{lem:coupling_quasiT}, \ref{lem:QTlike_uniform}, \ref{lem:typical_paths} and Proposition \ref{prop:concentration_G}. Let $t = \log n / h + C_0 (\e) \sqrt{\log n}$. To prove (i) we can consider a fixed starting state $x \in V$ and then need to ensure that the $t$ steps of a trajectory are nice with quenched probability at least $1-\e$, while for point (ii) we need to prove the last $t$ steps of a length $s' + t$ trajectory are nice, for $s' = (\log n)^{a}, a \geq 1$, this time independently of the starting state. The proof thus only differs by which statement of the aforementionned Lemmas and Proposition we use. Let us for instance focus on point (ii). Consider $a \geq 1$ such that the statement of Lemmas \ref{lem:QTlike_uniform}, \ref{lem:typical_paths} and Proposition \ref{prop:concentration_G} hold. There are several properties to check. Since there are finitely many of them, once a property is shown to hold with probability $1-o_{\bP}(1)$ or $1 -\e$ we can automatically assume it is satisfied when checking the remaining properties. Write $t' := s' + t$ to simplify notation. 

We already proved in Lemma \ref{lem:typical_paths} that a failure of the requirement $(X_{s'} \cdots X_{t'}) \in \Gamma(R,L,M)$ occurs with probability $o_{\bP}(1)$. For fixed $r, l$ the very definition of the backward neighbourhood $B(r,r+s,l)$ implies that it necessarily contains the last $r-s$ steps of the trajectory provided they have the prescribed long-range length $l$. Summing over $r$ and $l$, the constraint that $\frp_2$ only has its last endpoint in the backward neighbourhood while $\frp_3$ is contained in it amounts to conditionning by the last regeneration time occurring before $s$. In particular this requires the existence of a regeneration time in the interval $[t'-s-M,t'-s]$ but this is exactly ensured by the fact that $(X_{s'} \cdots X_{t'}) \in \Gamma(R,L,M)$. Assume now this property hold and let $T_{F'}$ denote the last regeneration before time $t'-s$. The remaining obstructions to following a nice paths are: 
\begin{enumerate} 
	\item the first steps of $(X_{s'} \cdots X_{t'})$ are not contained in $K=K(X_{s'},l, w_{\min})$, which occurs if $\BLR(X_{s'}, L)$ is not quasi-tree-like. This occurs with probability $o_{\bP}(1)$ by Lemma \ref{lem:QTlike_uniform}.
	\item from $X_{s'}$, the chain leaves $K$ before it reaches long-range distance $l_1$, which occurs if: 
	\begin{itemize}
		\item the loop-erased trace exits $K$ through the $L$ long-range neighbourhood of an edge $e$ which satisfied $\hat{w}(e) < w_{\min}$: since cumulative weights along a path are non-increasing this implies $- \log w(\xi(X_{s'} \cdots X_{t'-s})) > - \log w_{\min} = h (t-s) + C_h \sqrt{t}$ which occurs with probability less than $\e$ by Proposition \ref{prop:concentration_G}.
  		\item the chain crosses an edge which violated the quasi-tree structure. Recall the process considered in Lemma \ref{lem:forward_martingale}. As mentionned above we can suppose $\BLR(X_{s'},L)$ is quasi-tree-like. Then by Proposition \ref{prop:concentration_G}, the total probability of paths with long-range length $L$ and weights above $\e / 2$ is $o_{\bP}(1)$. Thus up to this $o_{\bP}(1)$ error the quantity $W_{\kappa}$ considered in the lemma exactly counts the probability to exit $K$ at an edge which violated the tree structure. Thanks to the choice of $w_{\min}$ \eqref{eq:nice_parameters} the lemma establishes that with high probability, $W_{\kappa} \leq W_{\lfloor L/2 \rfloor} + \e$ for any value of $X_{s'}$. However $W_{\lfloor L/2 \rfloor} = 0$ as $\BLR(X_{s'},L)$ is quasi-tree-like.
	\end{itemize}
	\item the path from $X_{t'-s-T_{F'}}$ to $y$ is not unique or its long-range length is not in the interval $[\mathscr{d} s - C_5 \sqrt{\log n}, \mathscr{d} s + C_5 \sqrt{\log n}]$. If this path is not unique, this implies in particular that the ball $B(X_{t'-s-M}, t'-s-M)$ is not quasi-tree-like. Since $s + M = O(s)$ Lemma \ref{lem:QTlike_uniform} shows this occurs with probability $o_{\bP}(1)$. For the long-range distance requirement, Proposition \ref{prop:concentration_G} shows that $\abs{\abs{\xi(X_{t'-s} \ldots X_{t'})} - \mathscr{d} s} \leq C_{\LR} \sqrt{s}$ with probability at least $1- \e - o(1)$. Then note that the long-range distance traveled in the intervals $[T_{F'},t']$ and $[t'-s,t']$ differ by at most $M$, hence $\abs{\abs{\xi(X_{t'-s-T_{F'}} \cdots X_{t'})} - \mathscr{d} s} \leq C_{\LR} \sqrt{s} + M \leq C_5 \sqrt{s}$ for a large enough constant $C_5$, using that $M=o(s)$.
	\item No regeneration occured outside outside $K$ during the first $ML$ steps of the intermediate path $\frp_2$ or the latter does not have length $O(\sqrt{t})$. Since the trajectory is in $\Gamma(R,L,M)$ there must be regeneration edge in less than $M$ steps after having first reached $E'$. By the non-backtracking property, this regeneration edge is either outside $K$ or at long-range distance less than $L$ from $E'$, after which the chain is forced to exit $K$. Thus the $L$-th consecutive regeneration edge must be outside $K$. Using again that regeneration edges are spaced by at most $M$ steps, this makes in total $ML$ steps at most until a regeneration occurs outside $K(x,l_1)$. 
	
	For the length requirement observe that the long-range length is sub-additive. Since by definition a nice trajectory decomposes as the concatenation $(X_{s'} \cdots X_{t'}) = \frp_1 \frp_2 \frp_3$ the sub-additivity implies 
	\begin{equation*}
		\abs{\xi(X_{s'} \cdots X_{s'+t})} \leq \abs{\xi(\frp_1)} + \abs{\xi(\frp_2)} + \abs{\xi(\frp_3)}.
	\end{equation*}
	The path $\xi(\frp_1)$ has length $l_1$, whereas $\xi(\frp_3)$ has variable length but from the bounds on $l$ in Definition \ref{def:nice_paths} and \eqref{eq:nice_parameters} we infer that their combined length is
	\begin{equation*}
		\abs{\xi(\frp_1)} + \abs{\xi(\frp_3)} \leq \mathscr{d} t - (C_4 - C_5) \sqrt{t},
	\end{equation*}
	while the intermediate path obviously has length $\abs{\xi(\frp_2)} \leq \abs{\frp_2}$. Choose $C_4 \geq C_5 + 2 C_{\LR}$. Hence if $\abs{\frp_2} < C_3 \sqrt{t}$ with $C_3 := C_{\LR}$, then $\abs{\xi(X_{s'} \cdots X_{s'+t})} < \mathscr{d} t - C_{\LR} \sqrt{t}$, which occurs with probability at most $o_{\bP}(1)$ by Proposition \ref{prop:concentration_G}. 
	To prove the upper bound on $\abs{\frp_2}$, observe $\abs{\frp_1}+s'$ coincides with the first hitting time $\tau_{l_1}$ of long-range distance $l_1$ after step $s'$. From Proposition \ref{prop:concentration_G} we can deduce the existence of $C_2> 0$ such that $\tau_{l_1} - s' \geq t -s - C_2 \sqrt{t} $ with probability at least $1 -\e$. Since $\abs{\frp_3} \geq s$, we deduce that
	\begin{equation*}
		\abs{\frp_2} = t - \abs{\frp_1} - \abs{\frp_3} \leq C_2 \sqrt{t}.
	\end{equation*}
	\item $w_{E}(\xi(X_{s'} \cdots X_{\tau_{l_1}})) w_{F} (\xi(X_{t'-s-T_{F'}} \cdots X_{t'})) > w_{\max}$: let $\xi := \xi(X_{s'} \cdots X_t)$. When deriving \eqref{eq:nice_paths_bound} we used that
	\begin{equation*}
		w_{x,R,L}(\xi) = w_{E}(\xi_1 \cdots \xi_{E}) w_{\xi_{E}^{+},R,L}(\xi_{E+1} \cdots \xi_{F}) w_{F}(\xi_{F+1} \cdots \xi_l)
	\end{equation*}
	wher $\xi_l$ is the last edge of $\xi$. Now by the non-backtracking property of nice paths $\xi(X_{s'} \cdots X_{\tau_{l_1}})$ and $\xi(X_{t'-s - T_{F'}} \cdots X_{t'})$ contain $\xi_1 \cdots \xi_{E}$ and $\xi_{F+1} \cdots \xi_{l}$ respectively so the goal is to show that $w_{E}(\xi_1 \cdots \xi_{E}) w_{F}(\xi_{F+1} \cdots \xi_l) > w_{\max}$ with probability at most $\e$. As we argued for the previous point, Proposition \ref{prop:concentration_G} implies $\tau_{l_1} - s' \geq t_1 := t-s-C_2 \sqrt{t}$ with probability at least $1 - \e$. Since cumulative weights are non-increasing along a path, we deduce that 
	\begin{equation*}
		w_{E}(\xi_1 \cdots \xi_E) \leq w_{x,R,L}(\xi(X_{s'} \cdots X_{t_1})) \leq e^{- t_1 h + C_h \sqrt{t_1}}
	\end{equation*}
	with probability at least $1 - \e$.
	Similarly we know that $T_{F'} \leq t'-s$, hence
	\begin{equation*}
		w_{E}(\xi_1 \cdots \xi_F) \geq w(\xi(X_{s'} \cdots X_{t'-s})) \geq e^{-(t-s) h - C_h \sqrt{t-s}}
	\end{equation*}
	with probability at least $1 - \e - o(1)$. From these two bounds, we deduce
	\begin{equation*}
		w_{\xi_{E}^{+},R,L}(\xi_{E+1} \cdots \xi_{F}) \geq e^{- C_{h} \sqrt{t-s} - C_{h} \sqrt{t_1} - C h \sqrt{t}} \geq e^{-C' \sqrt{t}}
	\end{equation*}
	for some $C' = C'(\e) > 0$. Thus if $w_{E}(\xi_1 \cdots \xi_{E}) w_{F}(\xi_{F+1} \cdots \xi_l) > w_{\max}$ we obtain that
	\begin{equation*}
		w_{x,R,L}(\xi) \geq w_{\max} e^{-C' \sqrt{t}}
	\end{equation*}
	which has probability at most $\e$ if the constant $C_1$ in $w_{\max}$ \eqref{eq:nice_parameters} is taken sufficiently large.
\end{enumerate}

\subsection{Concentration of nice paths}

The proof of the concentration of $\scP_{\frN}^{t}$ is identical to the reversible case so we will only sketch the arguments. Recall $\cF_{r,l}$ is the $\sigma$-algebra generated by the long-range edges revealed during the two first stages. Concentration is established for $r,l$ and $x,y$ fixed then extended by union bound. It is based on the following concentration inequality for functions on the symmetric group, proved specifically for this problem. It is inspired from a concentration inequality by Chatterjee (\cite[Prop. 1.1]{chatterjee2007stein}, see also \cite{chatterjee2007concentration}) that was used to prove the cutoff of non-backtracking chains in \cite{bordenave2019cutoff}. It corresponds to the degree $d=1$ case, but provides better constants. 

\begin{proposition}[{\cite[Corollary 1.1]{cutoff_reversible}}]\label{prop:concentration_symmetric}
	Let $\phi$ be a polynomial in the indeterminates $(X_{ij})_{i,j \in [n]}$, with non-negative coefficients, of degree at most $1$ in each entry and total degree $d$. We write $\partial_{ij}$ for the partial derivative in the $(ij)$-entry. Let $M(\phi)$ be the maximal coefficient of $\phi$ and given a matrix $S=(S_{ij})_{i,j \in [n]}$, let $N(\phi, S)$ be the number of monomials in $\phi$ which are non-zero when evaluated at $S$. Finally consider $A_{\phi} := M(\phi) \max_{S} N(\phi,S)$ where the maximum is taken over all permutation matrices and $A_{\nabla \phi} := \max_{i,j \in [n]} A_{\partial_{ij} \phi}$. Then if $S$ is a uniform permutation matrix, for all $t \geq 0$,
	\begin{equation}
		\bP \sbra{\abs{\phi(S) - \bE \sbra{\phi(S)}} \geq t} \leq 2 \exp \left( \frac{- t^2}{2 \alpha_{\phi} (\frac{4}{3} \bE \sbra{\phi} + \frac{2^{d+1}(d-1)}{3n} A_{\phi}  + t)} \right).
	\end{equation}
	with
	\begin{equation*}
		\alpha_{\phi} := 6 \, d \, 2^{d} A_{\nabla \phi} \left( \log \left( \frac{4 A_{\phi} \, n}{A_{\nabla \phi}} \right)^{+} + \frac{(2/n)(2-e^{-2/n})}{1-e^{-2/n}}\right)
	\end{equation*}
\end{proposition}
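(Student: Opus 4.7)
The plan is to apply Chatterjee's Stein-method-based concentration framework for exchangeable pairs. I would take $S$ uniform on the set of $n\times n$ permutation matrices and construct $S'$ from $S$ by picking a uniformly random transposition $(i,j)$ and setting $S' = S^{(ij)}$, the matrix obtained by swapping rows $i$ and $j$ of $S$. Since $S^{(ij)}$ is again uniform, $(S,S')$ is an exchangeable pair, and an antisymmetric function of the form $F(S,S') \propto \phi(S)-\phi(S')$ satisfies, after rescaling, the Stein identity $\bE[F(S,S')\mid S] = \phi(S)-\bE\phi$, as can be verified using reversibility of the random transposition walk. Chatterjee's inequality then gives a Bernstein-type bound with variance proxy
\[
v(S) := \tfrac{1}{2}\,\bE\bigl[|(\phi(S)-\phi(S'))\,F(S,S')|\,\big|\, S\bigr]
\]
together with a range parameter bounding $|\phi(S)-\phi(S')|$.

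The heart of the argument is to bound $v(S)$. Because $\phi$ has degree at most $1$ in each entry, a transposition swapping rows $i,j$ affects only monomials whose support meets rows $i$ or $j$; using the total degree $d$ and the non-negativity of the coefficients, one can write $|\phi(S)-\phi(S^{(ij)})|$ as a sum of at most $2^d$ terms built from the partial derivatives $\partial_{ik}\phi$ and $\partial_{jk}\phi$, hence bounded coefficient-wise by $A_{\nabla\phi}$. Averaging over the uniformly random $(i,j)$ and using that each entry $S_{ij}=1$ with probability $1/n$ is what inserts the prefactors $1/n$ and ultimately produces $\alpha_\phi$; the factor $A_\phi/n$ appearing in the denominator in the exponent comes from the "diagonal" contribution of monomials involving both rows $i$ and $j$, which must be bounded pointwise rather than in expectation.

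The main obstacle is obtaining the logarithmic factor $\log\bigl(4A_\phi n/A_{\nabla\phi}\bigr)^+$ in $\alpha_\phi$, which is what allows the variance-like term in the bound to depend on $\bE\phi$ rather than on the worst-case $A_\phi$. A naive pointwise bound $|\phi(S)-\phi(S^{(ij)})|\leq A_\phi$ would spoil the Bernstein estimate, so I would truncate at a threshold of order $A_{\nabla\phi}\log(A_\phi n/A_{\nabla\phi})$: the event that a single transposition perturbs $\phi$ beyond this threshold is controlled by a union bound over the at most $A_{\nabla\phi}$ monomials it can touch (giving the logarithmic factor), while below the threshold Chatterjee's inequality applies with the improved constant $\alpha_\phi$. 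The non-negativity of coefficients is crucial here to rule out cancellations that would otherwise force the variance to be controlled by $A_\phi$. Once this truncation is set up, the remainder of the work is bookkeeping to track the constants through Chatterjee's proof; the refinement over the general-purpose inequality used in \cite{bordenave2019cutoff} comes precisely from exploiting both the low-degree structure and the non-negativity at this truncation step.
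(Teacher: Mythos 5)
You have correctly identified the family of techniques: the paper does not reprove this proposition here but quotes it from \cite[Corollary 1.1]{cutoff_reversible}, whose proof is a refinement of Chatterjee's exchangeable-pair concentration inequality \cite{chatterjee2007stein}, i.e.\ precisely the random-transposition pair plus a Bernstein-type MGF bound that you propose. The genuine gap is at the first step of your sketch. The identity $\bE \sbra{F(S,S') \mid S} = \phi(S) - \bE \sbra{\phi(S)}$ for $F(S,S') \propto \phi(S)-\phi(S')$ requires $\phi$ to be an eigenfunction of the random-transposition kernel: this holds for linear (degree-$1$) statistics, which is why Chatterjee's original Proposition 1.1 can take $F$ proportional to the plain increment, but it fails for $d \ge 2$, since $\bE \sbra{\phi(S') \mid S}$ is not of the form $(1-c)\phi(S) + c\,\bE\sbra{\phi(S)}$ (monomials of different degrees contract at different rates under a transposition). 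For general $d$ one has to construct $F$ by (approximately) solving the Poisson equation, for instance by coupling two random-transposition walks started from $S$ and $S' = S^{(ij)}$ so that they differ by a single transposition until coalescence and summing the expected discrepancies over time. The specific shape of $\alpha_{\phi}$ reflects such a construction: the per-step discrepancy of two permutations differing by one transposition is what the factor $d\,2^{d} A_{\nabla \phi}$ controls, the coalescence rate of order $2/n$ per step is visible in the term $(2/n)(2-e^{-2/n})/(1-e^{-2/n})$, and the logarithm $\log(4 A_{\phi} n / A_{\nabla \phi})^{+}$ comes from iterating the comparison until a quantity that starts at scale $A_{\phi}$ is reduced to scale $A_{\nabla \phi}$ — not from ``a union bound over the at most $A_{\nabla\phi}$ monomials'' a swap can touch. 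Note in particular that $A_{\nabla\phi}$ is not a count of affected monomials; it is $\max_{i,j} M(\partial_{ij}\phi)\max_{S} N(\partial_{ij}\phi,S)$, a bound on the value of a partial derivative at a permutation matrix.

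A second, related gap is your account of how $\bE\sbra{\phi}$ (rather than the worst case $A_{\phi}$) enters the denominator of the exponent. In Chatterjee's scheme this comes from a self-bounding estimate of the form $v(S) \le a\,\phi(S) + b$ for the variance proxy, which is where the non-negativity of the coefficients is used (each monomial appearing in $(\partial_{ij}\phi)(S)$ is dominated by corresponding monomials of $\phi$ evaluated along the pair); it is not produced by truncating the increment at a threshold of order $A_{\nabla\phi}\log(A_{\phi} n / A_{\nabla\phi})$. Moreover, truncating the increment inside the exchangeable-pair argument destroys the antisymmetry on which the Stein identity rests, and your sketch does not explain how to repair this. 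So while the skeleton (exchangeable pair, Bernstein-type bound, exploiting degree at most $1$ in each entry and non-negativity) matches the intended proof, the two mechanisms that actually generate the statement — the construction of $F$ for $d \ge 2$ and the joint origin of the logarithmic factor and of the $\bE\sbra{\phi}$ term — are missing or misattributed as written.
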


Let us clarify a bit this result which is arguably a bit heavy in notation. First note any function on permutations can be represented as a polynomial in permutations matrices of degree at most $n-1$, although not uniquely. Furthermore, since permutation matrices have $0-1$ entries there is no loss of generality in supposing the degree is at most in each entry. The prohibitive term of the inequality are obviously the exponential terms in the degree, limiting the applications to small degrees that can nonetheless depend on $n$, and the term $A_{\nabla \phi}$. As the notation indicates, this term is essentially a bound on the norm of the gradient of the function. In the case $d = 1$, the random variable $\phi(S)$ can be written as $\tr(B^{\top}S)$ for some matrix $B$, in which case $B$ is exactly the gradient of $\phi$ and we have $A_{\nabla \phi} = \max_{i,j} B_{ij}$. All in all, Proposition \ref{prop:concentration_symmetric} provides a concentration inequality for smooth low-degree functions on the symmetric group. 

In our case, this result is applied conditional on $\cF_{r,l}$, so all the randomness comes from the intermediate path $\frp_2$, whose length will thus give the degree of the function. Then derivating the function at some entry $(i,j)$ comes down to focusing on the probability of nice paths which follow a prescribed long-range edge. The term $M(\partial_{ij} \phi)$ will thus correspond to the maximal probability of following one given nice path, which is $w_{\max}$ by \eqref{eq:nice_paths_bound}, while the boundedness of degrees allows to upper bound the number of nice paths which can use the long-rang edge $(i,j)$. We thus obtain the following bounds, which will be sufficient to apply the proposition.

\begin{lemma}[{\cite[Lemma 8.3]{cutoff_reversible}}]\label{lem:conditions_concentration}
	With the notations of Proposition \ref{prop:concentration_symmetric}, for all $r \leq M, l \geq 0$, conditional on $\cF_{r,l}$ $\scP_{\frN_{r,l}}^{t}(x,y)$ can be realized as a function $\phi$ on $S_{n'}$ of degree at most
	\begin{equation}\label{eq:degree_d}
		d := C_2 \sqrt{\log n'}.
	\end{equation}
	which satisfies
	\begin{equation*}
		\begin{gathered}
		\alpha_{\phi} = O \left( d^{3} (4 \Delta)^{d} w_{\max} \log n \right),\quad A_{\phi} = O(d^{2} \Delta^{d} n^{2/10} w_{\max}) \\
		\text{and} \quad A_{\nabla \phi} = O \left( d^{2}  (2 \Delta)^{d} w_{\max} \right).
		\end{gathered}
	\end{equation*}
\end{lemma}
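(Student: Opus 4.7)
The plan is to follow the same strategy as in the reversible companion paper \cite[Lemma 8.3]{cutoff_reversible}, since the decomposition $\frp = \frp_1 \frp_2 \frp_3$ of nice paths is identical. Conditional on $\cF_{r,l}$, both the forward neighbourhood $K(x,l_1,w_{\min})$ and the backward neighbourhood $B(r,r+s,l)$ are fixed, so the remaining randomness in $\scP^t_{\frN_{r,l}}(x,y)$ comes only from the unrevealed values of $\eta$, which are precisely the $\eta$-values required by the long-range steps of the intermediate segment $\frp_2$. The first step is to write $\phi(S)$ as a sum over nice paths, grouping terms into monomials indexed by the sequence of long-range edges traversed by $\frp_2$ that use those unrevealed $\eta$-values. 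Since a nice path has $|\frp_2| \leq C_2 \sqrt{\log n}$ by Definition \ref{def:nice_paths}(iii), the degree bound $d \leq C_2 \sqrt{\log n'}$ follows immediately.

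For the coefficient bound $M(\phi) \leq w_{\max}$, I would observe that each monomial fixes the long-range skeleton of $\frp_2$ and therefore determines both the exit edge $e \in E$ from $K$ and the entry edge $f \in F$ into $B$; the associated $\xi(\frp_1)_E$ and $\xi(\frp_3)_F$ are then also determined. The bound \eqref{eq:nice_paths_bound}, derived from Lemma \ref{lem:probability_weights} and the weight constraint (v), then yields that the sum of $\scP(\frp)$ over all nice paths matching this skeleton is at most $w_{\max}$, giving $M(\phi) \leq w_{\max}$.

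Counting monomials $N(\phi,S)$ is where the polynomial structure of the bounds emerges. For a fixed $S$, a monomial is non-zero only if the corresponding long-range skeleton of $\frp_2$ is realized in the environment and ends at a vertex of $F' \subset B$. Enumerating such skeletons from their endpoint backward uses at most $|F'| \leq |B| = O(n^{2/10})$ choices (by \eqref{eq:stages}) for the endpoint, and at most $\Delta^k$ reverse long-range extensions of length $k \leq d$; summing over $k$ and accounting for the free length of $\frp_2$ gives $N(\phi,S) = O(d \Delta^d n^{2/10})$, hence $A_\phi = O(d^2 \Delta^d n^{2/10} w_{\max})$. For $A_{\nabla \phi}$, differentiating in $S_{ij}$ restricts to skeletons using the specific long-range edge $(i,j)$; this fixes a vertex of $\frp_2$, and the monomials are then enumerated by choosing the position of that fixed step (at most $d$ choices) and completing the skeleton on each side, each completion having at most $\Delta^d$ choices, combining to at most $(2\Delta)^d$. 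The endpoint in $F'$ is no longer a free parameter, so the $n^{2/10}$ factor disappears and we obtain $A_{\nabla \phi} = O(d^2 (2\Delta)^d w_{\max})$. Substituting these values into the definition of $\alpha_\phi$, using that $\log\bigl(4 A_\phi n / A_{\nabla \phi}\bigr)^+ = O(\log n)$ and that the $2^d(d-1)/n$ correction is negligible compared with the other terms, delivers $\alpha_\phi = O(d^3 (4\Delta)^d w_{\max} \log n)$.

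The main obstacle is purely combinatorial: carefully tracking which $\eta$-values are truly unrevealed after the two-stage exploration (in particular at the junctions between $\frp_1, \frp_2$ and between $\frp_2, \frp_3$) and checking that the small-range excursions inside $\frp_2$ inflate the coefficient of each monomial only by factors absorbed in the $d^2$ prefactor, rather than by $\Delta^{|\frp_2|}$. This bookkeeping is identical to the reversible setting and transfers verbatim, so the bulk of the argument reduces to invoking \eqref{eq:nice_paths_bound} and \eqref{eq:stages} in the counting sketched above.
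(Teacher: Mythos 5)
Your proposal follows exactly the intended route — which this paper does not reproduce but delegates to \cite[Lemma 8.3]{cutoff_reversible}: represent $\scP^{t}_{\frN_{r,l}}(x,y)$ conditional on $\cF_{r,l}$ as a polynomial in the unrevealed matching entries used along $\frp_2$ (degree bounded by $|\frp_2|\leq C_2\sqrt{\log n}$), bound the maximal coefficient by $w_{\max}$ via \eqref{eq:nice_paths_bound} since each monomial pins down the relevant $e\in E$ and $f\in F$, and count non-zero monomials from the $F'$ side using $|B|=O(n^{2/10})$ from \eqref{eq:stages}, which then gives $\alpha_\phi=O(d^3(4\Delta)^d w_{\max}\log n)$ after substitution into Proposition \ref{prop:concentration_symmetric}. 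The only delicate point, correctly flagged by you, is the junction bookkeeping (which unrevealed entries a monomial contains and that it determines the pair $(e,f)$), and this is indeed the same verification as in the reversible case.
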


The next step is to compute the expectation of nice paths. The following Lemma will be sufficient to deduce Proposition \ref{prop:nice_approx}. Recall the measure $\bfQ^{(L)}_u$ was defined in \eqref{eq:bfQu_G}. To ease notation we will drop the exponent $(L)$ in the sequel. Recall also that $T_1^{(G,l)}, T_1^{(\cG,l)}$ denote regeneration times with horizon $L$ in $G^{\ast}$ and $\cG$ respectively, $\mu$ is the invariant measure of the regeneration chain in the quasi-tree and $\nu$ was considered in Proposition \ref{prop:mixing_annealed_QT}. Below we write $\bfQ_{\nu + c} = \sum_{u} (\nu(u) + c(u)) \bfQ_{u}$.

\begin{lemma}\label{lem:mixing_annealed_G}
	There exists a measure $c$ on $V$ such that $\sum_{v \in V} c(v) = o_{\bP}(1)$ and
	\begin{equation*}
		\bE \cond{ \scP_{\frN_{r,l}}^{t}(x,y) }{\cF_{r,l}} =  \frac{(1-o_{\bP}(1))}{\bE_{\bQ_{\mu}} \sbra{T_{1}^{(\cG,\infty)}}} \bfQ_{\nu + c} \sbra{X_{r+s} = y, \abs{\xi(X_0 \cdots X_{r+s})} = l, r < T_1^{(G,L)} \leq M}.
	\end{equation*}
\end{lemma}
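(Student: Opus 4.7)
The plan is to condition on $\cF_{r,l}$, decompose every nice path $\frp = \frp_1 \frp_2 \frp_3$ as in Definition \ref{def:nice_paths}, and take expectation over the long-range edges unrevealed in the first two stages (stages exploring $K$ and $B$). Since $\frp_1$ lies in $K(x,l_1,w_{\min})$ and $\frp_3$ lies in $B(r,r+s,l)$, the transition probabilities along $\frp_1$ and $\frp_3$ are $\cF_{r,l}$-measurable, so only $\frp_2$ contributes randomness. Summing over the endpoints $u^{-} \in E'$ of $\frp_1$ and $u^{+} \in F'$ of $\frp_3$ and over intermediate paths between them, the conditional expectation factorizes as the sum over $(u^{-},u^{+})$ of $\scP(\frp_1)$ times the expected probability of the intermediate step (from $u^{-}$ to $u^{+}$ satisfying the constraints of Definition \ref{def:nice_paths} (iii)) times $\scP(\frp_3)$.

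To analyze the intermediate contribution I will use the coupling of Section \ref{subsec:coupling}, started at $u^{-}$ at time $1/2$ and run on the environment $G^{\ast}$ conditional on $\cF_{r,l}$. Since by \eqref{eq:stages} the two first stages have revealed only $O(n^{1-\e'})$ long-range edges while $\abs{\frp_2} = O(\sqrt{\log n})$, the rejection-sampling coupling with a fresh infinite quasi-tree $\cG$ rooted at $u^{-}$ succeeds (with annealed error $o_{\bP}(1)$ uniformly in $u^{-}$) until the chain leaves $K$ and then couples its trajectory with $\cX$ on $\cG$. By property (iv) of nice paths, $\frp_2$ must contain a regeneration edge outside $K$ within $ML$ steps, giving the first ``clean'' regeneration time $\tT_1$ considered in Proposition \ref{prop:mixing_annealed_QT}, which itself is followed by $\Theta(\sqrt{\log n})$ remaining steps — much more than the $C(\e) M^{2} = O((\log\log n)^{2\kappa})$ that Proposition \ref{prop:mixing_annealed_QT} requires for annealed mixing of the regeneration chain.

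Applying Proposition \ref{prop:mixing_annealed_QT} then yields that, conditionally on $\cF_{r,l}$ and on reaching $u^-$, the distribution of $\iota(\cX_{t'})$ at the step $t'$ where the chain exits $B_{\LR}(u^-,L)$ at a regeneration time is $\nu(\cdot)/\bE_{\bQ_\mu}[T_1^{(\cG,\infty)}] + c_{u^-}(\cdot)$, with $\sum_v c_{u^-}(v) = o(1)$. Summing over possible exit vertices $u^+$ and collecting the backward part, the subsequent path from $u^+$ to $y$ in $B$ together with the regeneration constraint exactly realizes the event $\{X_{r+s} = y, \abs{\xi(X_0\cdots X_{r+s})} = l, r < T_1^{(G,L)} \leq M\}$ under $\bfQ_{u^+}$; this is the definition of the backward neighbourhood and $\frp_3$. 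Summing against $\nu(u^+) + c(u^+)$ (where $c$ collects the $c_{u^-}$ weighted by the starting distribution) gives the announced expression, with the prefactor $1/\bE_{\bQ_\mu}[T_1^{(\cG,\infty)}]$ as dictated by the renewal theorem.

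The main obstacle, and where the $o_\bP(1)$ error on $c$ must be absorbed, is handling the coupling discrepancies at several levels simultaneously: (a) the rejection-sampling coupling between $G^{\ast}$ and $\cG$ across the intermediate path conditional on the already-revealed $\cF_{r,l}$; (b) controlling that $\frp_2$ does not re-enter $K \cup B$ after its first clean regeneration, which uses Lemma \ref{lem:no_backtracking} and the escape estimates of Section \ref{section:QT1} and costs an additional $o(1)$; and (c) the dependence of the conditional mixing on $u^-$, which is why the correction $c(v)$ is not a constant but a small signed measure — the total-variation bound in Proposition \ref{prop:mixing_annealed_QT} controls $\sum_v \abs{c_{u^-}(v)}$ uniformly in $u^-$, so Markov's inequality applied to $\sum_{u^-} \scP(\frp_1) \sum_v \abs{c_{u^-}(v)}$ yields $\sum_v c(v) = o_\bP(1)$ since $\sum_{u^-}\scP(\frp_1)$ is bounded by $1$.
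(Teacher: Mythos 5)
Your architecture is the same as the paper's: condition on $\cF_{r,l}$, note that only the middle segment $\frp_2$ carries unrevealed randomness, couple that segment with the chain on a quasi-tree conditioned on the already revealed $L$-ball at the exit point of $K$, invoke Proposition \ref{prop:mixing_annealed_QT} with the time-in-$K$ control $U \leq ML$ coming from the nice-path definition, and absorb all errors into a measure $c$ using $\sum_{u^-}\scP(\frp_1)\leq 1$. Two steps, however, are stated in a form that does not go through as written. First, the claimed exact factorization of $\bE\cond{\scP^{t}_{\frN_{r,l}}(x,y)}{\cF_{r,l}}$ over $(\frp_1,\frp_2,\frp_3)$ is false: condition (i) of Definition \ref{def:nice_paths} ($\frp\in\Gamma(R,L,M)$), together with the regeneration-spacing and backtracking requirements, couples the three segments across their boundaries. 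The paper only obtains an \emph{upper} bound by this decomposition (its \eqref{eq:lem_upperbd}), and the matching lower bound is a separate argument: the decomposition over-counts only on events (failure of $\Gamma(R,L,M)$, absence of a regeneration in the prescribed window) whose probability is $o_{\bP}(1)$ by the ``nice paths are typical'' estimates of Section \ref{subsec:nice_typical}. Since the lemma asserts an equality up to a $(1-o_{\bP}(1))$ factor, your proposal is missing this lower-bound direction.

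Second, your statement of what Proposition \ref{prop:mixing_annealed_QT} delivers is not what it says: it is a local renewal theorem at \emph{fixed deterministic} times $t_2$, asserting that $\bP\sbra{\exists k:\ \tT_k=t_2,\ \iota(\cX_{t_2})=v\mid\cdots}$ is close to the renewal density $\nu(v)/\bE_{\bQ_{\mu}}\sbra{T_{1}^{(\cG,\infty)}}$, which sums over $v$ to $1/\bE_{\bQ_{\mu}}\sbra{T_1^{(\cG,\infty)}}$, not to $1$; it is not ``the distribution of $\iota(\cX_{t'})$ at the (random) step where the chain exits the ball at a regeneration time''. The correct use is to apply it for each fixed intermediate length $t_2=\Theta(\sqrt{\log n})$ in the decomposition over $t_1+t_2=t-(r+s)$ and then sum the first factor (which totals at most $1$) against the resulting density; this also requires identifying the horizon-$L$ regeneration times of the finite chain with the true ($L=\infty$) regeneration times of $\cX$, ruled out to differ before time $O(\sqrt{\log n})$ by the no-backtracking estimate (Lemma \ref{lem:typical_paths_QT}) — a step your proposal omits and which is needed to produce the constant $\bE_{\bQ_{\mu}}\sbra{T_1^{(\cG,\infty)}}$ in the statement. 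Finally, the control that the middle segment does not re-enter $K\cup B$ is an annealed collision estimate based on the size bounds \eqref{eq:stages} (the freshly sampled long-range edges avoid the $O(n^{1-\e'})$ revealed vertices over $O(\sqrt{\log n})$ steps), not an application of Lemma \ref{lem:no_backtracking}.
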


\begin{proof}[Proof of Lemma \ref{lem:mixing_annealed_G}]
	The proof is almost identical to that of Lemma 8.4 in \cite{cutoff_reversible} for the reversible case. It is essentially an application of Proposition \ref{prop:mixing_annealed_QT} combined with the coupling on the quasi-tree. 
	
	Let $\tau_{E'}$ be the hitting time of $E'$ and $U_{t'}(K)$ the time spent in $K$ before the time $t' \geq 0$. By definition, a nice path requires that $\tau_{E'} \leq t-s$ and the first part $\frp_1$ of the path is the trajectory until $\tau_{E'}$. The second part of a nice path is the trajectory until hitting $F'$ and is assured to spend less than $m := ML$ steps in $K$. Therefore by strong Markov's property, one can bound
	\begin{align}
		&\sum_{\frp \in \frN_{r,l}} \scP(\frp) \leq \sum_{\substack{t_1 + t_2 =  t -(r+s) \\ C_3 \sqrt{\log n} \leq t_2 \leq C_2 \sqrt{\log n}}} \sum_{u \in E'} \sum_{v \in F'} \left( \bfP_{x} \sbra{\tau_{E'} = t_1, X_{t_1} = u} \right. \label{eq:lem_upperbd}\\
		& \times \bfP \cond{\exists k \geq 0: T_{k}^{(G,L)} = t_2, X_{t_2} = v, U_{t_2}(K) \leq m}{X_{1/2} = u} \nonumber \\
		&\left. \bfP \cond{X_{r + s} = y, r < T_1 \leq M}{X_{1/2} = v, \tau_{\eta(v) > \tau_L}} \right) .\nonumber
	\end{align}
	In each term of this sum, the first and third factor are $\cF_{r,l}$-measurable, so only the second factor gets averaged when taking conditional expectation. We claim this expectation satisfies 
	\begin{equation*}
		\sum_{v \in F'} \abs{ \bP \cond{\exists k \geq 0: T^{(G,L)}_{k} = t_2, X_{t_2} = v, U_{t_2}(K) \leq m}{X_{1/2} = u, \cF_{r,l}} - \frac{\nu(v)}{\bE_{\bQ_{\mu}} \sbra{T_{1}^{(\cG,\infty)} }}} + o_{\bP}(1).
	\end{equation*}
	Note that $\nu / \bE_{\bQ_{\mu}} \sbra{T_{1}^{(\cG, \infty)}}$ is independent of $u$ and $t_1$, while the first factor in the sum considered above can be summed to at most $1$. We can also recognize the measure $\bfQ_v$ \eqref{eq:bfQu_G} in the third factor. Therefore provided the claim holds one obtains that for some $c = (c_v)_{v}$ satisfying $\sum_{v \in V} c(v)= o_{\bP}(1)$, 
	\begin{align*}
		&\bE \cond{\sum_{\frp \in \frN_{r,l}} \scP(\frp)}{\cF_{r,l}} \leq \sum_{v \in F'} \left( \frac{\nu(v)}{\bE_{\bQ_{\mu}} \sbra{T_{1}^{(\cG,\infty)}}} + c(v) \right) \bfQ_{v} \sbra{X_{r + s} = y, r < T_1 \leq M} \\
		&\qquad \leq \sum_{v \in V} \frac{\nu(v) + c(v)}{\bE_{\bQ_{\mu}} \sbra{T_{1}^{(\cG,\infty)}}} \bfQ_{v} \sbra{X_{r + s} = y, d_{\LR}(v,y) = l, r < T_1 \leq M},
	\end{align*}
	where in the second we implicitely changed the definition of $c$, using that $\bE_{\bQ_{\mu}} \sbra{T_{1}^{(\cG, \infty)}} = O(1)$. This proves an upper bound. To prove a matching lower bound, note that the inequality \eqref{eq:lem_upperbd} is not sharp if the trajectory is not in $\Gamma(R,L,M)$ or there is no regeneration time in the interval $[\tau_{l_1}, M L ]$. This was shown to occur with probability $o_{\bP}(1)$ in Section \ref{subsec:nice_typical}, so the upper bound is also a lower bound up to a $o_{\bP}(1)$ error.
	
	Let us prove the claim. Let $u \in E', v \in F'$. The idea is to couple the chains $X$ and $\cX$ on a quasi-tree to relate their regeneration times, in order to use Proposition \ref{prop:mixing_annealed_QT}. However the conditionning by $\cF_{r,l}$ already revealed some-long range edges, which requires in turn a similar conditionning in the quasi-tree. We argue that the only conditionning required is by $\BLR(O,L)$, the ball of radius $L$ in a quasi-tree. Let $u_0$ be the ancestor of $E$ at long-range distance $L$ from $u$. By definition of $K(x,l_1)$, the $L$ long-range neighbourhood $\BLR(u_0,L)$ contains no long-range cycle and thus is a possible realization of $\BLR(O, L)$ around the root of a quasi-tree. Consider a quasi-tree $\cG$ which has this ball as the neighbourhood of its root and is completed with the standard procedure (so without taking consideration of the long-range edges revealed in $K \cup B$). Using the coupling of Section \ref{subsec:coupling}, the chain $X$ started at $u$ can thus be coupled with the chain $\cX$ on $\cG$, started at the vertex of $\BLR(O,L)$ that identifies with $u$. This coupling fails after $X$ enters $B$ or if it re-enters $K$ by another path that the one it used. Let $\tcoup$ denote this new coupling time. 
	
	 From \eqref{eq:stages}, the probability of sampling an element of either $K$ or $B$ under the uniform measure on $V$ is $O(1/n^{\e'})$ for some $\e' > 0$. Consequently, using the same comparison with a binomial we used in the proof of Lemma \ref{lem:coupling_quasiT}, this implies that the chain $X$ re-enters $K$ or reaches $B$ in $O(\sqrt{\log n})$ steps is $O(\log n / n^{2 \e'}) = o(1)$. Note that considering regeneration times requires the knowledge of the $L$ steps ahead but this is $O(\log \log n)$. Consequently it remains true that for $t_2 + L = O(\sqrt{\log n})$,
	 \begin{multline*}
		\sum_{v \in F'} \left| \bP \cond{\exists k \geq 0: T_{k}^{(G, L)} = t_2, X_{t_2} = v, U_{t_2}(K) \leq m}{X_{1/2}= u, \cF_{r,l}} \right. \\ \left. - \, \bP \cond{\exists k \geq 0: T_{k}^{(\cG, L)} = t_2, \iota(\cX_{t_2}) = v, U_{t_2}(K) \leq m}{\cX_{1/2} = x, \BLR(O,L)} \right| = o(1)
	\end{multline*}
	where $x \in \BLR(O,L)$ is a vertex at long-range distance $L$ with type $u$.
	Obviously, $\{T_{k}^{(G, \infty)}, k \geq 1 \} \subset \{T_{k}^{(G, L)}, k \geq 1 \}$. This inclusion may be strict however, if the chain backtracks over a long-range distance $L$.  Lemma \ref{lem:typical_paths_QT} shows this occurs before time $O(\sqrt{\log n})$ with probability $o(1)$. Consequently, with high probability $T_{k}^{(\cG,L)} = T_{k}^{(\cG,\infty)}$ for all regeneration times that occur before $t_2$, so we can exchange these random times in the equation above. 
	Now since $t_2 = \Theta(\sqrt{\log n})$ and $m^{2} = O((\log \log n)^{2 \kappa + 2}) = o(t_2)$, Proposition \ref{prop:mixing_annealed_QT} proves that
	\begin{multline*}
		\sum_{v \in V} \left| \bP \cond{\exists k \geq 0: T_{k}^{(\cG, \infty)} = t_2, \iota(\cX_{t_2}) = v, U_{t_2}(K) \leq m}{\cX_{1/2} = x, \BLR(O,L)} - \frac{\nu(v)}{\bE_{\bQ_{\mu}} \sbra{T_{1}^{(\cG,\infty)}}} \right| \\
		\leq \bP \cond{U_{t_2}(K) > m}{\cX_{1/2} = x, \BLR(O,L)} / \bE_{\bQ_{\mu}} \sbra{T_{1}^{(\cG,\infty)}} + o(1).
	\end{multline*}
	By Lemma \ref{lem:regeneration_tails}, the first term is stretched exponential in $m$ and thus $o(1)$. Using triangle inequality to combine the two previous bounds yields the claim. 
\end{proof}

Now comes the last proof of this paper. 

\begin{proof}[Proof of Proposition \ref{prop:nice_approx}]
	Points (i)-(ii) were established proved in Section \ref{subsec:nice_typical} so we now prove (iii) - (iv).

	For (iii), suppose first $x, y \in V$ and $r,l$ are fixed. Let $\phi := \scP_{\frN_{r,l}}^{t}(x,y)$ as in Lemma \ref{lem:conditions_concentration} and $z:= \frac{\e}{2} \bE \sbra{\phi} + \frac{\e}{2 C_6 M n \sqrt{\log n}}$ for some $C_6 > 0$. 
	Note that $w_{\max} \leq e^{-C \sqrt{\log n}} / n$ for some constant $C$ which tends to $+ \infty$ as the constant $C_0$ in the definition of $t$ grows, while other factors of $\alpha_{\phi}, A_{\nabla \phi}$ are of all of order at most $e^{C' \sqrt{\log n}}$. Thus for any choice of $C = C(\e) > 0$ Lemma \ref{lem:conditions_concentration} shows that $\alpha_{\phi}$, $A_{\nabla \phi}$ can both be bounded by $e^{-C \sqrt{\log n}} / n$, provided the constant $C_0$ is sufficiently large, while $d^{2} (d-1) A_{\phi}/ n = o(n^{-3/5})$. In particular we can choose $C$ so that $A_{\nabla \phi} \leq z$. Since $\bE \sbra{\phi} \leq 2 z / \e$ and $z \geq \e / (2 C_6 M n \sqrt{\log n})$, applying Proposition \ref{prop:concentration_symmetric} yields 
	\begin{equation}\label{eq:concentration_nice}
		\bP \sbra{\abs{\phi - \bE \sbra{\phi}} \geq z} \leq 2 \exp \left( \frac{- C' \e^{2}}{\alpha_{\phi} M n \sqrt{\log n} } \right)
	\end{equation}
	for some $C' = C'(\e) > 0$. Up to increasing again the value of $C$, we can ensure that $\alpha_{\phi} M n \sqrt{\log n} \leq (\log n)^{-2}$, hence
	\begin{equation*}
		\bP \sbra{\abs{\phi - \bE \sbra{\phi}} > \frac{\e}{2} \bE \sbra{\phi} + \frac{\e}{2 C_6 M n \sqrt{\log n}} } \leq 2 \exp \left(- C' \e^{2} (\log n)^{2} \right).
	\end{equation*}
	This is sufficent to take a union bound over $x,y \in V$ and $r \leq M, l \in [\mathscr{d} s - C_5 \sqrt{\log n}, \mathscr{d} s + C_5 \sqrt{\log n}]$. Thus summing over $r,l$ we obtain that with high probability, for all $x,y \in V$,
	\begin{equation*}
		\abs{\scP_{\frN}^{t}(x,y) - \sum_{r,l} \bE \cond{\scP_{\frN_{r,l}}^{t}(x,y)}{\cF_{r,l}}} \leq \frac{\e}{2} \sum_{r,l} \bE \cond{\scP_{\frN_{r,l}}^{t}(x,y)}{\cF_{r,l}} + \frac{\e}{n}.
	\end{equation*}
	Lemma \ref{lem:mixing_annealed_G} gives an estimate of the conditional expectation which shows:
	\begin{align}
		\scP_{\frN}^{t}(x,y) &\leq \frac{1 + \e / 2}{\bE_{\bQ_{\mu}} \sbra{T_{1}^{(\cG,\infty)}}} \sum_{r = 0}^{M} \bfQ_{\nu + c} \sbra{X_{r+s} = y, r < T_1^{(G,L)} \leq M} + \frac{\e}{n} \nonumber \\
		&= (1 +\e /2) A \hat{\pi}(y) + \frac{(1 +\e /2 )A}{\bfE_{\bfQ_{\nu}} \sbra{T_{1}^{(G,L)}}} \sum_{r = 0}^{M} \bfQ_{c} \sbra{X_{r+s} = y, r < T_1^{(G,L)} \leq M} +\frac{\e}{n}. \label{eq:upper_bound_nice}
	\end{align}
	with $A:= \frac{\bfE_{\bfQ_{\nu}} \sbra{T_{1}^{(G, L)}}}{\bE_{\bQ_{\mu}} \sbra{T_{1}^{(\cG, \infty)}}}$. Summing over $y \in V$ and $r \in [0, M]$ in the second term yields $(1 + \e /2) A \sum_{v \in V} c(v) \leq 2 A \e$ with high probability. On the other hand, from Lemma \ref{lem:mixing_annealed_G} we also have the lower bound
	\begin{equation*}
		\scP_{\frN}^{t}(x,y) \geq (1 - \e / 2)(1-o_{\bP}(1)) A \hat{\pi}(y) - \e / n,
	\end{equation*}
	which by summing over $y \in V$ shows that $A \leq 1 + 10 \e$ with high probability. Plugging this in the upper bound above yields the second statement of the proposition. 	

	Let us move to the proof of (iv). Define
	\begin{equation*}
		\hat{\pi}_{1}(v) := \frac{1}{\bfE_{\bfQ_{\nu}} \sbra{T_{1}^{(G, L)}}} \sum_{r=0}^{M} \sum_{u \in S} \nu(u) \bfQ_{u} \sbra{X_{r+s} = v, r < T_1 \leq M, \TSR \wedge \TNB > M+s}
	\end{equation*}
	for all $v \in V$ and decompose $\hat{\pi} =: \hat{\pi}_1 + \hat{\pi}_2$.

	We start bounding $\hat{\pi}_2(V)$. Summing over $y$ in \eqref{eq:upper_bound_nice}, the left hand side becomes the total probability of following a nice path, which by the first part of the proposition is at least $1 - \e$ with high probability for a typical $x \in V$. Consequently $A$ can be lower bounded by a constant with high probability, which implies that for some constant $c > 0$, $\bfE_{\bfQ_{\nu}} \sbra{T_{1}^{(G, L)}} \geq c$ with high probability. Thus by summing over $v \in V$ we can bound
	\begin{equation*}
		\hat{\pi}_2(V) \leq c^{-1} M \bfQ_{\nu} \sbra{\TSR \wedge \TNB \leq M+s} + o_{\bP}(1).
	\end{equation*}
	Then notice that the quenched probability $\bfP \cond{\tau_{\eta(u)} > \tau_L}{X_{1/2} = u}$ can always be lower bounded by $q_n := \delta^{C L}$ for some constant $C > 0$, hence 
	\begin{align*}
		\hat{\pi}_2(V) &\leq c^{-1} M \sum_{u \in V} \nu(u) \frac{\bfP \cond{\TSR \wedge \TNB \leq M+s, \tau_{\eta(u)} > \tau_L}{X_{1/2} = u}}{\bfP \sbra{\tau_{\eta(u)} > \tau_L, X_{1/2} = u}} + o_{\bP}(1) \\
		&\leq c^{-1} q_n^{-1} M \sum_{u \in V} \nu(u) \bfP \cond{\TSR \wedge \TNB \leq M+s}{X_{1/2} = u} + o_{\bP}(1).
	\end{align*}
	As $L = O(\log \log n)$ one has $q_n^{-1} = O ((\log n)^{-b})$ for some $b > 0$. Next by Markov's property for all $u \in V$
	\begin{align*}
		\bfP \cond{\TSR \wedge \TNB \leq M+s}{X_{1/2} = u} &= \bfP \cond{\TSR \wedge \TNB \leq M+s}{X_0 = u, X_{1/2} =  u} \\
		&= \frac{\bfP_u \sbra{\TSR \wedge \TNB \leq M+s, X_{1/2} = u}}{p(u,\eta(u))} \\
		&\leq c' \bfP_u \sbra{\TSR \wedge \TNB \leq M+s}
	\end{align*}
	for some constant $c' > 0$ as the entries of $p$ are bounded (Assumption \ref{hyp:bdd_p}). For fixed $u \in V$, Lemma \ref{lem:typical_paths} shows $\bfP_u \sbra{\TSR \wedge \TNB \leq M+s} = o_{\bP}((\log n)^{-b}) = o_{\bP}(q_n^{-1})$ as $M+s = O(\log n)$. Letting $U \in V$ be a random variable independent from the environment, this can be rephrased as the fact that $\bfP_U \sbra{\TSR \wedge \TNB \leq M+s} = o_{\bP}(q_n^{-1})$ conditional on $U$, but then this statement must also hold unconditionally. As the measure $\nu$ is deterministic we can take $U$ of law $\nu$, after which by taking the conditional expectation with respect to the environment we get that
	\begin{equation*}
		\bE \cond{\bfP_U \sbra{\TSR \wedge \TNB \leq M+s}}{\eta} = \sum_{u \in V} \nu(u) \bfP_u \sbra{\TSR \wedge \TNB \leq M+s} =o_{\bP}(q_{n}^{-1}).
	\end{equation*} 
	All in all, this proves $\hat{\pi}_2(V) = o_{\bP}(1)$.
	
	To bound the $\ell^2$ norm of $\hat{\pi}_1$, we can use what precedes to first bound
	\begin{align*}
		\hat{\pi}_1 &\leq c^{-1} M \max_{r \leq M} \bfQ_{\nu} \sbra{X_{r+s} = \cdot, \TSR \wedge \TNB > s} \\
		&\leq c^{-1} q_n^{-1} M \max_{r \leq M} \sum_{u \in V} \nu(u) \bfP \cond{X_{r+s} = \cdot, \TSR \wedge \TNB > M+s}{X_{1/2} = u}.
	\end{align*}
	Consequently
	\begin{equation*}
		\bE \sbra{\sum_{v \in V} \tilde{\pi}_{1}(v)^{2}} \leq c^{-2} q_n^{-2} M^2 \, \max_{r \leq M} \bP \sbra{ \begin{array}{c} X_{r+s} = X'_{r+s}, \TSR \wedge \TNB > s, \\ \tau_{\eta(X_{1/2})} > \tau_L, \tau_{\eta(X'_{1/2})} > \tau_{L} \end{array}}
	\end{equation*}
	where $X, X'$ are two independent chains conditional on the same environment, both started at time $1/2$ under the law $\nu$. Here the event $\TSR \wedge \TNB > s$ applies to both chains. Fix $r \leq M$.	Observe that if all long-range edges crossed by $X'$ are at distance more than $R$ from the trajectory followed by the first chain $X$, then $X'$ can reach $X_{r+s}$ only by deviating by a small-range distance $R$ or by crossing the long-range edge $(X'_{1/2}, \eta(X'_{1/2}))$ if $\eta(X'_{1/2}) \in \BSR^{-}(X_{1/2} \ldots X_{r+s},R)$. The event $\{ \tau_{\eta(X'_{1/2})} > \tau_{L} \}$ proscribes this last possibility to occur before the chain has crossed a long-range distance $L$, so the chain must backtrack. Thus either of these two possibilities contradicts $\TSR \wedge \TNB > s$. Consequently the only possibility that the two chains meet is that the second one crosses a long-range edge at small-range distance less than $R$ from the first chain. From \eqref{eq:nu_uniform} $d_{\SR}(X_{1/2}, X'_{1/2}) \leq R$ with probability $O(\Delta^{R} / n ) = o(1)$ under $\nu$. Then assuming this does not hold, under the annealed law the environment can be generated sequentially along the trajectories of the chains as explained in Section \ref{subsec:coupling}. The number of long-range edges met by $X'$ up to time $M+s$ which fall in a $R$ small-range neighbourhood of $X$ is thus stochastically dominated by a binomial $\mathrm{Bin}(M+s, \Delta^{R} (M+s) / n)$. As $q_n = O((\log)^{b}), R = O(\log \log n)$, $s = O(\log n), M = o(\log n)$ we deduce that 
	\begin{equation*}
		\bE \sbra{\sum_{v \in V} \hat{\pi}_{2}(v)} \leq O \left( \frac{C (\log n)^{b'}}{n} \right)
	\end{equation*}
	for some $b' > 0$, which gives the result by the first moment argument \eqref{eq:markov_quenched}.
\end{proof}

\bibliographystyle{plainurl}
\bibliography{biblio.bib}

\end{document}